\newtheorem{theorem}{Theorem}
\newtheorem{lemma}[theorem]{Lemma}
\newtheorem{proposition}[theorem]{Proposition}
\newtheorem{remark}[theorem]{Remark}
\newcommand{\dd}{\mathrm{d}}
\newcommand{\bbR}{\mathbb{R}}
\newcommand{\1}{{\eta}}
\def\beq{\begin{equation}}
\def\eeq{\end{equation}}
\def\beqs{\begin{equation*}}
\def\eeqs{\end{equation*}}
\def\bal#1\eal{\begin{align}#1\end{align}}
\def\bals#1\eals{\begin{align*}#1\end{align*}}
\def\bsp#1\esp{\begin{split}#1\end{split}}
\def\d{{\mathrm{d}}}
\let\a=\alpha
\let\e=\varepsilon
\let\pt=\partial
\let\th=\theta
\let\b=\beta
\numberwithin{equation}{section}
\numberwithin{theorem}{section}
\begin{document}
\date{}
\title[Diffusive Limit of One-species VMB for Cutoff Hard Potentials]{
\large{Diffusive Limit of the  One-species  Vlasov--Maxwell--Boltzmann System for Cutoff Hard Potentials
}}

\author{Weijun Wu$^\dagger$, Fujun Zhou$^*$, Weihua Gong$^\ddagger$ and Yuan Xu$^\S$}

\address[Weijun Wu$^\dagger$]{School of Network Security, Guangdong Police College, Guangzhou 510230, China}
\email{scutweijunwu@qq.com}

\address[Fujun Zhou$^*$, Corresponding author]{School of Mathematics, South China University of Technology, Guangzhou 510640, China}
\email{fujunht@scut.edu.cn}

\address[Weihua Gong$^\ddagger$]{School of Mathematics, Guangdong University of Education, Guangzhou 510303, China}
\email{weihuagong2020@163.com}

\address[Yuan Xu$^\S$]{School of Mathematics, South China University of Technology, Guangzhou 510640, China}
\email{yuanxu2019@163.com}

\begin{abstract}
Diffusive limit of the one-species Vlasov--Maxwell--Boltzmann system in perturbation framework still remains unsolved,
due to the weaker time decay rate compared with the two-species  Vlasov--Maxwell--Boltzmann system.
By employing the weighted energy method with two newly introduced weight functions and some novel treatments, we solve this problem
for the full range of cutoff hard potentials $0\leq \gamma \leq 1$.
Uniform estimate with respect to the Knudsen number $\varepsilon\in (0,1]$ is established globally in time, which eventually leads to
the global existence of solutions to the one-species Vlasov--Maxwell--Boltzmann system and hydrodynamic limit to the
incompressible Navier--Stokes--Fourier--Maxwell system. To the best of our knowledge, this is the first result on diffusive limit
of the one-species  Vlasov--Maxwell--Boltzmann system in perturbation framework.
 \\[3mm]
{\em Mathematics Subject Classification (2020)}:  35Q20; 35Q83
\\[1mm]
{\em Keywords}:  Hydrodynamic limit; Vlasov--Maxwell--Boltzmann system;    incompressible Navier--Stokes--Fourier--Maxwell system; weighted energy method.
\end{abstract}

%
%
\maketitle

\tableofcontents

\section{Introduction}

\subsection{Description of the Problem}
\hspace*{\fill}

 The Vlasov--Maxwell--Boltzmann (VMB, for short) system is an important model in plasma physics to describe the time evolution of dilute charged particles
 under the influence of their self-consistent electromagnetic field. Despite its importance, diffusive limit to the fluid equations has not been completely solved so far,  such as the scaled two-species VMB system
 \begin{equation}\label{twoGG1}
\left\{
	\begin{array}{ll}
	\displaystyle \varepsilon \partial_t F^\varepsilon_{+}+v\cdot\nabla_xF^\varepsilon_{+}
     + (\varepsilon E^\varepsilon + v \times B^\varepsilon )\cdot\nabla_v F^\varepsilon_{+} =\frac{1}{\varepsilon}Q(F^\varepsilon_{+},F^\varepsilon_{+})+\frac{1}{\varepsilon}Q(F^\varepsilon_{+},F^\varepsilon_{-}),    \\[2mm]
		
	\displaystyle \varepsilon \partial_tF^\varepsilon_{-}+v\cdot\nabla_xF^\varepsilon_{-}
    - (\varepsilon E^\varepsilon + v \times B^\varepsilon )\cdot\nabla_v F^\varepsilon_{-}
    =\frac{1}{\varepsilon}Q(F^\varepsilon_{-},F^\varepsilon_{-})+\frac{1}{\varepsilon}Q(F^\varepsilon_{-},F^\varepsilon_{+}),   \\[2mm]
		
    \displaystyle \partial_t E^\varepsilon - \nabla_x\times B^\varepsilon = -\frac{1}{\varepsilon^2}\int_{\mathbb{R}^3} v (F^{\varepsilon}_{+}-F^{\varepsilon}_{-})\d v, \quad  \displaystyle \nabla_x \cdot E^\varepsilon= \frac{1}{\varepsilon}\int_{\mathbb{R}^3} (F^\varepsilon_{+}-F^\varepsilon_{-})\d v,\\ [2.5mm]
		
	\displaystyle \partial_t B^\varepsilon+\nabla_x \times E^\varepsilon=0, \qquad\qquad\qquad\qquad\qquad\;\,\nabla_x \cdot B^\varepsilon=0.
\end{array}\right.
\end{equation}
Here $F_{\pm}^{\varepsilon}(t,x,v)\geq 0$ are the density distribution functions for the ions $(+)$ and the electrons $(-)$, respectively, at time $t \geq 0$, position $x=(x_1, x_2, x_3)\in \mathbb{R}^{3}$ and velocity $v =(v_1, v_2, v_3) \in \mathbb{R}^{3}$ \cite{MR-90}.
The electromagnetic  field $[E^\e(t, x), B^\e(t,x)]$ is coupled with the density function $F^{\e}_{\pm}(t,x,v)$ through the   celebrated Maxwell system in (\ref{twoGG1}).
The positive parameter $\varepsilon\in (0,1]$ is the Knudsen number, which equals to the ratio of the mean free path to the macroscopic length scale.

 In  the physical situation, the ion mass is usually much
heavier than the electron mass, so that the electrons move faster than the ions.
Hence, the ions are often described by a fixed ion background $n_b(x)$, and only   electrons move. For this simplified situation, the two-species VMB system (\ref{twoGG1}) is reduced to the following scaled one-species VMB system
\begin{equation}\label{GG1}
\left\{
	\begin{array}{ll}
	\displaystyle \varepsilon \partial_t F^\varepsilon+v\cdot\nabla_xF^\varepsilon
   +\left(\e^2 E^\e+\e v\times B^\e\right)\cdot\nabla_v F^\varepsilon =\frac{1}{\varepsilon}Q(F^\varepsilon,F^\varepsilon), \\ [2mm]
		
    \displaystyle
    \pt_tE^\varepsilon-\nabla_x\times B^\varepsilon=
    -\frac{1}{\varepsilon}\int_{\mathbb{R}^3}v F^\varepsilon\d v, \qquad\;
		
	\nabla_x\cdot E^\varepsilon=\int_{\mathbb{R}^3}  F^\varepsilon\d v-n_b, \\ [3mm]
 \displaystyle
    \pt_tB^\varepsilon+\nabla_x\times E^\varepsilon=
    0, \qquad\qquad\qquad\qquad
		
	\nabla_x\cdot B^\varepsilon=0.
\end{array}\right.
\end{equation}
Here, $F^{\e}(t,x,v)\geq 0$ represents the density of the electrons at time $t\geq0$, velocity $v\in \mathbb{R}^{3}$
and position $x\in \mathbb{R}^{3}$, and the fixed constant ion background charge $n_b>0$  is scaled to $1$.
The initial data of the VMB system (\ref{GG1}) are imposed as
\begin{align}\label{GG1-initial}
  F^\varepsilon(0, x,v)= F_{0}^\varepsilon(x,v), \quad E^\varepsilon(0,x)=E^\varepsilon_0(x), \quad B^\varepsilon(0,x)=B^\varepsilon_0(x)
\end{align}
with the compatibility conditions
\begin{align*}
\nabla_x \cdot E_0^\varepsilon= \int_{\mathbb{R}^3} F^\varepsilon_{0}\d v-1,
    \quad \nabla_x \cdot B_0^\varepsilon=0.
\end{align*}
The Boltzmann collision operator $Q(G_1, G_2)$ in \eqref{GG1}
is defined as
\begin{align*}
  Q(G_1,G_2)(v)&:=\iint_{\bbR^3\times \mathbb{S}^2}
   |v-v_*|^{\gamma}q_0(\theta) \big[G_1(v')G_2(v_*')-G_1(v)G_2(v_*)\big] \d v_*\d \omega,
\end{align*}
where $v'=v-[(v-v_*)\cdot \omega]\omega$ and $v_*'=v_*+[(v-v_*)\cdot \omega]\omega$ by the conservation of momentum and energy
$$
v'+v_*'=v+v_*, \quad |v'|^2+|v_*'|^2=|v|^2+|v_*|^2.
$$
The collision kernel $ |v-v_*|^{\gamma}q_0(\theta)$, depending only on
$|v-v_*|$ and $\cos\theta=\omega\cdot\frac{v-v_*}{|v-v_*|}$, satisfies $-3<\gamma\leq 1$ and the Grad's
angular cutoff assumption
$$
0\leq q_0(\theta)\leq C|\cos\theta|.
$$
The exponent $\gamma$ is determined by the potential of intermolecular forces, and is classified into the
soft potential case when $-3<\gamma<0$ and the hard potential case when $0\leq \gamma\leq 1$. In particular, the hard potential
case includes the hard sphere model with $\gamma=1, q_0(\theta)=C|\cos\theta|$ and the Maxwell model with $\gamma=0$.

Much significant progress has been made on global well-posedness of the standard VMB system, including the two-species VMB system (\ref{twoGG1})  and
the one-species VMB system (\ref{GG1}) with $\e=1$. Firstly,  for the  standard two-species VMB system (\ref{twoGG1}) with $\e=1$, a significant breakthrough
was achieved by \cite{Guo2003}, wherein Guo constructed classical solutions in perturbation framework for the hard sphere case $\gamma=1$ in $\mathbb{T}^3$ by employing the robust high-order energy method. Subsequently, Strain and Duan--Strain \cite{DS2011, Strain2006CMP} established global existence and optimal time decay rate of classical solutions for  the hard sphere case $\gamma=1$  in $\mathbb{R}^3$. Ultimately, global existence for the full range of soft potentials $-3<\gamma<0$ was solved by \cite{DLYZ2017}. Secondly, for the standard one-species VMB system (\ref{GG1}) with $\e=1$, the mathematical theory is much more delicate than the two-species VMB system, due to the slow time decay rate of solutions to the one-species VMB system.
 The global existence of classical solutions in perturbation framework is obtained by Duan \cite{D2011} for the hard sphere case $\gamma=1$. Then
 the global existence and time decay estimates of classical solutions for the soft potential case $-1<\gamma<0$ was solved by \cite{LZ2016JDE}.
In addition, Li--Yang--Zhong \cite{LYZ2016-1} utilized spectral analysis method to study global solutions and optimal time decay rate of the VMB system with
the hard sphere case $\gamma=1$ for both two-species and one-species. Moreover, Ars\'{e}nio--Saint-Raymond \cite{AS2019} obtained global
renormalized  solutions  of the VMB system with general large initial data for both two-species and one-species.
Except for the progress on the cutoff VMB system described above, the standard non-cutoff VMB system also received some research,
such as \cite{DLYZ2013} for soft potentials $\max\{-3,-\frac{3}{2}-2s\}<\gamma<-2s$ with strong angular singularity $\frac{1}{2}\leq s<1$ and \cite{FLLZ2018} for soft potentials  $\max\{-3,-\frac{3}{2}-2s\}<\gamma<-2s$ with weak angular singularity $0<s<\frac{1}{2}$.

Hydrodynamic limit of the VMB system has also been the focus of extensive research efforts.
In a recent notable development, Ars\'{e}nio--Saint-Raymond \cite{AS2019} justified various limits (depending on the scalings)   towards incompressible viscous electro-magneto-hydrodynamics in the framework of renormalized solutions. Among these limits, the most two critical cases are  the two-fluid incompressible Navier--Stokes--Fourier--Maxwell (NSFM, for short) system with Ohm's law from the two-species VMB system \eqref{twoGG1} and the incompressible NSFM system from the one-species VMB system \eqref{GG1}.  This fundamental work opened up a series of subsequent progress on the study of the hydrodynamic limit of  the  VMB system, mainly for the two-species
VMB system in perturbation framework.  Jiang--Luo \cite{JL2019} justified the two-fluid incompressible NSFM   limit of \eqref{twoGG1} in perturbation framework for the cutoff hard sphere case $\gamma=1$, and also considered Hilbert expansion \cite{Ca} for the cutoff hard potential case $0\leq \gamma\leq 1$ \cite{JLZ2023ARMA}. Recently, Jiang--Lei \cite{JL2023ARXIV} investigated the two-fluid incompressible NSFM limit for the cutoff soft potentials $-1\leq \gamma<0$ and the non-cutoff soft potentials $\max\{-3,-\frac{3}{2}-2s\}<\gamma<-2s$ with strong angular singularity $\frac{1}{2}\leq s<1$. Then this two-fluid incompressible NSFM limit was extended to the non-cutoff potentials $\gamma>\max\{-3,-\frac{3}{2}-2s\}$ with full range $0<s<1$ \cite{XZGW2024ARXIV}.
Additionally, Yang--Zhong \cite{YZ24} demonstrated the above limit and the convergence rate for the cutoff hard sphere case $\gamma=1$ by spectral analysis method.
In addition to the incompressible NSFM limit of the two-species VMB system mentioned above, Jang \cite{J2009} studied the incompressible Vlasov--Navier--Stokes--Fourier limit for the cutoff hard sphere case $\gamma=1$ in $\mathbb{T}^3$.
However, for the hydrodynamic limit of the one-species VMB system in perturbation framework, the research so far have only focused on the compressible
Euler--Maxwell limit in hyperbolic regime, as presented by \cite{DYY2023MMMAS} for the cutoff hard sphere
case $\gamma= 1$ and \cite{LLXZ2023ARXIV} for the non-cutoff soft potential case $\max\{-3,-\frac{3}{2}-2s\}<\gamma<-2s$ with $0 <s<1$, respectively.
For more related works on hydrodynamic limit of relevant kinetic
equations, we refer to \cite{BGL91, BGL93, BU91, CIP, DEL, Esposito2018, GS, GZW-2021, guo2006, guo2010cmp, GJJ, GX2020, JXZ2018, LW2023,  LM, Saint, WZL2023ARXIV} and  the references cited therein.

As indicated from the aforementioned research progress,
the existing results on hydrodynamic limit of the one-species VMB system in perturbation framework  are all limited to the hyperbolic  limit toward the Euler--Maxwell system \cite{DYY2023MMMAS, LLXZ2023ARXIV},
 whereas diffusive limit of the one-species VMB system still remains open.
The purpose of this paper is to fill in this gap and investigate
diffusive limit of the one-species  VMB  system  \eqref{GG1}--\eqref{GG1-initial} for cutoff hard potentials $0\leq \gamma \leq 1$.
 In fact, diffusive limit of the one-species  VMB  system  \eqref{GG1}--\eqref{GG1-initial} is more difficult than
 the two-species VMB system, due to the weak time decay rate of solutions.
 By employing the weighted energy estimates with two newly introduced weight functions as well as some novel treatments, we
overcome the difficulty of slow time decay rate of solutions and establish the uniform estimate in $\varepsilon\in (0,1]$
globally in time, which eventually
leads to the convergence of the VMB system to the  incompressible NSFM system for the full range of  cutoff hard potentials.
This firstly justifies diffusive limit of classical solutions to the one-species Vlasov--Maxwell--Boltzmann system.

\subsection{Reformulation of the Problem}
\hspace*{\fill}

 We investigate diffusive limit of the one-species VMB system (\ref{GG1}) in perturbation framework around the global Maxwellian
$$
\mu=\mu(v) :=(2 \pi)^{-3 / 2} \mathrm{e}^{-|v|^{2} / 2}.
$$
By writing
$$
F^{\varepsilon}(t,x,v)=\mu+\varepsilon \mu^{1/2}f^{\varepsilon}(t,x,v),\quad
F^\varepsilon_{0}(x,v)=\mu+\varepsilon \mu^{1/2}f_{0}^{\varepsilon}(x,v)
$$
with the fluctuations $f^{\varepsilon}$ and $f_{0}^{\varepsilon}$,
the VMB system
(\ref{GG1})--\eqref{GG1-initial} is transformed into the following equivalent perturbed VMB system
\begin{align}\label{rVPB}
	\left\{\begin{array}{l}
\displaystyle \!\!\!\partial_{t} f^{\varepsilon}\!+\!\frac{1}{\varepsilon}v \!\cdot \nabla_{x} {f}^{\varepsilon}
\!+\!(\e E^\e\!+\!v\times B^\e)\!\cdot \nabla_{v} {f}^{\varepsilon}
\!-\! E^{\varepsilon} \!\cdot \!v \mu^{1/2}\!+\!\frac{1}{\varepsilon^{2}} L {f}^{\varepsilon}
=\!\e\displaystyle \!\frac{v}{2}\cdot E^{\varepsilon} {f}^{\varepsilon}\!+\!\frac{1}{\varepsilon} \Gamma({f}^{\varepsilon}\!,\! {f}^{\varepsilon}),  \\ [2mm]
\displaystyle\!\!\! \pt_tE^\varepsilon-\nabla_x\times B^\varepsilon=
    -\int_{\mathbb{R}^3}v \mu^{1/2}f^\varepsilon\d v,\qquad \qquad\;
	\nabla_x\cdot E^\varepsilon=\varepsilon \int_{\mathbb{R}^3}  \mu^{1/2}f^{\varepsilon}\d v, \\ [3mm]
 \displaystyle\!\!\!
    \pt_tB^\varepsilon+\nabla_x\times E^\varepsilon=
    0, \qquad\qquad\qquad\qquad	\qquad\;\;\;		
	\nabla_x\cdot B^\varepsilon=0, \\ [3mm]
\displaystyle \!\!\!f^\varepsilon(0,x,v)=f^\varepsilon_{0}(x,v),\qquad
    E^\varepsilon(0,x)=E^\varepsilon_{0}(x), \quad \; B^\varepsilon(0,x)=B^\varepsilon_{0}(x),
	\end{array}\right.
\end{align}
where  the linearized Boltzmann collision operator $Lf^{\varepsilon}$ and the nonlinear collision term $\Gamma(f^{\varepsilon}, f^{\varepsilon})$ are respectively defined by
\bals
Lf^{\varepsilon}\;&:=-\mu^{-1/2}
\left[Q(\mu,\mu^{1/2}f^\e)+Q(\mu^{1/2}f^\e,\mu)\right]\equiv\nu f -Kf,\\
\Gamma(f^{\varepsilon}, f^{\varepsilon})\;&:=\mu^{-1/2}
Q(\mu^{1/2}f^\e,\mu^{1/2}f^\e).
\eals
Here,  $\nu=\nu(v)$ is   the collision frequency and $K$ is a compact operator on $L^2(\mathbb{R}_v^3)$,   given by
\begin{equation}\label{nu-def}
\begin{split}
\nu\equiv \nu(v)&:=\frac{1}{\sqrt{\mu}}Q_-(\sqrt{\mu},\mu)=
\iint_{\mathbb{R}^3\times\mathbb{S}^2}|v-v_*|^{\gamma} q_0(\theta)\mu(v_*)\mathrm{d}\omega \mathrm{d}v_*,\\
Kf&:=\frac{1}{\sqrt{\mu}}\Big[Q_+(\mu,\sqrt{\mu}f)
+Q_+(\sqrt{\mu}f,\mu)-Q_-(\mu,\sqrt{\mu}f)\Big].
\end{split} \end{equation}

It is well-known that the operator $L$ is non-negative with null space
$$
\mathrm{N}(L) = \text{span} \Big\{ \mu^{1/2},  \ v \mu^{1/2} ,  \  \frac{|v|^{2}-3}{2} \mu^{1/2} \Big \}
$$
cf. \cite{CIP}.
For given     function $f(t,x,v)$, the following macro-micro decomposition  was introduced in \cite{Guo2004}
\begin{equation}\label{f decomposition}
f=\mathbf{P}f+(\mathbf{I}-\mathbf{P})f.
\end{equation}
Here, $\mathbf{P}$ denotes the orthogonal projection from $L^2(\mathbb{R}_{v}^3)$ to $\mathcal{N}(L)$, defined by
\begin{equation}\label{Pf define}
\mathbf{P}f:=\Big[a(t,x)+ v\cdot b(t,x) +\frac{|v|^2-3}{2}c(t,x)\Big]\mu^{1/2}.
\end{equation}
Furthermore, it is well-known that there exists a positive constant
 $\sigma_0>0$ such that
\begin{equation}\label{spectL}
\langle Lf, f\rangle_{L_{x,v}^2}\ge\sigma_0\|(\mathbf{I}-\mathbf{P})
f\|_{L_{x,v}^2(\nu)}^2.
\end{equation}

\subsection{Notations}
\hspace*{\fill}

Throughout the paper, $C$ denotes a generic positive constant independent of $\varepsilon$.
We use $X \lesssim Y$ to denote $X \leq CY$, where $C$ is a constant independent of $X$, $Y$. We also use the notation $X \approx Y$ to represent $X\lesssim Y$ and $Y\lesssim X$. The notation $X \ll 1$ means that $X$ is a positive constant small enough.

The multi-indexs $\alpha = [\alpha_1, \alpha_2, \alpha_3]\in\mathbb{N}^3$ and $\beta = [\beta_1, \beta_2, \beta_3]\in\mathbb{N}^3$ will be used to record space and velocity derivatives, respectively. We  denote the $\alpha$-th order space partial derivatives by $\partial_x^\alpha=\partial_{x_1}^{\alpha_1}\partial_{x_2}^{\alpha_2}\partial_{x_3}^{\alpha_3}$,
and the $\beta$-th order velocity partial derivatives by $\partial_v^\beta=\partial_{v_1}^{\beta_1}\partial_{v_2}^{\beta_2}\partial_{v_3}^{\beta_3}$.
In addition, $\partial^\alpha_\beta=\partial^\alpha_x\partial^\beta_v
=\partial^{\alpha_1}_{x_1}\partial^{\alpha_2}_{x_2}\partial^{\alpha_3}_{x_3}
\partial^{\beta_1}_{v_1}\partial^{\beta_2}_{v_2}\partial^{\beta_3}_{v_3}$ stands for the mixed space-velocity derivative.
The length of $\alpha$ is denoted by $|\alpha|=\alpha_1+\alpha_2+\alpha_3$.
If each component of $\theta$ is not greater than that of $\bar{\theta}$, we denote by $\theta \leq \bar{\theta}$. $\theta < \bar{\theta}$ means $\theta \leq \bar{\theta}$ and $|\theta| < |\bar{\theta}|$.
In addition, the notation $\nabla_x^k=\partial^\a_x$ will be used when $\b=0, |\a|=k$.

We  use $| \cdot |_{L^p_v}$ to denote the $L^p$ norm in $\mathbb{R}^3_v$, and  use $\|\cdot\|_{L^q_{x,v}}$  to denote the $L_{x,v}^q$-norm on the variables
$(x, v) \in \mathbb{R}^3\times \mathbb{R}^3$ for $1\leq q \leq \infty$.  For an integer $m \geq 1$, we also
employ $\|\cdot\|_{W^{m,q}_{x,v}}$ to represent  the norm of the Sobolev space
$W^{m,q}(\mathbb{R}^3\times \mathbb{R}^3)$. In particular, if $q=2$ then $\|\cdot\|_{H^{m}_{x,v}}$ stands for
the norm of $H^{m}(\mathbb{R}^3\times \mathbb{R}^3)=W^{m,2}(\mathbb{R}^3\times \mathbb{R}^3)$. Similarly,
$\|\cdot\|_{L^q_{x}}$,  $\|\cdot\|_{H^{m}_{x}}$,  $\|\cdot\|_{W^{m,q}_{x}}$ and
 $\|\cdot\|_{H^{m}_{x}L^2_v}$  stand for
 the norms of the function spaces $L^q(\mathbb{R}^3_x)$, $H^{m}(\mathbb{R}^3_x)$,
 $W^{m,q}(\mathbb{R}^3_x)$ and $L^{2}(\mathbb{R}^3_v,H^m(\mathbb{R}^3_x))$, respectively.
Besides, $\| \cdot \|_{L^p_x L^q_v}$ denotes $\| | \cdot |_{L^q_v} \|_{L^p_x}$.
The norm of a vector means the sum of the norms for all components
of this vector.

Let  $\langle\cdot, \cdot\rangle_{L^2_v}$  $\langle \cdot,\cdot\rangle_{L^2_{x}}$, $\langle \cdot,\cdot\rangle_{L^2_{x, v}}$ denote the $L^2_v$ inner product in $\mathbb{R}_v^3$,  the $L^2_x$ inner product in $\mathbb{R}_x^3$,
and  the $L^2_{x,v}$ inner product in $\mathbb{R}_{x,v}^3$,  respectively.
We also define the weighted  $L^2_{x,v}$ and $H^N_{x}L^2_{v}$  space endowed with the norms
\bals
\|g\|^2_{L^2_{x,v}(\nu)}&= \iint_{\mathbb{R}^3\times \mathbb{R}^{3}}\nu(v)|g(x,v)|^2\dd x \dd v,\quad
\|g\|^2_{H^N_{x}L^2_{ v}(\nu)}= \sum_{|\a|\leq N}\|\partial^\a_x g\|^2_{L^2_{x,v}(\nu)},
\eals
respectively, where $\nu=\nu(v)$ will be given in \eqref{nu-def}.

Finally,
we use $\Lambda^{-s}f(x)$ to denote
\begin{align*}
\Lambda^{-s} f(x):= {(2\pi)^{-3/2}} \int_{\mathbb{R}^3} |y|^{-s} \widehat{f}(y) e^{ix \cdot y} \d y,\qquad s\in(0,3)
\end{align*}
where $\widehat{f}(y) :=  {(2\pi)^{-3/2}} \int_{\mathbb{R}^3} f(x) e^{-ix \cdot y}\d x$ represents the Fourier transform of $f(x)$.

\subsection{Main Results}
\hspace*{\fill}

To state the main results of this paper, we introduce the following fundamental instant energy functional
\begin{align}
\label{without weight energy functional}
{\mathcal{E}}_{N} (t) \sim\;
&\sum_{|\alpha|\leq N} \left\|\partial^{\alpha}_xf^{\varepsilon}\right\|^2_{L^2_{x,v}}
+\sum_{|\alpha|\leq N} \left\|\partial^{\alpha}_xE^{\varepsilon}\right\|^2_{L^2_{x}}
+\sum_{|\alpha|\leq N} \left\|\partial^{\alpha}_xB^{\varepsilon}\right\|^2_{L^2_{x}},
\end{align}
and the corresponding dissipation rate functional
 \bal
 \bsp
\label{without weight dissipation functional}
{\mathcal{D}}_{N} (t)\sim \;
&\!\!\sum_{1 \leq |\alpha| \leq N} \!\!\left \| \partial^{\alpha}_x \mathbf{P}f^\varepsilon \right \|^2_{L^2_{x,v}}
\!+ \!\frac{1}{\varepsilon^2} \sum_{|\alpha| \leq N}\left\|\partial^{\alpha}_x
(\mathbf{I}\!-\!\mathbf{P})f^\varepsilon\right\|^2_{L^2_{x,v}(\nu)}
+\e^2\sum_{1\leq|\alpha| \leq N-1} \left\|\partial^{\alpha}_xE^{\varepsilon}\right\|^2_{L^2_{x}}\\
&+\e^2\sum_{2\leq|\alpha| \leq N-1} \left\|\partial^{\alpha}_x B^{\varepsilon}\right\|^2_{L^2_{x}},
\esp
\eal
 where the integer $N\in\mathbb{Z}$ will be determined later.

On the one hand, owing to  the weaker dissipation of the linearized Boltzmann operator $L$ for hard potentials rather than the hard sphere case, in order to deal with the Lorentz force term brought by the  electromagnetic field, we introduce the following time-velocity weight function
\begin{align}\label{weight function}
\overline{w}_{l_1}(\alpha,\beta)&:=
\langle v \rangle^{l_1-4|\alpha|-4|\beta|}e^{\frac{q\langle v\rangle^2}{(1+t)^\vartheta}},
\end{align}
where  $\langle v \rangle := \sqrt{1+|v|^2}$ and the constants $l_1, q,  \vartheta\geq 0$ will be determined later.  Moreover,
sometimes we write   $\overline{w}_{l_1}(|\a|,|\b|)$ to denote  $\overline{w}_{l_1}(\a,\b)$ for simplicity.
Then the  weighted instant energy functional ${\overline{\mathcal{E}}}_{N-1,l_1} (t)$
 and the corresponding dissipation rate functional ${\overline{\mathcal{D}}} _{N-1,l_1} (t)$ are defined  respectively as
\begin{align}\label{energy functional}
{\overline{\mathcal{E}}}_{N-1,l_1} (t) :=\;
& \sum_{\substack{|\alpha|+|\beta| \leq N-1 }}\left\|\overline{w}_{l_1}(\alpha, \beta) \partial_{\beta}^{\alpha} (\mathbf{I}-\mathbf{P})f^{\varepsilon}\right\|^2_{L^2_{x,v}}, \\
\label{dissipation functional}
{\overline{\mathcal{D}}} _{N-1,l_1} (t):= \;&
\frac{1}{\varepsilon^{2}} \sum_{\substack{|\alpha|+|\beta| \leq N-1 }}\left\|\overline{w}_{l_1}(\alpha, \beta) \partial_{\beta}^{\alpha}
(\mathbf{I}-\mathbf{P}) f^{\varepsilon}\right\|^2_{L^2_{x,v}(\nu)} \nonumber\\
&+ \sum_{\substack{|\alpha|+|\beta| \leq N-1 }}\frac{q\vartheta}{(1+t)^{1+\vartheta}}\left\|\langle v\rangle \overline{w}_{l_1}(\alpha, \beta) \partial_{\beta}^{\alpha}
(\mathbf{I}-\mathbf{P}) f^{\varepsilon}\right\|^2_{L^2_{x,v}}.
\end{align}

On the other hand, comparing the instant energy functional \eqref{without weight energy functional} with dissipation rate functional  \eqref{without weight dissipation functional}, one can find that the dissipations of electromagnetic field is coupled with $\e^2$, implying that the slow time decay rate of the one-species VMB system can not close the $\overline{w}_{l_1}(\alpha,\beta)$-weighted energy estimate. This urges us to introduce the second time-velocity weight function
\bal
\label{weight function 2}
\widetilde{w}_{l_2}(\alpha,\beta)\;&:=
\langle v \rangle^{l_2-4|\alpha|-\frac{7}{2}|\beta|}e^{\frac{q\langle v\rangle^2}{(1+t)^\vartheta}},
\eal
where    the constants $l_2, q,  \vartheta\geq 0$ will be determined later. Similarly,
sometimes we write   $\widetilde{w}_{l_2}(|\a|,|\b|)$ to denote  $\widetilde{w}_{l_2}(\a,\b)$ for simplicity.
Then  the weighted
instant energy functional ${\widetilde{\mathcal{E}}} _{N,l_2}(t)$ and the corresponding dissipation rate functional ${\widetilde{\mathcal{D}}} _{N,l_2} (t)$ are
respectively defined as
\begin{align}
{\widetilde{\mathcal{E}}} _{N,l_2}(t):= \;&
\sum_{0\leq|\a|+|\b|\leq N-1}
(1+t)^{-|\b|\frac{1+\vartheta}{2}} \left\|\widetilde{w}_{l_2}(\alpha, \beta) \partial_{\beta}^{\alpha} (\mathbf{I}-\mathbf{P})f^{\varepsilon}\right\|^2_{L^2_{x,v}}\nonumber\\
\;&+
(1+t)^{-\frac{1+\vartheta}{2}} \sum_{\substack{|\a|+|\b|= N,\\ |\a|\neq N}}
(1+t)^{-|\b|\frac{1+\vartheta}{2}} \left\|\widetilde{w}_{l_2}(\alpha, \beta) \partial_{\beta}^{\alpha} (\mathbf{I}-\mathbf{P})f^{\varepsilon}\right\|^2_{L^2_{x,v}}\nonumber\\
\;&
+(1+t)^{-\frac{1+\vartheta}{2}} \e\sum_{|\a|=N}
\left\|\widetilde{w}_{l_2}(\a, 0) \partial_x^{\a} f^{\varepsilon}\right\|^2_{L^2_{x,v}},\label{energy functional 2-N}\\
{\widetilde{\mathcal{D}}} _{N,l_2}(t):= \;&\sum_{0\leq|\a|+|\b|\leq N-1}
(1+t)^{-|\b|\frac{1+\vartheta}{2}}\frac{q\vartheta}{(1+t)^{1+\vartheta}}\left\|\langle v\rangle \widetilde{w}_{l_2}(\alpha, \beta) \partial_{\beta}^{\alpha}
(\mathbf{I}-\mathbf{P}) f^{\varepsilon}\right\|^2_{L^2_{x,v}}\nonumber\\
&+\sum_{0\leq|\a|+|\b|\leq N-1}(1+t)^{-|\b|\frac{1+\vartheta}{2}}\frac{1}{\varepsilon^{2}} \left\|\widetilde{w}_{l_2}(\alpha, \beta) \partial_{\beta}^{\alpha}
(\mathbf{I}-\mathbf{P})f^{\varepsilon}\right\|^2_{L^2_{x,v}(\nu)}\nonumber \\
\;&+(1+t)^{-\frac{1+\vartheta}{2}} \sum_{\substack{|\a|+|\b|\leq N,\\ |\a|\neq N}}
(1+t)^{-|\b|\frac{1+\vartheta}{2}}\frac{q\vartheta}{(1+t)^{1+\vartheta}}\left\|\langle v\rangle \widetilde{w}_{l_2}(\alpha, \beta) \partial_{\beta}^{\alpha}
(\mathbf{I}-\mathbf{P}) f^{\varepsilon}\right\|^2_{L^2_{x,v}}\nonumber\\
\;&+(1+t)^{-\frac{1+\vartheta}{2}} \sum_{\substack{|\a|+|\b|= N,\\ |\a|\neq N}}(1+t)^{-|\b|\frac{1+\vartheta}{2}}\frac{1}{\varepsilon^{2}} \left\|\widetilde{w}_{l_2}(\alpha, \beta) \partial_{\beta}^{\alpha}
(\mathbf{I}-\mathbf{P})f^{\varepsilon}\right\|^2_{L^2_{x,v}(\nu)} \nonumber\\
\;&+(1+t)^{-\frac{1+\vartheta}{2}} \sum_{|\a|= N}
\frac{\e q\vartheta}{(1+t)^{1+\vartheta}}\left\|\langle v\rangle \widetilde{w}_{l_2}(\alpha, 0)  \partial_x^{\alpha}
 f^{\varepsilon}\right\|^2_{L^2_{x,v}}\nonumber\\
\;&+(1+t)^{-\frac{1+\vartheta}{2}} \sum_{|\a|= N}\frac{1}{\varepsilon} \left\|\widetilde{w}_{l_2}(\alpha, 0) \partial_x^{\alpha}
(\mathbf{I}-\mathbf{P})f^{\varepsilon}\right\|^2_{L^2_{x,v}(\nu)}.\label{dissipation functional 2-N}
\end{align}

Moreover, a sufficient time decay rate is necessary to close the energy estimate. Due to the slow time decay rate of the linearized system of \eqref{rVPB},
the time decay estimate of the nonlinear problem can not be obtained by the Duhamel principle and semigroup estimate of the linearized problem. Thus,
we have to resort to the Sobolev space with negative index and introduce the following instant energy functional ${\mathcal{E}}_{-s} (t)$ and
the corresponding dissipation rate functional ${\mathcal{D}}_{-s} (t)$
\begin{align}
\label{negative sobolev energy}
{\mathcal{E}}_{-s} (t)\! :=\!\;
&   \left\| \Lambda^{-s}f^\varepsilon\right\|^2_{L^2_{x,v}}
\!+\!\left\| \Lambda^{-s}E^\varepsilon\right\|^2_{L^2_x}
+\left\| \Lambda^{-s}B^\varepsilon\right\|^2_{L^2_x},\\
\label{negative sobolev dissipation}
{\mathcal{D}}_{-s} (t)\! :=\!\;&
 \| \Lambda^{1-s} \mathbf{P}f^\varepsilon\|^2_{L^2_{x,v}}
\!\!+\!\frac{1}{\varepsilon^2}\| \Lambda^{-s} (\mathbf{I}\!-\!\mathbf{P})f^\varepsilon\|_{L^2_{x,v}(\nu)}^2
\!+\e^2 \| \Lambda^{1-s}E^{\varepsilon}\|^2_{L^2_{x}}
\!+\e^2 \| \Lambda^{2-s}\! B^{\varepsilon}\|^2_{L^2_{x}},
\end{align}
respectively, where the positive constant $s>0$ will be determined later.

Besides, we introduce the following energy functional with the lowest $k$th order space derivative $\mathcal{E}^k_{N_0}(t)$ and the corresponding dissipation rate functional with the lowest $k$-order space derivative ${\mathcal{D}}^k_{N_0} (t)$
\begin{align}
\label{low k energy}
\mathcal{E}^k_{N_0}(t)\sim\;
& \sum_{k \leq |\alpha| \leq N_0} \left\|\partial^{\alpha}_xf^{\varepsilon}\right\|^2_{L^2_{x,v}}
+\sum_{k \leq |\alpha| \leq N_0} \left\|\partial^{\alpha}_xE^{\varepsilon}\right\|^2_{L^2_{x}}
+\sum_{k \leq |\alpha| \leq N_0} \left\|\partial^{\alpha}_xB^{\varepsilon}\right\|^2_{L^2_{x}},  \\
\label{low k dissipation}
{\mathcal{D}}^k_{N_0} (t)\sim \;
& \sum_{k+1 \leq |\alpha| \leq N_0} \left \| \partial^{\alpha}_x \mathbf{P}f^\varepsilon \right \|^2_{L^2_{x,v}}
+ \frac{1}{\varepsilon^2} \sum_{k \leq |\alpha| \leq N_0} \left\|\partial^{\alpha}_x
(\mathbf{I}-\mathbf{P})f^\varepsilon\right\|^2_{_{L^2_{x,v}(\nu)}}\nonumber\\
&
+\sum_{k+1 \leq |\alpha| \leq N_0-1} \e^2\left\|\partial^{\alpha}_xE^{\varepsilon}\right\|^2_{L^2_{x}}
+\sum_{k+2 \leq |\alpha| \leq N_0-1}\e^2 \left\|\partial^{\alpha}_xB^{\varepsilon}\right\|^2_{L^2_{x}},
\end{align}
respectively, where $k=0,1$ and   the constant $N_0$ will be determined later.
\medskip

Our first main result gives the uniform global estimate with respect to $\varepsilon\in (0,1]$ of solutions to the one-species VMB system (\ref{rVPB}).

\begin{theorem}   \label{mainth1}
Let $0\leq\gamma\leq1$ and $0<\varepsilon \leq 1$. Introduce the following constants in sequence
\begin{equation}
\begin{split}\label{hard assumption}
 &N_0\geq 3\; (N_0\in \mathbb{Z}), \quad N=\left[\frac{5}{3}N_0\right],\quad 1 < s < 3/2, \quad \vartheta=\frac{s-1}{2}
  ,\\
 & l_1 \geq 4 N \quad\text{and}\; \quad l_2-l_1=\frac{N-1}{2}.
\end{split}
\end{equation}
If there exists a small constant $\delta_1>0$ independent of $\varepsilon$ such that the initial data
\begin{equation}\nonumber
\widetilde{\mathcal{E}}(0):=\mathcal{E}_{N}(0)+{\overline{\mathcal{E}}}_{N-1,l_1}(0)+ {\widetilde{\mathcal{E}}}_{N,l_2}(0)+\mathcal{E}_{-s}(0) \leq \delta_1,
\end{equation}
  then the one-species VMB system \eqref{rVPB} admits a unique global solution
$(f^{\varepsilon},E^\varepsilon,B^\varepsilon)$  satisfying
\begin{align}
&\e^s (1+t)^{\frac{s}{2}} \mathcal{E}_{N_0}^0(t)+\e^{1+s} (1+t)^{\frac{1+s}{2}} \mathcal{E}_{N_0}^1(t)
  \leq C \widetilde{\mathcal{E}}(0), \label{thm1 N l0 decay}\\[1mm]
&\mathcal{E}_{N}(t)+{\overline{\mathcal{E}}}_{N-1,l_1}(t)
+{\widetilde{\mathcal{E}}}_{N,l_2}(t)
+\mathcal{E}_{-s}(t) \leq C\widetilde{\mathcal{E}}(0)\label{thm1 widetilde N l0+l1 decay}
\end{align}
for any $t \geq 0$ and some positive constant $C>0$ independent of $\varepsilon$.
\end{theorem}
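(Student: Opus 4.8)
The plan is to establish global existence together with the uniform bounds \eqref{thm1 N l0 decay}--\eqref{thm1 widetilde N l0+l1 decay} through a continuity (bootstrap) argument fed by a carefully layered system of weighted energy and time-decay estimates. First I would invoke local-in-time existence in the energy space (obtained by a standard linearization-and-iteration scheme), and then devote the bulk of the work to closing the a priori estimates under the a priori smallness assumption that the total energy $\widetilde{\mathcal{E}}(t)$ stays below a fixed small threshold on $[0,T]$, with all constants uniform in $\varepsilon \in (0,1]$.

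The a priori estimates proceed in several interlocking layers. The pure spatial estimates come first: applying $\partial_x^\alpha$ to \eqref{rVPB}, pairing with $\partial_x^\alpha f^\varepsilon$ in $L^2_{x,v}$ and using the coercivity \eqref{spectL}, I extract the microscopic dissipation $\frac{1}{\varepsilon^2}\|\partial_x^\alpha (\mathbf{I}-\mathbf{P})f^\varepsilon\|^2_{L^2_{x,v}(\nu)}$; the macroscopic dissipation $\|\partial_x^\alpha \mathbf{P}f^\varepsilon\|^2$ is then recovered from the local conservation laws of the macro-micro decomposition via an interactive (Guo-type) energy functional, and the $\varepsilon^2$-weak dissipation of $(E^\varepsilon,B^\varepsilon)$ is read off from the Maxwell structure. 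To compensate for the weaker dissipation of $L$ for hard potentials, I next carry out the $\overline{w}_{l_1}$-weighted estimates on $(\mathbf{I}-\mathbf{P})f^\varepsilon$: differentiating the Gaussian factor $e^{q\langle v\rangle^2/(1+t)^\vartheta}$ in time produces exactly the extra dissipative term $\frac{q\vartheta}{(1+t)^{1+\vartheta}}\|\langle v\rangle\, \overline{w}_{l_1}\partial_\beta^\alpha(\mathbf{I}-\mathbf{P})f^\varepsilon\|^2$ recorded in \eqref{dissipation functional}, which absorbs the velocity growth from the Lorentz force $(v\times B^\varepsilon)\cdot\nabla_v f^\varepsilon$. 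The second weight $\widetilde{w}_{l_2}$ and the time-graded functionals \eqref{energy functional 2-N}--\eqref{dissipation functional 2-N}, with their $(1+t)^{-|\beta|(1+\vartheta)/2}$ and $\varepsilon$-graded factors, are introduced precisely to reconcile the $\varepsilon^2$-coupling of the field dissipation with the slow time decay of the one-species system.

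To obtain the decay needed to close the whole scheme, I would derive the negative-Sobolev estimates for $\mathcal{E}_{-s}$, $\mathcal{D}_{-s}$ by applying $\Lambda^{-s}$ to the system, and then interpolate between negative- and positive-index norms (Gagliardo--Nirenberg-type inequalities) to convert the uniform energy bounds into algebraic decay of the lowest-order energies $\mathcal{E}^k_{N_0}(t)$, producing the rates in \eqref{thm1 N l0 decay}. Feeding these decay rates back into the weighted hierarchies closes the loop: with the calibrated choices $\vartheta = (s-1)/2$, $l_1 \geq 4N$, $l_2 - l_1 = (N-1)/2$ and $N = [5N_0/3]$, the slowly decaying nonlinear and field-coupling contributions become integrable in time, so that summing all the differential inequalities yields a closed Lyapunov inequality for $\widetilde{\mathcal{E}}(t)$.

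The main obstacle is exactly the tension the authors emphasize: since the field dissipation appears only at order $\varepsilon^2$ while the one-species system decays slowly, a naive energy estimate will not close. Technically, the hardest step is the bookkeeping of the nonlinear terms $\frac{1}{\varepsilon}\Gamma(f^\varepsilon,f^\varepsilon)$, $(\varepsilon E^\varepsilon + v\times B^\varepsilon)\cdot\nabla_v f^\varepsilon$ and $\varepsilon\frac{v}{2}\cdot E^\varepsilon f^\varepsilon$ inside the two weighted frameworks: each term must be split along $\mathbf{P}$ and $(\mathbf{I}-\mathbf{P})$ and estimated so that every factor $\varepsilon^{-1}$ is matched by a microscopic quantity carrying $\varepsilon^{-1}$, while every slowly decaying electromagnetic factor is absorbed into the time-weighted dissipation after paying the price $(1+t)^{-(1+\vartheta)/2}$. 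Verifying that these terms are simultaneously $\varepsilon$-uniform and $t$-integrable, so that the assembled differential inequalities close, is the crux; the continuity argument then upgrades the local solution to the global one with the uniform bounds \eqref{thm1 widetilde N l0+l1 decay}.
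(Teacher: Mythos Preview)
Your overall architecture matches the paper's: local existence by iteration, a bootstrap on the composite energy, the layered $\mathcal{E}_N$, $\overline{\mathcal{E}}_{N-1,l_1}$, $\widetilde{\mathcal{E}}_{N,l_2}$, $\mathcal{E}_{-s}$ estimates, and interpolation with the negative Sobolev norm to extract the decay of $\mathcal{E}^k_{N_0}$. That is the right skeleton.

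There is, however, a concrete misstatement about the mechanism that is precisely the paper's main technical point. You say the extra dissipation $\frac{q\vartheta}{(1+t)^{1+\vartheta}}\|\langle v\rangle\,\overline{w}_{l_1}\partial_\beta^\alpha(\mathbf{I}-\mathbf{P})f^\varepsilon\|^2$ ``absorbs the velocity growth from the Lorentz force''. It does not. In the $\overline{w}_{l_1}$-estimate the Lorentz term with $|\alpha'|=1$ leaves the residual
\[
\varepsilon^2\|\nabla_x B^\varepsilon\|_{L^\infty_x}^2\,\|\langle v\rangle\,\overline{w}_{l_1}(|\alpha|-1,|\beta|+1)\nabla_v\partial_\beta^{\alpha-\alpha'}(\mathbf{I}-\mathbf{P})f^\varepsilon\|_{L^2_{x,v}}^2,
\]
and to absorb this into that extra dissipation you would need $\varepsilon^2\|\nabla_x B^\varepsilon\|_{L^\infty_x}^2\lesssim (1+t)^{-(1+\vartheta)}$, whereas only $(1+t)^{-1}$ is available in the one-species system. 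The paper closes this gap by a velocity-weight interpolation (see \eqref{Bdekunnan:D}): with $\varrho=(l_2-l_1)^{-1}$ one bounds the residual by
\[
(\varepsilon^2\|\nabla_x B^\varepsilon\|_{L^\infty_x})^{2(l_2-l_1)}\frac{1}{\varepsilon^2}\|\widetilde{w}_{l_2}(|\alpha|-1,|\beta|+1)\nabla_v\partial_\beta^{\alpha-\alpha'}(\mathbf{I}-\mathbf{P})f^\varepsilon\|_{L^2_{x,v}}^2+\eta\,\overline{\mathcal{D}}_{N-1,l_1},
\]
so that the large exponent $2(l_2-l_1)=N-1$ boosts the decay of the $B^\varepsilon$ factor enough. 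This is why the $\widetilde{w}_{l_2}$-hierarchy is needed at all, and why $l_2-l_1=\frac{N-1}{2}$ is calibrated as it is. In turn, because $\widetilde{w}_{l_2}$ differs from $\overline{w}_{l_1}$ in the $|\beta|$-exponent, the transport term now \emph{does} produce velocity growth, and that is what forces the time-graded factors $(1+t)^{-|\beta|(1+\vartheta)/2}$ in \eqref{energy functional 2-N}. Your description reverses the logic: the $\widetilde{w}_{l_2}$-functional is not a generic device to ``reconcile the $\varepsilon^2$-coupling'', it is specifically the target of this interpolation, and the time weights are the price for its transport defect. A second subtlety you skip is that the decay of $\mathcal{E}^1_{N_0}$ cannot be obtained directly: the Lorentz term leaves $\|\nabla_x B^\varepsilon\|_{H^{N_0-1}_x}\mathcal{D}^0_{N_0}(t)$ on the right of \eqref{1-N estimate}, so one must first close the decay of $\mathcal{E}^0_{N_0}$ and then feed it into the $\mathcal{E}^1_{N_0}$ inequality via the time-integrated dissipation bound \eqref{decay:estimate:guocheng:2}.
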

\medskip

The second main result is on the hydrodynamic limit from the one-species VMB system \eqref{rVPB}
to the   incompressible NSFM system.
\begin{theorem} \label{mainth3}
Let $(f^\varepsilon, E^\varepsilon,B^\varepsilon)$  be the
global solutions to the one-species VMB system \eqref{rVPB} constructed in Theorem
\ref{mainth1}  with initial data $f_0^\e={f}_0^\e(x,v), E_0^\e=E_0^\e(x),B_0^\e=B_0^\e(x)$. Suppose that there exist scalar functions $(\rho_0, \theta_0)=(\rho_0(x),  \theta_0(x))$ and vector-valued functions $(u_0, E_0, B_0)=(u_0(x),E_0(x), B_0(x))$ such that
\begin{align}
\begin{split}\label{theorem1.3 1}
&{f}_0^\varepsilon \to {f}_0\;\text{ strongly in } H^{N}_x L^2_v,
\;\; E_0^\varepsilon \to E_0 \;\text{ and }\; B_0^\varepsilon \to B_0 \; \text{ strongly in } H^{N}_x
\end{split}
\end{align}
as $\varepsilon \to 0$ and ${f}_0={f}_0(x, v)$ is of the form
\begin{align}\label{theorem1.3 2}
\begin{split}
\!\!\!f_0=\;&\Big(\rho_0+u_0\cdot v+\theta_0\frac{|v|^2-3}{2}\Big) \mu^{1/2}.
\end{split}
\end{align}

Then there hold
\begin{equation}\label{theorem1.3 4}
\begin{split}
&{f}^\varepsilon \to {f}
\; \text{ weakly}\!-\!* \text{ in }  L^\infty(\mathbb{R}^+; H^N_x L^2_v)
 \text{ and strongly in } C(\mathbb{R}^+; H^{N-1}_x L^2_v), \\
&E^\varepsilon \to E \;
\text{ weakly}\!-\!*  \text{ in }   L^\infty(\mathbb{R}^+;H^{N}_{x})
\text{ and strongly in } C(\mathbb{R}^+; H^{N-1}_x),\\
&B^\varepsilon \to B \;
\text{ weakly}\!-\!*  \text{ in }   L^\infty(\mathbb{R}^+;H^{N}_{x})
\text{ and strongly in } C(\mathbb{R}^+; H^{N-1}_x)
\end{split}
\end{equation}
as $\varepsilon \to 0$ and $f={f}(t, x, v)$ has the form
\begin{align}
\begin{split}\label{theorem1.3 5}
f=\;&\Big(\rho+u\cdot v+\theta\frac{|v|^2-3}{2}\Big){\mu}^{1/2}.
\end{split}
\end{align}
Moreover, the above mass density $\rho=\rho(t,x)$, bulk velocity $u=u(t,x)$, temperature $\theta=\theta(t,x)$,  electric field $E=E(t,x)$ and magnetic field  $B=B(t,x)$ satisfy
 \begin{align}
\begin{split}\label{jixian solution space}
&\left(\rho, u, \theta, E, B\right) \in  C(\mathbb{R}^+; H^{N}_{x})
\end{split}
\end{align}
 and the following  incompressible NSFM system
\begin{equation}\label{INSFP limit}
\left\{
\begin{array}{ll}
\displaystyle \partial_{t} u+u \cdot \nabla_{x} u-\nu \Delta_{x} u+\nabla_{x}P=E+u\times B,  &\nabla_{x} \cdot u=0,\\[2mm]
\displaystyle \partial_{t} \theta+u \cdot \nabla_{x} \theta-\kappa \Delta_{x} \theta=0, &\rho+\theta=0,\\[2mm]
\displaystyle \partial_{t} E-\nabla_{x}\times B=-u, &\nabla_{x} \cdot E=0,\\[2mm]
\displaystyle \partial_{t} B+\nabla_{x}\times E=0, &\nabla_{x} \cdot B=0,\\[2mm]
\displaystyle u(0)=\mathcal{P}u_0,\; \theta (0)=\frac{3}{5}\theta_0-\frac{2}{5}\rho_0,\;  E(0)=E_0,B(0)=B_0,&
\end{array} \right.
\end{equation}
where $\mathcal{P}$ is the Leray projection, and the viscosity coefficient $\nu$ and the heat conductivity coefficient $\kappa$ are defined in \eqref{nu define}
and \eqref{kappa define}, respectively.
\end{theorem}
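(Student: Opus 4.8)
The plan is to run the classical weak-compactness program for hydrodynamic limits, taking the uniform-in-$\varepsilon$ bounds of Theorem \ref{mainth1} as the only input. First I would use the energy bound \eqref{thm1 widetilde N l0+l1 decay}, which gives $\mathcal{E}_N(t)\le C\widetilde{\mathcal{E}}(0)$ uniformly in $(\varepsilon,t)$, to extract (up to a subsequence) weak-$*$ limits $f^\varepsilon\rightharpoonup f$ in $L^\infty(\mathbb{R}^+;H^N_xL^2_v)$ and $E^\varepsilon\rightharpoonup E$, $B^\varepsilon\rightharpoonup B$ in $L^\infty(\mathbb{R}^+;H^N_x)$. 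The same bound controls $\tfrac{1}{\varepsilon^2}\|(\mathbf{I}-\mathbf{P})f^\varepsilon\|^2_{L^2_{x,v}(\nu)}$, so $(\mathbf{I}-\mathbf{P})f^\varepsilon=O(\varepsilon)\to0$ strongly in $L^2$; hence the limit lies in $\mathrm{N}(L)$ and has the form \eqref{theorem1.3 5}, with $\rho,u,\theta$ the weak limits of the macroscopic components $a^\varepsilon,b^\varepsilon,c^\varepsilon$ of $\mathbf{P}f^\varepsilon$ from \eqref{Pf define}.

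Next I would upgrade weak to strong convergence, which is indispensable for the nonlinear terms. For the slow (incompressible, Boussinesq) part of $\mathbf{P}f^\varepsilon$ and for the Maxwell field, the plan is an Aubin--Lions argument: the local conservation laws satisfied by $(a^\varepsilon,b^\varepsilon,c^\varepsilon)$ together with the Maxwell system in \eqref{rVPB} give uniform bounds on the time derivatives of the filtered quantities $\mathcal{P}b^\varepsilon$, $\tfrac{3}{5}c^\varepsilon-\tfrac{2}{5}a^\varepsilon$, $E^\varepsilon$, $B^\varepsilon$ in a negative-order space, and combined with the uniform spatial regularity and the negative-Sobolev control from $\mathcal{E}_{-s}$ this yields strong convergence in $C(\mathbb{R}^+;H^{N-1}_x)$; the micro part needs no compactness since it is already $O(\varepsilon)$. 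This gives \eqref{theorem1.3 4} and the time-regularity \eqref{jixian solution space}, the Leray and Boussinesq projections in the limiting initial data of \eqref{INSFP limit} reflecting exactly the filtering of the fast acoustic modes carried by $a^\varepsilon$ and the compressible part of $b^\varepsilon$.

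The core is to identify the limit system by passing to the limit in the velocity moments of \eqref{rVPB}. Testing against $\mu^{1/2}$, $v\mu^{1/2}$ and $\tfrac{|v|^2-3}{2}\mu^{1/2}$ gives the local conservation laws, whose leading singular ($1/\varepsilon$) balance forces incompressibility $\nabla_x\cdot u=0$ and the Boussinesq relation $\rho+\theta=0$, while the constraint $\nabla_x\cdot E^\varepsilon=\varepsilon\int\mu^{1/2}f^\varepsilon\,\d v\to0$ gives $\nabla_x\cdot E=0$. For the dynamics I would use the leading-order microscopic balance from \eqref{rVPB}, namely $\tfrac{1}{\varepsilon}(\mathbf{I}-\mathbf{P})f^\varepsilon\to -L^{-1}\big[(\mathbf{I}-\mathbf{P})(v\cdot\nabla_x\mathbf{P}f)\big]$; inserting this into the second-order moments $\tfrac{1}{\varepsilon}\int v\otimes v\,\mu^{1/2}(\mathbf{I}-\mathbf{P})f^\varepsilon\,\d v$ and $\tfrac{1}{\varepsilon}\int\tfrac{|v|^2-5}{2}v\,\mu^{1/2}(\mathbf{I}-\mathbf{P})f^\varepsilon\,\d v$ produces the viscous and heat-conduction terms with the Burnett-function coefficients $\nu$ and $\kappa$ of \eqref{nu define}--\eqref{kappa define}. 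Applying the Leray projection $\mathcal{P}$ to the momentum equation removes the pressure gradient; the source $-E^\varepsilon\cdot v\mu^{1/2}$ yields the forcing $E$ after integration, while an integration by parts in $v$ of $(v\times B^\varepsilon)\cdot\nabla_vf^\varepsilon$, using $\nabla_v\cdot(v\times B^\varepsilon)=0$ and $(v\times B^\varepsilon)\cdot v=0$, produces the Lorentz force $u\times B$. The Maxwell block passes to the limit directly, giving the last two lines of \eqref{INSFP limit}.

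The hardest step is passing to the limit in the nonlinear and singular quantities simultaneously. The convective terms $u\cdot\nabla_xu$ and $u\cdot\nabla_x\theta$ and the Lorentz force $u\times B$ are only meaningful once the strong convergence of the second step is in hand, and the viscous/conductive limit requires the precise identification of $\tfrac{1}{\varepsilon}(\mathbf{I}-\mathbf{P})f^\varepsilon$ rather than merely its weak limit. The genuine obstacle, sharper here than in the two-species case because of the slow time decay, is to show that the fast oscillations generated by the singular streaming $\tfrac{1}{\varepsilon}v\cdot\nabla_x$ --- coupled to the electric field through the Poisson constraint $\nabla_x\cdot E^\varepsilon=\varepsilon\int\mu^{1/2}f^\varepsilon\,\d v$ --- do not contribute in the weak limit of the quadratic terms; this is where the uniform weighted bounds and the decay rate \eqref{thm1 N l0 decay} of Theorem \ref{mainth1} are essential.
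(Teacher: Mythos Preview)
Your outline matches the paper's approach: weak-$*$ compactness from $\sup_t\mathcal{E}_N(t)\le C$, strong vanishing of $(\mathbf{I}-\mathbf{P})f^\varepsilon$ from the integrated dissipation, Aubin--Lions--Simon on the filtered slow quantities $\mathcal{P}u^\varepsilon$ and $\tfrac{3}{5}\theta^\varepsilon-\tfrac{2}{5}\rho^\varepsilon$ together with $(E^\varepsilon,B^\varepsilon)$, and the Burnett-function moment calculus (via $\widehat A,\widehat B$) to identify \eqref{INSFP limit}. That is exactly what the paper does in Section~4.

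Two corrections are worth making. First, the role you assign in your last paragraph to the decay estimate \eqref{thm1 N l0 decay} and the weighted bounds is misplaced. The limit argument uses \emph{only} the non-decaying bound \eqref{thm1 widetilde N l0+l1 decay}, namely $\sup_t\mathcal{E}_N(t)\le C$ and $\int_0^\infty\|(\mathbf{I}-\mathbf{P})f^\varepsilon\|^2_{H^N_xL^2_v(\nu)}\,\d t\le C\varepsilon^2$; the time-decay rates and the two weight functions are what is needed to \emph{close} the a priori estimate in Theorem~\ref{mainth1}, not to pass to the limit. The fast acoustic oscillations are neutralised by structure, not decay: the equations for $\mathcal{P}u^\varepsilon$ and $\tfrac{3}{5}\theta^\varepsilon-\tfrac{2}{5}\rho^\varepsilon$ carry no $1/\varepsilon$ singularity (it is removed by the Leray projection and the Boussinesq combination), so their time derivatives are bounded in $L^2(0,T;H^{N-1}_x)$ and Aubin--Lions applies directly. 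The quadratic remainders involving $\mathcal{P}^\perp u^\varepsilon$ or $\rho^\varepsilon+\theta^\varepsilon$ then vanish by strong-times-weak convergence or are full gradients killed by $\mathcal{P}$. Likewise, $\mathcal{E}_{-s}$ plays no role in Section~4.

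Second, Aubin--Lions--Simon yields only $C(\mathbb{R}^+;H^{N-1}_{loc})$, since the compact embedding $H^N\hookrightarrow H^{N-1}$ needs a bounded spatial domain. The paper upgrades this to the global $C(\mathbb{R}^+;H^{N-1}_x)$ convergence of \eqref{theorem1.3 4} by exhausting $\mathbb{R}^3$ with bounded regions and invoking the uniform bounds, citing \cite{CK2006}; you should flag that step.
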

\medskip

\begin{remark}
The results given in Theorems  \ref{mainth1} and  \ref{mainth3} indicate
 that
\bals
  \;&\sup_{0\leq t\leq \infty} \Big\{\Big\|F^\e-\mu-\e \Big(\rho+u\cdot v+\th\frac{|v|^2-3}{2}\Big){\mu}\Big\|_{H^{N}_{x}L^{2}_{v}}
  \Big\}=o(\e),\\
   \;&\sup_{0\leq t\leq \infty} \Big\{\big\|E^\varepsilon-E \big\|_{H^{N-1}_{x}}+\big\|B^\varepsilon-B \big\|_{H^{N-1}_{x}}\Big\}=o(1),
\eals
 which eventually justifies that the incompressible NSFM system \eqref{INSFP limit}, as the first order
 approximation, is the hydrodynamic limit of the one-species VMB system \eqref{rVPB}.
 \end{remark}
\medskip


\begin{remark}\label{remark-1.7}
It is worth pointing out the mechanism to introduce two different weight functions $\overline{w}_{l_1}(\a, \b)$  and $\widetilde{w}_{l_2}(\a, \b)$ in \eqref{weight function} and \eqref{weight function 2}. In fact,
the first weight function $\overline{w}_{l_1}(\a, \b)$ is used to overcome the velocity growth in the Lorentz force term, and is friendly to the
transport term $\frac{1}{\e}v \cdot \nabla_x f^\e$ in the sense that it does not involve any growth in velocity.
However, in this process the Lorentz force term brings a trouble term
 \bal\label{KUNNAN-1}
 \;&\sum_{|\a'|=1}\Big\langle
 -v\times \partial_x^{\a'} B^\e \cdot\nabla_v\partial_{\b}^{\a-\a'}(\mathbf{I}\!-\!\mathbf{P}) f^\e, \; \overline{w}^{2}_{l_1}(\a,\b)\partial_{\b}^\a(\mathbf{I}\!-\!\mathbf{P})f^\e
 \Big
\rangle_{L^2_{x,v}},
\eal
which  can not be controlled by the extra dissipation in \eqref{dissipation functional} brought by the first weight, because
 the  critical term  $\e^2\|\nabla_x B^\e \|_{L^\infty_x}^2$ can  only  reach  a time decay rate $(1+t)^{-1}$, less than the required decay rate $(1+t)^{-(1+\vartheta)}$, cf. \eqref{thm1 N l0 decay}.
 Thus, we have to design the second weight function $\widetilde{w}_{l_2}(\a, \b)$, which contributes another microscopic dissipation
 in \eqref{dissipation functional 2-N} and control the Lorentz force term \eqref{KUNNAN-1} under the help of the interpolation inequality, cf.
 Section \ref{The a Priori Estimate} for more details.

Note that this usage of two different weight functions is fundamentally different from \cite{DLYZ2017,JL2023ARXIV},
wherein the time decay rate is fast enough and only velocity growth need such treatment.

\end{remark}
\medskip

\begin{remark}\label{remark-1.5}
It is noteworthy that the more challenging diffusive limit of the one-species VMB system with cutoff soft potentials still remains unsolved,
although diffusive limit of the two-species VMB system with cutoff soft potentials $-1\leq \gamma<0$ has been justified by
\cite{JL2023ARXIV}. In fact, the approach, which we employ in this paper by applying two interactional weighted energy estimates, is not applicable to the one-species VMB system with cutoff soft potentials.
To illustrate this, let us define the second weight function
$
{w}_{l}(\alpha,\beta):=
\langle v \rangle^{l-m|\alpha|-(m- \frac{\gamma}{2}-1)|\beta|}e^{\frac{q\langle v\rangle^2}{(1+t)^\vartheta}}
$
and take the $L^2_{x,v}$ estimate of ${w}_{l}(\a,\b)\partial_{\b}^\a(\mathbf{I}-\mathbf{P})f^\e$. In this process,
the transport term and   the Lorentz force term will bring two trouble terms
\bal\label{KUNNAN-2}
\bsp
 \|\langle v \rangle {w}_{l}(\a+e_i,\b-e_i)\partial_{\b-e_i}^{\a+e_i} (\mathbf{I}-\mathbf{P}) f^\e\|_{L^2_{x,v}}^2
\esp
\eal
and
\bal\label{KUNNAN-3}
\;&\sum_{|\a'|=1}\e^2\|\nabla_x B^\e \|_{L^\infty_x}^2 \|\langle v\rangle^{-\gamma} w_{l}(|\a|\!-\!1,|\b|\! +\!1) \nabla_v\partial_{\b}^{\a-\a'\!}(\mathbf{I}\!-\!\mathbf{P}) f^\e \|_{L^2_{x,v}}^2,
\eal
respectively. On the one hand,  to guarantee that \eqref{KUNNAN-2} can be absorbed by the dissipation generated from
the $L^2_{x,v}$ estimate of $ {w}_{l}(\a+e_i,\b-e_i)\partial_{\b-e_i}^{\a+e_i}(\mathbf{I}-\mathbf{P})f^\e$, we have to design the iteration with   time decay factor $(1+t)^{-(1+\vartheta)}$  for one more velocity derivative.
On  the other hand,
 the time decay rate of $\|\nabla_xB^\e\|_{L^\infty_{x}}^2$ in the one-species VMB system \eqref{rVPB} can  only   reach
$\e^{-{\frac{2s+5}{2}}} (1+t)^{-\frac{2s+5}{4}},$
which is obviously much slower  than the two-species VMB system \cite{JL2023ARXIV} and has a singularity $\e^{-{\frac{2s+5}{2}}}$.  Due to
this weak time decay rate,   the term \eqref{KUNNAN-3} is out of control.  Consequently, diffusive limit of the one-species VMB system  \eqref{rVPB} with cutoff soft potentials still remains unsolved.
\end{remark}
\medskip

\subsection{Difficulties  and Innovations}
\hspace*{\fill}

In this subsection, we outline the difficulties and innovations proposed in this paper.

Our analysis is based on the uniform weighted energy estimates with respect to $\varepsilon\in (0,1]$ globally in time, which eventually leads to the incompressible NSFM limit.
To deal with the one-species VMB system \eqref{rVPB} for the hard potentials $0\leq\gamma \leq1$, we are faced with considerable difficulties induced by the weak dissipation
of the linearized Boltzmann operator and the slow time decay rate of the electromagnetic field.
In what follows, we point out the critical technical points in our treatment.
\subsubsection{Velocity weighted $H_{x,v}^N$ energy estimate}
\hspace*{\fill}

 Our velocity weighted $H_{x,v}^N$  energy estimate contains some new ingredients.
 The starting point is the following fundamental $H^N_{x}L^2_{v}$ energy estimate for the VMB system  \eqref{rVPB}
\begin{align}
\begin{split}\label{basic energy estimate-D}
 \;&\frac{\d}{\d t}\mathcal{E}_{N}(t)+\mathcal{D}_{N}(t)\\
\lesssim
 \;&
  \underbrace{\mathcal{E}_N(t)
\left(
\|\langle v\rangle\nabla_v(\mathbf{I}-\mathbf{P}) f^\varepsilon\|_{H^{N-2}_{x}L^2_v}^2
+
\|\langle v\rangle   (\mathbf{I}-\mathbf{P})f^\e\|_{H^{N-1}_{x}L^2_{v}}^2\right)}_{:=D_1}\\
  \;&+\!\underbrace{\e^3\|\nabla_x [E^{\e},\! B^{\e}]\|_{H^2_{x}}^2\Big(\e\|\langle v\rangle \nabla_x^N\!(\mathbf{I}\!-\mathbf{P})f^\varepsilon\|_{L^2_{x,v}}^2
 \!+\!\frac{1}{\e}\|\langle v\rangle \nabla_v \nabla_x^{N-1}\!(\mathbf{I}\!-\!\mathbf{P})f^\varepsilon
 \|_{L^2_{x,v}}^2\Big)}_{:=D_2}+\cdots.
\end{split}
\end{align}
The velocity growth of $D_1$ and $D_2$ in \eqref{basic energy estimate-D} urges us to design appropriate weighted energy estimate.

 On the one hand, to handle the velocity growth of $D_1$ in  \eqref{basic energy estimate-D}, we design a new time-velocity weight function $\overline{w}_{l_1}(\alpha,\beta)$ defined in \eqref{weight function} and consider the corresponding  weighted energy ${\overline{\mathcal{E}}}_{N-1,l_1} (t)$ defined in  \eqref{energy functional}.
 The advantage of this weight $\overline{w}_{l_1}(\alpha,\beta)$ is that the transport term does
not involve any growth in velocity, namely,
\bals
\bsp
\;&\Big\langle- \frac{1}{\e} \partial_{\b}^{\a}\left [v\cdot\nabla_x (\mathbf{I}-\mathbf{P}) f^\e\right] , w^{2}_{l_1}(\a,\b)\partial_{\b}^\a(\mathbf{I}-\mathbf{P})f^\e
\Big\rangle_{L_{x,v}^2}\\
=\;&\Big\langle- \frac{1}{\e} \partial_{\b- e_i}^{\a+ e_i}    (\mathbf{I}-\mathbf{P}) f^\e, w^{2}_{l_1}(\a,\b)\partial_{\b}^\a(\mathbf{I}-\mathbf{P})f^\e
\Big\rangle_{L_{x,v}^2}\\
\lesssim \;& \frac{1}{\e}\|\overline{w}_{l_1}(\a,\b)\partial_{\b}^\a(\mathbf{I}-\mathbf{P})f^\e\|_{L^2_{x,v}}
\|\overline{w}_{l_1}(\a+e_i,\b-e_i)\partial_{\b-e_i}^{\a+e_i} (\mathbf{I}-\mathbf{P}) f^\e\|_{L^2_{x,v}}
,
\esp
\eals
where we have used the notation $e_i$ to denote the multi-index with the $i$-th element unit and the rest zero, as well as the fact
$
\overline{w}_{l_1}(\alpha,\beta)=\overline{w}_{l_1}(\alpha+e_i,\beta-e_i).
$
 The usage of the weight $\overline{w}_{l_1}(\alpha,\beta)$  will generate an extra dissipation term
 \eqref{dissipation functional}, but this requires a certain time decay rate of $E^\e$ and  $B^\e$. However,   when performing
 the corresponding weighted energy estimate for Lorentz force terms, we find that for $|\a'|=1$,
 \bals
 \;&\Big\langle
 -v\times \partial_x^{\a'} B^\e \cdot\nabla_v\partial_{\b}^{\a-\a'}(\mathbf{I}\!-\!\mathbf{P}) f^\e, w^{2}_{l_1}(\a,\b)\partial_{\b}^\a(\mathbf{I}\!-\!\mathbf{P})f^\e
 \Big
\rangle_{L^2_{x,v}}\\
\lesssim \;&
\underbrace{\e^2\|\nabla_x B^\e \|_{L^\infty_x}^2 \|\langle v\rangle \overline{w}_{l_1}(|\a|\!-\!1,|\b|\! +\!1) \nabla_v\partial_{\b}^{\a-\a'\!}(\mathbf{I}\!-\!\mathbf{P}) f^\e \|_{L^2_{x,v}}^2}_{:=D_3}+\frac{\eta}{\e^2}\| \overline{w}_{l_1}(\a,\b)\partial_{\b}^{\a}(\mathbf{I}\!-\!\mathbf{P}) f^\e \|_{L^2_{x,v}}^2.
\eals
Here $D_3$ can not be controlled by the extra dissipation given in \eqref{dissipation functional}, in that the time decay rate of $\e^2\|\nabla_x B^\e \|_{L^\infty_x}^2$ can  only  reach  $(1+t)^{-1}$, less than the required decay rate $(1+t)^{-(1+\vartheta)}$, cf. \eqref{X define}. To overcome this difficulty,
our treatment is to utilize  the   interpolation method and  the Cauchy--Schwarz inequality with small $\eta$,
\bal
\bsp\label{Bdekunnan:D}
D_3
\lesssim\;&
\Big[\left(\e^{2-\varrho}\|\nabla_x B^\e \|_{L^\infty_x}\right)^{\frac{2}{\varrho}}\|\langle v\rangle^{\widetilde{\ell}} \overline{w}_{l_1}(|\a|-1,|\b| +1)\nabla_v\partial_{\b}^{\a-\a'}(\mathbf{I}-\mathbf{P}) f^\e \|_{L^2_{x,v}}^2\Big]^{\varrho}\\
\;&\times
\Big[\frac{1}{\e^2}\| \overline{w}_{l_1}(|\a|-1,|\b| +1) \nabla_v\partial_{\b}^{\a-\a'}(\mathbf{I}-\mathbf{P}) f^\e \|_{L^2_{x,v}}^2\Big]^{1-\varrho}\\
\lesssim\;& \left(\e^{2-\varrho}\|\nabla_x B^\e \|_{L^\infty_x}\right)^{\frac{2}{\varrho}}
\|\underbrace{ \langle v\rangle^{\widetilde{\ell}} \overline{w}_{l_1}(|\a|-1,|\b| +1) } \nabla_v\partial_{\b}^{\a-\a'}(\mathbf{I}-\mathbf{P}) f^\e \|_{L^2_{x,v}}^2\\
\;&+
\frac{\eta}{\e^2}\| \overline{w}_{l_1}(|\a|-1,|\b| +1) \nabla_v\partial_{\b}^{\a-\a'}(\mathbf{I}-\mathbf{P}) f^\e \|_{L^2_{x,v}}^2
\\
\lesssim\;& \left(\e^{2 }\|\nabla_x B^\e \|_{L^\infty_x}\right)^{2\widetilde{\ell}}
\frac{1}{\e^2}\|\overline{w}_{l_2}(|\a|-1,|\b| +1) \nabla_v\partial_{\b}^{\a-\a'}(\mathbf{I}-\mathbf{P}) f^\e \|_{L^2_{x,v}}^2+\cdots,
\esp
\eal
where $|\a'|=1$, $l_2=\widetilde{\ell}+l_1$ and $\varrho={\widetilde{\ell}}^{-1}$.
This indicates that once the  magnetic field $\e^{2 }\|\nabla_x B^\e \|_{L^\infty_x}$
has some decay, the total decay of $(\e^{2 }\|\nabla_x B^\e \|_{L^\infty_x})^{2\widetilde{\ell}}$ will be large enough
as the weight index $\widetilde{\ell}$ increases.
Therefore,  motivated by \cite{DLYZ2017, JL2023ARXIV}, we  design the second  weight  $\widetilde{w}_{l_2}(\a,\b)$ defined in \eqref{weight function 2} to control
the underbraced term in \eqref{Bdekunnan:D}.

The advantage of the weight $\widetilde{w}_{l_2}(\alpha, \beta)$ is that it can balance the singularity and velocity growth
in the weighted energy estimate. More precisely, for the $L^2_{x,v}$-estimate of
$
  \widetilde{w}_{l_2}(\alpha, \beta) \partial_{\beta}^{\alpha} (\mathbf{I}-\mathbf{P})f^{\varepsilon}
$ with $|\a|+|\b|=n\leq N-1$, the most trouble term induced by the Lorentz field can be controlled by
 \bal
 \bsp\label{weightfunction2D_1}
 \;&\sum_{|\a'|=1}\Big\langle
 -v\times \partial_x^{\a'} B^\e \cdot\nabla_v\partial_{\b}^{\a-\a'}(\mathbf{I}-\mathbf{P}) f^\e, \widetilde{w}^{2}_{l_2}(\a,\b)\partial_{\b}^\a(\mathbf{I}-\mathbf{P})f^\e
 \Big
\rangle_{L^2_{x,v}}\\
\lesssim \;&
\e^2\|\nabla_x B^\e \|_{L^\infty_x}^2\sum_{|\a'|=1} \|\langle v\rangle^{\frac{1}{2}} \widetilde{w}_{l_2}(|\a|-1,|\b| +1)\nabla_v\partial_{\b}^{\a-\a'}(\mathbf{I}-\mathbf{P}) f^\e \|_{L^2_{x,v}}^2
\\\;&+\frac{\eta}{\e^2}\|  \widetilde{w}_{l_2}(\a,\b)\partial_{\b}^{\a}(\mathbf{I}-\mathbf{P}) f^\e \|_{L^2_{x,v}}^2\\
\lesssim \;&
\e^3\|\nabla_x B^\e \|_{L^\infty_x}^2{(1+t)^{\frac{1+\vartheta}{2}}} {(1+t)^{-\frac{1+\vartheta}{2}}}\!\! \sum_{|\a'|=1}\!\!\|\langle v\rangle \widetilde{w}_{l_2} (|\a|\!-\!1,|\b|\! +\!1) \nabla_v\partial_{\b}^{\a\!-\!\a'}\!(\mathbf{I}\!-\!\mathbf{P}) f^\e \|_{L^2_{x,v}}\\
\;&\times \frac{1}{\e}  \| \widetilde{w}_{l_2}(|\a|-1,|\b| +1) \nabla_v\partial_{\b}^{\a-\a'}(\mathbf{I}-\mathbf{P}) f^\e \|_{L^2_{x,v}}+\cdots,
\esp
 \eal
where we have used the fact
$
  \langle v\rangle  \widetilde{w}_{l_2}(\a,\b)=  \langle v\rangle^{\frac{1}{2}}\widetilde{w}_{l_2}(\a-e_i, \b+e_i).
$
However, the shortcoming of this weight $\widetilde{w}_{l_2}(\alpha, \beta)$ lies in that the transport term involves velocity growth, that is,
\bal
\bsp\label{weightfunction2D_2}
\;&\Big\langle- \frac{1}{\e} \partial_{\b}^{\a}\left [v\cdot\nabla_x (\mathbf{I}-\mathbf{P}) f^\e\right] , \widetilde{w}^{2}_{l_2}(\a,\b)\partial_{\b}^\a(\mathbf{I}-\mathbf{P})f^\e
\Big\rangle_{L_{x,v}^2}\\
\lesssim \;& C_{\eta}\|\langle v \rangle^{\frac{1}{2}}\widetilde{w}_{l_2}(\a+e_i,\b-e_i)\partial_{\b-e_i}^{\a+e_i} (\mathbf{I}-\mathbf{P}) f^\e\|_{L^2_{x,v}}^2
+\frac{\eta}{\e^2}
\|\widetilde{w}_{l_2}(\a,\b)\partial_{\b}^\a(\mathbf{I}-\mathbf{P})f^\e\|_{L^2_{x,v}}^2
.
\esp
\eal
 In order to    absorb  the first term on the right-hand side of \eqref{weightfunction2D_2} by the microscopic dissipation induced by
  the $L^2_{x,v}$-estimate of $ \widetilde{w}_{l_2}(\a+e_i,\b-e_i)\partial_{\b-e_i}^{\a+e_i} (\mathbf{I}-\mathbf{P}) f^\e$, it is essential for us   to arrange  different time increments for the various space-velocity derivatives   of  $f^\e$  as
\bals
\sum_{|\a|+|\b|\leq N-1} (1+t)^{-|\b|\frac{1+\vartheta}{2}}\left\|\widetilde{w}_{l_2}(\alpha, \beta) \partial_{\beta}^{\alpha} (\mathbf{I}-\mathbf{P})f^{\varepsilon}\right\|^2_{L^2_{x,v}}.
\eals
Consequently, with this arrangement, the  first term on the right-hand side of \eqref{weightfunction2D_1}  can be controlled, provided that $$\e^3\|\nabla_x B^\e \|_{L^\infty_x}^2{(1+t)^{1+\vartheta}}\lesssim C,$$
cf. the proof of Proposition \ref{weighted 2:diyuN} for more details.

On the other hand,    the trouble term $D_2$  in  \eqref{basic energy estimate-D}  is associated with the  $N$-th order space-velocity derivative   of $(\mathbf{I}-\mathbf{P})f^\e$. Observing that  $D_2$ includes the factor  $\e^3\|\nabla_x [E^{\e},\! B^{\e}]\|_{H^2_{x}}^2$, which possess the time decay rate   $(1+t)^{-(1+\vartheta)}$,  we therefore attempt to  consider  weighted energy estimate of  $\widetilde{w}_{l_2}(\alpha, \beta) \partial_{\beta}^{\alpha} (\mathbf{I}-\mathbf{P})f^{\varepsilon}$ with $N$-th order spatial-velocity derivatives $|\alpha|+ |\beta|=N.$
However,
the usage of the weight function $\widetilde{w}_{l_2}(\alpha, 0)$ also generates a severe singularity when we handle the $N$-th order space derivative on the linear Boltzmann operator $L$, that is
\begin{align*}
\frac{1}{\varepsilon^2} \left\langle L \partial^\alpha_x f^\varepsilon,
\widetilde{w}^2_{l_2}(\alpha,0)\partial^\alpha_x f^\varepsilon\right\rangle_{L^2_{x,v}}
\geq\;&\frac{\sigma_0}{\varepsilon^2} \left\| \widetilde{w}_{l_2}(\alpha,0) \partial^\alpha_x f^\varepsilon\right\|^2_{L^2_{x,v}(\nu)}
-\frac{1}{\varepsilon^2}\left\|\partial^\alpha_x f^\varepsilon\right\|^2_{L^2_{x,v}},
\quad |\alpha|=N,
\end{align*}
where the last term includes the singular macroscopic quantity $\dfrac{1}{\varepsilon^2}\left\|\partial^\alpha_x \mathbf{P} f^{\varepsilon}\right\|_{L^2_{x,v}}^2$ and is out of control.
We solve this difficulty by first multiplying the first  equation in  \eqref{rVPB} with $\varepsilon$ and then making $\widetilde{w}_{l_2}^2(\alpha,0)$-weighted energy estimate $(|\alpha|=N)$, namely
\begin{align*}
&\frac{\varepsilon}{2}\frac{\d}{\d t}  \left\|  \widetilde{w}_{l_2}(\alpha,0) \partial^\alpha _x f ^\varepsilon\right\|_{L^2_{x,v}}^2 + \frac{1}{\varepsilon}\left\langle L \partial^\alpha_x f^{\varepsilon}, \widetilde{w}_{l_2}^2(\alpha,0) \partial^\alpha_x f^{\varepsilon}\right\rangle_{L^2_{x,v}}\\
=\;&\frac{\varepsilon}{2}\frac{\d}{\d t}  \left\|  \widetilde{w}_{l_2}(\alpha,0) \partial^\alpha _xf ^\varepsilon\right\|_{L^2_{x,v}}^2
+\frac{1}{\varepsilon}\left\langle L \partial^\alpha_x (\mathbf{I}-\mathbf{P})f^{\varepsilon}, \widetilde{w}_{l_2}^2(\alpha,0) \partial^\alpha_x (\mathbf{I}-\mathbf{P})f^{\varepsilon}\right\rangle_{L^2_{x,v}}\\
&+\frac{1}{\varepsilon}\left\langle L \partial^\alpha (\mathbf{I}-\mathbf{P})f^{\varepsilon}, \widetilde{w}_{l_2}^2(\alpha,0) \partial^\alpha _x \mathbf{P}f^{\varepsilon}\right\rangle_{L^2_{x,v}}\\
\gtrsim\;&\frac{\varepsilon}{2}\frac{\d}{\d t} \left\|  \widetilde{w}_{l_2}(\alpha,0) \partial^\alpha_x f ^\varepsilon\right\|_{L^2_{x,v}}^2
+ \frac{\sigma_0}{\varepsilon} \left\| \widetilde{w}_{l_2}(\alpha,0) \partial^\alpha_x (\mathbf{I}-\mathbf{P})f^\varepsilon\right\|_{L^2_{x,v}(\nu)}^2
-\frac{C}{\varepsilon^2}\left\|\partial^\alpha_x (\mathbf{I}-\mathbf{P})f^\varepsilon\right\|^2_{L^2_{x,v}(\nu)}\\
&
-\left\|\partial^\alpha_x \mathbf{P} f^{\varepsilon}\right\|_{L^2_{x,v} }^2,\qquad\qquad\qquad\qquad\qquad \qquad\qquad\qquad\qquad\qquad\qquad \qquad
\end{align*}
then the last two terms on the right-hand side can be controlled by the dissipation $\mathcal{D}_{N}(t)$.

Meanwhile, due to the loss of the highest order   derivatives of electric field $\e\|\nabla^N_x E^\e\|_{L^2_x}$ in the dissipation \eqref{without weight dissipation functional}, observing that
\bals
\sum_{|\a|=N}\Big\langle  \partial_x^\a (E^\e\cdot v \sqrt{\mu}) , \e \widetilde{w}^{2}_{l_2}(\a,0)\partial_x^\a f^\e
\Big\rangle_{L_{x,v}^2}\lesssim\e\|\nabla^N_x E^\e\|_{L^2_x}\|\nabla^N_x f^\e\|_{L^2_{x,v}},
\eals
we use a time factor $(1 + t)^{-\frac{1+\vartheta}{2} }$ to calculate it.
That is, the desired $L^2_{x,v}$ energy estimate of the $N$-th order space derivative is $(1 + t)^{-\frac{1+\vartheta}{2} }\e\| \widetilde{w}_{l_2}(\a,0)\partial_x^\a f^\e\|_{L^2_{x,v}}^2$, which implies
\bals
\sum_{|\a|=N}\Big\langle  \partial_x^\a (E^\e\cdot v \sqrt{\mu}) , (1 + t)^{-\frac{1+\vartheta}{2} }\e \widetilde{w}^{2}_{l_2}(\a,0)\partial_x^\a f^\e
\Big\rangle_{L_{x,v}^2}\lesssim\; \eta\e^2(1+t)^{-(1+\vartheta)}\|\nabla^N_x E^\e\|_{L^2_x}^2+ \mathcal{D}_N(t).
\eals
In conclusion, we eventually
design the following velocity weighted $N$-th order energy
\bals
\sum_{\substack{|\a|+|\b|= N,\\ |\a|\neq N}}\!\!\!\!\!\!
(1+t)^{-(|\b|+1)\frac{1+\vartheta}{2}} \left\|\widetilde{w}_{l_2}(\alpha, \beta) \partial_{\beta}^{\alpha} (\mathbf{I}-\mathbf{P})f^{\varepsilon}\right\|^2_{L^2_{x,v}}
+ (1+t)^{-\frac{1+\vartheta}{2}} \e\sum_{|\a|=N}\!\!\!
\left\|\widetilde{w}_{l_2}(\a, 0) \partial_x^{\a} f^{\varepsilon}\right\|^2_{L^2_{x,v}}.
\eals
For more details, please see the proof of Proposition \ref{weighted 2:N}.

\subsubsection{Time decay estimates of lower-order derivatives $\e^s\mathcal{E}^0_{N_0}(t)$ and $\e^{1+s}\mathcal{E}^1_{N_0}(t)$}
\hspace*{\fill}

To close the a priori estimates, we also need sufficient time decay rates of
$\e^s\mathcal{E}^0_{N_0}(t)$ and $\e^{1+s}\mathcal{E}^1_{N_0}(t)$. Due to the slow time decay rate of the linearized VMB system \cite{D2011} and the singularity $1/\varepsilon$ in front of the nonlinear terms, the method of combining the semigroup theory of linearized equation with the Duhamel principle is no longer applicable to the nonlinear problem. To overcome this difficulty, inspired by \cite{GW2012CPDE}, we employ the interpolation and  energy estimate in  negative Sobolev space $\|\Lambda^{-s} \big(f^\varepsilon, E^\varepsilon, B^\varepsilon\big) \|$ to obtain the time decay rate.
In contrast to the two-species VMB system  \cite{JL2023ARXIV},    one
major difference lies in the Lorentz force term
\bal\label{decay-kunnan-1}
\left\langle \partial^\a_x \left[(v\times B^\e)\cdot \nabla_v  \mathbf{P}f^\e \right], \partial^\a_x \mathbf{P}f^\e\right\rangle_{L^2_{x,v}}\neq 0
\eal
 when $|\a|> 0$. Therefore,  in the energy estimate of  $\mathcal{E}^1_{N_0}(t)$, the tricky term  \eqref{decay-kunnan-1} is
 handled by
\bals
\eqref{decay-kunnan-1}
\lesssim\;&\!
\Big[ \sum_{1\leq |\a'|\leq N_0-1} \|\partial^{\a'}_xB^\e \|_{L^3_x}\| \partial^{\a-\a'}_x \mathbf{P} f^\e\!\|_{L^6_x L^2_v}
+ \!\|\nabla^{N_0}_xB^\e \|_{L^2_x}\| \mathbf{P} f^\e\|_{L^\infty_x L^2_v}\Big]
\|\partial^\a_x \mathbf{P}f^\varepsilon\|_{L^2_{x, v}}\\
\lesssim\;&
\|\nabla_x B^\e\|_{H^{N_0-1}_x}\mathcal{D}^0_{N_0}(t),
\eals
which further implies that we can only obtain
 the energy inequality
\bal\label{decay-kunnan-2}
\frac{\d}{\d t} \mathcal{E}^1_{N_0}(t)+   \mathcal{D}^1_{N_0}(t)
\lesssim \|\nabla_x B^\e\|_{H^{N_0-1}_x}\mathcal{D}^0_{N_0}(t).
\eal
 The last nonhomogeneous source term in \eqref{decay-kunnan-2} hinders us from applying the interpolation and
 negative Sobolev space mentioned above.
Fortunately,  thanks to \eqref{0-N estimate} and \eqref{decay:estimate:guocheng:1}, we find that
\bals
\bsp
\;&\sup_{0\le\tau\le t}\left\{\e^s(1+t)^{\frac{s}{2}} \mathcal{E}^0_{N_0}(t)\right\}+
(1+t)^{-p}\int_0^t
\e^s(1+\tau)^{\frac{s}{2}+p} \mathcal{D}^0_{N_0}(\tau)\dd \tau\\
\lesssim\;&{\mathcal{E}}_{N}(0)
+\sup_{0\leq \tau\leq t}\left\{{\mathcal{E}}_{N_0+\frac{s}{2}}(\tau)+
{\mathcal{E}}_{-s}(\tau)\right\},
\esp
\eals
indicating that once $\e(1+t)^{\frac{1}{2}}\|\nabla_x B^\e\|_{H^{N_0-1}_x}\lesssim \eta$ is small enough, the last
nonhomogeneous term in \eqref{decay-kunnan-2}  will be controlled in the way
\bals
\bsp
\;&\e^{1+s} \int_0^t  (1+\tau)^{\frac{1+s}{2}+p} \|\nabla_x B^\e(\tau)\|_{H^{N_0-1}_x}\mathcal{D}^0_{N_0}(\tau)\dd \tau
\lesssim\eta\e^{s}\int_0^t  (1+\tau)^{\frac{s}{2}+p}
\mathcal{D}^0_{N_0}(\tau)\dd \tau
\esp
\eals
in the process of obtaining the time decay rate $(1+t)^{\frac{1+s}{2}}$ of $\e^{1+s}\mathcal{E}^1_{N_0}(t)$ from \eqref{decay-kunnan-2}.
Consequently, based on the analysis above, we are able to obtain the time decay estimates of $\e^s\mathcal{E}^0_{N_0}(t)$ and $\e^{1+s}\mathcal{E}^1_{N_0}(t)$,  please see the proof of Proposition \ref{k-N decay proposition} for details.
By combining the above strategies, we eventually close the global a priori estimates successfully.

\medskip

The rest of this paper is organized as follows. In Section \ref{The a Priori Estimate}, we establish the uniform a priori weighted energy estimate. In Section \ref{Global Existence}, we first obtain the time decay rate and close the a priori estimates, and then we give the proof of Theorem \ref{mainth1}. In section \ref{Limit section}, based on the uniform energy estimate with respect to $\e \in (0,1]$ globally in time, we justify
the limit of the one-species VMB system \eqref{rVPB} to the  incompressible NSFM system \eqref{INSFP limit}, that is, give the proof of Theorem \ref{mainth3}.

\section{The a Priori Estimates}\label{The a Priori Estimate}

In this section, we deduce the uniform a priori estimates  of the one-species VMB system \eqref{rVPB} with respect to $\e\in (0,1]$
globally in time.
For this purpose, we define the following time-weighted energy norm $X(t)$ by
\bal
\bsp\label{X define}
X(t):=\;&
\sup_{0 \leq \tau \leq t} \Big\{\mathcal{E}_{N}(\tau)+{\overline{\mathcal{E}}}_{N-1,l_1}(\tau)
+{\widetilde{\mathcal{E}}}_{N,l_2}(\tau)
+\mathcal{E}_{-s}(\tau)+\e^s (1+\tau)^{\frac{s}{2}} \mathcal{E}_{N_0}^0(\tau)\Big\}
\\
\;&+\sup_{0 \leq \tau \leq t}\left\{\e^{1+s} (1+\tau)^{\frac{1+s}{2}} \mathcal{E}_{N_0}^1(\tau)+\e^{3} (1+\tau)^{{1+\vartheta}} \left\|\nabla_x\left[E^\e(\tau), B^\e(\tau)\right]\right\|_{H^2_x}^2
\right\}.
\esp
\eal
Here, all the involved parameters are fixed to satisfy  \eqref{hard assumption}.

The construction of the weighted instant energy functionals $\mathcal{E}_{N}(t)$, ${\overline{\mathcal{E}}}_{N-1,l_1}(t)$,
${\widetilde{\mathcal{E}}}_{N,l_2}(t)$  and
$\mathcal{E}_{-s}(t)$   will be given in the following. Suppose that the one-species VMB system  \eqref{rVPB} admits a smooth solution $\left(f^\varepsilon,E^\varepsilon,B^\varepsilon\right)$ over $0 \leq t \leq T$ for $0 < T \leq \infty$, and  the solution
$\left(f^\varepsilon,E^\varepsilon,B^\varepsilon\right)$ also satisfies
\begin{align}\label{priori assumption}
\sup_{0 \leq t \leq T} X(t) \leq \delta_0,
\end{align}
where $\delta_0$ is a suitable small positive constant to be determined later.

\subsection{Fundamental Energy Estimate}
\hspace*{\fill}

In this subsection, we aim to establish the   uniform  energy estimate without weight for the VMB system   \eqref{rVPB}.

To be precise, we begin with decomposing the perturbation $f^\e$ into
\begin{equation}\label{fdefenjie}
f^\e =\mathbf{P}f^\e +(\mathbf{I}-\mathbf{P})f^\e,
\end{equation}
where the macro part  $\mathbf{P}f^\e$ is given by
\bal\label{defination:Pf}
\mathbf{P}f^\e=\Big[a^\e(t,x)+b^\e(t,x)\cdot v+c^\e(t,x)\frac{|v|^2-3}{2}\Big]{\mu}^{1/2}
\eal
with $a^\e(t,x), b^\e(t,x), c^\e(t,x)$ defined as
\begin{align}
a^\e(t,x):=\langle \mu^{1/2}, \mathbf{P}f^\e \rangle_{L^2_v},\quad
b^\e(t,x):=\langle v\mu^{1/2}, \mathbf{P}f^\e\rangle_{L^2_v}, \quad
c^\e(t,x):=\;& \frac{1}{3} \big\langle(|v|^2-3)\mu^{1/2},\mathbf{P}f^\e \big\rangle_{L^2_v}. \nonumber
\end{align}

We are now in a position to state   the main result of this subsection.
\begin{proposition}\label{result-basic energy estimate}Let $N, l_1,
 l_2$ be fixed parameters  stated in Theorem \ref{mainth1}.
Assume that $ (f^\e, E^\e, B^\e) $ is a solution to the VMB system  \eqref{rVPB} defined on $ [0, T] \times \mathbb{R}^3 \times \mathbb{R}^3$
and the \emph{a priori} assumption  \eqref{priori assumption} holds true for $\delta_0$ small enough.
Then, there exists an energy functional $\mathcal{E}_{N}(t)$ and the
corresponding energy dissipation functional $\mathcal{D}_{N}(t)$ which satisfy \eqref{eq:energy:estimate:result-hard-1}, \eqref{without weight dissipation functional} respectively
such that
\begin{align}\label{basic-energy-estimate-result}
\begin{split}
\frac{\d}{\d t}\mathcal{E}_{N}(t)+\mathcal{D}_{N}(t)
\lesssim\;&
\delta_0 {\overline{\mathcal{D}}}_{N-1,l_1}(t)
+\delta_0 {\widetilde{\mathcal{D}}}_{N,l_2}(t).
\end{split}
\end{align}
\end{proposition}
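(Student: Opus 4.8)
The plan is to run the macro--micro decomposition energy method in the $H^N_x L^2_v$ framework, tracking the powers of $\e$ throughout. First I would apply $\partial_x^\a$ for each $|\a|\le N$ to the kinetic equation in \eqref{rVPB}, pair it with $\partial_x^\a f^\e$ in $L^2_{x,v}$, and simultaneously pair the Ampère and Faraday equations with $\partial_x^\a E^\e$ and $\partial_x^\a B^\e$ in $L^2_x$. The transport term $\frac1\e v\cdot\nabla_x\partial_x^\a f^\e$ is skew-symmetric and drops out, while the spectral gap \eqref{spectL} of $L$ produces the microscopic dissipation $\frac{\sigma_0}{\e^2}\|\partial_x^\a(\mathbf{I}-\mathbf{P})f^\e\|^2_{L^2_{x,v}(\nu)}$ recorded in \eqref{without weight dissipation functional}. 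Since $v\mu^{1/2}\in\mathrm{N}(L)$, the linear coupling $-E^\e\cdot v\mu^{1/2}$ reduces to a pairing of $E^\e$ with the current $\int v\mu^{1/2}f^\e\,\d v=b^\e$; combining the kinetic identity with the field identities, this $O(1)$ coupling organizes into the time derivative of a total energy equivalent to $\mathcal{E}_N(t)$, leaving only controllable remainders.

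Next I would recover the macroscopic and electromagnetic dissipations, since Step~1 alone yields only the microscopic piece. Following the one-species treatment of Duan, I project the kinetic equation onto $\mathrm{N}(L)$ to derive the local conservation laws for $(a^\e,b^\e,c^\e)$, whose sources are $\frac1\e$-moments of $(\mathbf{I}-\mathbf{P})f^\e$ and of the nonlinearities. Building an interactive functional from suitable $L^2_x$ pairings of the $\partial_x^\a(a^\e,b^\e,c^\e)$ with their derivatives, and adding it with a small constant multiple to the Step~1 functional so as to preserve the positivity of $\mathcal{E}_N$, produces $\sum_{1\le|\a|\le N}\|\partial_x^\a\mathbf{P}f^\e\|^2_{L^2_{x,v}}$. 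The field dissipation $\e^2\|\partial_x^\a E^\e\|^2$, $\e^2\|\partial_x^\a B^\e\|^2$ is the delicate one-species feature: it is extracted by testing the momentum relation for $b^\e$ against $E^\e$ and using $\nabla_x\cdot E^\e=\e\int\mu^{1/2}f^\e\,\d v$ jointly with the Maxwell evolution, the factor $\e^2$ being precisely the price of the weak coupling. Keeping the $\e$-bookkeeping consistent with \eqref{without weight dissipation functional} is essential here.

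It then remains to bound the nonlinear and Lorentz-force contributions (the ``$+\cdots$'' in \eqref{basic energy estimate-D}) by $\delta_0\big(\overline{\mathcal D}_{N-1,l_1}+\widetilde{\mathcal D}_{N,l_2}\big)$ plus absorbable multiples of $\mathcal{D}_N$. For $\frac1\e\Gamma(f^\e,f^\e)$ I would use the standard trilinear estimate, decomposing $f^\e=\mathbf{P}f^\e+(\mathbf{I}-\mathbf{P})f^\e$ so that the $\frac1\e$ always meets a factor of $(\mathbf{I}-\mathbf{P})f^\e$ absorbable by the microscopic dissipation, the remaining factor being controlled in $L^\infty$ or $L^6$--$L^3$ by Sobolev embedding and the \emph{a priori} smallness \eqref{priori assumption}. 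The terms $\e\frac v2\cdot E^\e f^\e$ and the electric Lorentz piece $\e E^\e\cdot\nabla_v f^\e$ are treated the same way. The genuinely troublesome contributions are the magnetic Lorentz terms $\frac1\e(v\times B^\e)\cdot\nabla_v f^\e$, which carry the velocity growth $\langle v\rangle\nabla_v(\mathbf{I}-\mathbf{P})f^\e$ and cannot be closed by the unweighted $\mathcal{D}_N$.

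The main obstacle is exactly the control of $D_1$ and $D_2$ in \eqref{basic energy estimate-D}. For $D_1$, the quantities $\|\langle v\rangle\nabla_v(\mathbf{I}-\mathbf{P})f^\e\|_{H^{N-2}_xL^2_v}^2$ and $\|\langle v\rangle(\mathbf{I}-\mathbf{P})f^\e\|_{H^{N-1}_xL^2_v}^2$ are, up to the weight $\overline{w}_{l_1}$, precisely the dissipative terms in $\overline{\mathcal D}_{N-1,l_1}$ of \eqref{dissipation functional}; since the prefactor is $\mathcal{E}_N(t)\le\delta_0$, one gets $D_1\lesssim\delta_0\overline{\mathcal D}_{N-1,l_1}(t)$. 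For $D_2$, the factor $\e^3\|\nabla_x[E^\e,B^\e]\|^2_{H^2_x}$ carries, by the definition of $X(t)$ in \eqref{X define}, the decay $(1+t)^{-(1+\vartheta)}$, while the two bracketed $N$-th order terms match the weighted dissipations with weight $\widetilde{w}_{l_2}$ in $\widetilde{\mathcal D}_{N,l_2}$ of \eqref{dissipation functional 2-N} once the time-decay factor $(1+t)^{-\frac{1+\vartheta}{2}}$ is reconciled, giving $D_2\lesssim\delta_0\widetilde{\mathcal D}_{N,l_2}(t)$. Synchronizing the $\e$-powers and the time-decay weights between $D_2$ and $\widetilde{\mathcal D}_{N,l_2}$ is the crux, and it is the very reason both weight functions are needed. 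Collecting Steps~1--3 and absorbing the small multiples of $\mathcal{D}_N$ into the left-hand side yields \eqref{basic-energy-estimate-result}.
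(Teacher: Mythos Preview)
Your proposal is correct and follows essentially the same route as the paper: a pure $H^N_xL^2_v$ energy estimate (the paper's Lemma~\ref{energy estimate without weight}) combined with an interactive functional for the macroscopic and field dissipations (the paper's Lemma~\ref{macroscopic estimate}), then the identification of the residual velocity-growth terms $D_1,D_2$ with pieces of $\overline{\mathcal D}_{N-1,l_1}$ and $\widetilde{\mathcal D}_{N,l_2}$ exactly as in \eqref{basic energy estimate-1}. One small slip: the magnetic Lorentz term in \eqref{rVPB} is $(v\times B^\e)\cdot\nabla_v f^\e$, not $\tfrac1\e(v\times B^\e)\cdot\nabla_v f^\e$; there is no $\e^{-1}$ prefactor, and your subsequent $\e$-bookkeeping for $D_1,D_2$ is consistent with the correct scaling.
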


The main idea of  proving Proposition
\ref{result-basic energy estimate} is that by combining direct energy methods
on the perturbation VMB system \eqref{rVPB} and
the so-called macro-micro decomposition method to   acquire the missing macro dissipations.
\medskip

First of all, we   find
a dissipation structure of the macroscopic part $\mathbf{P}f^\e$. 
\begin{lemma}\label{macroscopic estimate}
Let $\left(f^\varepsilon,E^\varepsilon, B^\varepsilon\right)$ be the solution to the VMB system \eqref{rVPB} defined on $ [0, T] \times \mathbb{R}^3 \times \mathbb{R}^3$.
Then there exists an interactive functional $\mathcal{E}^N_{int}(t)$  defined in \eqref{macro energy2}  satisfying
\begin{align}
\begin{split}\nonumber
\left|\mathcal{E}^N_{int}(t)\right| \lesssim\;&
\sum_{|\alpha| \leq N} \left\| \partial^\alpha_x f^{\varepsilon}\right\|^2_{L^2_{x,v}}
+\sum_{|\alpha| \leq N} \left\| \partial^\alpha_x E^{\varepsilon}\right\|^2_{L^2_{x}}+\sum_{|\alpha| \leq N} \left\| \partial^\alpha_x B^{\varepsilon}\right\|^2_{L^2_{x}},
\end{split}
\end{align}
such that for any $t \geq 0$, there holds
\begin{align}\label{macro estimate}
\begin{split}
&\varepsilon \frac{\d}{\d t}\mathcal{E}^N_{int}(t)\!+\! \sum_{1 \leq |\alpha| \leq N}\!\!
\left\|   \partial^\alpha_x \mathbf{P}f^\e\right\|^2_{L^2_{x,v}}
+\e^2 \sum_{1 \leq|\alpha| \leq N-1} \left\| \partial^\alpha_x E^{\varepsilon} \right\|^2_{L^2_{x}}+ \e^2 \sum_{2 \leq|\alpha| \leq N-1}\!\! \left\| \partial^\alpha_x B^{\varepsilon} \right\|^2_{L^2_{x}} \\
\lesssim \;& \frac{1}{\varepsilon^2} \sum_{|\alpha| \leq N}\left\| \partial^\alpha_x (\mathbf{I}-\mathbf{P})f^{\varepsilon}\right\|^2_{L^2_{x,v}(\nu)}
+\mathcal{E}_{N}(t) \mathcal{D}_{N}(t).
\end{split}
\end{align}
\end{lemma}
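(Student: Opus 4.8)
The plan is to run the classical macro--micro decomposition together with a Kawashima-type interactive functional, adapted to the electromagnetic coupling. First I would substitute $f^\e=\mathbf{P}f^\e+(\mathbf{I}-\mathbf{P})f^\e$ into the first equation of \eqref{rVPB} and test it against the collision invariants $\mu^{1/2},\,v_i\mu^{1/2},\,\tfrac{|v|^2-3}{2}\mu^{1/2}$ and against the second- and third-order moments $(v_iv_j-\tfrac13|v|^2\delta_{ij})\mu^{1/2}$ and $\tfrac12(|v|^2-5)v_i\mu^{1/2}$. This produces the local macroscopic balance laws for $(a^\e,b^\e,c^\e)$, schematically $\pt_t a^\e+\tfrac1\e\nabla_x\!\cdot b^\e=0$, $\pt_t b^\e+\tfrac1\e\nabla_x(a^\e+2c^\e)=E^\e+\tfrac1\e\nabla_x\!\cdot(\text{moments of }(\mathbf{I}-\mathbf{P})f^\e)+(\cdots)$, $\pt_t c^\e+\tfrac1{3\e}\nabla_x\!\cdot b^\e=\tfrac1\e(\cdots)+(\cdots)$, supplemented by the auxiliary Burnett relations expressing $\tfrac1\e\nabla_x b^\e$ and $\tfrac1\e\nabla_x c^\e$ as $\pt_t(\cdots)+\tfrac1\e\nabla_x\!\cdot(\text{moments of }(\mathbf{I}-\mathbf{P})f^\e)+(\cdots)$. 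The crucial bookkeeping observation is that \emph{every} inhomogeneous term carries either a factor $\tfrac1\e$ in front of a spatial derivative of $(\mathbf{I}-\mathbf{P})f^\e$ or an electromagnetic/nonlinear product; after premultiplying the final identity by $\e$ these fall into $\tfrac1{\e^2}\|\partial^\alpha_x(\mathbf{I}-\mathbf{P})f^\e\|_{L^2_{x,v}(\nu)}^2$ and into the quadratic $\mathcal{E}_N(t)\mathcal{D}_N(t)$ under the a priori assumption \eqref{priori assumption}.

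Next, for each $|\alpha|\le N-1$ I would apply $\partial^\alpha_x$ to these balance laws and assemble the fluid part of $\mathcal{E}^N_{int}(t)$ out of the usual cross terms, for instance $\langle\nabla_x\partial^\alpha_x a^\e,\partial^\alpha_x b^\e\rangle_{L^2_x}$, $\langle\partial^\alpha_x b^\e,\nabla_x\partial^\alpha_x c^\e\rangle_{L^2_x}$ and the analogous pairings built from the Burnett moments. Differentiating in time, inserting the balance laws and integrating by parts produces the positive gradient dissipation $\tfrac1\e\sum_{1\le|\alpha|\le N}\|\partial^\alpha_x\mathbf{P}f^\e\|_{L^2_{x,v}}^2$; because the whole identity is premultiplied by $\e$ (note the $\e\tfrac{\d}{\d t}\mathcal{E}^N_{int}$ in \eqref{macro estimate}), this becomes exactly $\sum_{1\le|\alpha|\le N}\|\partial^\alpha_x\mathbf{P}f^\e\|_{L^2_{x,v}}^2$, while every remaining term is of the two absorbable types above. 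The bound $|\mathcal{E}^N_{int}(t)|\lesssim\sum_{|\alpha|\le N}\big(\|\partial^\alpha_x f^\e\|_{L^2_{x,v}}^2+\|\partial^\alpha_x E^\e\|_{L^2_x}^2+\|\partial^\alpha_x B^\e\|_{L^2_x}^2\big)$ is then immediate from Cauchy--Schwarz applied to each cross term.

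The genuinely delicate part, which I expect to be the main obstacle, is extracting the electromagnetic dissipation $\e^2\|\partial^\alpha_x E^\e\|_{L^2_x}^2$ and $\e^2\|\partial^\alpha_x B^\e\|_{L^2_x}^2$, since the Maxwell subsystem is conservative and supplies no dissipation on its own. The $E^\e$-dissipation I would borrow from the momentum balance $E^\e=\pt_t b^\e+\tfrac1\e\nabla_x(a^\e+2c^\e)+(\cdots)$: adding $-\e\langle\partial^\alpha_x b^\e,\partial^\alpha_x E^\e\rangle_{L^2_x}$ to $\mathcal{E}^N_{int}$ and using Amp\`ere's law $\pt_t E^\e-\nabla_x\times B^\e=-b^\e$ to rewrite $\pt_t E^\e$ yields $+\e^2\|\partial^\alpha_x E^\e\|_{L^2_x}^2$ on the left, modulo the errors $\e^2\|\partial^\alpha_x b^\e\|_{L^2_x}^2$, $\e^2\langle\nabla_x\times\partial^\alpha_x B^\e,\partial^\alpha_x b^\e\rangle_{L^2_x}$ and $\e\langle\partial^\alpha_x E^\e,\nabla_x\partial^\alpha_x(a^\e+2c^\e)\rangle_{L^2_x}$. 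The $B^\e$-dissipation I would extract from the curl structure: writing $\nabla_x\times B^\e=\pt_t E^\e+b^\e$ and $\pt_t B^\e=-\nabla_x\times E^\e$, the cross term $-\e\langle\nabla_x\times\partial^\alpha_x B^\e,\partial^\alpha_x E^\e\rangle_{L^2_x}$ produces $+\e^2\|\nabla_x\times\partial^\alpha_x B^\e\|_{L^2_x}^2=\e^2\|\nabla_x\partial^\alpha_x B^\e\|_{L^2_x}^2$ (using $\nabla_x\cdot B^\e=0$), at the cost of $\e^2\|\nabla_x\times\partial^\alpha_x E^\e\|_{L^2_x}^2$ and a current term $\e^2\langle\nabla_x\times\partial^\alpha_x B^\e,\partial^\alpha_x b^\e\rangle_{L^2_x}$. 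This one-derivative loss forces the ranges $1\le|\alpha|\le N-1$ for $E^\e$ and $2\le|\alpha|\le N-1$ for $B^\e$ and accounts for the $\e^2$ weights.

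The remaining, and really the only substantive, work is the coefficient balancing: one must fix a hierarchy of small constants so that the error $\e^2\|\nabla_x\partial^\alpha_x E^\e\|_{L^2_x}^2$ coming from the $B^\e$-cross term is dominated by the $E^\e$-dissipation, the errors from the $E^\e$-cross term (the macroscopic gradients $\|\nabla_x(a^\e+2c^\e)\|_{L^2_x}^2$ and $\|\partial^\alpha_x b^\e\|_{L^2_x}^2$) are dominated by the fluid dissipation, and the fluid-cross-term errors are dominated by $\tfrac1{\e^2}\|(\mathbf{I}-\mathbf{P})f^\e\|_{L^2_{x,v}(\nu)}^2$, with the residual $\langle\nabla_x\times B^\e,b^\e\rangle$-type couplings split by Cauchy--Schwarz with a small parameter $\eta$ into a piece reabsorbed by the $B^\e$-dissipation and a piece landing in the fluid dissipation. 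All electromagnetic and nonlinear products are quadratically small and collected into $\mathcal{E}_N(t)\mathcal{D}_N(t)$. Choosing the signs and relative sizes of all cross-term coefficients so that every positive dissipation survives and every error is absorbed is where the real effort lies, rather than in any single estimate.
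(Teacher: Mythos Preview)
Your proposal is correct and follows essentially the same route as the paper: the fluid dissipation is obtained from the macro--micro balance laws via the standard Kawashima cross terms (the paper cites \cite{GZW-2021} for this part and builds $\mathcal{E}^{(\alpha)}_{a^\e,b^\e,c^\e}$ exactly as you describe), and the electromagnetic dissipation is extracted by the same two cross terms $-\e\langle\partial^\alpha_x E^\e,\partial^\alpha_x b^\e\rangle_{L^2_x}$ and $-\e\langle\partial^\alpha_x\nabla_x\times B^\e,\partial^\alpha_x E^\e\rangle_{L^2_x}$, with the same derivative-loss accounting and the same hierarchy $0<\kappa_3<\kappa_2\ll1$ of small constants. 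The only slip is the pressure coefficient: with the paper's normalization $\mathbf{P}f=(a+v\cdot b+\tfrac{|v|^2-3}{2}c)\mu^{1/2}$ the momentum balance reads $\partial_t b^\e+\tfrac1\e\nabla_x(a^\e+c^\e)-E^\e=\mathcal{X}_{b^\e}$, not $a^\e+2c^\e$, but this is cosmetic and does not affect the argument.
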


\begin{proof}
We prove this lemma by an analysis of the macroscopic equations and the local conservation laws. Firstly,
plugging (\ref{fdefenjie})  and  (\ref{defination:Pf}) into  the first equation in (\ref{rVPB}), we have
\newcommand{\W}{W}
\newcommand{\M}{M}
\newcommand{\T}{T}
\newcommand{\Q}{Q}
\bals
\bsp
&\pt_t \Big(a^\e+b^\e\cdot v+c^\e\frac{|v|^2-3}{2} \Big)\sqrt{\mu}+\frac{1}{\e}\sqrt{\mu}v\cdot \nabla_x \Big(a^\e+b^\e\cdot v+c^\e\frac{|v|^2-3}{2} \Big) -\sqrt{\mu}v\cdot E^\e\\
 = &-\pt_t(\mathbf{I}-\mathbf{P}) f^\e +g^\varepsilon(f^\e, E^\e, B^\e)
\esp
\eals
with $g^\varepsilon(f^\e, E^\e, B^\e)$ given by
\bal
\bsp\label{g define}
g^\varepsilon:= -\frac{1}{\e}v\cdot\nabla_x (\mathbf{I}-\mathbf{P}) f^\e-\frac{1}{\e^2}L f^\e
 - \frac{\e E^\e\cdot\nabla_v(\sqrt{\mu} f^\e)}{\sqrt{\mu}}
 -v\times B^\e\cdot\nabla_vf^\e
 +\frac{1}{\e}\Gamma(f^\e,f^\e).
 \esp
\eal
Fixing $(t,x)$   and comparing the coefficients on both sides,
we  thereby obtain the so-called macroscopic equations
\newcommand{\J}{\mathcal{G}} 
\beq\label{qkj:esti:diss:2}
 \left\{
\begin{array}{ll}
\sqrt{\mu}:  &\qquad         \displaystyle \pt_t\Big(a^\e-\frac{3}{2}c^\e\Big) = \J_1 , \,\quad\qquad\qquad     \\[2mm]
v_i\sqrt{\mu}:&\qquad   \displaystyle \pt_tb_i^\e+\frac{1}{\e}\pt_i\Big(a^\e-\frac{3}{2}c^\e\Big)-E^\e=\J _2 ^{i}  , \quad\qquad\qquad\\[2mm]
v_i^2\sqrt{\mu}:&\qquad      \displaystyle   \frac{1}{2}\pt_t c^\e+\frac{1}{\e}\pt_ib_i^\e= \J_3 ^{i}  , \quad\qquad\qquad \\[2mm]
v_iv_j\sqrt{\mu}(i\ne j):&\qquad    \displaystyle   \frac{1}{\e}\left(\pt_ib_j^\e+\pt_jb_i^\e\right)= \J _4 ^{i,j}, \,\,\qquad\qquad \\[2mm]
v_i|v|^2\sqrt{\mu}:&\qquad  \displaystyle   \frac{1}{\e}\pt_ic^\e= \J _{5}^i. \quad\qquad\qquad
\end{array}
\right.
\eeq
Here the right-hand terms $ \J_1,\J _2 ^{i},\J_3 ^{i},\J _4 ^{i,j}$ and $\J _5^i (i,j=1,2,3) $ are all of the form
\bals
\bsp
\big\langle-\pt_t(\mathbf{I}-\mathbf{P}) f^\e  + g^\varepsilon(f^\e, E^\e, B^\e), \zeta \big\rangle_{L^2_v},
\esp
\eals
where $\zeta$ is a (different) linear combination of the basis $\left[\sqrt{\mu},\,v_i\sqrt{\mu},\,v_iv_j\sqrt{\mu},\,v_i|v|^2\sqrt{\mu} \right]$.

The second set of equations we consider are the local conservation laws satisfied
by $(a^\e\!,\! b^\e\!, \!c^\e)$. To derive them, multiplying the first equation in (\ref{rVPB}) by the collision
invariant $(1, v, \frac{|v|^2-3}{2})\sqrt{\mu}$ and integrating  the resulting  identity over $\mathbb{R}^3_v$,  we derive by
using  (\ref{defination:Pf}) that $a^\e$, $b^\e$ and $c^\e$ obey the local macroscopic balance laws of mass, moment and energy
\beq\label{qkj:esti:f:L^2:12}
 \left\{
\begin{array}{ll}
 \displaystyle\pt_t a^\e+\frac{1}{ \e}\nabla_x\cdot b^\e=0,\\[2mm]
 \displaystyle \pt_t b^\e+\frac{1}{\e}\nabla_x(a^\e+c^\e)-E^\e=\mathcal{X}_{b^\e},\\[2mm]
 \displaystyle\pt_t c^\e+\frac{2}{3\e}\nabla_x\cdot b^\e
 =\mathcal{X}_{c^\e}.
\end{array}
\right.
\eeq
Here $\mathcal{X}_{b^\e}$ and $\mathcal{X}_{c^\e}$ are given by
\bal
\bsp\label{qkj:esti:f:L^2:12-1}
\mathcal{X}_{b^\e}&:=
\e a^\e E^\e+b^\e\times B^\e
 - \frac{1}{\e}\int_{\mathbb{R}^3}v\cdot \nabla_x (\mathbf{I}-\mathbf{P})f^\e v\sqrt{\mu} \d v,\\
\mathcal{X}_{c^\e}&:= \frac{2}{3 }\e E^\e\cdot b^\e
-\frac{1}{3\e}\int_{\bbR^3}v\cdot\nabla_x(\mathbf{I}-\mathbf{P})f^\e|v|^2\sqrt{\mu}\d v.
\esp
\eal

Next, applying  the analogous  arguments employed in \cite{GZW-2021}, which is based on the analysis of the macro fluid-type system \eqref{qkj:esti:diss:2} and the local conservation
laws \eqref{qkj:esti:f:L^2:12},
 we can  obtain that
 \begin{align} \label{macro estimate 1}
\begin{split}
&\varepsilon \frac{\d}{\d t} \mathcal{E}^{(\alpha)}_{a^\varepsilon,b^\varepsilon,c^\varepsilon}(t)
+\left\|\nabla_x (a^\varepsilon,b^\varepsilon,c^\varepsilon)\right\|^2_{L^2_x} \\
\lesssim \;& \frac{1}{\e^2}\left\|  \nabla_x (\mathbf{I}-\mathbf{P}) f^\varepsilon \right\|^2_{L^2_{x,v}(\nu)}
+\left\|  (\mathbf{I}-\mathbf{P}) f^\varepsilon \right\|_{L^2_{x,v}(\nu)}
\left\| \nabla_x E^\varepsilon \right\|_{L^2_{x}}
\\
\;&
+\varepsilon^2 \sum_{i=1}^3\sum_{j\neq i}\left\|\big \langle  (\J _3^i +\J _3^j +\J _4 ^{i,j}+\J _5^i +\mathcal{X}_{b^\e}), \zeta\big\rangle_{L^2_v} \right\|^2_{L^2_x}\quad\text {for}\quad |\a|=0,
\end{split}
\end{align}
and
 \begin{align} \label{macro estimate 2}
\begin{split}
&\varepsilon \frac{\d}{\d t} \mathcal{E}^{(\alpha)}_{a^\varepsilon,b^\varepsilon,c^\varepsilon}(t)
+\left\|\partial^\alpha_x \nabla_x (a^\varepsilon,b^\varepsilon,c^\varepsilon)\right\|^2_{L^2_x} \\
\lesssim \;& \frac{1}{\e^2}\left\|\partial^\alpha_x \nabla_x (\mathbf{I}-\mathbf{P}) f^\varepsilon \right\|^2_{L^2_{x,v}(\nu)}
+\left\|\partial^\alpha_x \nabla_x  (\mathbf{I}-\mathbf{P}) f^\varepsilon \right\|_{L^2_{x,v}(\nu)}
\left\|\partial^\alpha_x  E^\varepsilon \right\|_{L^2_{x}}
\\
\;&
+\varepsilon^2 \sum_{i=1}^3\sum_{j\neq i}\left\|\big\langle \partial^\alpha_x (\J _3^i +\J _3^j +\J _4 ^{i,j}+\J _5^i +\mathcal{X}_{b^\e}), \zeta\big\rangle_{L^2_v} \right\|^2_{L^2_x}
\quad\text{ for}\quad 1\leq|\a|\leq N-1.
\end{split}
\end{align}
The details are omitted for
 simplicity. Here, we only point out the representation of
$\mathcal{E}^\mathbf{(\a)}_{a^\e, b^\e, c^\e}(t)$ as
\bals
\bsp
\mathcal{E}^\mathbf{(\a)}_{a^\e, b^\e, c^\e}(t)=\;& \kappa_1 \big\langle-   \nabla_x\cdot \partial_x^\a b ,\, \partial_x^\a a\big\rangle_{L^2_x}
+  \sum_{i=1}^3\big\langle \partial_x^\a (\mathbf{I}-\mathbf{P}) f^\e ,\,\chi_{a^\e}^{i}\pt_i \partial_x^\a a^\e \big\rangle_{L^2_{x,v}}\\
\;&+ \sum_{i=1}^3\sum_{j\neq i}\big\langle \partial_x^\a (\mathbf{I}-\mathbf{P}) f^\e ,\,\chi_{b^\e}^{i,j}\pt_j \partial_x^\a b_i^\e \big\rangle_{L^2_{x,v}}
+ \sum_{i=1}^3\big\langle \partial_x^\a (\mathbf{I}-\mathbf{P}) f^\e ,\,\chi_{b^\e}^{i}\pt_i \partial_x^\a b_i^\e \big\rangle_{L^2_{x,v}}
\\
\;&
+ \sum_{i=1}^3\big\langle  \partial_x^\a(\mathbf{I}-\mathbf{P}) f^\e ,\,\chi_{c^\e}^{i}\pt_i \partial_x^\a c^\e \big\rangle_{L^2_{x,v}}
\quad\text{ for}\quad 0\leq|\a|\leq N-1,
\esp
\eals
where $\chi_{a^\e}^{i}$, $ \chi_{b^\e}^{i,j}$, $\chi_{b^\e}^{i}$ and $\chi_{c^\e}^{i}$ are dissimilar linear combinations of
$\left[\sqrt{\mu},v_i\sqrt{\mu}, v_iv_j\sqrt{\mu}, v_i|v|^2\sqrt{\mu} \right]$
and  $\kappa_1> 0$ is a small   constant.

To estimate   $E^\e$,     using the Maxwell system in (\ref{rVPB}), \eqref{qkj:esti:f:L^2:12} and  the Cauchy--Schwarz inequality with $\eta$, one can  deduce that, for any $1\leq|\a|\leq N-1$, there holds
\bal
\bsp\label{E1}
&\e^2\|\pt^\alpha_x E^\e\|_{L_x^2}^2=\e^2\big\langle \pt^\alpha_x E^\e, \pt^\alpha_x E^\e  \big\rangle_{L_x^2} \\
=\;&\e^2 \Big\langle \pt^\alpha_x E^\e, \pt^\alpha_x \Big[\pt_t b^\e+\frac{1}{\varepsilon}\nabla_x(a^\e+c^\e)-\mathcal{X}_{b^\e}\Big] \Big\rangle_{L_x^2}\\
=\;&\frac{\mathrm d}{\mathrm d t}\!\e^2\big\langle \pt^\alpha_x E^\e, \pt^\alpha_x b^\e  \big\rangle_{L_x^2}\! -\!\e^2\big\langle \pt^\alpha_x \pt_t E^\e ,\pt^\alpha_x b^\e \big\rangle_{L_x^2}
+\eta\e^2\|\pt^\alpha_x E^\e\|_{L_x^2}^2+C_{\eta}\|\pt^\alpha_x \nabla_x (a^\e+c^\e) \|_{L_x^2}^2\\
 \;&+C_{\eta}
\e^2\|\pt^\alpha_x \mathcal{X}_{b^\e}\|_{L_x^2}^2.
\esp
\eal
Further, for the second term on the right-hand side of  (\ref{E1}), applying (\ref{rVPB})$_2$ gives rise to
\begin{align}
\begin{split}\label{E2}
\e^2\big\langle \pt^\alpha_x \pt_t E^\e , \pt^\alpha_x b^\e \big\rangle_{L_x^2}
&=\e^2\big\langle \pt^\alpha_x (\nabla_x\times B^\e-b^\e), \pt^\alpha_x b^\e  \big\rangle_{L_x^2} \\
&=\e^2\big\langle \pt^\alpha_x \nabla_x\times B^\e,  \pt^\alpha_x b^\e \big\rangle_{L_x^2}
-\e^2\|\pt^\alpha_x b^\e\|_{L_x^2}^2\\
&=\e^2\big\langle \pt^\alpha_x B^\e,  \pt^\alpha_x\nabla_x\times  b^\e \big\rangle_{L_x^2}
-\e^2\|\pt^\alpha_x b^\e\|_{L_x^2}^2.
\end{split}
\end{align}
Hence, putting \eqref{E2} into \eqref{E1} and choosing $\eta$ small enough, we find that
\begin{align}
\begin{split}\label{E4}
&\e\frac{\mathrm d}{\mathrm d t}{\mathcal{E}}_{E^\e}^{(\a)}(t)+\e^2\|\pt^\alpha_x E^\e\|_{L_x^2}^2\\
\lesssim\;&\e^4\|\pt^\alpha_x \nabla_x\times B^\e\|_{L_x^2}^2+\|\pt^\alpha_x \mathbf{ P}f^\e \|_{L_x^2}^2
+\|\pt^\alpha_x \nabla_x\mathbf{ P}f^\e \|_{L_x^2}^2+\e^2\|\pt^\alpha_x \mathcal{X}_{b^\e}\|_{L_x^2}^2 \quad\text{for} \quad|\a|=1,
\end{split}
\end{align}
and
\begin{align}
\begin{split}\label{E5}
\e\frac{\mathrm d}{\mathrm d t}{\mathcal{E}}_{E^\e}^{(\a)}(t)+\e^2\|\pt^\alpha_x E^\e\|_{L_x^2}^2
\lesssim\;&\e^4\|\pt^\alpha_x  B^\e\|_{L_x^2}^2
+\|\pt^\alpha_x \nabla_x\mathbf{ P}f^\e \|_{L_x^2}^2+\e^2\|\pt^\alpha_x \mathcal{X}_{b^\e}\|_{L_x^2}^2
\end{split}
\end{align}
for   $2\leq|\a|\leq N-1$, where ${\mathcal{E}}_{E^\e}^{(\a)}(t)$ is defined as
$${\mathcal{E}}_{E^\e}^{(\a)}(t):=-\e\big\langle \pt^\alpha_x E^\e,  \pt^\alpha_x b^\e \big\rangle_{L_x^2}\quad\text{ for}\quad 1\leq|\a|\leq N-1. $$

To obtain the dissipation of $B^\e$,  by   using the Maxwell system in (\ref{rVPB}) and  the Cauchy--Schwarz inequality with $\eta$, we have
\bals
\bsp
&\e^2\|\pt^\alpha_x \nabla_x\times B^\e\|_{L_x^2}^2\\
=\;&\e^2 \big\langle  \pt^\alpha_x\nabla_x\times B^\e, \pt^\alpha_x(\pt_t E^\e+b^\e)  \big\rangle_{L_x^2} \\
=\;&\frac{\mathrm{d}}{\mathrm{d}t}\e^2 \big\langle  \pt^\alpha_x\nabla_x\times B^\e, \pt^\alpha_x E^\e  \big\rangle_{L_x^2}
-\e^2\big\langle  \pt^\alpha_x\nabla_x\times \pt_t B^\e, \pt^\alpha_x E^\e \big\rangle_{L_x^2}
+\e^2 \big\langle  \pt^\alpha_x\nabla_x\times B^\e ~,~ \pt^\alpha_x b^\e  \big\rangle_{L_x^2}\\
\leq\;&\frac{\mathrm{d}}{\mathrm{d}t}\e^2 \big\langle  \pt^\alpha_x\nabla_x\times B^\e, \pt^\alpha_x E^\e  \big\rangle_{L_x^2}
+\e^2\|\pt^{{\alpha}}_x \nabla_x E^\e\|_{L_x^2}^2+\eta\e^2\|\pt^\alpha_x \nabla_x\times B^\e\|_{L_x^2}^2+ C_{\eta}\e^2\|\pt^\alpha_x b^\e\|_{L_x^2}^2.
\esp
\eals
Thereby, we derive that for any $1\leq |\a|\leq N-2$, there holds
\begin{equation}\label{B3}
\begin{split}
  &\e\frac{\mathrm{d}}{\mathrm{d}t}{\mathcal{E}}_{B^\e}^{(\a)}(t)+\e^2\|\pt^\alpha_x \nabla_x\times B^\e\|_{L_x^2}^2
  \lesssim\e^2 \|\pt^\alpha_x \nabla_x \times E^\e\|_{L_x^2}^2
  + \e^2\|\pt^\alpha_x b^\e\|_{L_x^2}^2,
\end{split}
\end{equation}
where ${\mathcal{E}}_{B^\e}^{(\a)}(t)$ is defined as
$${\mathcal{E}}_{B^\e}^{(\a)}(t):= -\e \big\langle  \pt^\alpha_x\nabla_x\times B^\e, \pt^\alpha_x E^\e  \big\rangle_{L_x^2}\quad\text{ for}\quad 0\leq|\a|\leq N-2. $$

Finally, set
\begin{align}\label{macro energy2}
\mathcal{E}^{N}_{int}(t):= \sum_{0\leq|\alpha| \leq N-1}\mathcal{E}^{(\alpha)}_{a^\varepsilon, b^\varepsilon,c^\varepsilon}(t)+\kappa_2\sum_{1\leq|\alpha| \leq N-1} \mathcal{E} ^{(\alpha)}_{E^\varepsilon}(t)
+\kappa_3\sum_{1\leq|\alpha| \leq N-2} {\mathcal{E}}_{B^\e}^{(\a)}(t)
\end{align}
with  $ 0 <\kappa_3< \kappa_2 \ll 1$.
It follows from \eqref{macro estimate 1}, \eqref{macro estimate 2},
\eqref{E4}, \eqref{E5} and \eqref{B3} that
\bal
&\varepsilon \frac{\d}{\d t} \mathcal{E}^{N}_{int}(t)
+ \!\!\!\sum_{0 \leq |\alpha| \leq N-1}\!\!\!
\left\|   \partial^\alpha_x\nabla_x \mathbf{P}f^\e\right\|^2_{L^2_{x,v}}+\e^2\!\! \sum_{1 \leq|\alpha| \leq N-1}\!\! \left\| \partial^\alpha_x E^{\varepsilon} \right\|^2_{L^2_{x}}+ \e^2 \!\! \sum_{2 \leq|\alpha| \leq N-1}\!\! \left\| \partial^\alpha_x B^{\varepsilon} \right\|^2_{L^2_{x}}\nonumber \\
\lesssim \;&\frac{1}{\varepsilon^2} \sum_{0\leq|\alpha| \leq N}\left\| \partial^\alpha_x (\mathbf{I}-\mathbf{P})f^{\varepsilon}\right\|^2_{L^2_{x,v}(\nu)}
+\sum_{0\leq|\alpha| \leq N-1}\e^2\|\pt^\alpha_x \mathcal{X}_{b^\e}\|_{L_{x,v}^2}^2\label{macro energy1}\\
\;&
  +\varepsilon^2\!\!\sum_{0\leq|\alpha| \leq N-1} \sum_{i=1}^3\sum_{j\neq i}\left\|  \big\langle \partial^\alpha_x( \J _3^i +\J _3^j +\J _4 ^{i,j}+\J _5^i ),\, \zeta\big\rangle_{L^2_{v}}\right\|^2_{L^2_{x}}.\nonumber
\eal
By referring back to \eqref{g define}, \eqref{qkj:esti:f:L^2:12-1} for $g^\varepsilon$ and $\mathcal{X}_{b^\e}$, and employing \eqref{hard gamma}, we obtain
\begin{align}
\bsp\label{g estimate}
&\sum_{0\leq|\alpha| \leq N-1}\!\!\e^2\|\pt^\alpha_x \mathcal{X}_{b^\e}\|_{L_{x,v}^2}^2
  +\varepsilon^2\sum_{0\leq|\alpha| \leq N-1} \sum_{i=1}^3\sum_{j\neq i}\left\|  \big\langle \partial^\alpha_x( \J _3^i +\J _3^j +\J _4 ^{i,j}+\J _5^i ),\, \zeta\big\rangle_{L^2_{v}}\right\|^2_{L^2_{x}}\\
\lesssim\;&\sum_{|\alpha| \leq N-1}\left\| \partial^\alpha_x\nabla_x (\mathbf{I}-\mathbf{P})f^{\varepsilon}\right\|^2_{L^2_{x,v}(\nu)}+
\frac{1}{\varepsilon^2} \sum_{ |\alpha| \leq N-1}\left\| \partial^\alpha_x (\mathbf{I}-\mathbf{P})f^{\varepsilon}\right\|^2_{L^2_{x,v}(\nu)}+ \mathcal{E}_{N}(t)\mathcal{D}_{N}(t).
\esp
\end{align}
Hence, gathering \eqref{macro energy1} and \eqref{g estimate} leads to the desired result \eqref{macro estimate}.
This completes the proof of Lemma \ref{macroscopic estimate}.
\end{proof}
Next, we   perform the following
 $H^N_{x}L^2_{v}$  energy estimate for the VMB system   \eqref{rVPB}.
\begin{lemma}\label{energy estimate without weight}Let $ (f^\e, E^\e, B^\e) $ be the solution of the VMB system  \eqref{rVPB} defined on $ [0, T] \times \mathbb{R}^3 \times \mathbb{R}^3$ and  the \emph{a priori} assumption  \eqref{priori assumption} hold true for $\delta_0$ small enough. Then, there holds
\begin{align}\label{estimate without weight}
\begin{split}
&\frac{\d}{\d t}\left[\left\| f^\varepsilon\right\|^2_{H^N_xL^2_v}+\left\| (E^\varepsilon, \! B^\varepsilon)\right\|^2_{H^N_x}
\right]
 \!+ \!\frac{\sigma_0}{\varepsilon^2}
\left\| (\mathbf{I} \!- \!\mathbf{P})f^\varepsilon\right\|^2_{H^N_xL^2_v(\nu)}
+\e\frac{\d}{\d t}  \int_{\mathbb{R}^3}|b^\e|^2(a^\e+c^\e)\dd x
\\
\lesssim\;&\delta_0
\mathcal{D}_{N}(t)
+\delta_0\left(\left\|\langle v\rangle \nabla_v(\mathbf{I}-\mathbf{P})f^\varepsilon\right\|_{H^{N-2}_xL^2_v}^2
+\left\|\langle v\rangle (\mathbf{I}-\mathbf{P})f^\varepsilon\right\|_{H^{N-1}_xL^2_v}^2\right)\\
 \;&
 +\frac{\delta_0}{(1+t)^{1+\vartheta}}\Big(\e\left\|\langle v\rangle \nabla_x^N(\mathbf{I}-\mathbf{P})f^\varepsilon\right\|_{L^2_{x,v}}^2
 +\frac{1}{\e}\left\|\langle v\rangle \nabla_v \nabla_x^{N-1}(\mathbf{I}-\mathbf{P})f^\varepsilon\right\|_{L^2_{x,v}}^2\Big).
\end{split}
\end{align}
\end{lemma}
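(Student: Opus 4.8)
The plan is to run the $\partial_x^\alpha$-energy method on the kinetic equation of \eqref{rVPB} for $|\alpha|\le N$, simultaneously with the energy identity for the Maxwell system, and to sort all remainders into the three groups on the right-hand side of \eqref{estimate without weight}. First I apply $\partial_x^\alpha$ to the first equation of \eqref{rVPB} and take the $L^2_{x,v}$ inner product with $\partial_x^\alpha f^\varepsilon$. The streaming term $\frac{1}{\varepsilon}v\cdot\nabla_x\partial_x^\alpha f^\varepsilon$ is skew-adjoint and vanishes, while $\frac{1}{\varepsilon^2}L\partial_x^\alpha f^\varepsilon$ yields, through $L\mathbf{P}f^\varepsilon=0$ and the coercivity \eqref{spectL}, the dissipation $\frac{\sigma_0}{\varepsilon^2}\|(\mathbf{I}-\mathbf{P})f^\varepsilon\|^2_{H^N_xL^2_v(\nu)}$. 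Pairing $\partial_x^\alpha E^\varepsilon$ and $\partial_x^\alpha B^\varepsilon$ with the Maxwell equations, the key is the exact cancellation of the linear field--fluid coupling: the contribution $-\langle\partial_x^\alpha(E^\varepsilon\cdot v\mu^{1/2}),\partial_x^\alpha f^\varepsilon\rangle=-\langle\partial_x^\alpha E^\varepsilon,\partial_x^\alpha b^\varepsilon\rangle$ is cancelled by the source $-\partial_x^\alpha b^\varepsilon$ of the $E^\varepsilon$-equation, and the two $\nabla_x\times$ cross-terms cancel by self-adjointness of the curl, leaving the clean $\frac{1}{2}\frac{\d}{\d t}\|(E^\varepsilon,B^\varepsilon)\|^2_{H^N_x}$.

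Next I treat the Lorentz term $(\varepsilon E^\varepsilon+v\times B^\varepsilon)\cdot\nabla_v f^\varepsilon$, which produces the velocity-growth and top-order contributions in \eqref{estimate without weight}. Since both $\varepsilon E^\varepsilon$ and $v\times B^\varepsilon$ are divergence-free in $v$, the top Leibniz piece with all $\partial_x$ on $f^\varepsilon$ is skew-adjoint in $L^2_v$ and drops out; every remaining commutator carries at least one $\partial_x$ on the field. I bound these by H\"older and Sobolev embedding, putting the low-order field factor in $L^\infty_x$ and the high-order factor in $L^2_x$, paired against $\langle v\rangle\nabla_v\partial_x^{\alpha'}(\mathbf{I}-\mathbf{P})f^\varepsilon$. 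For $|\alpha|\le N-1$, after invoking the \emph{a priori} bound $X(t)\le\delta_0$ of \eqref{priori assumption}, these give the group $\delta_0(\|\langle v\rangle\nabla_v(\mathbf{I}-\mathbf{P})f^\varepsilon\|^2_{H^{N-2}_xL^2_v}+\|\langle v\rangle(\mathbf{I}-\mathbf{P})f^\varepsilon\|^2_{H^{N-1}_xL^2_v})$. At $|\alpha|=N$, I distribute the powers of $\varepsilon$ and extract the decay $\varepsilon^3(1+t)^{1+\vartheta}\|\nabla_x[E^\varepsilon,B^\varepsilon]\|^2_{H^2_x}\lesssim\delta_0$ recorded in \eqref{X define}, which produces the critical group $\frac{\delta_0}{(1+t)^{1+\vartheta}}(\varepsilon\|\langle v\rangle\nabla_x^N(\mathbf{I}-\mathbf{P})f^\varepsilon\|^2_{L^2_{x,v}}+\frac{1}{\varepsilon}\|\langle v\rangle\nabla_v\nabla_x^{N-1}(\mathbf{I}-\mathbf{P})f^\varepsilon\|^2_{L^2_{x,v}})$.

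For the collision nonlinearity $\frac{1}{\varepsilon}\Gamma(f^\varepsilon,f^\varepsilon)$ I use that $\Gamma$ takes values in $\mathrm{N}(L)^\perp$, so only its pairing with $\partial_x^\alpha(\mathbf{I}-\mathbf{P})f^\varepsilon$ remains; the standard trilinear estimate for $\Gamma$ under the hard-potential weight $\nu$ of \eqref{nu-def}, together with the macro-micro split \eqref{fdefenjie} and $X(t)\le\delta_0$, bounds this by $\delta_0\mathcal{D}_N(t)$, the prefactor $\frac{1}{\varepsilon}$ being absorbed into the $\frac{1}{\varepsilon^2}$-dissipation. Finally, the genuinely cubic field--fluid term coming from the source $\varepsilon\frac{v}{2}\cdot E^\varepsilon f^\varepsilon$ at low order reduces, upon projecting onto $\mathbf{P}f^\varepsilon$, to the worst piece $\varepsilon\int E^\varepsilon\cdot b^\varepsilon\,(a^\varepsilon+c^\varepsilon)\,\d x$, which is neither small nor sign-definite. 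Here I substitute the local momentum balance law $E^\varepsilon=\pt_t b^\varepsilon+\frac{1}{\varepsilon}\nabla_x(a^\varepsilon+c^\varepsilon)-\mathcal{X}_{b^\varepsilon}$ from \eqref{qkj:esti:f:L^2:12} and integrate the $\pt_t b^\varepsilon$ part by parts in time, which produces (up to harmless constants) the correction functional $\varepsilon\frac{\d}{\d t}\int|b^\varepsilon|^2(a^\varepsilon+c^\varepsilon)\,\d x$ appearing on the left of \eqref{estimate without weight}; the leftover terms are cubic quantities that, after rewriting $\pt_t a^\varepsilon,\pt_t c^\varepsilon$ by the mass and energy balance laws, are absorbed into $\delta_0\mathcal{D}_N(t)$.

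The main obstacle is exactly the top-order Lorentz contribution. The resulting norms of $\nabla_x^N$ and $\nabla_v\nabla_x^{N-1}$ of the microscopic part carry genuine velocity growth $\langle v\rangle$ and cannot be absorbed by the plain dissipation $\mathcal{D}_N(t)$, because the naive decay of $\varepsilon^2\|\nabla_x B^\varepsilon\|_{L^\infty_x}^2$ is only $(1+t)^{-1}$ instead of the required $(1+t)^{-(1+\vartheta)}$. For this reason they are deliberately kept on the right-hand side of \eqref{estimate without weight} with the $(1+t)^{-(1+\vartheta)}$ factors, to be closed only later through the two weighted energy estimates based on $\overline{w}_{l_1}$ and $\widetilde{w}_{l_2}$ in \eqref{weight function} and \eqref{weight function 2}.
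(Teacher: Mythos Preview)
Your proposal is correct and follows essentially the same route as the paper: the $\partial_x^\alpha$-energy method combined with the Maxwell identity, the coercivity \eqref{spectL}, the Leibniz treatment of the Lorentz commutators via Sobolev embeddings, the trilinear $\Gamma$-estimate, and—crucially—the substitution of the momentum balance law \eqref{qkj:esti:f:L^2:12} to convert the cubic piece $\varepsilon\int E^\varepsilon\cdot b^\varepsilon(a^\varepsilon+c^\varepsilon)\,\d x$ into the time-derivative correction $\varepsilon\frac{\d}{\d t}\int|b^\varepsilon|^2(a^\varepsilon+c^\varepsilon)\,\d x$ all match the paper's argument. One small misattribution worth fixing: the top-order contribution $\varepsilon\|\langle v\rangle\nabla_x^N(\mathbf{I}-\mathbf{P})f^\varepsilon\|^2_{L^2_{x,v}}$ does \emph{not} come from the Lorentz force $(\varepsilon E^\varepsilon+v\times B^\varepsilon)\cdot\nabla_v f^\varepsilon$ (which only produces the $\nabla_v\nabla_x^{N-1}$ piece) but from the source term $\frac{\varepsilon}{2}v\cdot E^\varepsilon f^\varepsilon$ at high $|\alpha|$ with $|\alpha'|=0$ on $E^\varepsilon$ (the paper's $\mathcal{X}_{4,3}$), which you discussed only at $|\alpha|=0$.
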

\begin{proof}
The proof  of (\ref{estimate without weight}) is divided into
two steps as follows.

\emph{Step 1. $L_{x,v}^2$-estimate of $f^\e$.}\;
Taking the  $L^2(\bbR^3_{x}\times\mathbb{R}^3_{v})$ inner product of the first equation of \eqref{rVPB} with $f^\e$ and then employing (\ref{spectL}), we derive
\bal
\bsp\label{without weight-basic}
&\frac{1}{2}\frac{\d}{\d t}\left\| f^\varepsilon\right\|^2_{L^2_{x,v} } +\underbrace{\Big\langle-E^\e\cdot v \mu^{1/2}, f^\varepsilon\Big\rangle_{L^2_{x,v} } }_{\mathcal{X}_1}
+\frac{\sigma_0}{\varepsilon^2}\left\| (\mathbf{I}-\mathbf{P})f^\varepsilon\right\|^2_{L^2_{x,v}(\nu)}  \\
\lesssim\;& \underbrace{\Big\langle\frac{\e}{2}( v\cdot E^\e f^\varepsilon),  f^\varepsilon\Big\rangle_{L^2_{x,v} }}_{\mathcal{X}_2}
+\underbrace{\Big\langle\frac{1}{\varepsilon}
 \Gamma(f^\varepsilon, f^\varepsilon), f^\varepsilon\Big\rangle_{L^2_{x,v} }}_{\mathcal{X}_3} .
\esp
\eal
For the term ${\mathcal{X}_1} $, recalling the second and third equation of \eqref{rVPB}, we have
\begin{align}\nonumber
\mathcal{X}_{1}=\Big\langle E^\e, \pt_t E^\varepsilon- \nabla_x\times B^\varepsilon \Big\rangle_{L^2_{x} }
=\frac{1}{2}\frac{\d}{\d t}\left(\left\| E^\varepsilon\right\|^2_{L^2_x}+\left\| B^\varepsilon\right\|^2_{L^2_x}\right)
.
\end{align}
By the  macro-micro decomposition \eqref{fdefenjie}, we derive
\bals
\bsp
\mathcal{X}_2=
 \;&\underbrace{ \frac{\e}{2} \langle v \cdot E^\e \mathbf{P}f^\e,\mathbf{P}f^\e \rangle_{L^2_{x,v}} }_{\mathcal{X}_{2,1}}
  +\underbrace{\e \langle v \cdot E^\e \mathbf{P}f^\e,(\mathbf{I}\!-\! \mathbf{P})f^\e \rangle_{L^2_{x,v}}}_{\mathcal{X}_{2,2}}+\underbrace{\frac{\e}{2} \langle v \cdot E^\e (\mathbf{I}\!-\!\mathbf{P})f^\e,(\mathbf{I}-\mathbf{P})f^\e \rangle_{L^2_{x,v}} }_{\mathcal{X}_{2,3}}.
\esp
\eals
Then we derive from   \eqref{defination:Pf}, (\ref{qkj:esti:f:L^2:12}), \eqref{qkj:esti:f:L^2:12-1}  and the H\"{o}lder inequality that
\bals
\bsp
\mathcal{X}_{2,1}
=\;&-\frac{\e}{2}\iint_{\mathbb{R}^3\times\mathbb{R}^3}    v \cdot
E^\e \Big[\Big(a^\e+b^\e\cdot v+c^\e\frac{|v|^2-3}{2}\Big)\sqrt{\mu}\Big]^2  \d x\d v \\
=\;&-\e\int_{\bbR^3}    (a^\e+c^\e)b^\e\cdot E^\e    \d x \\
\le\;&\int_{\bbR^3}  \e  (a^\e+c^\e)b^\e\cdot\pt_t b^\e\d +\int_{\bbR^3}\left| (a^\e+c^\e)b^\e\cdot\left[\nabla_x(a^\e+c^\e)+\e\mathcal{X}_{b^\e}\right]\right|\d x\\
 \le\;&-\frac{\e}{2}\frac{\d}{\d t}\int_{\bbR^3}   (a^\e+c^\e)|b^\e|^2\d x+C\int_{\bbR^3}  \e \Big(|\mathcal{X}_{c^\e}|+\frac{1}{ \e}|\nabla_x\cdot b^\e|\Big)  |{b^\e}|^2\d x\\
\;& +\int_{\bbR^3}\left|  (a^\e+c^\e)b^\e\cdot\nabla_x(a^\e+c^\e)\right|\d x
+\int_{\bbR^3} \left| \e (a^\e+c^\e)b^\e\cdot\mathcal{X}_{b^\e}\right|\d x\\
 \leq\;& -\frac{\e}{2}\frac{\d}{\d t}\int_{\bbR^3}    (a^\e+c^\e)|b^\e|^2\d x
 +C\big[\mathcal{E}_N(t)\big]^{\frac{1}{2}} \mathcal{D}_N(t)+C \mathcal{E}_N(t) \mathcal{D}_N(t),
\esp
\eals
where the fourth inequality   above is derived from the integrating by parts with respect to $t$.
Further, thanks to the H\"{o}lder inequality, the Sobolev embedding inequality and the Cauchy--Schwarz inequality with $\eta$, we find
\bals
\bsp
\;&\mathcal{X}_{2,2}+\mathcal{X}_{2,3}\\
\lesssim\;&\e\|E^\e\|_{L_x^3} \|\mathbf{P}f^\e\|_{L^6_xL^2_v}\|(\mathbf{I}-\mathbf{P})f^\e\|_{L_{x,v}^2(\nu)}+\e\|E^\e\|_{L^\infty_x}
\|\langle v\rangle (\mathbf{I}-\mathbf{P})f^\e\|_{L^2_{x,v}}
\| (\mathbf{I}-\mathbf{P})f^\e\|_{L^2_{x,v}}\\
\lesssim\;&\e\big[\mathcal{E}_N(t)\big]^{\frac{1}{2}}\mathcal{D}_N(t)
+\e^4\|\nabla_xE^\e\|_{H^1_x}^2
\|\langle v\rangle (\mathbf{I}-\mathbf{P})f^\e\|_{L^2_{x,v}}^2
+\frac{\eta}{\e^2}\| (\mathbf{I}-\mathbf{P})f^\e\|_{L^2_{x,v}}^2,
\esp
\eals
where here and below $\1>0$ is a sufficiently small constant.
Here we have used  the Nirenberg inequality
\begin{equation}\label{NGinequality}
\| f \|_{L_x^\infty}\lesssim \|\nabla_x f \|_{L_x^2}^{\frac{1}{2}}\|\nabla_x^2 f \|_{L_x^2}^{\frac{1}{2}}\lesssim \|\nabla_x f \|_{H_x^1}.
\end{equation}
Therefore, gathering the above bounds of $\mathcal{X}_{2,1}$,  $\mathcal{X}_{2,2}$ and  $\mathcal{X}_{2,3}$  together yields
\bals
\bsp
\mathcal{X}_{2}\leq\;&-\frac{\e}{2}\frac{\d}{\d t}\int_{\bbR^3}    (a^\e+c^\e)|b^\e|^2\d x
 +C\big[\mathcal{E}_N(t)\big]^{\frac{1}{2}} \mathcal{D}_N(t)+C \mathcal{E}_N(t) \mathcal{D}_N(t)
 \\
\;&
+C\e^4\|\nabla_xE^\e\|_{H^1_x}^2\|\langle v\rangle (\mathbf{I}-\mathbf{P})f^\e\|_{L^2_{x,v}}^2
+\frac{C\eta}{\e^2}\| (\mathbf{I}-\mathbf{P})f^\e\|_{L^2_{x,v}(\nu)}^2.
\esp
\eals
For the term $\mathcal{X}_3$,
the collision symmetry of $\Gamma $ and \eqref{hard gamma} implies
\bals
\bsp
\mathcal{X}_3=\Big\langle\frac{1}{\e}\Gamma(f^\e,f^\e), (\mathbf{I}-\mathbf{P})f^\e \Big\rangle_{L_{x,v}^2}
\lesssim &\;
\frac{1}{\e}
\|f^\e\|_{L^\infty_x L^2_v(\nu)}
\|f^\e\|_{L^2_{x,v}}\|(\mathbf{I}-\mathbf{P})f^\e\|_{L^2_{x,v}(\nu)}\\
\lesssim &\;
\big[\mathcal{E}_N(t)\big]^{\frac{1}{2}}\mathcal{D}_N(t).
\esp
\eals

Consequently, putting   the above estimates of $\mathcal{X}_1$, $\mathcal{X}_2$ and $\mathcal{X}_3$ into \eqref{without weight-basic} and  choosing $\eta$ small enough, we  conclude
\bal
\bsp\label{without weight-basic-result}
\;&\frac{\d}{\d t}\left[\left\| f^\varepsilon\right\|^2_{L^2_{x,v} } +\left\| E^\varepsilon\right\|^2_{L^2_x}+\left\| B^\varepsilon\right\|^2_{L^2_x}\right]+\e\frac{\d}{\d t}\int_{\bbR^3}    (a^\e\!+\!c^\e)|b^\e|^2\d x+\frac{\sigma_0}{\varepsilon^2}\left\| (\mathbf{I}\!-\!\mathbf{P})f^\varepsilon\right\|^2_{L^2_{x,v}(\nu)}\\
\lesssim \;&\big[\mathcal{E}_N(t)\big]^{\frac{1}{2}} \mathcal{D}_N(t)+ \mathcal{E}_N(t) \mathcal{D}_N(t)
+\e^4\|\nabla_xE^\e\|_{H^1_x}^2
\|\langle v\rangle (\mathbf{I}-\mathbf{P})f^\e\|_{L^2_{x,v}}^2.
\esp
\eal

\emph{Step 2. $L_{x,v}^2$-estimate of the pure space derivative  $\partial_x^\a f^\e$ $(1\leq|\a|\leq N).$\;}\;
Applying $\partial^\alpha_x$ with $1\leq | \alpha | \leq N $ to  the first equation of \eqref{rVPB}  and taking the inner product with $  \partial^\alpha_xf^\varepsilon $ over $\mathbb{R}_x^3$ $\times$  $\mathbb{R}_v^3$, we obtain by employing the coercivity of $ L$ in (\ref{spectL}) that
\bal
\bsp\label{without weight}
&\frac{1}{2}\frac{\d}{\d t}\left[\left\| \partial^\a_xf^\varepsilon\right\|^2_{L^2_{x,v} } +
\left\| \partial^\alpha_xE^\varepsilon\right\|^2_{L^2_x}+\left\| \partial^\alpha_xB^\varepsilon\right\|^2_{L^2_x}\right]
+\frac{\sigma_0}{\varepsilon^2}\left\|\partial^\alpha_x (\mathbf{I}-\mathbf{P})f^\varepsilon\right\|^2_{L^2_{x,v}(\nu)}  \\
\lesssim\;& \underbrace{ \frac{\e}{2}\sum_{0\leq|\a'|\leq|\a|}\!\!\! \Big\langle v \cdot\partial^{\alpha'}_x E^\e \partial^{\alpha-\alpha'}_x f^\e,\partial^\alpha_x f^\e \Big\rangle_{L^2_{x,v}}}_{\mathcal{X}_4}
+\underbrace{\sum_{1\leq|\a'|\leq|\a|}\!\!\! \Big\langle\e\partial^{\alpha'}_x E^{\e} \cdot \nabla_v\partial^{\alpha-\alpha'}_x f^\varepsilon,  \partial^\alpha_x f^\varepsilon\Big\rangle_{L^2_{x,v} }}_{\mathcal{X}_5}
  \\
&+\underbrace{\sum_{1\leq|\a'|\leq|\a|} \Big\langle v\times \partial^{\alpha'}_xB^\e \cdot \nabla_v \partial^{\alpha-\alpha'}_x f^\varepsilon,  \partial^\alpha_x f^\varepsilon\Big\rangle_{L^2_{x,v} }}_{\mathcal{X}_6}+\underbrace{\Big\langle\frac{1}{\varepsilon}
\partial^\alpha_x \Gamma(f^\varepsilon, f^\varepsilon),\partial^\alpha_x f^\varepsilon\Big\rangle_{L^2_{x,v} }}_{\mathcal{X}_7} .
\esp
\eal
For the term ${\mathcal{X}_4} $, recalling the macro-micro decomposition \eqref{fdefenjie}, we have
\bals
\bsp
\mathcal{X}_4
 =\;&\underbrace{ \frac{\e}{2}\sum_{0\leq|\a'|\leq|\a|} \Big\langle v \cdot\partial^{\alpha'}_x E^\e \partial^{\alpha-\alpha'}_x \mathbf{P}f^\e,\partial^\alpha_x f^\e \Big\rangle_{L^2_{x,v}} }_{\mathcal{X}_{4,1}}\\
 \;&+
  \underbrace{\frac{\e}{2}\sum_{0\leq|\a'|\leq|\a|} \Big\langle v \cdot\partial^{\alpha'}_x E^\e \partial^{\alpha-\alpha'}_x (\mathbf{I}-\mathbf{P})f^\e,\partial^\alpha_x \mathbf{P}f^\e \Big\rangle_{L^2_{x,v}}}_{\mathcal{X}_{4,2}}\\
              \;&+\underbrace{\frac{\e}{2}\sum_{0\leq|\a'|\leq|\a|} \Big\langle v \cdot\partial^{\alpha'}_x E^\e \partial^{\alpha-\alpha'}_x (\mathbf{I}-\mathbf{P})f^\e,\partial^\alpha_x (\mathbf{I}-\mathbf{P})f^\e \Big\rangle_{L^2_{x,v}}}_{\mathcal{X}_{4,3}}.
\esp
\eals
Utilizing the H\"{o}lder inequality, \eqref{NGinequality} and the Sobolev embedding $H^1(\mathbb{R}^3)\hookrightarrow L^3(\mathbb{R}^3)$, we arrive at
\bals
\bsp
\mathcal{X}_{4,1}\lesssim\;&
\e\left(\| E^\e\|_{L^\infty_x}
\| \partial^{\alpha}_x  \mathbf{P} f^\e\|_{L^2_{x,v}}
+\|\partial^{\alpha}_x E^\e\|_{L^2_x}
\|  \mathbf{P}f^\e\|_{L^\infty_{x}L^2_{v}}\right)
\| \partial^{\alpha}_xf^\e\|_{L^2_{x,v}}\\
\;&
+
\e\sum_{1\leq|\a'|\leq |\a|-1} \|\partial^{\alpha'}_x E^\e\|_{L^3_x}
\| \partial^{\alpha-\alpha'}_x  \mathbf{P} f^\e\|_{L^6_{x}L^2_{v}}
\| \partial^{\alpha}_xf^\e\|_{L^2_{x,v}}\\
\lesssim\;&
\e\| \nabla_xE^\e\|_{H^{N-1}_x}
\|  \nabla_x \mathbf{P}f^\e\|_{H^{N-1}_{x}L^2_{v}}
\| \partial^{\alpha}_xf^\e\|_{L^2_{x,v}}\\
\lesssim\;&\big[\mathcal{E}_N(t)\big]^{\frac{1}{2}} \mathcal{D}_N(t).
\esp
\eals
Applying the analogous argument of the estimate of $\mathcal{X}_{4,1}$, we have
\bals
\bsp
\mathcal{X}_{4,2}
\lesssim
\e\| \nabla_xE^\e\|_{H^{N-1}_x}
\|  \nabla_x (\mathbf{I}-\mathbf{P})f^\e\|_{H^{N-1}_{x}L^2_{v}(\nu)}
\| \partial^{\alpha}_x\mathbf{P}f^\e\|_{L^2_{x,v}}
\lesssim\big[\mathcal{E}_N(t)\big]^{\frac{1}{2}} \mathcal{D}_N(t).
\esp
\eals
For the term $\mathcal{X}_{4,3}$, we deduce from
the H\"{o}lder inequality, \eqref{NGinequality}, the Sobolev embeddings $H^1(\mathbb{R}^3)$$\hookrightarrow L^3(\mathbb{R}^3)$ and $H^1(\mathbb{R}^3)\hookrightarrow L^6(\mathbb{R}^3)$ that
\bals
\bsp
\mathcal{X}_{4,3}\lesssim\;&
\e
\Big[\sum_{0\leq|\a'|\leq 1}  \|\partial^{\alpha'}_x E^\e\|_{L^\infty_x}
\|\langle v\rangle \partial^{\alpha-\alpha'}_x  (\mathbf{I}-\mathbf{P})f^\e\|_{L^2_{x,v}}
+\|\partial^{\alpha}_x E^\e\|_{L^2_x}
\|\langle v\rangle  (\mathbf{I}-\mathbf{P})f^\e\|_{L^\infty_{x}L^2_{v}}
\\
\;&\quad+\sum_{2\leq|\a'|\leq |\a|-1} \|\partial^{\alpha'}_x E^\e\|_{L^6_x}
\|\langle v\rangle \partial^{\alpha-\alpha'}_x  (\mathbf{I}-\mathbf{P})f^\e\|_{L^3_{x}L^2_{v}}\Big]
\| \partial^{\alpha}_x(\mathbf{I}-\mathbf{P})f^\e\|_{L^2_{x,v}}\\
\lesssim\;& \e^4\|\nabla_xE^\e\|_{H^1_x}^2
\|\langle v\rangle \partial_x^\a  (\mathbf{I}-\mathbf{P})f^\e\|_{L^2_{x,v}}^2
+\mathcal{E}_N(t)
\|\langle v\rangle   (\mathbf{I}-\mathbf{P})f^\e\|_{H^{N-1}_{x}L^2_{v}}^2
\\
\;&+\frac{\eta}{\e^2}
\| \partial^{\alpha}_x(\mathbf{I}-\mathbf{P})f^\e\|_{L^2_{x,v}}^2.
\esp
\eals
Via the summarization of the bounds of $\mathcal{X}_{4,i}(i=1,2,3)$, we find
\bals
\bsp
\mathcal{X}_{4}\lesssim\;&\big[\mathcal{E}_N(t)\big]^{\frac{1}{2}} \mathcal{D}_N(t)+
\e^4\|\nabla_xE^\e\|_{H^1_x}^2
\|\langle v\rangle \partial_x^\a  (\mathbf{I}-\mathbf{P})f^\e\|_{L^2_{x,v}}^2
+\mathcal{E}_N(t)
\|\langle v\rangle   (\mathbf{I}-\mathbf{P})f^\e\|_{H^{N-1}_{x}L^2_{v}}^2\\
\;&
+\frac{\eta}{\e^2}
\| \partial^{\alpha}_x(\mathbf{I}-\mathbf{P})f^\e\|_{L^2_{x,v}}^2.
\esp
\eals

For the term ${\mathcal{X}_5} $, utilizing \eqref{fdefenjie}, we have
\bals
\bsp
\mathcal{X}_5
=\;&\underbrace{\!\!\!\sum_{1\leq|\a'|\leq|\a|}\!\!\! \Big\langle\e\partial^{\alpha'}_x E^{\e} \cdot \nabla_v\partial^{\alpha-\alpha'}_x f^\varepsilon,  \partial^\alpha_x (\mathbf{I}\!-\!\mathbf{P}) f^\varepsilon \Big\rangle_{L^2_{x,v} }}_{\mathcal{X}_{5,1}}
\underbrace{ \!- \! \!\!\!\sum_{1\leq|\a'|\leq|\a|}\!\!\!\left\langle\e\partial^{\alpha'}_x E^{\e}  \partial^{\alpha-\alpha'}_x f^\varepsilon,  \nabla_v\partial^\alpha_x \mathbf{P} f^\varepsilon\right\rangle_{L^2_{x,v} }}_{\mathcal{X}_{5,2}}.
\esp
\eals
It is derived from  the H\"{o}lder inequality, \eqref{NGinequality}, the Sobolev embeddings $H^1(\mathbb{R}^3)\hookrightarrow L^3(\mathbb{R}^3)$ and $H^1(\mathbb{R}^3)\hookrightarrow L^6(\mathbb{R}^3)$,  \eqref{fdefenjie} and the Cauchy--Schwarz inequality with $\eta$  that
\bals
\bsp
\mathcal{X}_{5,1}
\lesssim\;&\e\Big[\sum_{1\leq|\a'|\leq2}\|\partial^{\alpha'}_x E^{\e}\|_{L^\infty_{x}} \cdot \|\nabla_v\partial^{\alpha-\alpha'}_x f^\varepsilon\|_{L^2_{x,v}}+\|\partial_x^\a E^{\e}\|_{L^2_{x}} \cdot \|\nabla_vf^\varepsilon\|_{L^\infty_{x}L^2_{v}}\\
\;&
\quad+
\sum_{3\leq|\a'|\leq|\a|-1} \|\partial^{\alpha'}_xE^{\e}\|_{L^6_{x}} \cdot \|\nabla_v\partial^{\alpha-\a'}_x f^\varepsilon\|_{L^3_{x}L^2_{v}}
\Big]\| \partial^{\alpha}_x(\mathbf{I}-\mathbf{P})f^\e\|_{L^2_{x,v}}\\
\lesssim\;&\sum_{|\a'|=1}\e\|\nabla_x E^{\e}\|_{H^2_{x}}
\left(
\|\nabla_v\partial^{\alpha-\a'}_x(\mathbf{I}-\mathbf{P}) f^\varepsilon\|_{L^2_{x,v}}+
\|\nabla_v\partial^{\alpha-\a'}_x\mathbf{P}f^\varepsilon\|_{L^2_{x,v}}\right)
\| \partial^{\alpha}_x(\mathbf{I}-\mathbf{P})f^\e\|_{L^2_{x,v}}\\
\;&
+\e\| E^{\e}\|_{H^N_{x}}\left(
\|\nabla_v\nabla_x(\mathbf{I}-\mathbf{P}) f^\varepsilon\|_{H^{N-3}_{x}L^2_v}
+\|\nabla_v\nabla_x\mathbf{P} f^\varepsilon\|_{H^{N-3}_{x}L^2_v}
\right)
\| \partial^{\alpha}_x(\mathbf{I}-\mathbf{P}) f^\e\|_{L^2_{x,v}}\\
\lesssim\;&\e^4\|\nabla_x E^{\e}\|_{H^2_{x}}^2\sum_{|\a'|=1}\|\nabla_v\partial^{\alpha-\a'}_x(\mathbf{I}-\mathbf{P}) f^\varepsilon\|_{L^2_{x,v}}^2
+\e^4\| E^{\e}\|_{H^N_{x}}^2
\|\nabla_v(\mathbf{I}-\mathbf{P}) f^\varepsilon\|_{H^{N-2}_{x}L^2_v}^2\\
\;&+\frac{\eta}{\e^2}
\| \partial^{\alpha}_x(\mathbf{I}-\mathbf{P})f^\e\|_{L^2_{x,v}}^2+\big[\mathcal{E}_N(t)\big]^{\frac{1}{2}} \mathcal{D}_N(t).
\esp
\eals
Similarly,  the quantity $\mathcal{X}_{5,2}$ is controlled by
\bals
\bsp
\mathcal{X}_{5,2}
\lesssim\;&\e\Big[\sum_{1\leq|\a'|\leq|\a|-1}\|\partial^{\alpha'}_x E^{\e}\|_{L^3_{x}} \cdot \|\partial^{\alpha-\alpha'}_x f^\varepsilon\|_{L^6_{x}L^2_{v}}
+\|\partial_x^\a E^{\e}\|_{L^2_{x}} \cdot \|f^\varepsilon\|_{L^\infty_{x}L^2_{v}}\Big]\| \partial^{\alpha}_x\mathbf{P}f^\e\|_{L^2_{x,v}}\\
\lesssim\;&\e\| E^{\e}\|_{H^N_{x}}
\|\nabla_x f^\varepsilon\|_{H^{N-1}_{x}L^{2}_{v}}\| \partial^{\alpha}_x\mathbf{P}f^\e\|_{L^2_{x,v}}
\\
\lesssim\;&\big[\mathcal{E}_N(t)\big]^{\frac{1}{2}} \mathcal{D}_N(t).
\esp
\eals
Thus, collecting the above bounds of $\mathcal{X}_{5,1}$ and $\mathcal{X}_{5,2}$   shows us  that
\bals
\bsp
\mathcal{X}_{5}
\lesssim\;&\e^4\|\nabla_x E^{\e}\|_{H^2_{x}}^2\sum_{|\a'|=1}\|\nabla_v\partial^{\alpha-\a'}_x(\mathbf{I}-\mathbf{P}) f^\varepsilon\|_{L^2_{x,v}}^2
+\e^4\| E^{\e}\|_{H^N_{x}}^2
\|\nabla_v(\mathbf{I}-\mathbf{P}) f^\varepsilon\|_{H^{N-2}_{x}L^2_v}^2\\
\;&+\frac{\eta}{\e^2}
\| \partial^{\alpha}_x(\mathbf{I}-\mathbf{P})f^\e\|_{L^2_{x,v}}^2+\big[\mathcal{E}_N(t)\big]^{\frac{1}{2}} \mathcal{D}_N(t).
\esp
\eals

For the term ${\mathcal{X}_6} $, using \eqref{fdefenjie} gives rise to
\bals
\bsp
\mathcal{X}_6
=\;&\underbrace{\sum_{1\leq|\a'|\leq|\a|} \Big\langle v\times \partial^{\alpha'}_xB^\e \cdot \nabla_v \partial^{\alpha-\alpha'}_x f^\varepsilon,  \partial^\alpha_x(\mathbf{I}-\mathbf{P}) f^\varepsilon\Big\rangle_{L^2_{x,v} }}_{\mathcal{X}_{6,1}}
\\
\;&\underbrace{-
\sum_{1\leq|\a'|\leq|\a|} \Big\langle v\times \partial^{\alpha'}_xB^\e \cdot \partial^{\alpha-\alpha'}_x f^\varepsilon,  \partial^\alpha_x\nabla_v \mathbf{P} f^\varepsilon\Big\rangle_{L^2_{x,v} }.}_{\mathcal{X}_{6,2}}
\esp
\eals
Applying the analogous arguments  of the estimates of ${\mathcal{X}_{5,1}}$ and ${\mathcal{X}_{5,2}}$, we have
\bals
\bsp
\mathcal{X}_{6,1}
\lesssim\;&\Big[\sum_{1\leq|\a'|\leq2}\|\partial^{\alpha'}_x B^{\e}\|_{L^\infty_{x}} \cdot \|\langle v\rangle\nabla_v\partial^{\alpha-\alpha'}_x f^\varepsilon\|_{L^2_{x,v}}+\|\partial_x^\a B^{\e}\|_{L^2_{x}} \cdot \|\langle v\rangle\nabla_vf^\varepsilon\|_{L^\infty_{x}L^2_{v}}\\
\;&
+
\sum_{3\leq|\a'|\leq|\a|-1} \|\partial^{\alpha'}_xB^{\e}\|_{L^6_{x}} \cdot \|\langle v\rangle\nabla_v\partial^{\alpha-\a'}_x f^\varepsilon\|_{L^3_{x}L^2_{v}}
\Big]\| \partial^{\alpha}_x(\mathbf{I}-\mathbf{P})f^\e\|_{L^2_{x,v}}\\
\lesssim\;&\|\nabla_x B^{\e}\|_{H^2_{x}}
\sum_{|\a'|=1}\!\!\left(
\|\langle v\rangle\nabla_v\partial^{\alpha-\a'}_x(\mathbf{I}-\mathbf{P}) f^\varepsilon\|_{L^2_{x,v}}+
\|\partial^{\alpha-\a'}_x\mathbf{P}f^\varepsilon\|_{L^2_{x,v}}\right)
\| \partial^{\alpha}_x(\mathbf{I}-\mathbf{P})f^\e\|_{L^2_{x,v}}\\
\;&
+\| B^{\e}\|_{H^N_{x}}\left(
\|\langle v\rangle\nabla_v\nabla_x(\mathbf{I}-\mathbf{P}) f^\varepsilon\|_{H^{N-3}_{x}L^2_v}
+\|\nabla_x\mathbf{P} f^\varepsilon\|_{H^{N-3}_{x}L^2_v}
\right)
\| \partial^{\alpha}_x(\mathbf{I}-\mathbf{P}) f^\e\|_{L^2_{x,v}}\\
\lesssim\;&\e^2 \|\nabla_x B^{\e}\|_{H^2_{x}}^2\sum_{|\a'|=1}
\|\langle v\rangle\nabla_v\partial^{\alpha-\a'}_x(\mathbf{I}-\mathbf{P}) f^\varepsilon\|_{L^2_{x,v}}^2
+\e^2\| B^{\e}\|_{H^N_{x}}^2
\|\langle v\rangle\nabla_v(\mathbf{I}-\mathbf{P}) f^\varepsilon\|_{H^{N-2}_{x}L^2_v}^2\\
\;&+\frac{\eta}{\e^2}
\| \partial^{\alpha}_x(\mathbf{I}-\mathbf{P})f^\e\|_{L^2_{x,v}}^2+\big[\mathcal{E}_N(t)\big]^{\frac{1}{2}} \mathcal{D}_N(t),
\esp
\eals
and
\bals
\bsp
\mathcal{X}_{6,2}
\lesssim\;&\| B^{\e}\|_{H^N_{x}}
\|\nabla_x f^\varepsilon\|_{H^{N-1}_{x}L^{2}_{v}}\| \partial^{\alpha}_x\mathbf{P}f^\e\|_{L^2_{x,v}}
\lesssim\big[\mathcal{E}_N(t)\big]^{\frac{1}{2}} \mathcal{D}_N(t).
\esp
\eals
As a result, collecting  the above estimates of $\mathcal{X}_{6,1}$ and $\mathcal{X}_{6,2}$   leads  to
\bals
\bsp
\mathcal{X}_{6}
\lesssim\;&\e^2\|\nabla_x B^{\e}\|_{H^2_{x}}^2\sum_{|\a'|=1}
\|\langle v\rangle\nabla_v\partial^{\alpha-\a'}_x(\mathbf{I}-\mathbf{P}) f^\varepsilon\|_{L^2_{x,v}}^2
+\e^2\| B^{\e}\|_{H^N_{x}}^2
\|\langle v\rangle\nabla_v(\mathbf{I}-\mathbf{P}) f^\varepsilon\|_{H^{N-2}_{x}L^2_v}^2\\
\;&+\frac{\eta}{\e^2}
\| \partial^{\alpha}_x(\mathbf{I}-\mathbf{P})f^\e\|_{L^2_{x,v}}^2+\big[\mathcal{E}_N(t)\big]^{\frac{1}{2}} \mathcal{D}_N(t).
\esp
\eals
For the term $\mathcal{X}_7$,
employing \eqref{hard gamma} and the Sobolev embeddings theory  implies
\bals
\bsp
\mathcal{X}_7
\lesssim &\;
\frac{1}{\e}\Big[
\|f^\e\|_{L^\infty_x L^2_v(\nu)}
\|\partial^{\alpha}_xf^\e\|_{L^2_{x,v}}
+
\sum_{1\leq|\a'|\leq|\a|-1} \|\partial^{\alpha'}_xf^\e\|_{L^6_x L^2_v(\nu)}
\|\partial^{\alpha-\a'}_xf^\e\|_{L^3_{x}L^2_{v}}\\
&\;\quad
+\|\partial^{\alpha}_xf^\e\|_{L^2_{x,v}(\nu)}
\|f^\e\|_{L^\infty_{x}L^2_{v}}\Big]
\|\partial^{\alpha}_x(\mathbf{I}-\mathbf{P})f^\e\|_{L^2_{x,v}(\nu)}\\
\lesssim\;&
\big[\mathcal{E}_N(t)\big]^{\frac{1}{2}}\mathcal{D}_N(t).
\esp
\eals

Consequently, substituting all the estimates of $\mathcal{X}_4\sim\mathcal{X}_7$
into \eqref{without weight} and choosing $\eta$ small enough and then  summing  the resulting inequality over $1\leq|\alpha| \leq N$, we obtain
\bal
\bsp\label{without weight-higher}
&\frac{\d}{\d t}\Big[\!\!\!\sum_{
1\leq|\a|\leq N}\!\!\!\left\| \partial^\a_xf^\varepsilon\right\|^2_{L^2_{x,v} } +\!\!\!
\sum_{1\leq|\a|\leq N}\!\!\!
\left\| \partial^\alpha_x(E^\varepsilon,B^\e)\right\|^2_{L^2_x}\Big]
+\frac{\sigma_0}{\varepsilon^2}\sum_{1\leq|\a|\leq N}\left\|\partial^\alpha_x (\mathbf{I}-\mathbf{P})f^\varepsilon\right\|^2_{L^2_{x,v}(\nu)}  \\
\lesssim\;&
\big[\mathcal{E}_N(t)\big]^{\frac{1}{2}} \mathcal{D}_N(t)
+\mathcal{E}_N(t)
\left(
\|\langle v\rangle\nabla_v(\mathbf{I}-\mathbf{P}) f^\varepsilon\|_{H^{N-2}_{x}L^2_v}^2
+
\|\langle v\rangle   (\mathbf{I}-\mathbf{P})f^\e\|_{H^{N-1}_{x}L^2_{v}}^2\right)\\
\;&
+\!\e^3\|\nabla_x [E^{\e}, B^{\e}]\|_{H^2_{x}}^2\Big(\e\left\|\langle v\rangle \nabla_x^N(\mathbf{I}-\mathbf{P})f^\varepsilon\right\|_{L^2_{x,v}}^2
 +\frac{1}{\e}\left\|\langle v\rangle \nabla_v \nabla_x^{N-1}(\mathbf{I}-\mathbf{P})f^\varepsilon\right
 \|_{L^2_{x,v}}^2\Big)
.
\esp
\eal
In summary, adding
\eqref{without weight-basic-result} and  \eqref{without weight-higher} together and  employing the \emph{a priori} assumption  \eqref{priori assumption} lead  to
\eqref{estimate without weight}. This completes the proof of Lemma \ref{energy estimate without weight}.
\end{proof}

With  Lemma  \ref{macroscopic estimate}  and Lemma \ref{energy estimate without weight} in hand,   we are now in the position to complete the proof of Proposition \ref{result-basic energy estimate}.
\begin{proof}[\textbf{Proof of Proposition \ref{result-basic energy estimate}}]
Choosing $0<\eta_1\ll 1$ and $\delta_0$    small enough and taking the proper linear combination of  \eqref{macro estimate}$\times \eta_1+$(\ref{estimate without weight}) induce  that
\begin{align}
\begin{split}\label{basic energy estimate}
\frac{\d}{\d t}\mathcal{E}_{N}(t)\;&+\mathcal{D}_{N}(t)
\lesssim\delta_0\left( \left\|\langle v\rangle \nabla_v(\mathbf{I}-\mathbf{P})f^\varepsilon\right\|_{H^{N-2}_xL^2_v}^2
+\left\|\langle v\rangle (\mathbf{I}-\mathbf{P})f^\varepsilon\right\|_{H^{N-1}_xL^2_v}^2\right)\\
 \;&
 +\frac{\delta_0}{(1+t)^{1+\vartheta}}\Big(\e\left\|\langle v\rangle \nabla_x^N(\mathbf{I}-\mathbf{P})f^\varepsilon\right\|_{L^2_{x,v}}^2
 +\frac{1}{\e}\left\|\langle v\rangle \nabla_v \nabla_x^{N-1}(\mathbf{I}-\mathbf{P})f^\varepsilon\right\|_{L^2_{x,v}}^2\Big),
\end{split}
\end{align}
where ${\mathcal{E}}_N (t)$ comes from the energy given in  \eqref{macro energy2} and   (\ref{estimate without weight}), as follows
\bal
\bsp\label{eq:energy:estimate:result-hard-1}
{\mathcal{E}}_N (t):=\;&\left\| f^\varepsilon\right\|^2_{H^N_xL^2_v}+\left\| E^\varepsilon \right\|^2_{H^N_x}
+\left\| B^\varepsilon\right\|^2_{H^N_x}+\eta_1\e{\mathcal{E}}_{int}^N (t).
\esp
\eal
By referring back to  \eqref{dissipation functional},
\eqref{dissipation functional 2-N}
 and \eqref{hard assumption}, we can easily find that
\bal
\bsp\label{basic energy estimate-1}
&\left\|\langle v\rangle \nabla_v(\mathbf{I}-\mathbf{P})f^\varepsilon\right\|_{H^{N-2}_xL^2_v}^2
+\left\|\langle v\rangle (\mathbf{I}-\mathbf{P})f^\varepsilon\right\|_{H^{N-1}_xL^2_v}^2
\leq
\overline{\mathcal{D}}_{N-1,4N}(t)\leq
{\overline{\mathcal{D}}}_{N-1,l_1}(t)
,\\
&\e\left\|\langle v\rangle \nabla_x^N(\mathbf{I}-\mathbf{P})f^\varepsilon\right\|_{L^2_{x,v}}^2
\leq
(1+t)^{\frac{1+\vartheta}{2}}{\widetilde{\mathcal{D}}}_{N,l_2}(t),\\
&\frac{1}{\e}\left\|\langle v\rangle \nabla_v \nabla_x^{N-1}(\mathbf{I}-\mathbf{P})
 f^\varepsilon\right\|_{L^2_{x,v}}^2
\leq\e
(1+t)^{1+\vartheta}{\widetilde{\mathcal{D}}}_{N,l_2}(t).
\esp
\eal
Hence, putting \eqref{basic energy estimate-1} into \eqref{basic energy estimate} yields the desired estimate \eqref{basic-energy-estimate-result}.
This completes the proof of  Proposition \ref{result-basic energy estimate}.
\end{proof}

\subsection{Weighted Energy Estimate}
\hspace*{\fill}

In this subsection, we establish  the  energy estimates  for the VMB system   \eqref{rVPB}  with different weight functions $\overline{w}_{l_1}(\alpha, \beta)$ and $\widetilde{w}_{l_2}(\alpha, \beta)$.
 First of all, we establish the following  the  energy estimate    with the weight  $\overline{w}_{l_1}(\alpha, \beta)$ for $|\a|+|\beta|\leq N-1$.
\begin{proposition}\label{weighted 1}Let $N, l_1,
 l_2$ be fixed parameters  stated in Theorem \ref{mainth1}.
 Assume that $ (f^\e, E^\e, B^\e) $ is a solution to the VMB system  \eqref{rVPB} defined on $ [0, T] \times \mathbb{R}^3 \times \mathbb{R}^3$
and the \emph{a priori} assumption  \eqref{priori assumption} holds true for $\delta_0$ small enough. Then, there holds
\begin{align}\label{weight estimate1}
\begin{split}
&\frac{\d}{\d t}{\overline{\mathcal{E}}}_{N-1,l_1}(t)+{\overline{\mathcal{D}}}_{N-1,l_1}(t)
\\
\lesssim\;&
 \delta_0(1+t)^{-{\frac{1+s}{2}}(l_2-l_1)}\frac{1}{\e^2}
 \sum_{{|\a|+|\b|\leq N-1,|\a|\geq 1}}
 \!\!\!\!
\big\|\widetilde{w}_{l_2}(|\a|-1,|\b|+1)\nabla_v\partial_{\b}^{\a-\a'}(\mathbf{I}-\mathbf{P})
f^\varepsilon\big\|_{L^2_{x,v}}^2\\
\;&+(1+\delta_0)\mathcal{D}_{N}(t)
 .
\end{split}
\end{align}
Here, ${\overline{\mathcal{E}}}_{N-1,l_1}(t)$ and ${\overline{\mathcal{D}}}_{N-1,l_1}(t)$ are defined in \eqref{energy functional} and \eqref{dissipation functional}, respectively.
\end{proposition}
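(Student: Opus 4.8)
The plan is to run a velocity-weighted $\partial_\beta^\alpha$-energy estimate on the microscopic part $(\mathbf{I}-\mathbf{P})f^\varepsilon$. First I would apply $\mathbf{I}-\mathbf{P}$ to the first equation of \eqref{rVPB}; since $Lf^\varepsilon=L(\mathbf{I}-\mathbf{P})f^\varepsilon$ and $v\mu^{1/2}\in\mathrm{N}(L)$ forces $(\mathbf{I}-\mathbf{P})(E^\varepsilon\cdot v\mu^{1/2})=0$, this yields a transport equation for $(\mathbf{I}-\mathbf{P})f^\varepsilon$ in which the macroscopic coupling terms (the commutators of $\mathbf{P}$ with $\partial_t$ and of $\mathbf{I}-\mathbf{P}$ with $\frac1\varepsilon v\cdot\nabla_x$) are expressed through $(a^\varepsilon,b^\varepsilon,c^\varepsilon)$ and their derivatives and, after invoking the local conservation laws \eqref{qkj:esti:f:L^2:12}, are all controllable by $\mathcal{D}_N(t)$. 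Next I apply $\partial_\beta^\alpha$ with $|\alpha|+|\beta|\le N-1$, take the $L^2_{x,v}$ inner product against $\overline{w}_{l_1}^2(\alpha,\beta)\partial_\beta^\alpha(\mathbf{I}-\mathbf{P})f^\varepsilon$, and sum. Two structural gains appear at once: the time derivative, after differentiating the factor $e^{q\langle v\rangle^2/(1+t)^\vartheta}$ in \eqref{weight function}, produces $\frac{d}{dt}\overline{\mathcal{E}}_{N-1,l_1}(t)$ together with the good term $\frac{q\vartheta}{(1+t)^{1+\vartheta}}\|\langle v\rangle\overline{w}_{l_1}(\alpha,\beta)\partial_\beta^\alpha(\mathbf{I}-\mathbf{P})f^\varepsilon\|^2_{L^2_{x,v}}$, which is exactly the second half of \eqref{dissipation functional}; and the weighted coercivity of $L$ (a velocity-weighted refinement of \eqref{spectL}, handled by induction over $|\beta|$ so that the compact-operator errors carrying fewer velocity derivatives are absorbed) produces $\frac{1}{\varepsilon^2}\|\overline{w}_{l_1}(\alpha,\beta)\partial_\beta^\alpha(\mathbf{I}-\mathbf{P})f^\varepsilon\|^2_{L^2_{x,v}(\nu)}$, the first half of \eqref{dissipation functional}.

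The remaining terms I would dispatch as follows. For the streaming term the key is the invariance $\overline{w}_{l_1}(\alpha,\beta)=\overline{w}_{l_1}(\alpha+e_i,\beta-e_i)$: the diagonal contribution vanishes after integration by parts in $x$, while the Leibniz commutator produces $\partial_{\beta-e_i}^{\alpha+e_i}(\mathbf{I}-\mathbf{P})f^\varepsilon$, whose norm already sits inside $\overline{\mathcal{D}}_{N-1,l_1}$, so Cauchy--Schwarz with a small constant absorbs it into the $\varepsilon^{-2}$-dissipation (this is the computation displayed in the Difficulties section). The electric-force contributions $\varepsilon E^\varepsilon\cdot\nabla_v$ and $\frac\varepsilon2 v\cdot E^\varepsilon f^\varepsilon$ carry an explicit $\varepsilon$ and are treated as $\mathcal{X}_4,\mathcal{X}_5$ in Lemma \ref{energy estimate without weight}, now weighted, yielding $\delta_0\mathcal{D}_N(t)$ plus $\eta$-absorbable pieces. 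In the magnetic force, the part where no $x$-derivative falls on $B^\varepsilon$ vanishes to leading order since $(v\times B^\varepsilon)\cdot v=0$ kills $\nabla_v\overline{w}_{l_1}^2$ and $\nabla_v\cdot(v\times B^\varepsilon)=0$, leaving only lower-order velocity commutators and a macroscopic coupling bounded by $\mathcal{D}_N(t)$. Finally the weighted trilinear estimate for $\frac1\varepsilon\partial_\beta^\alpha\Gamma(f^\varepsilon,f^\varepsilon)$ with the a priori smallness \eqref{priori assumption} contributes $\delta_0\big(\mathcal{D}_N(t)+\overline{\mathcal{D}}_{N-1,l_1}(t)\big)$.

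The main obstacle is the magnetic term $\sum_{|\alpha'|=1}\langle -v\times\partial_x^{\alpha'}B^\varepsilon\cdot\nabla_v\partial_\beta^{\alpha-\alpha'}(\mathbf{I}-\mathbf{P})f^\varepsilon,\ \overline{w}_{l_1}^2(\alpha,\beta)\partial_\beta^\alpha(\mathbf{I}-\mathbf{P})f^\varepsilon\rangle_{L^2_{x,v}}$, which generates $D_3=\varepsilon^2\|\nabla_x B^\varepsilon\|_{L^\infty_x}^2\|\langle v\rangle\overline{w}_{l_1}(|\alpha|-1,|\beta|+1)\nabla_v\partial_\beta^{\alpha-\alpha'}(\mathbf{I}-\mathbf{P})f^\varepsilon\|_{L^2_{x,v}}^2$ plus an $\frac\eta{\varepsilon^2}$-absorbable piece. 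Because $\varepsilon^2\|\nabla_x B^\varepsilon\|_{L^\infty_x}$ decays only like $(1+t)^{-(1+\vartheta)/2}$, far too slowly for a direct absorption, I would run the interpolation of \eqref{Bdekunnan:D}: split $\langle v\rangle\overline{w}_{l_1}(|\alpha|-1,|\beta|+1)=[\langle v\rangle^{\widetilde{\ell}}\overline{w}_{l_1}]^{\varrho}[\overline{w}_{l_1}]^{1-\varrho}$ with $\varrho=\widetilde{\ell}^{-1}$, apply Hölder and then Young's inequality, and use $(\varepsilon^{2-\varrho}\|\nabla_x B^\varepsilon\|_{L^\infty_x})^{2/\varrho}=(\varepsilon^2\|\nabla_x B^\varepsilon\|_{L^\infty_x})^{2\widetilde{\ell}}\varepsilon^{-2}$ to isolate a high power of the field. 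The bookkeeping then closes: $\langle v\rangle^{\widetilde{\ell}}\overline{w}_{l_1}(|\alpha|-1,|\beta|+1)=\langle v\rangle^{-\frac12(|\beta|+1)}\widetilde{w}_{l_2}(|\alpha|-1,|\beta|+1)\le\widetilde{w}_{l_2}(|\alpha|-1,|\beta|+1)$ because $l_2=\widetilde{\ell}+l_1$, while the a priori bound $\varepsilon^3(1+t)^{1+\vartheta}\|\nabla_x[E^\varepsilon,B^\varepsilon]\|_{H^2_x}^2\le X(t)\le\delta_0$ from \eqref{X define} with $H^2_x\hookrightarrow L^\infty_x$ gives $(\varepsilon^2\|\nabla_x B^\varepsilon\|_{L^\infty_x})^{2\widetilde{\ell}}\lesssim\delta_0^{\widetilde{\ell}}(1+t)^{-\widetilde{\ell}(1+\vartheta)}\lesssim\delta_0(1+t)^{-\frac{1+s}{2}(l_2-l_1)}$, using $1+\vartheta=\frac{1+s}{2}$ and $\widetilde{\ell}=l_2-l_1\ge1$ from \eqref{hard assumption}. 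This is precisely the first term on the right of \eqref{weight estimate1}; the pieces with $|\alpha'|\ge2$ are strictly milder and fall to the same interpolation with spatial regularity to spare. Collecting everything, choosing $\eta$ and then $\delta_0$ small to absorb the $\frac{\eta}{\varepsilon^2}$- and $\overline{\mathcal{D}}$-type remainders into the left-hand dissipation, and bounding the macroscopic couplings by $(1+\delta_0)\mathcal{D}_N(t)$, yields \eqref{weight estimate1}. The delicate point throughout is the exponent accounting in the interpolation, which is exactly what forces the second weight $\widetilde{w}_{l_2}$ of \eqref{weight function 2} into the estimate.
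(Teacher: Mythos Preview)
Your proposal is correct and follows essentially the same route as the paper: apply $(\mathbf{I}-\mathbf{P})$ to the equation, run the $\overline{w}_{l_1}$-weighted $\partial_\beta^\alpha$ energy estimate with the coercivity \eqref{L coercive3}, exploit $\overline{w}_{l_1}(\alpha,\beta)=\overline{w}_{l_1}(\alpha+e_i,\beta-e_i)$ for the streaming commutator, and handle the critical $|\alpha'|=1$ magnetic term by the interpolation \eqref{Bdekunnan:D} to trade the slow decay of $\varepsilon^2\|\nabla_x B^\varepsilon\|_{L^\infty_x}$ for the $\widetilde{w}_{l_2}$-weighted norm. Two small points of precision: in the weighted setting the electric-force terms ($\mathcal{Y}_3,\mathcal{Y}_4$ in the paper) are actually controlled by $\delta_0\overline{\mathcal{D}}_{N-1,l_1}(t)$ rather than $\delta_0\mathcal{D}_N(t)$ (still absorbable on the left), and the magnetic pieces with $|\alpha'|\geq 2$ do \emph{not} require the interpolation trick---the weight gap $\langle v\rangle\overline{w}_{l_1}(\alpha,\beta)\lesssim\overline{w}_{l_1}(|\alpha-\alpha'|,|\beta|+1)$ absorbs the extra $\langle v\rangle$ directly and yields $\varepsilon^2\|\nabla_x B^\varepsilon\|_{H^{N-1}_x}\overline{\mathcal{D}}_{N-1,l_1}(t)$.
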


\begin{proof}[\textbf{Proof of Proposition \ref{weighted 1}}]
Applying microscopic projection $(\mathbf{I}-\mathbf{P})$ to   the first equation in (\ref{rVPB}) and using  $(\mathbf{I}-\mathbf{P})( v\sqrt{\mu}\cdot E^\e)=0$, we have
\bal
\bsp\label{rVMBweiguan-1}
&\;\pt_t(\mathbf{I}-\mathbf{P})f^\e+\frac{1}{\e}v\cdot \nabla_x (\mathbf{I}-\mathbf{P})f^\e+\frac{1}{\e^2}L((\mathbf{I}-\mathbf{P})f^\e)\\
=&\;-(\e E^\e+v \times B^\e)\cdot \nabla_v (\mathbf{I}-\mathbf{P}) f^\e
+\frac{\e}{2} v \cdot E^\e(\mathbf{I}-\mathbf{P}) f^\e
+\frac{1}{\e}\Gamma(f^\e,f^\e)
+[[\mathbf{P},\mathcal{A}]]f^\e.
\esp
\eal
Here $ [[\mathbf{P},\mathcal{A}]]
:=\mathbf{P}\mathcal{A}-\mathcal{A}\mathbf{P}$ denotes the commutator of two operators $\mathbf{P}$ and $\mathcal{A}$  given by
$$\mathcal{A}:=\frac{1}{\e}v\cdot \nabla_x + (\e E^\e+v\times B^\e)\cdot \nabla_v-\frac{
\e}{2} v \cdot E^\e.$$
Taking $\partial_{\b}^\a$  with $|\beta|=m$ and $|\alpha|+|\beta| \leq N-1$ to (\ref{rVMBweiguan-1}) and then integrating the resulting identity over $\bbR^3_x\times\bbR^3_v$ by multiplying $\overline{w}^{2}_{l_1}(\a,\b)\partial_{\b}^\a(\mathbf{I}-\mathbf{P})f^\e$, we  obtain
\bal
\bsp\label{diyigeweight:L^2:2}
& \frac{1}{2}\frac{\d }{\d t} \|\overline{w}_{l_1}(\a,\b)\partial_{\b}^\a(\mathbf{I}-\mathbf{P})f^\e\|_{L_{x,v}^2}^2
+\frac{q\vartheta}{(1+t)^{1+\vartheta}}
\|\langle v\rangle \overline{w}_{l_1}(\a,\b)
\partial_{\b}^\a(\mathbf{I}-\mathbf{P})f^\e\|_{L_{x,v}^2}^2\\
\;&
+\frac{\sigma_0}{\e^2}\| \overline{w}_{l_1}(\a,\b)\partial_{\b}^\a(\mathbf{I}-\mathbf{P})f^\e\|_{L_{x,v}^2(\nu)}^2
\\
\le
&\;\frac{\eta}{\e^2}\sum_{|\b'|\leq|\b|}\| \overline{w}_{l_1}(\a,\b')\partial_{\b'}^\a(\mathbf{I}-\mathbf{P})f^\e\|_{L_{x,v}^2(\nu)}^2
+\frac{C_{\eta}}{\e^2}\| \partial^\a_x(\mathbf{I}-\mathbf{P})f^\e\|_{L_{x,v}^2(\nu)}^2\\
&\;+\Big\langle- \frac{1}{\e}C_{\b}^{e_i}\partial_{\b-e_i}^{\a+e_i} (\mathbf{I}-\mathbf{P}) f^\e , \overline{w}^{2}_{l_1}(\a,\b)\partial_{\b}^\a(\mathbf{I}-\mathbf{P})f^\e
\Big\rangle_{L^2_{x,v}}\\
&\;
+\Big\langle
\partial_{\b}^\a\Big(-v\times B^\e \cdot\nabla_v(\mathbf{I}-\mathbf{P}) f^\e
\Big) , \overline{w}^{2}_{l_1}(\a,\b)\partial_{\b}^\a(\mathbf{I}-\mathbf{P})f^\e\Big
\rangle_{L^2_{x,v}}\\
&\;+\Big\langle
\partial_{\b}^\a\Big(-\e E^\e \cdot\nabla_v(\mathbf{I}-\mathbf{P}) f^\e
\Big) , \overline{w}^{2}_{l_1}(\a,\b)\partial_{\b}^\a(\mathbf{I}-\mathbf{P})f^\e\Big
\rangle_{L^2_{x,v}}
\\
&\;+\Big\langle
\partial_{\b}^\a\Big(\frac{\e}{2}v\cdot E^\e (\mathbf{I}-\mathbf{P}) f^\e\Big) , \overline{w}^{2}_{l_1}(\a,\b)\partial_{\b}^\a(\mathbf{I}-\mathbf{P})f^\e\Big
\rangle_{L^2_{x,v}}\\
&\;\!+\!\Big\langle\frac{1}{\e}\! \partial_{\b}^\a\Gamma(f^\e,f^\e), \overline{w}^{2}_{l_1}(\a,\b)\partial_{\b}^\a(\mathbf{I}\!-\!\mathbf{P})f^\e\!\Big
\rangle_{L^2_{x,v}}
\!\!\!\!\!\!\!+\!\Big\langle \!\partial_{\b}^\a[[\mathbf{P},\mathcal{A}]]f^\e,  \overline{w}^{2}_{l_1}(\a,\b)\partial_{\b}^\a(\mathbf{I}\!-\!\mathbf{P})f^\e\!\Big \rangle_{L^2_{x,v}}.
\esp
\eal
Here, we    have used the spectral inequality \eqref{L coercive3}.
Now   we classify the last six terms on the right-hand side  of \eqref{diyigeweight:L^2:2} as  $\mathcal{Y}_1$ to $\mathcal{Y}_6$ and estimate them term by term.
The H\"{o}lder inequality and   the Cauchy--Schwarz inequality with $\eta$  give
\bals
\bsp
\mathcal{Y}_1
\lesssim \;&C_{\eta}
\|\overline{w}_{l_1}(\a+e_i,\b-e_i)\partial_{\b-e_i}^{\a+e_i} (\mathbf{I}-\mathbf{P}) f^\e\|_{L^2_{x,v}}^2
+ \frac{\eta}{\e^2}\|\overline{w}_{l_1}(\a,\b)\partial_{\b}^\a(\mathbf{I}-\mathbf{P})f^\e\|_{L^2_{x,v}}^2,
\esp
\eals
where we have used the fact  $\overline{w}_{l_1}(\a+e_i,\b-e_i)=\overline{w}_{l_1}(\a,\b)$.

For the term $\mathcal{Y}_2$, direct calculation shows us that
\bals
\bsp
\mathcal{Y}_2=
\;&
\sum_{1\leq|\a'|\leq|\a|}\Big\langle
 -v\times \partial_x^{\a'} B^\e \cdot\nabla_v\partial_{\b}^{\a-\a'}(\mathbf{I}-\mathbf{P}) f^\e, \overline{w}^{2}_{l_1}(\a,\b)\partial_{\b}^\a(\mathbf{I}-\mathbf{P})f^\e
 \Big
\rangle_{L^2_{x,v}}
\\
\;&
+\sum_{|\a'|\leq|\a|,|\b_1|=1}\Big\langle
 -\partial_{\b_1}v\times \partial_x^{\a'} B^\e \cdot\nabla_v\partial_{\b-\b_1}^{\a-\a'}(\mathbf{I}-\mathbf{P}) f^\e, \overline{w}^{2}_{l_1}(\a,\b)\partial_{\b}^\a(\mathbf{I}-\mathbf{P})f^\e
 \Big
\rangle_{L^2_{x,v}} \\
\equiv\;& \mathcal{Y}_{2,1}+\mathcal{Y}_{2,2}.
\esp
\eals
Applying the H\"{o}lder inequality, \eqref{NGinequality}  and the Sobolev embedding inequality,
we   deduce
\bal
\bsp\label{Bdekunnan:1}
\mathcal{Y}_{2,1}
\lesssim\;&\Big[\|\nabla_x B^\e \|_{L^\infty_x} \sum_{|\a'|=1}\|\langle v\rangle \overline{w}_{l_1}(|\a|-1,|\b|+1)\nabla_v\partial_{\b}^{\a-\a'}(\mathbf{I}-\mathbf{P}) f^\e \|_{L^2_{x,v}}
\\
\;&\;+\sum_{2\leq|\a'|\leq\max\{|\a|-1,2\}}\|\partial_x^{\a'} B^\e \|_{L^\infty_x} \|\overline{w}_{l_1}(|\a-\a'|,|\b|+1)\nabla_v\partial_{\b}^{\a-\a'}(\mathbf{I}-\mathbf{P}) f^\e \|_{L^2_{x,v}}\\
\;&\;+\|\partial_x^{\a} B^\e \|_{L^3_x} \|\overline{w}_{l_1}(1,|\b|+1)\nabla_v\partial_{\b}(\mathbf{I}-\mathbf{P}) f^\e \|_{L^6_{x}L^2_{v}}\Big]\| \overline{w}_{l_1}(\a,\b)\partial_{\b}^{\a}(\mathbf{I}-\mathbf{P}) f^\e\|_{L^2_{x,v}}\\
\lesssim \;&\e^2\|\nabla_x B^\e \|_{L^\infty_x}^2\sum_{|\a'|=1} \|\langle v\rangle \overline{w}_{l_1}(|\a|-1,|\b|+1)\nabla_v\partial_{\b}^{\a-\a'}(\mathbf{I}-\mathbf{P}) f^\e \|_{L^2_{x,v}}^2\\
\;&+
\eta{\overline{\mathcal{D}}}_{N-1,l_1}(t)+ \e^2\|\nabla_xB^\e\|_{H^{N-1}_x}{\overline{\mathcal{D}}}_{N-1,l_1}(t).
\esp
\eal
Here, we have used the facts that
\bals
\bsp
\;&\langle v\rangle \overline{w}_{l_1}(\a,\b)\lesssim \overline{w}_{l_1}(|\a-\a'|,|\b|+1)\quad \text{in the case of}\quad
2\leq|\a'|\leq\max\{|\a|-1,2\},\\
\;&
\langle v\rangle \overline{w}_{l_1}(\a,\b)\lesssim \overline{w}_{l_1}(1,|\b|+1)\quad\quad \;\quad \;\; \text{in the case of}\quad  |\a|\geq3.
\esp
\eals
Furthermore,  it is derived  from $  l_2-l_1=\frac{N-1}{2}:=\widetilde{\ell} $ and  the Cauchy--Schwarz inequality  that
\bal
\bsp\label{Bdekunnan:2}
\;&\e^2\|\nabla_x B^\e \|_{L^\infty_x}^2 \sum_{|\a'|=1}\|\langle v\rangle \overline{w}_{l_1}(|\a|-1,|\b|+1)\nabla_v\partial_{\b}^{\a-\a'}(\mathbf{I}-\mathbf{P}) f^\e \|_{L^2_{x,v}}^2\\
\lesssim\;&
\Big[(\e^{2-\varrho}\|\nabla_x B^\e \|_{L^\infty_x})^{\frac{2}{\varrho}}\sum_{|\a'|=1}\|\langle v\rangle^{\widetilde{\ell}} \overline{w}_{l_1}(|\a|-1,|\b|+1)\nabla_v\partial_{\b}^{\a-\a'}(\mathbf{I}-\mathbf{P}) f^\e \|_{L^2_{x,v}}^2\Big]^{\varrho}\\
\;&\times
\Big[\frac{1}{\e^2}\sum_{|\a'|=1}\| \overline{w}_{l_1}(|\a|-1,|\b|+1)\nabla_v\partial_{\b}^{\a-\a'}(\mathbf{I}-\mathbf{P}) f^\e \|_{L^2_{x,v}}^2\Big]^{1-\varrho}\\
\lesssim\;& (\e^{2-\varrho}\|\nabla_x B^\e \|_{L^\infty_x})^{\frac{2}{\varrho}}
\sum_{|\a'|=1}\|\langle v\rangle^{\widetilde{\ell}} \overline{w}_{l_1}(|\a|-1,|\b|+1)\nabla_v\partial_{\b}^{\a-\a'}(\mathbf{I}-\mathbf{P}) f^\e \|_{L^2_{x,v}}^2\\
\;&+
\frac{\eta}{\e^2}\sum_{|\a'|=1}\| \overline{w}_{l_1}(|\a|-1,|\b|+1)\nabla_v\partial_{\b}^{\a-\a'}(\mathbf{I}-\mathbf{P}) f^\e \|_{L^2_{x,v}}^2\\
\lesssim\;& (\e^{2}\|\nabla_x B^\e \|_{L^\infty_x})^{\frac{2}{\varrho}}
\!\!\sum_{|\a'|=1}\!\!\frac{1}{\e^{2}}\| \widetilde{w}_{l_2}(|\a|-1,|\b|+1)\nabla_v\partial_{\b}^{\a-\a'}(\mathbf{I}-\mathbf{P}) f^\e \|_{L^2_{x,v}}^2+
\eta{\overline{\mathcal{D}}}_{N-1,l_1}(t)
,
\esp
\eal
where we have taken $\varrho$  as
$
\varrho={\widetilde{\ell}}^{-1}.
$
Putting the bound \eqref{Bdekunnan:2} into \eqref{Bdekunnan:1}, we obtain
\bals
\bsp
\mathcal{Y}_{2,1}\lesssim\;& (\e^{2}\|\nabla_x B^\e \|_{L^\infty_x})^{\frac{2}{\varrho}}
\sum_{|\a'|=1}\frac{1}{\e^{2}}\| \widetilde{w}_{l_2}(|\a|-1,|\b|+1)\nabla_v\partial_{\b}^{\a-\a'}(\mathbf{I}-\mathbf{P}) f^\e \|_{L^2_{x,v}}^2+
\eta{\overline{\mathcal{D}}}_{N-1,l_1}(t)\\
\;&+\e^2\|\nabla_xB^\e\|_{H^{N-1}_x}{\overline{\mathcal{D}}}_{N-1,l_1}(t).
\esp
\eals
For the term $\mathcal{Y}_{2,2}$,  we have
\bals
\bsp
\mathcal{Y}_{2,2}
\lesssim\;&\!\!\sum_{|\a'|\leq|\a|,|\b_1|=1}\!\!\!\!\!\!\|\partial_x^{\a'}\!\! B^\e \|_{L^\infty_x} \|\overline{w}_{l_1}(\a-\a',\b)\nabla_v\partial_{\b-\b_1}^{\a-\a'}(\mathbf{I}-\mathbf{P}) f^\e \|_{L^2_{x,v}}\| \overline{w}_{l_1}(\a,\b)\partial_{\b}^{\a}(\mathbf{I}-\mathbf{P}) f^\e\|_{L^2_{x,v}}\\
\lesssim \;& \e^2\|\nabla_xB^\e\|_{H^{N-1}_x}{\overline{\mathcal{D}}}_{N-1,l_1}(t)
\esp
\eals
by the H\"{o}lder inequality and \eqref{fdefenjie}.
Here, we also have   used the fact $|\a|\leq N-2$ because of  $|\b|\geq1$ and $|\a|+|\b|\leq N-1$ in this case.
As a result,  collecting the above bounds $\mathcal{Y}_{2,1}$ and $\mathcal{Y}_{2,2}$ and utilizing   the \emph{a priori} assumption  \eqref{priori assumption} lead  to
\bals
\bsp
\mathcal{Y}_{2}\lesssim\;& \delta_0(1+t)^{-\frac{1+s}{2}\widetilde{\ell}}
\frac{1}{\e^{2}}\sum_{|\a'|=1}\| \widetilde{w}_{l_2}(|\a|-1,|\b|+1)\nabla_v\partial_{\b}^{\a\!-\!\a'}(\mathbf{I}\!-\!\mathbf{P}) f^\e \|_{L^2_{x,v}}^2+
(\eta \!+\!\delta_0){\overline{\mathcal{D}}}_{N-1,l_1}(t).
\esp
\eals

For the term $\mathcal{Y}_3$, we have
\bals
\bsp
\mathcal{Y}_3=\;&
\Big\langle
 -\e E^\e \cdot\nabla_v\partial_{\b}^{\a}(\mathbf{I}-\mathbf{P}) f^\e
,  \overline{w}^{2}_{l_1}(\a,\b)\partial_{\b}^\a(\mathbf{I}-\mathbf{P})f^\e\Big
\rangle_{L^2_{x,v}}\\
\;&+\sum_{1\leq|\a'|\leq|\a| }\Big\langle
 - \e\partial_x^{\a'} E^\e \cdot\nabla_v\partial_{\b}^{\a-\a'}(\mathbf{I}-\mathbf{P}) f^\e
,  \overline{w}^{2}_{l_1}(\a,\b)\partial_{\b}^\a(\mathbf{I}-\mathbf{P})f^\e\Big
\rangle_{L^2_{x,v}}\\
\equiv\;&\mathcal{Y}_{3,2}+\mathcal{Y}_{3,2}.
\esp
\eals
Notice that
$$ \nabla_v \overline{w}_{l_1}(\a,\b)=
(l_1-|\a|-|\b|)\langle v\rangle ^{l_1-|\a|-|\b|-1}e^{\frac{q\langle v\rangle^2}{(1+t)^\vartheta}}\frac{v}{|v|}
+\langle v\rangle ^{l_1-|\a|-|\b|}
e^{\frac{q\langle v\rangle^2}{(1+t)^\vartheta}}
{\frac{2q}{(1+t)^\vartheta}}\langle v\rangle,$$
which means $\nabla_v \overline{w}_{l_1}(\a,\b)\lesssim \langle v\rangle \overline{w}_{l_1}(\a,\b)$,
and hence, we have
\bals
\bsp
\mathcal{Y}_{3,1}=
\;&\Big\langle
 -\e E^\e \cdot\frac{\nabla_v \overline{w}_{l_1}(\a,\b)}{\overline{w}_{l_1}(\a,\b)}\partial_{\b}^{\a}(\mathbf{I}-\mathbf{P}) f^\e
,
\overline{w}^{2}_{l_1}(\a,\b)\partial_{\b}^\a(\mathbf{I}-\mathbf{P})f^\e\Big
\rangle_{L^2_{x,v}}\\
\lesssim\;&\e\| E^\e \|_{L^\infty_x}
\| \langle v\rangle \overline{w}_{l_1}(\a,\b)\partial_{\b}^{\a}(\mathbf{I}-\mathbf{P}) f^\e\|_{L^2_{x,v}}
\| \overline{w}_{l_1}(\a,\b)\partial_{\b}^{\a}(\mathbf{I}-\mathbf{P}) f^\e\|_{L^2_{x,v}}\\
\lesssim\;&\e^2\|\nabla_xE^\e\|_{H^1_x}(1+t)^{\frac{1+\vartheta}{2}}
 {\overline{\mathcal{D}}}_{N-1,l_1}(t).
\esp
\eals
Taking  the H\"{o}lder inequality and   the Sobolev embedding theory gives rise to
\bals
\bsp
\mathcal{Y}_{3,2}\lesssim
\;&\e\Big[\sum_{1\leq|\a'|\leq\max\{|\a|-1,1\}}\|\partial_x^{\a'} E^\e \|_{L^\infty_x} \|\overline{w}_{l_1}(|\a-\a'|,|\b|+1)\nabla_v\partial_{\b}^{\a-\a'}(\mathbf{I}-\mathbf{P}) f^\e \|_{L^2_{x,v}}\\
\;&\quad+\|\partial_x^{\a} E^\e \|_{L^3_x} \|\overline{w}_{l_1}(1,|\b|+1)\nabla_v\partial_{\b}(\mathbf{I}-\mathbf{P}) f^\e \|_{L^6_{x}L^2_{v}}\Big]\| \overline{w}_{l_1}(\a,\b)\partial_{\b}^{\a}(\mathbf{I}-\mathbf{P}) f^\e\|_{L^2_{x,v}}\\
\lesssim \;& \e^3\|\nabla_xE^\e\|_{H^{N-1}_x}{\overline{\mathcal{D}}}_{N-1,l_1}(t),
\esp
\eals
where we have used the facts that
\bals
\bsp
\;& \overline{w}_{l_1}(\a,\b)\lesssim \overline{w}_{l_1}(|\a-\a'|,|\b|+1)\quad \,\text{in the case of}\quad
1\leq|\a'|\leq\max\{|\a|-1,1\},\\
\;&
 \overline{w}_{l_1}(\a,\b)\lesssim \overline{w}_{l_1}(1,|\b|+1)\quad\quad \qquad  \text{in the case of}\quad  |\a|\geq2.
\esp
\eals
Thus, the  above  estimates of $\mathcal{Y}_{3,1}$ and $\mathcal{Y}_{3,2}$
and   the \emph{a priori} assumption  \eqref{priori assumption}  give us that
\bals
\bsp
\mathcal{Y}_{3}
\lesssim\e^2\|\nabla_xE^\e\|_{H^1_x}(1+t)^{\frac{1+\vartheta}{2}}
 {\overline{\mathcal{D}}}_{N-1,l_1}(t)+
\e^3 \|\nabla_xE^\e\|_{H^{N-1}_x}{\overline{\mathcal{D}}}_{N-1,l_1}(t)\
\lesssim\delta_0{\overline{\mathcal{D}}}_{N-1,l_1}(t).
\esp
\eals

By employing the similar argument in the estimate of $\mathcal{Y}_{2}$, we deduce from    the \emph{a priori} assumption  \eqref{priori assumption}    that
\bals
\bsp
\mathcal{Y}_{4}
\lesssim\e^2\|\nabla_xE^\e\|_{H^1_x}(1+t)^{\frac{1+\vartheta}{2}}
 {\overline{\mathcal{D}}}_{N-1,l_1}(t)+
\e^3 \|\nabla_xE^\e\|_{H^{N-1}_x}{\overline{\mathcal{D}}}_{N-1,l_1}(t)\
\lesssim\delta_0{\overline{\mathcal{D}}}_{N-1,l_1}(t).
\esp
\eals
Further,  employing  \eqref{hard gamma1}, the \emph{a priori} assumption  \eqref{priori assumption}  and   the H\"{o}lder inequality   yields
\bals
\bsp
\mathcal{Y}_{5}
\lesssim
\sqrt{{\overline{\mathcal{E}}}_{N-1,l_1}(t)}{\overline{\mathcal{D}}}_{N-1,l_1}(t)
\lesssim\delta_0{\overline{\mathcal{D}}}_{N-1,l_1}(t).
\esp
\eals
Owing to the    Maxwellian structure for  the macro projection $\mathbf{P}$ defined in \eqref{defination:Pf}, we directly deduce from
the H\"{o}lder inequality and the  Sobolev embedding  theory that
\bals
\bsp
\mathcal{Y}_{6}
\lesssim\;&
\Big[\frac{1}{\e}\|\partial^{\a}_x \nabla_x f^\e\|_{L^2_{x,v}}
+\left(\e\|E^\e\|_{H^{N}_x}+\|B^\e\|_{H^{N}_x}\right)
\left(\|\nabla_x\nabla_v\mathbf{P}f^\e\|_{H^{N-2}_{x}L^2_{v}}
+\|\nabla_xf^\e\|_{H^{N-2}_{x}L^2_{v}}\right)
\\
\;&
+\e\|\nabla_xE^\e\|_{H^{N-2}_x}
\left(\|\nabla_x\mathbf{P}f^\e\|_{H^{N-2}_{x}L^2_{v}}
+\|\nabla_xf^\e\|_{H^{N-2}_{x}L^2_{v}}\right)\Big]
\|\partial_{\b}^\a(\mathbf{I}-\mathbf{P} )f^\e\|_{L^2_{x,v}}\\
\lesssim\;&\mathcal{D}_{N}(t)+\e\mathcal{E}_{N}(t) \mathcal{D}_{N}(t).
\esp
\eals

Therefore,  plugging  all the  estimates of $\mathcal{Y}_{1}\sim\mathcal{Y}_{6}$
into \eqref{diyigeweight:L^2:2},
taking the summation over $\left\{ |\beta|=m,|\alpha|+|\beta| \leq N-1\right\}$ for each given $0 \leq m \leq N-1$, and then taking combination of those $N-1$ estimates with properly chosen constant $C_m > 0 $ $(0 \leq m \leq N-1)$ and $\eta$, $\delta_0$ small enough,
we obtain
\eqref{weight estimate1}. This completes the proof of Proposition \ref{weighted 1}.
\end{proof}

Next,  we turn to establishing    the   weighted energy estimate    with the weight   $\overline{w}_{l_2}(\alpha, \beta)$ as follows.
\begin{proposition}\label{weighted 2:zongguji}
Let $N,
 l_2$ be fixed parameters  stated in Theorem \ref{mainth1}.
   Assume that the \emph{a priori} assumption  \eqref{priori assumption} holds true for $\delta_0$ small enough.
Then, there holds
\bal
\bsp\label{estimate-weighted-2:zong}
\frac{\d}{\d t}\widetilde{\mathcal{E}}_{N,l_2}(t)
+\widetilde{\mathcal{D}}_{N,l_2}(t)
\lesssim
  \overline{\mathcal{D}}_{ N-1,l_1}(t)
+\mathcal{D}_{N}(t)+\eta\e^2(1+t)^{-(1+\vartheta)}\|\nabla_x^N E^{\e}\|_{L^2_x}^2.
\esp
\eal
Here, $\widetilde{\mathcal{E}}_{N,l_2}(t)$ and  $\widetilde{\mathcal{D}}_{N,l_2}(t)$ are defined in \eqref{energy functional 2-N} and \eqref{dissipation functional 2-N}, respectively.
\end{proposition}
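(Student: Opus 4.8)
The plan is to carry out the $\widetilde w_{l_2}$-weighted energy estimate in the three regimes that make up $\widetilde{\mathcal E}_{N,l_2}(t)$ in \eqref{energy functional 2-N}: the bulk range $|\alpha|+|\beta|\le N-1$, the top total order $|\alpha|+|\beta|=N$ with $|\alpha|\ne N$, and the purely spatial top order $|\alpha|=N$. For the first two regimes I would differentiate the \emph{microscopic} equation \eqref{rVMBweiguan-1} by $\partial_\beta^\alpha$ and test against the correspondingly time-weighted $\widetilde w_{l_2}^2(\alpha,\beta)\partial_\beta^\alpha(\mathbf I-\mathbf P)f^\e$; for $|\alpha|=N$ I would instead test the \emph{full} first equation of \eqref{rVPB}, multiplied by $\e$, against $\e\,\widetilde w_{l_2}^2(\alpha,0)\partial_x^\alpha f^\e$, for the reason explained below. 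Summing each regime over its index set with carefully ordered constants and invoking \eqref{priori assumption} should assemble \eqref{estimate-weighted-2:zong}.

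In the bulk range, the weighted coercivity behind \eqref{spectL} produces the microscopic dissipation $\tfrac{1}{\e^2}\|\widetilde w_{l_2}(\alpha,\beta)\partial_\beta^\alpha(\mathbf I-\mathbf P)f^\e\|_{L^2_{x,v}(\nu)}^2$, while differentiating the factor $e^{q\langle v\rangle^2/(1+t)^\vartheta}$ in $\widetilde w_{l_2}$ in $t$ yields exactly the extra dissipation $\tfrac{q\vartheta}{(1+t)^{1+\vartheta}}\|\langle v\rangle\widetilde w_{l_2}(\alpha,\beta)\partial_\beta^\alpha(\mathbf I-\mathbf P)f^\e\|_{L^2_{x,v}}^2$ of \eqref{dissipation functional 2-N}. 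The forcing terms are then treated one at a time, the two model cases being \eqref{weightfunction2D_1} and \eqref{weightfunction2D_2}. The Lorentz term \eqref{weightfunction2D_1} is controlled by $\e^3\|\nabla_x B^\e\|_{L^\infty_x}^2(1+t)^{1+\vartheta}$ times the dissipation, which is $\lesssim 1$ by the Sobolev embedding $\|\nabla_x B^\e\|_{L^\infty_x}\lesssim\|\nabla_x B^\e\|_{H^2_x}$ and the last entry of \eqref{X define}. The delicate term is the transport loss \eqref{weightfunction2D_2}: using $\widetilde w_{l_2}(\alpha,\beta)=\langle v\rangle^{1/2}\widetilde w_{l_2}(\alpha+e_i,\beta-e_i)$ the commutator reduces to a pairing with $\partial_{\beta-e_i}^{\alpha+e_i}(\mathbf I-\mathbf P)f^\e$, and Cauchy--Schwarz leaves a genuine remainder carrying one fewer velocity derivative and a $\langle v\rangle$ weight.

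This is exactly why the energy \eqref{energy functional 2-N} imposes the staircase of time factors $(1+t)^{-|\beta|\frac{1+\vartheta}{2}}$: the remainder sits at velocity order $|\beta|-1$, hence it is short by one power $(1+t)^{-\frac{1+\vartheta}{2}}$ against the budget there, and this deficit together with the $\e^{-2}$ in front of the microscopic dissipation lets it be absorbed after an interpolation in $v$ that bridges the gap $\langle v\rangle^{1-\gamma}$ between $\nu\sim\langle v\rangle^\gamma$ ($\gamma\ge 0$) and the $\langle v\rangle^2$ of the exponential-weight dissipation. Processing the velocity orders in decreasing order, with constants $C_m$ chosen so that the order-$(m-1)$ dissipation dominates the order-$m$ loss, closes the bulk and intermediate regimes; the residual low-order and macroscopic pieces (from $[[\mathbf P,\mathcal A]]f^\e$, from $\Gamma$ via \eqref{hard gamma1}, and from the $\mathbf Pf^\e$-coupling) fall under $\mathcal D_N(t)$ and $\overline{\mathcal D}_{N-1,l_1}(t)$, which is why those two appear on the right of \eqref{estimate-weighted-2:zong}.

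For the purely spatial top order I avoid the microscopic equation because the $\widetilde w_{l_2}(\alpha,0)$-weighted coercivity would leave the uncontrollable singular remainder $\tfrac{1}{\e^2}\|\partial_x^\alpha\mathbf P f^\e\|^2$. Working with the $\e$-scaled full equation instead, the coercivity yields $\tfrac{\sigma_0}{\e}\|\widetilde w_{l_2}(\alpha,0)\partial_x^\alpha(\mathbf I-\mathbf P)f^\e\|_{L^2_{x,v}(\nu)}^2$, and the only singular remainder is now $\tfrac{C}{\e^2}\|\partial_x^\alpha(\mathbf I-\mathbf P)f^\e\|_{L^2_{x,v}(\nu)}^2$ together with $\|\partial_x^\alpha\mathbf P f^\e\|^2$, both inside $\mathcal D_N(t)$. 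The electric term $\langle\partial_x^\alpha(E^\e\cdot v\sqrt\mu),\e\widetilde w_{l_2}^2(\alpha,0)\partial_x^\alpha f^\e\rangle$ loses the top field derivative that $\mathcal D_N$ does not carry; attaching the prefactor $(1+t)^{-\frac{1+\vartheta}{2}}$ from \eqref{energy functional 2-N} and using Cauchy--Schwarz converts it precisely into the stated $\eta\e^2(1+t)^{-(1+\vartheta)}\|\nabla_x^N E^\e\|_{L^2_x}^2$. The main obstacle throughout is the transport-loss absorption of the previous paragraph: matching its tiered time weight, its $\langle v\rangle$-growth, and its $\e$-power simultaneously against the two available dissipations is where the specific choices $\vartheta=\frac{s-1}{2}$ and $l_2-l_1=\frac{N-1}{2}$ in \eqref{hard assumption} must be used.
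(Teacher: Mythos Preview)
Your three-regime split and the device of testing the $\e$-scaled full equation at $|\alpha|=N$ are exactly the paper's approach (Lemmas~\ref{weighted 2:N} and~\ref{weighted 2:diyuN}). Two points in your bookkeeping, however, do not line up with what actually closes the estimate.

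First, the relation $l_2-l_1=\tfrac{N-1}{2}$ plays \emph{no} role in the transport loss \eqref{weightfunction2D_2}. That term is absorbed purely by the staircase $(1+t)^{-|\beta|\frac{1+\vartheta}{2}}$ and an interpolation of $\langle v\rangle^{1/2}$ between the $\nu$-dissipation and the exponential-weight dissipation at level $|\beta|-1$ (the paper's estimate of $\mathcal J_6$); no passage to $\overline w_{l_1}$ is involved.

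Second, and this is a genuine gap, you only treat the Lorentz term in the model case $|\alpha'|=1$ of \eqref{weightfunction2D_1}. In the bulk range $|\alpha|+|\beta|\le N-1$, when many derivatives hit $B$ (say $|\alpha'|\ge N_0+1$), the quantity $\|\partial_x^{\alpha'}B^\e\|$ carries no decay from \eqref{X define}, and the remaining $f^\e$-factor has too few derivatives to be absorbed by the $\widetilde w_{l_2}$-dissipation with its time weight. The paper resolves this (term $\mathcal J_{7,2}^2$ in the proof of Lemma~\ref{weighted 2:diyuN}) by checking that for such $\alpha'$ one has $\langle v\rangle\,\widetilde w_{l_2}(\alpha,\beta)\le \overline w_{l_1}(|\alpha-\alpha'|+2,|\beta|+1)$, which holds precisely because $l_2-l_1=\tfrac{N-1}{2}$ and $|\alpha'|\ge N_0+1$, and then controls the term by $\delta_0\,\overline{\mathcal D}_{N-1,l_1}(t)$. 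This Lorentz contribution, not the commutator or the $\Gamma$-term, is the true source of $\overline{\mathcal D}_{N-1,l_1}$ on the right of \eqref{estimate-weighted-2:zong}.
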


To prove Proposition \ref{weighted 2:zongguji}, the construction of
$\widetilde{\mathcal{E}}_{N,l_2}(t)$ given in \eqref{energy functional 2-N}    heavily depends on the  different order   weighted  energy estimate of
$\widetilde{w}_{l_2}(\alpha, \beta)$  matched  with different order time decay factor.
Therefore, to make it clear,   we introduce the  $n$-th weighted
instant energy functional $\widetilde{{\mathcal{E}}}_{l_2}^{(n)}(t)$  of $\widetilde{\mathcal{E}}_{N,l_2}(t)$ and the corresponding dissipation functional $\widetilde{{\mathcal{D}}}_{l_2}^{(n)}(t)$  of $\widetilde{\mathcal{D}}_{N,l_2}(t)$
  defined as
 \begin{align}\label{energy functional 2-N-new}
\widetilde{{\mathcal{E}}}_{l_2}^{(n)}(t):=
\sum_{0\leq j\leq n}
{\mathcal{E}}_{l_2}^{n,j}(t),\quad
\widetilde{{\mathcal{D}}}_{l_2}^{(n)}(t):= \;&
\sum_{{0\leq j\leq n} }
{\mathcal{D}}_{l_2}^{n,j}(t) \quad\text{for }\;0\leq n\leq N.
\end{align}
Here, in the case of $0\leq n\leq N-1$, ${\mathcal{E}}_{l_2}^{n,j}(t)$ and ${\mathcal{D}}_{l_2}^{n,j}(t)$ are given by
\begin{align*}
{\mathcal{E}}_{l_2}^{n,j}(t) :=\;
&
 (1+t)^{-\sigma_{n,j}}\sum_{|\a|+|\b|=n, |\b|=j}\left\|\widetilde{w}_{l_2}(\alpha, \beta) \partial_{\beta}^{\alpha} (\mathbf{I}-\mathbf{P})f^{\varepsilon}\right\|^2_{L^2_{x,v}}\quad  \text{for }\;0\leq j\leq n,\\
{\mathcal{D}}_{l_2}^{{n,j}}(t):= \;&
(1+t)^{-\sigma_{n,j}}\frac{q\vartheta}{(1+t)^{1+\vartheta}}\sum_{|\a|+|\b|=n, |\b|=j}\left\|\langle v\rangle \widetilde{w}_{l_2}(\alpha, \beta) \partial_{\beta}^{\alpha}
(\mathbf{I}-\mathbf{P}) f^{\varepsilon}\right\|^2_{L^2_{x,v}}\nonumber\\
&+(1+t)^{-\sigma_{n,j}}\sum_{|\a|+|\b|=n, |\b|=j}\frac{1}{\varepsilon^{2}} \left\|\widetilde{w}_{l_2}(\alpha, \beta) \partial_{\beta}^{\alpha}
(\mathbf{I}-\mathbf{P})f^{\varepsilon}\right\|^2_{L^2_{x,v}(\nu)} \quad \text{for}\;0\leq j\leq n,
\end{align*}
and in the case of $n=N$, ${\mathcal{E}}_{l_2}^{N,j}(t)$ and ${\mathcal{D}}_{l_2}^{N,j}(t)$ are given by
\bals
{{\mathcal{E}}}_{l_2}^{{N,j}}(t) :=\;
&
(1+t)^{-\sigma_{N,j}}\sum_{|\a|+|\b|=N, |\b|=j}\left\|\widetilde{w}_{l_2}(\alpha, \beta) \partial_{\beta}^{\alpha} (\mathbf{I}-\mathbf{P})f^{\varepsilon}\right\|^2_{L^2_{x,v}}\quad \quad\quad\quad\text{for }\;1\leq j \leq N,\\
{{\mathcal{E}}}_{l_2}^{{N,0}}(t):=\; & (1+t)^{-\sigma_{N,0}}\sum_{|\a|=N}\e\left\|\widetilde{w}_{l_2}(\a, 0) \partial_x^{\a} f^{\varepsilon}\right\|^2_{L^2_{x,v}},\\
{{\mathcal{D}}}_{l_2}^{{N,j}}(t):=\;&
 (1+t)^{-\sigma_{N,j}}\sum_{|\a|+|\b|=N, |\b|=j}\frac{q\vartheta}{(1+t)^{1+\vartheta}}\left\|\langle v\rangle \widetilde{w}_{l_2}(\alpha, \beta) \partial_{\beta}^{\alpha}
(\mathbf{I}-\mathbf{P})f^{\varepsilon}\right\|^2_{L^2_{x,v}} \\
\;&+(1+t)^{-\sigma_{N,j}}\!\!\!\!\sum_{|\a|+|\b|=N, |\b|=j}\frac{1}{\varepsilon^{2}} \left\|\widetilde{w}_{l_2}(\alpha, \beta) \partial_{\beta}^{\alpha}
(\mathbf{I}-\mathbf{P})f^{\varepsilon}\right\|^2_{L^2_{x,v}(\nu)}
\;\quad\text{for}\; 1\leq j\leq N,
\\
{{\mathcal{D}}}_{l_2}^{{N,0}}(t):=
\;&  (1+t)^{-\sigma_{N,0}}\sum_{|\a|=N}
\frac{\e q\vartheta}{(1+t)^{1+\vartheta}}\left\|\langle v\rangle \widetilde{w}_{l_2}(\a, 0) \partial_{x}^{\a} f^{\varepsilon}\right\|^2_{L^2_{x,v}} \nonumber\\
\;&+(1+t)^{-\sigma_{N,0}}\frac{1}{\varepsilon} \sum_{|\a|=N}\left\|\widetilde{w}_{l_2}(\a, 0) \partial_{x}^{\a}(\mathbf{I}-\mathbf{P})
f^{\varepsilon}\right\|^2_{L^2_{x,v}(\nu)}
\eals
with $\sigma_{n,|\b|}$  given by
\bal
\bsp\label{sigma:def}
\;& \sigma_{n,0}=0, \qquad\qquad\sigma_{n,|\b|+1}-\sigma_{n,|\b|}=\frac{1+\vartheta}{2} \qquad \text{for }\; n\leq N-1,\\
\;&\sigma_{N,0}=\frac{1+\vartheta}{2} ,\quad\;\;\sigma_{N,|\b|+1}-\sigma_{N,|\b|}=\frac{1+\vartheta}{2}.
\esp
\eal

Then  we   provide the weighted energy estimate   with the weight function $\widetilde{w}_{l_2}(\alpha, \beta)$ for $|\a|+|\beta|= N$  as follows.
\begin{lemma}\label{weighted 2:N}
Assume that  the \emph{a priori} assumption  \eqref{priori assumption} holds true for $\delta_0$ small enough.
Then, there holds
\bal
\bsp\label{estimate-weighted-2:N}
\frac{\d}{\d t}\widetilde{\mathcal{E}}_{l_2}^{(N)}(t)
+\widetilde{\mathcal{D}}_{l_2}^{(N)}(t)
\lesssim
\delta_0 \widetilde{\mathcal{D}}_{ N,l_2}(t)
+\mathcal{D}_{N}(t)+\eta\e^2(1+t)^{-(1+\vartheta)}\|\nabla_x^N E^{\e}\|_{L^2_x}^2.
\esp
\eal
Here, $\widetilde{\mathcal{E}}_{l_2}^{(N)}(t)$ and  $\widetilde{\mathcal{D}}_{l_2}^{(N)}(t)$ are defined in \eqref{energy functional 2-N-new}.
\end{lemma}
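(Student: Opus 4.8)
The plan is to prove \eqref{estimate-weighted-2:N} by splitting the energy $\widetilde{\mathcal{E}}_{l_2}^{(N)}(t)$ into its two structurally different pieces and estimating each: the ``genuinely microscopic'' levels ${\mathcal{E}}_{l_2}^{N,j}(t)$ with $|\a|+|\b|=N$, $|\b|=j\ge1$, governed by the microscopic equation \eqref{rVMBweiguan-1}, and the pure top-order spatial level ${\mathcal{E}}_{l_2}^{N,0}(t)$ with $|\a|=N,|\b|=0$, which must be treated by rescaling the \emph{full} equation in \eqref{rVPB} by $\e$ because of the singular macroscopic contribution described in the introduction. For $1\le j\le N$ I would apply $\partial_{\b}^{\a}$ with $|\a|+|\b|=N$, $|\b|=j$, to \eqref{rVMBweiguan-1} and pair it in $L^2_{x,v}$ with $(1+t)^{-\sigma_{N,j}}\widetilde{w}_{l_2}^2(\a,\b)\partial_{\b}^{\a}(\mathbf{I}-\mathbf{P})f^\e$, exactly as in the proof of Proposition \ref{weighted 1} but with the second weight. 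Differentiating the factor $e^{q\langle v\rangle^2/(1+t)^\vartheta}$ in time produces the $\langle v\rangle$-weighted dissipation of ${\mathcal{D}}_{l_2}^{N,j}(t)$, and the weighted coercivity of $L$ (cf.\ \eqref{spectL}, \eqref{L coercive3}) produces the $\tfrac1{\e^2}$-microscopic dissipation; the electric terms, the $\tfrac\e2 v\cdot E^\e$ term, the nonlinear term via \eqref{hard gamma1}, and the commutator $\partial_{\b}^{\a}[[\mathbf{P},\mathcal{A}]]f^\e$ are all absorbed into $\delta_0\widetilde{\mathcal{D}}_{N,l_2}(t)+\mathcal{D}_{N}(t)$ using \eqref{priori assumption}, and the Lorentz magnetic term is controlled by the interpolation \eqref{weightfunction2D_1} together with the decay $\e^3\|\nabla_xB^\e\|_{L^\infty_x}^2(1+t)^{1+\vartheta}\lesssim\delta_0$ built into the a priori bound.

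The decisive point in this regime is the transport term. As in \eqref{weightfunction2D_2}, after integrating the leading piece $\tfrac1\e v\cdot\nabla_x\partial_{\b}^{\a}(\mathbf{I}-\mathbf{P})f^\e$ by parts in $x$ (it vanishes), the remainder is bounded by $C_\eta\|\langle v\rangle^{1/2}\widetilde{w}_{l_2}(\a+e_i,\b-e_i)\partial_{\b-e_i}^{\a+e_i}(\mathbf{I}-\mathbf{P})f^\e\|_{L^2_{x,v}}^2+\tfrac{\eta}{\e^2}\|\widetilde{w}_{l_2}(\a,\b)\partial_{\b}^{\a}(\mathbf{I}-\mathbf{P})f^\e\|_{L^2_{x,v}}^2$, which drops from level $(N,j)$ to level $(N,j-1)$. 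Writing $g=\widetilde{w}_{l_2}(\a+e_i,\b-e_i)\partial_{\b-e_i}^{\a+e_i}(\mathbf{I}-\mathbf{P})f^\e$ and using $\|\langle v\rangle^{1/2}g\|_{L^2_{x,v}}^2\le\|g\|_{L^2_{x,v}}\|\langle v\rangle g\|_{L^2_{x,v}}$ with the Cauchy--Schwarz inequality, the time-weighted remainder $(1+t)^{-\sigma_{N,j}}\|\langle v\rangle^{1/2}g\|_{L^2_{x,v}}^2$ is split into a piece absorbed by the $\tfrac1{\e^2}$-microscopic dissipation of ${\mathcal{D}}_{l_2}^{N,j-1}(t)$ (using $\nu\gtrsim1$ for hard potentials) and a piece $\e^2(1+t)^{-2\sigma_{N,j}+\sigma_{N,j-1}}\|\langle v\rangle g\|_{L^2_{x,v}}^2$ absorbed by the $\langle v\rangle$-dissipation of ${\mathcal{D}}_{l_2}^{N,j-1}(t)$. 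The second absorption works precisely because $\sigma_{N,j}-\sigma_{N,j-1}=\tfrac{1+\vartheta}{2}$ from \eqref{sigma:def} forces $-2\sigma_{N,j}+\sigma_{N,j-1}=-\sigma_{N,j-1}-(1+\vartheta)$, matching the extra $(1+t)^{-(1+\vartheta)}$ in the dissipation; this is exactly the arrangement of staggered time increments that the construction \eqref{energy functional 2-N-new} was designed to supply.

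For the remaining level $|\a|=N,|\b|=0$ I would multiply the first equation of \eqref{rVPB} by $\e$, apply $\partial_x^{\a}$, and pair with $(1+t)^{-\frac{1+\vartheta}{2}}\widetilde{w}_{l_2}^2(\a,0)\partial_x^{\a}f^\e$, keeping the full $f^\e$. The transport term now reads $\langle v\cdot\nabla_x\partial_x^{\a}f^\e,\widetilde{w}_{l_2}^2(\a,0)\partial_x^{\a}f^\e\rangle_{L^2_{x,v}}$ and vanishes by integration by parts in $x$, and the $\a'=0$ part of the magnetic term vanishes since $\nabla_v\widetilde{w}_{l_2}(\a,0)$ is parallel to $v$ while $(v\times B^\e)\cdot v=0$. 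Using $L\mathbf{P}f^\e=0$, the decomposition \eqref{fdefenjie} and the weighted coercivity, the collision term becomes $\tfrac{\sigma_0}{\e}\|\widetilde{w}_{l_2}(\a,0)\partial_x^{\a}(\mathbf{I}-\mathbf{P})f^\e\|_{L^2_{x,v}(\nu)}^2$ minus the controllable remainders $\tfrac{C}{\e^2}\|\partial_x^{\a}(\mathbf{I}-\mathbf{P})f^\e\|_{L^2_{x,v}(\nu)}^2$ and $\|\partial_x^{\a}\mathbf{P}f^\e\|_{L^2_{x,v}}^2$, both inside $\mathcal{D}_{N}(t)$; this is precisely the device that keeps the forbidden $\tfrac1{\e^2}\|\partial_x^{\a}\mathbf{P}f^\e\|_{L^2_{x,v}}^2$ out of the estimate, and the resulting $\tfrac1\e$-microscopic dissipation is exactly what absorbs the transport remainder descending from level $j=1$. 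The genuinely new contribution is the linear source $-\e\partial_x^{\a}(E^\e\cdot v\mu^{1/2})$, bounded by $\e(1+t)^{-\frac{1+\vartheta}{2}}\|\nabla_x^N E^\e\|_{L^2_x}\|\nabla_x^N f^\e\|_{L^2_{x,v}}$; splitting it by Cauchy--Schwarz yields $\eta\e^2(1+t)^{-(1+\vartheta)}\|\nabla_x^N E^\e\|_{L^2_x}^2+\mathcal{D}_{N}(t)$, which is exactly why this term must survive on the right-hand side of \eqref{estimate-weighted-2:N}. The $|\a'|\ge1$ parts of the Lorentz force and the nonlinear term are then bounded by Sobolev embedding, \eqref{hard gamma1} and \eqref{priori assumption}, contributing $\delta_0\widetilde{\mathcal{D}}_{N,l_2}(t)+\mathcal{D}_{N}(t)$.

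Finally I would sum the level-$j$ estimates from $j=N$ down to $j=0$ with suitably chosen constants $C_j$, so that each transport remainder at level $j$ is dominated by the dissipation carried by the level-$(j-1)$ estimate with a large prefactor, closing the whole family $\widetilde{\mathcal{D}}_{l_2}^{(N)}(t)$ internally. I expect the main obstacle to be the simultaneous bookkeeping of the two competing flows: the transport remainder is fed \emph{downward} in $|\b|$ into ${\mathcal{D}}_{l_2}^{N,j-1}$, while the Lorentz remainder (via \eqref{weightfunction2D_1}) is fed \emph{upward} into ${\mathcal{D}}_{l_2}^{N,j+1}$, so the constants $C_j$, the exponents $\sigma_{N,j}$, and the $\e$-powers must all be chosen so that neither mechanism reintroduces a term it cannot pay for; the single delicate inequality underlying the whole scheme is the one forced by $\sigma_{N,j}-\sigma_{N,j-1}=\tfrac{1+\vartheta}{2}$, reconciled with the $\e$-rescaling in the $|\b|=0$ slot that simultaneously removes the singular macroscopic quantity and produces the $\tfrac1\e$-dissipation that catches the bottom-level transport term.
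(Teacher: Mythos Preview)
Your proposal is correct and follows essentially the same approach as the paper: splitting into the pure spatial level $|\alpha|=N$ (treated by multiplying the full equation by $\e$ and pairing with $\widetilde{w}_{l_2}^2(\alpha,0)\partial_x^{\alpha}f^\e$, which avoids the singular $\tfrac{1}{\e^2}\|\partial_x^\alpha\mathbf{P}f^\e\|^2$ and produces the surviving $\eta\e^2(1+t)^{-(1+\vartheta)}\|\nabla_x^N E^\e\|_{L^2_x}^2$) and the mixed levels $|\beta|=j\ge1$ (treated via the microscopic equation \eqref{rVMBweiguan-1}), with the staggered time weights $\sigma_{N,j}$ absorbing the transport remainder fed downward and the $\delta_0$-smallness from the a priori decay absorbing the Lorentz remainder fed upward. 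The only cosmetic difference is that the paper first derives unweighted differential inequalities (the displayed \eqref{weighted estimate2}, \eqref{weighted estimate3}) and then combines with constants $C_m$, whereas you pair directly against the time-weighted test function; the two are equivalent and your bookkeeping of the $\sigma_{N,j}-\sigma_{N,j-1}=\tfrac{1+\vartheta}{2}$ identity is exactly the mechanism the paper uses in its estimate of $\mathcal{J}_6$.
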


\begin{proof}
Our proof  of (\ref{estimate-weighted-2:N}) is divided into
two steps as follows.

\medskip
 \emph{Step 1. Weighted $L_{x,v}^2$-estimate of  $\partial^\a_x f^\e$ with the pure spatial derivative $|\a|=N$.}\;
More precisely, we can establish
\bal
\bsp\label{weighted estimate2}
&\frac{\d}{\d t} \left[(1+t)^{\sigma_{N,0}}  \mathcal{E}_{l_2}^{N,0}(t)\right]
+(1+t)^{\sigma_{N,0}}\mathcal{D}_{l_2}^{N,0}(t)\\
\lesssim\;&\eta\e^2(1+t)^{-\sigma_{N,0}}\|\nabla^N_x E^\e\|_{L^2_x}^2+(1+t)^{\sigma_{N,0}}\mathcal{D}_N(t)
+\mathcal{D}_N(t)
+\delta_0\sum_{n\leq N}(1+t)^{\sigma_{n,0}}\mathcal{D}_{l_2}^{n,0}(t)\\
\;&
+\delta_0\sum_{n\leq N}(1+t)^{\sigma_{n,1}}\mathcal{D}_{l_2}^{n,1}(t).
\esp
\eal

In fact,
applying $\partial^\alpha_x $ with $|\alpha|=N$  to   the first equation in (\ref{rVPB}), then integrating the resulting identity over $\bbR^3_x\times\bbR^3_v$ by multiplying
 $\varepsilon w^2_{l_2}(\alpha,0)\partial^\alpha_x f^\varepsilon$, we have
\bal
\bsp\label{with weight 2}
&\!\frac{\varepsilon}{2} \frac{\d}{\d t}   \left\| \widetilde{w}_{l_2}(\alpha,\!0) \partial^\alpha_x\! f^\varepsilon \right\|^2
\!+\!
 \frac{\e q\vartheta}{(1+t)^{1+\vartheta}}\left\|\langle v\rangle \widetilde{w}_{l_2}(\alpha,0) \partial^\alpha_x \!f^\varepsilon \right\|^2
 \!+\!\frac{\sigma_0}{\varepsilon}  \left\| \widetilde{w}_{l_2}(\alpha,0)\partial^\alpha_x (\mathbf{I}\!-\!\mathbf{P}) f^\varepsilon\!\right\|_{{\nu}}^2 \\
=& {\Big\langle  \partial_x^\a (E^\e\cdot v \sqrt{\mu}) , \e \widetilde{w}^{2}_{l_2}(\a,0)\partial_x^\a f^\e
\Big\rangle_{L_{x,v}^2}}
+{\Big\langle
\partial_x^\a \left(-v\times B^\e \cdot\nabla_v f^\e
\right) , \e \widetilde{w}^{2}_{l_2}(\a,0)\partial_x^\a f^\e\Big
\rangle_{L_{x,v}^2}}\\
&\;
+{\Big\langle
\partial_x^\a\left(-\e E^\e\cdot\nabla_v f^\e
\right) , \e \widetilde{w}^{2}_{l_2}(\a,0)\partial_x^\a f^\e\Big
\rangle_{L_{x,v}^2}}+{\Big\langle
\partial_x^\a\Big(
\frac{\e}{2}v\cdot E^\e  f^\e\Big) , \e \widetilde{w}^{2}_{l_2}(\a,0)\partial_x^\a f^\e\Big
\rangle_{L_{x,v}^2}}\\
&\;
+{\Big\langle\frac{1}{\e} \partial_x^\a\Gamma(f^\e,f^\e), \e \widetilde{w}^{2}_{l_2}(\a,0)\partial_x^\a f^\e\Big
\rangle_{L_{x,v}^2}}+\frac{1}{\varepsilon}  \left\| \partial^\alpha_x (\mathbf{I} - \mathbf{P}) f^\varepsilon\right\|_{L^2_{x,v}\!(\nu)}^2
+\mathcal{D}_N(t).
\esp
\eal
Here, we    have used the spectral inequality \eqref{L coercive2}.
Now   we classify the first five terms on the right-hand side  of \eqref{diyigeweight:L^2:2} as $\mathcal{J}_1$ to $\mathcal{J}_5$ and estimate them term by term.
For the term $\mathcal{J}_1$, it is derived from the H\"{o}lder inequality and the Cauchy--Schwarz inequality with $\eta$ that
\bals
\bsp
\mathcal{J}_1
\lesssim\; \e\|\nabla^N_x E^\e\|_{L^2_x}\|\nabla^N_x f^\e\|_{L^2_{x,v}}
\lesssim\; \eta\e^2(1+t)^{-\sigma_{N,0}}\|\nabla^N_x E^\e\|_{L^2_x}^2+(1+t)^{\sigma_{N,0}}\mathcal{D}_N(t).
\esp
\eals
From the macro-micro decomposition \eqref{fdefenjie},  we deduce
\bals
\bsp
\mathcal{J}_2
=\;&-\sum_{1\leq|\a'|\leq N}\Big\langle
  v\times  \partial_x^{\a'} B^\e \cdot\nabla_v \partial_x^{\a-\a'} (\mathbf{I}-\mathbf{P})f^\e, \e \widetilde{w}^{2}_{l_2}(N,0)\partial_x^\a (\mathbf{I}-\mathbf{P}) f^\e\Big
\rangle_{L_{x,v}^2}\\
\;&-\sum_{1\leq|\a'|\leq N}\Big\langle
  v\times  \partial_x^{\a'} B^\e \cdot\nabla_v \partial_x^{\a-\a'} (\mathbf{I}-\mathbf{P})f^\e, \e \widetilde{w}^{2}_{l_2}(N,0)\partial_x^\a \mathbf{P}f^\e\Big
\rangle_{L_{x,v}^2}\\[0.1mm]
\;&-\sum_{1\leq|\a'|\leq N}\Big\langle
  v\times  \partial_x^{\a'} B^\e \cdot\nabla_v \partial_x^{\a-\a'}  \mathbf{P}f^\e, \e \widetilde{w}^{2}_{l_2}(N,0)\partial_x^\a f^\e\Big
\rangle_{L_{x,v}^2}\\
\equiv\;&\mathcal{J}_{2,1}+\mathcal{J}_{2,2}+\mathcal{J}_{2,3}.
\esp
\eals
From the H\"{o}lder inequality, the Cauchy--Schwarz inequality with $\eta$, the interpolation inequality, \eqref{NGinequality}  and the Sobolev embedding $H^1(\mathbb{R}^3)\hookrightarrow L^3(\mathbb{R}^3)$, $H^1(\mathbb{R}^3)\hookrightarrow L^6(\mathbb{R}^3)$, we deduce
\bals
\bsp
\mathcal{J}_{2,1}\lesssim \;&{\e}\|\widetilde{w}_{l_2}(N,0)\partial_x^{\a}(\mathbf{I}-\mathbf{P})f^\e \|_{L^2_{x,v}}\Big[\|\nabla_xB^\e\|_{L^\infty_x}
\|\langle v\rangle^{\frac{1}{2}}\widetilde{w}_{l_2}(N-1,1)
\nabla_v\partial_x^{N-1}(\mathbf{I}-\mathbf{P})f^\e \|_{L^2_{x,v}}
\\
\;&\qquad\qquad\qquad
+\sum_{|\a'|=2}\|\partial_x^{\a'}B^\e\|_{L^\infty_x}
\|\widetilde{w}_{l_2}(N-2,1)
\nabla_v\partial_x^{\a-\a'}(\mathbf{I}-\mathbf{P})f^\e \|_{L^2_{x,v}}
\\
\;&\qquad\qquad\qquad
+\sum_{|\a'|=3}\|\partial_x^{\a'}B^\e\|_{L^3_x}
\|\widetilde{w}_{l_2}(N-2,1)
\nabla_v\partial_x^{\a-\a'}(\mathbf{I}-\mathbf{P})f^\e \|_{L^6_{x}L^2_{v}}
\\
\;&\qquad\qquad\qquad
+\sum_{4\leq|\a'|\leq N}\|\partial_x^{\a'}B^\e\|_{L^2_x}
\|\widetilde{w}_{l_2}(N-2,1)
\nabla_v\partial_x^{\a-\a'}(\mathbf{I}-\mathbf{P})f^\e \|_{L^\infty_{x}
L^2_{v}}
\Big]
\\
\lesssim \;&\frac{\eta}{\e}
\|\widetilde{w}_{l_2}(\a,0)\partial_x^\a(\mathbf{I}-\mathbf{P})f^\e\|_{L^2_{x,v}}^2
+\e^3\|\nabla_xB^\e\|_{L^\infty_x}^2\|\langle v\rangle^{\frac{1}{2}}\widetilde{w}_{l_2}(N-1,1)
\nabla_v\partial_x^{N-1}(\mathbf{I}-\mathbf{P})f^\e \|_{L^2_{x,v}}^2
\\\;&+
\e^5\|\nabla_x B^\e\|_{H^{N-1}_x}^2\sum_{n\leq N-1}(1+t)^{\sigma_{n,1}}\mathcal{D}_{l_2}^{n,1}(t)
\\
\lesssim \;&
\eta(1+t)^{\sigma_{N,0}}\mathcal{D}_{l_2}^{N,0}(t)
+\delta_0(1+t)^{\sigma_{N,1}}
\mathcal{D}_{l_2}^{N,1}(t)+ \delta_0\sum_{n\leq N-1}(1+t)^{\sigma_{n,1}}\mathcal{D}_{l_2}^{n,1}(t).
\esp
\eals
Here, we have used \eqref{priori assumption} and the facts  that
\bals
\bsp
\;& \langle v\rangle  \widetilde{w}_{l_2}(N,0)\lesssim  \langle v\rangle^{\frac{1}{2}}\widetilde{w}_{l_2}(N-1, 1),\quad
 \langle v\rangle  \widetilde{w}_{l_2}(N,0)\lesssim \widetilde{w}_{l_2}(N-2, 1)\langle v\rangle^{-\frac{7}{2}}.
\esp
\eals
The terms $\mathcal{J}_{2,2}$ and $\mathcal{J}_{2,3}$ can be estimated as
\bals
\bsp
\;&\mathcal{J}_{2,2}+\mathcal{J}_{2,3}\\
\lesssim \;&
\e\Big[\!\|\nabla_x B^\e\|_{L^\infty_x}\!\!\!\sum_{|\a'|=1}\!\!
\|\partial_x^{\a-\a'}\!(\mathbf{I}\!-\!\mathbf{P})f^\e \|_{L^2_{x,v}}
\!+\!\!\!\sum_{2\leq|\a'|\leq N}\!\!\!\|\partial_x^{\a'}B^\e\|_{L^3_x}
\|\partial_x^{\a-\a'}\!(\mathbf{I}\!-\!\mathbf{P})f^\e \|_{L^\infty_{x}L^2_{v}}\!\Big]
\| \partial_x^{\a}\mathbf{P} f^\e\! \|_{L^2_{x,v}}\\
\;&+\e\Big[\|\nabla_x B^\e\|_{L^\infty_x}
\!\sum_{|\a'|=1}\!\|\nabla_v\partial_x^{\a-\a'}\mathbf{P}f^\e \|_{L^2_{x,v}}
\!\!+\!\!\!\!\sum_{2\leq|\a'|\leq N}\!\!\|\partial_x^{\a'}B^\e\|_{L^2_x}
\|\nabla_v\partial_x^{\a-\a'}\mathbf{P}f^\e \|_{L^\infty_{x}L^2_{v}}\Big]
\|\partial_x^{\a} f^\e \|_{L^2_{x,v}}\\
\lesssim \;&\e\sqrt{\mathcal {E}_N(t)}\mathcal{D}_N(t)
\esp
\eals
by making use of the  H\"{o}lder inequality  and the Sobolev embedding theory.
Therefore, collecting the previous  estimates of the  terms $\mathcal{J}_{2,1}$, $\mathcal{J}_{2,2}$ and $\mathcal{J}_{2,3}$, we have
\bals
\bsp
\mathcal{J}_{2}\lesssim \;&
\eta(1+t)^{\sigma_{N,0}}\mathcal{D}_{l_2}^{N,0}(t)
+\delta_0\sum_{n\leq N}(1+t)^{\sigma_{n,1}}\mathcal{D}_{l_2}^{n,1}(t)+
\delta_0\mathcal{D}_N(t).
\esp
\eals

Direct calculation gives us that
\bals
\bsp
\mathcal{J}_3=\;&\underbrace{\Big\langle
-\e E^\e\cdot\nabla_v \partial_x^\a f^\e, \e \widetilde{w}^{2}_{l_2}(\a,0)\partial_x^\a f^\e\Big
\rangle_{L_{x,v}^2}}_{\mathcal{J}_{3,1}}\\
\;&+\underbrace{\e^2
\sum_{1\leq|\a'|\leq N}\Big\langle
- \partial_x^{\a'} E^\e\cdot\nabla_v \partial_x^{\a-\a'}(\mathbf{I}-\mathbf{P})f^\e, \widetilde{w}^{2}_{l_2}(\a,0)\partial_x^\a(\mathbf{I}-\mathbf{P}) f^\e\Big
\rangle_{L_{x,v}^2}}_{\mathcal{J}_{3,2}}\\
\;&+
\underbrace{\e^2
\sum_{1\leq|\a'|\leq N}\Big\langle
- \partial_x^{\a'} E^\e\cdot\nabla_v \partial_x^{\a-\a'}\mathbf{P}f^\e, \widetilde{w}^{2}_{l_2}(\a,0)\partial_x^\a(\mathbf{I}-\mathbf{P}) f^\e\Big
\rangle_{L_{x,v}^2}}_{\mathcal{J}_{3,3}}\\
\;&
+\underbrace{\e^2 \sum_{1\leq|\a'|\leq N}\Big\langle
  \partial_x^{\a'} E^\e \partial_x^{\a-\a'}f^\e,  \nabla_v\left( \widetilde{w}^{2}_{l_2}(\a,0)\partial_x^\a\mathbf{P}f^\e\right)\Big
\rangle_{L_{x,v}^2}}_{\mathcal{J}_{3,4}}.
\esp
\eals
Notice that $\nabla_v \widetilde{w}_{l_2}(\a,0)\lesssim \langle v\rangle \widetilde{w}_{l_2}(\a,0)$,
and hence, we have
\bals
\bsp
\mathcal{J}_{3,1}
\lesssim \;& \e^2\|E^\e\|_{L^\infty_x}\Big[
\|\langle v\rangle \widetilde{w}_{l_2}(\a,0)\partial_x^{\a}f^\e \|_{L^2_{x,v}}\| \widetilde{w}_{l_2}(\a,0)\partial_x^{\a} (\mathbf{I}\!-\!\mathbf{P})f^\e \|_{L^2_{x,v}}
\!+\!\|\partial_x^{\a}f^\e \|_{L^2_{x,v}}\!\| \partial_x^{\a}\mathbf{P}f^\e \|_{L^2_{x,v}}\Big]\\
\lesssim \;&\e^2\|E^\e\|_{L^\infty_x}(1+t)^{\frac{1+\vartheta}{2}}(1+t)^{\sigma_{N,0}}\mathcal{D}_{l_2}^{N,0}(t)+\e^2\sqrt{\mathcal {E}_N(t)}\mathcal{D}_N(t).
\esp
\eals
Then we  deduce from    the H\"{o}lder inequality,  the Cauchy--Schwarz inequality with $\eta$ and  the Sobolev embedding \eqref{NGinequality} that
\bals
\bsp
\mathcal{J}_{3,2}
\lesssim \;&\frac{\eta}{\e}
\|\widetilde{w}_{l_2}(\a,\!0)\partial_x^\a(\mathbf{I}\!-\!\mathbf{P}) f^\e\|_{L^2_{x,v}}^2+\e^5\!\!
\sum_{1\leq|\a'|\leq 2}\!\!\|\partial_x^{\a'}\! E^\e\|_{L^\infty_x}^2
\|\widetilde{w}_{l_2}(|\a\!-\!\a'|,1)\nabla_v \partial_x^{\a-\a'}\!(\mathbf{I}\!-\!\mathbf{P})f^\e\!\|_{L^2_{x,\!v}}^2\\
\;&+\e^5\!\!\!
 \sum_{3\leq|\a'|\leq N}\!\!\|\partial_x^{\a'} E^\e\|_{L^2_x}^2
\|\widetilde{w}_{l_2}(|\a-\a'|+2,1)\nabla_v \partial_x^{\a-\a'}(\mathbf{I}\!-\!\mathbf{P})f^\e\|_{L^\infty_x L^2_v}^2\\
\lesssim \;&\frac{\eta}{\e}
\|\widetilde{w}_{l_2}(\a,0)\partial_x^\a(\mathbf{I}-\mathbf{P}) f^\e\|_{L^2_{x,v}}^2+\e^7\|\nabla_xE^\e\|_{H^{N-1}_x}^2\sum_{n\leq N}(1+t)^{\sigma_{n,1}}\mathcal{D}_{l_2}^{n,1}(t)
\\
\lesssim \;&\eta(1+t)^{\sigma_{N,0}}\mathcal{D}_{l_2}^{N,0}(t)+ \e^7\|\nabla_xE^\e\|_{H^{N-1}_x}^2\sum_{n\leq N}(1+t)^{\sigma_{n,1}}\mathcal{D}_{l_2}^{n,1}(t)
.
\esp
\eals
Via the  H\"{o}lder inequality  and the Sobolev embedding theory, we have
\bals
\bsp
\;&\mathcal{J}_{3,3}+\mathcal{J}_{3,4}\\
\lesssim\;&\e^2\Big[\|\nabla_x E^\e\|_{L^\infty_x}
\| \partial_x^{N-1}\mathbf{P}f^\e\|_{L^2_{x,v}}+
\sum_{2\leq|\a'|\leq N}\|\partial_x^{\a'} E^\e\|_{L^2_x}
\| \partial_x^{\a-\a'}\mathbf{P}f^\e\|_{L^\infty_x L^2_v}\Big]
\|\partial_x^\a(\mathbf{I}-\mathbf{P}) f^\e\|_{L^2_{x,v}}\\
\;&+\e^2\Big[\|\nabla_x E^\e\|_{L^\infty_x}
\| \partial_x^{N-1}f^\e\|_{L^2_{x,v}}+
\sum_{2\leq|\a'|\leq N}\|\partial_x^{\a'} E^\e\|_{L^2_x}
\| \partial_x^{\a-\a'}f^\e\|_{L^\infty_x L^2_v}\Big]
\|\partial_x^\a\mathbf{P} f^\e\|_{L^2_{x,v}}\\
\lesssim \;&\e^2\sqrt{\mathcal {E}_N(t)}\mathcal{D}_N(t).
\esp
\eals
 Therefore, collecting the estimates on the quantities
$\mathcal{J}_{3,i}(i=1,2,3,4)$ in the previous and using  the   \emph{a priori} assumption  \eqref{priori assumption}, we obtain
\bals
\bsp
\mathcal{J}_{3}
\lesssim \;& (\eta+\delta_0)(1+t)^{\sigma_{N,0}}\mathcal{D}_{l_2}^{N,0}(t)+\delta_0\sum_{n\leq N}(1+t)^{\sigma_{n,1}}\mathcal{D}_{l_2}^{n,1}(t)
+\delta_0\mathcal{D}_N(t)
.
\esp
\eals

Applying the analogous argument for the term $\mathcal{J}_{2}$, we can deduce that
\bals
\bsp
\mathcal{J}_{4}
\lesssim\;& \eta(1+t)^{\sigma_{N,0}}\mathcal{D}_{l_2}^{N,0}(t)+\delta_0\sum_{n\leq N}(1+t)^{\sigma_{n,0}}\mathcal{D}_{l_2}^{n,0}(t)+
\delta_0\mathcal{D}_N(t)
.
\esp
\eals
From \eqref{hard gamma1}, the \emph{a priori} assumption  \eqref{priori assumption}  and  the Cauchy--Schwarz inequality, we obtain
\bals
\bsp
\mathcal{J}_{5}
\lesssim\;& \frac{\eta}{\e}
\|\widetilde{w}_{l_2}(\a,0)\partial_x^\a(\mathbf{I}-\mathbf{P}) f^\e\|_{L^2_{x,v}(\nu)}^2
+\e^2{\overline{\mathcal{E}}}_{N-1,l_1}(t)
\sum_{n\leq N}(1+t)^{\sigma_{n,0}}\mathcal{D}_{l_2}^{n,0}(t)
+\sqrt{\mathcal{E}_N(t)}\mathcal{D}_N(t)
\\
\lesssim\;& \eta(1+t)^{\sigma_{N,0}}\mathcal{D}_{l_2}^{N,0}(t)+\delta_0\sum_{n\leq N}(1+t)^{\sigma_{n,0}}\mathcal{D}_{l_2}^{n,0}(t)+
\delta_0\mathcal{D}_N(t)
.
\esp
\eals
In summary, putting all the estimates of $\mathcal{J}_{1}\sim\mathcal{J}_{5}$
into \eqref{with weight 2} and choosing $\eta$ small enough, \eqref{weighted estimate2} follows.

\medskip

 \emph{Step 2. Weighted $L_{x,v}^2$-estimate of  $\partial^\a_\b f^\e$ with the mixed derivative $|\a|+|\b|=N$, $|\b|=m\geq 1$.}\;
To be more precise, we can obtain
\bal
\bsp\label{weighted estimate3}
& \frac{\d}{\d t}
\left[(1+t)^{\sigma_{N,|\b|}}\mathcal{D}_{l_2}^{N,|\b|}(t)\right]
+(1+t)^{\sigma_{N,|\b|}}\mathcal{D}_{l_2}^{N,|\b|}(t)\\
\lesssim\;&\delta_0\sum_{n\leq N}(1+t)^{\sigma_{n,|\b|}}\mathcal{D}_{l_2}^{n,|\b|}(t)
+(1+t)^{\sigma_{N,|\b|}}\mathcal{D}_{l_2}^{N,|\b|-1}(t)
+\delta_0(1+t)^{\sigma_{N,|\b|}}\mathcal{D}_{l_2}^{N,|\b|+1}(t)
\\
\;&
+\delta_0\sum_{n\leq N-1}(1+t)^{\sigma_{n,|\b|-1}}\mathcal{D}_{l_2}^{n,
|\b|-1}(t)
 +\delta_0\sum_{n\leq N-1}(1+t)^{\sigma_{n,|\b|+1}}\mathcal{D}_{l_2}^{n,|\b|+1}(t)+\mathcal{D}_N(t)
 .
\esp
\eal

Indeed, applying $\partial_{\b}^\a$ with $|\a|+|\b|=N, |\b|=m\geq 1$ to (\ref{rVMBweiguan-1}) and then integrating the resulting identity over $\bbR^3_x\times\bbR^3_v$ by multiplying $\widetilde{w}^{2}_{l_2}(\a,\b)\partial_{\b}^\a(\mathbf{I}-\mathbf{P})f^\e$, we  obtain
\bal
\bsp\label{diergeweight:L^2:2}
& \frac{1}{2}\frac{\d }{\d t} \|\widetilde{w}_{l_2}(\a,\b)\partial_{\b}^\a(\mathbf{I}-\mathbf{P})f^\e\|_{L_{x,v}^2}^2
+\frac{q\vartheta}{(1+t)^{1+\vartheta}}
\|\langle v\rangle \widetilde{w}_{l_2}(\a,\b)
\partial_{\b}^\a(\mathbf{I}-\mathbf{P})f^\e\|_{L_{x,v}^2}^2\\
\;&
+\frac{\sigma_0}{\e^2}\| \widetilde{w}_{l_2}(\a,\b)\partial_{\b}^\a(\mathbf{I}-\mathbf{P})f^\e\|_{L_{x,v}^2(\nu)}^2
\\
\le
&\;\frac{\eta}{\e^2}\sum_{|\b'|\leq|\b|}\| \widetilde{w}_{l_2}(\a,\b')\partial_{\b'}^\a(\mathbf{I}-\mathbf{P})f^\e\|_{L_{x,v}^2(\nu)}^2
+\frac{C_{\sigma_0}}{\e^2}\| \partial^\a_x(\mathbf{I}-\mathbf{P})f^\e\|_{L_{x,v}^2(\nu)}^2\\
&\;+{\Big\langle- \frac{1}{\e}C_{\b}^{e_i}\partial_{\b-e_i}^{\a+e_i} (\mathbf{I}-\mathbf{P}) f^\e , \widetilde{w}^{2}_{l_2}(\a,\b)\partial_{\b}^\a(\mathbf{I}-\mathbf{P})f^\e
\Big\rangle_{L_{x,v}^2}}\\
&\;
+{\Big\langle
\partial_{\b}^\a\Big(-v\times B^\e \cdot\nabla_v(\mathbf{I}-\mathbf{P}) f^\e
\Big) , \widetilde{w}^{2}_{l_2}(\a,\b)\partial_{\b}^\a(\mathbf{I}-\mathbf{P})f^\e\Big
\rangle_{L_{x,v}^2}}\\
&\;+{\Big\langle
\partial_{\b}^\a\Big(-\e E^\e \cdot\nabla_v(\mathbf{I}-\mathbf{P}) f^\e+
\frac{\e}{2}v\cdot E^\e (\mathbf{I}-\mathbf{P}) f^\e\Big) , \widetilde{w}^{2}_{l_2}(\a,\b)\partial_{\b}^\a(\mathbf{I}-\mathbf{P})f^\e\Big
\rangle_{L_{x,v}^2}}
\\
&+{\Big\langle\frac{1}{\e} \partial_{\b}^\a\Gamma(f^\e,f^\e), \widetilde{w}^{2}_{l_2}(\a,\b)\partial_{\b}^\a(\mathbf{I}-\mathbf{P})f^\e\Big
\rangle_{L_{x,v}^2}}\\
\;&
+{\Big\langle \partial_{\b}^\a\big([[\mathbf{P},\mathcal{A}_{(E^\e,B^\e)}]]f^\e\big),  \widetilde{w}^{2}_{l_2}(\a,\b)\partial_{\b}^\a(\mathbf{I}-\mathbf{P})f^\e\Big \rangle_{L_{x,v}^2}.}
\esp
\eal
Here, we    have used the spectral inequality \eqref{L coercive3}.
 Now   we classify the last five terms on the
right-hand side of (\ref{diergeweight:L^2:2}) as $\mathcal{J}_6 $ to $\mathcal{J}_{10}$ and   estimate them  term by term.
The H\"{o}lder inequality, the Cauchy--Schwarz inequality with $\eta$ and the interpolation inequality lead  to
\bals
\bsp
\mathcal{J}_6\lesssim \;&\frac{1}{\e}\|\widetilde{w}_{l_2}(\a,\b)\partial_{\b}^\a(\mathbf{I}-\mathbf{P})f^\e\|_{L^2_{x,v}}
\|\langle v \rangle^{\frac{1}{2}}\widetilde{w}_{l_2}(\a+e_i,\b-e_i)\partial_{\b-e_i}^{\a+e_i} (\mathbf{I}-\mathbf{P}) f^\e\|_{L^2_{x,v}} \\
\lesssim \;&\frac{\eta}{\e^2}
\|\widetilde{w}_{l_2}(\a,\b)\partial_{\b}^\a(\mathbf{I}-\mathbf{P})f^\e\|_{L^2_{x,v}}^2
+ C_{\eta}\|\langle v \rangle^{\frac{1}{2}}\widetilde{w}_{l_2}(\a+e_i,\b-e_i)\partial_{\b-e_i}^{\a+e_i} (\mathbf{I}-\mathbf{P}) f^\e\|_{L^2_{x,v}}^2\\
\lesssim \;&
\eta(1+t)^{\sigma_{N,|\b|}}\mathcal{D}_{l_2}^{N,|\b|}(t)
+ C_{\eta}(1+t)^{\frac{1+\vartheta}{2}}(1+t)^{\sigma_{N,|\b|-1}}\mathcal{D}_{l_2}^{N,|\b|-1}(t)\\
\lesssim \;&
\eta(1+t)^{\sigma_{N,|\b|}}\mathcal{D}_{l_2}^{N,|\b|}(t)
+
(1+t)^{\sigma_{N,|\b|}}\mathcal{D}_{l_2}^{N,|\b|-1}(t)
,
\esp
\eals
where we have used \eqref{sigma:def} and the fact  that $\widetilde{w}_{l_2}(\a,\b)=\widetilde{w}_{l_2}(\a+e_i,\b-e_i)\langle v \rangle^{\frac{1}{2}}$.

For the term $\mathcal{J}_7$,  we split it into
\bals
\bsp
\mathcal{J}_7=
\;&
\sum_{|\a'|\leq|\a|,|\b'|=1}\Big\langle
 -\partial_{\b_1}v\times \partial_x^{\a'} B^\e \cdot\nabla_v\partial_{\b-\b'}^{\a-\a'}(\mathbf{I}-\mathbf{P}) f^\e, \widetilde{w}^{2}_{l_2}(\a,\b)\partial_{\b}^\a(\mathbf{I}-\mathbf{P})f^\e
 \Big
\rangle_{L^2_{x,v}}\\
\;&+
\sum_{1\leq|\a'|\leq|\a|}\Big\langle
 -v\times \partial_x^{\a'} B^\e \cdot\nabla_v\partial_{\b}^{\a-\a'}(\mathbf{I}-\mathbf{P}) f^\e, \widetilde{w}^{2}_{l_2}(\a,\b)\partial_{\b}^\a(\mathbf{I}-\mathbf{P})f^\e
 \Big
\rangle_{L^2_{x,v}}\\
\equiv\;&\mathcal{J}_{7,1}+\mathcal{J}_{7,2}.
\esp
\eals
For   $\mathcal{J}_{7,1}$,   we   deduce by the H\"{o}lder inequality, \eqref{NGinequality} and the \emph{a priori} assumption  \eqref{priori assumption}   that
\bals
\bsp
\mathcal{J}_{7,1}
\lesssim\;&\Big[\sum_{|\a'|\leq|\a|-1,|\b'|=1}\|\partial_x^{\a'} B^\e \|_{L^\infty_x} \|\widetilde{w}_{l_2}(\a-\a',\b)\nabla_v\partial_{\b-\b'}^{\a-\a'}(\mathbf{I}-\mathbf{P}) f^\e \|_{L^2_{x,v}}\\
\;&\quad+\|\partial_x^{\a} B^\e \|_{L^3_x} \|\widetilde{w}_{l_2}(1,|\b|)\nabla_v\partial_{\b-\b'}(\mathbf{I}-\mathbf{P}) f^\e \|_{L^6_{x}L^2_{v}}\Big]
\| \widetilde{w}_{l_2}(\a,\b)\partial_{\b}^{\a}(\mathbf{I}-\mathbf{P}) f^\e\|_{L^2_{x,v}}\\
\lesssim \;& \e^2\|\nabla_xB^\e\|_{H^{N-1}_x}\sum_{n\leq N}(1+t)^{\sigma_{n,|\b|}}\mathcal{D}_{l_2}^{n,|\b|}(t)\\
\lesssim \;& \delta_0\sum_{n\leq N}(1+t)^{\sigma_{n,|\b|}}\mathcal{D}_{l_2}^{n,|\b|}(t).
\esp
\eals
Here, we have   used   the fact $|\a|\leq N-1$ because of  $|\b|\geq1$ and $|\a|+|\b|= N$ in this case.
It is worth pointing out that the most subtle part is the estimate of $\mathcal{J}_{7,2}$ which is carried out as
\bal
\bsp\label{Bdekunnan:3}
\mathcal{J}_{7,2}
\lesssim\;&\Big[\|\nabla_x B^\e \|_{L^\infty_x}\sum_{|\a'|=1} \|\langle v\rangle^{\frac{1}{2}} \widetilde{w}_{l_2}(|\a|-1,|\b|+1)\nabla_v\partial_{\b}^{\a-\a'}(\mathbf{I}-\mathbf{P}) f^\e \|_{L^2_{x,v}}
\\
\;&\;\;+\sum_{2\leq|\a'|\leq\max\{|\a|-1,2\}}\|\partial_x^{\a'} B^\e \|_{L^\infty_x} \|\widetilde{w}_{l_2}(|\a-\a'|,|\b|+1)\nabla_v\partial_{\b}^{\a-\a'}(\mathbf{I}-\mathbf{P}) f^\e \|_{L^2_{x,v}}\\
\;&\;\;+\|\partial_x^{\a} B^\e \|_{L^3_x} \|\widetilde{w}_{l_2}(1,\b+1)\nabla_v\partial_{\b}(\mathbf{I}-\mathbf{P}) f^\e \|_{L^6_{x}L^2_{v}}\Big]\| \widetilde{w}_{l_2}(\a,\b)\partial_{\b}^{\a}(\mathbf{I}-\mathbf{P}) f^\e\|_{L^2_{x,v}}\\
\lesssim \;&\e^2\|\nabla_x B^\e \|_{L^\infty_x}^2 \sum_{|\a'|=1}\|\langle v\rangle^{\frac{1}{2}} \widetilde{w}_{l_2}(|\a|-1,|\b|+1)\nabla_v\partial_{\b}^{\a-\a'}(\mathbf{I}-\mathbf{P}) f^\e \|_{L^2_{x,v}}^2\\
\;&+ \e^4
\| B^\e\|_{H^{N}_x}^2\sum_{n\leq N-1}(1+t)^{\sigma_{n,|\b|+1}}\mathcal{D}_{l_2}^{n,|\b|+1}(t)+\frac{\eta}{\e^2} \| \widetilde{w}_{l_2}(\a,\b) \partial_{\b}^{\a}(\mathbf{I}-\mathbf{P}) f^\e \|_{L^2_{x,v}}^2
\\
\lesssim \;&\delta_0 (1\!+\!t)^{\sigma_{N,|\b|}}\mathcal{D}_{l_2}^{N,|\b|+1}\!(t)\!+\!
 \delta_0\!\!\!\sum_{n\leq N-1}\!\!\!(1\!+\!t)^{\sigma_{n,|\b|+1}}\mathcal{D}_{l_2}^{n,|\b|+1}\!(t)
 \!+\! \eta(1\!+\!t)^{\sigma_{N,|\b|}}\mathcal{D}_{l_2}^{N,|\b|}\!(t).
\esp
\eal
Here, we have used \eqref{priori assumption} and the fact, which is derived from the  interpolation inequality and \eqref{sigma:def}, that
\bals
&\sum_{|\a'|=1}\|\langle v\rangle^{\frac{1}{2}} \widetilde{w}_{l_2}(|\a|-1,|\b|+1)\nabla_v\partial_{\b}^{\a-\a'}(\mathbf{I}-\mathbf{P}) f^\e \|_{L^2_{x,v}}^2
\\
\lesssim \;&\!\!\sum_{|\a'|=1}\!\!\|\langle v\rangle \widetilde{w}_{l_2}(|\a|\!-1\!,\!|\b|\!+\!1)\nabla_v\partial_{\b}^{\a-\a'}\!(\mathbf{I}\!-\!\mathbf{P}) f^\e\! \|_{L^2_{x,v}} \| \widetilde{w}_{l_2}(|\a|\!-1\!,\!|\b|\!+\!1)\nabla_v\partial_{\b}^{\a-\a'}\!(\mathbf{I}\!-\!\mathbf{P}) f^\e\! \|_{L^2_{x,v}}\\
\lesssim \;& \e(1+t)^{\frac{1+\vartheta}{2}}
(1+t)^{\sigma_{N,|\b|+1}}\mathcal{D}_{l_2}^{N,|\b|+1}(t)\\
\lesssim \;& \e(1+t)^{{1+\vartheta}}
(1+t)^{\sigma_{N,|\b|}}\mathcal{D}_{l_2}^{N,|\b|+1}(t).
\eals
Consequently, collecting the above bounds on the quantities $\mathcal{J}_{7,1}$ and $\mathcal{J}_{7,2}$, we obtain
\bals
\bsp
 \mathcal{J}_{7}\lesssim\;&
 \delta_0\sum_{n\leq N}(1+t)^{\sigma_{n,|\b|+1}}\mathcal{D}_{l_2}^{n,|\b|+1}(t)+\delta_0\sum_{n\leq N}(1+t)^{\sigma_{n,|\b|}}\mathcal{D}_{l_2}^{n,|\b|}(t)
 + \eta(1+t)^{\sigma_{N,|\b|}}\mathcal{D}_{l_2}^{N,|\b|}(t).
 \esp
 \eals

 For the term $\mathcal{J}_8$, direct calculation gives us that
\bals
\bsp
\mathcal{J}_8=\;&\underbrace{\Big\langle
 -\e E^\e \cdot\nabla_v\partial_{\b}^{\a}(\mathbf{I}-\mathbf{P}) f^\e
,  \widetilde{w}^{2}_{l_2}(\a,\b)\partial_{\b}^\a(\mathbf{I}-\mathbf{P})f^\e\Big
\rangle_{L^2_{x,v}}}_{\mathcal{J}_{8,1}}\\
\;&+\underbrace{\sum_{1\leq|\a'|\leq|\a| }\Big\langle
 - \e\partial_x^{\a'} E^\e \cdot\nabla_v\partial_{\b}^{\a-\a'}(\mathbf{I}-\mathbf{P}) f^\e
,  \widetilde{w}^{2}_{l_2}(\a,\b)\partial_{\b}^\a(\mathbf{I}-\mathbf{P})f^\e\Big
\rangle_{L^2_{x,v}}}_{\mathcal{J}_{8,2}}\\
\;&+\underbrace{\sum_{|\a'|\leq|\a|}\Big\langle
\frac{\e}{2}v\cdot \partial^{\a'}_x E^\e \partial_{\b}^{\a-\a'}(\mathbf{I}-\mathbf{P}) f^\e , \widetilde{w}^{2}_{l_2}(\a,\b)\partial_{\b}^\a(\mathbf{I}-\mathbf{P})f^\e\Big
\rangle_{L_{x,v}^2}}_{\mathcal{J}_{8,3}}
\\
\;&+\underbrace{\sum_{|\a'|\leq|\a|}\Big\langle
\frac{\e}{2}\partial^{\a'}_x E^\e_i \partial_{\b-e_i}^{\a-\a'}(\mathbf{I}-\mathbf{P}) f^\e , \widetilde{w}^{2}_{l_2}(\a,\b)\partial_{\b}^\a(\mathbf{I}-\mathbf{P})f^\e\Big
\rangle_{L_{x,v}^2}}_{\mathcal{J}_{8,4}}.
\esp
\eals
 Notice that
 $\nabla_v \widetilde{w}_{l_2}(\a,\b)\lesssim \langle v\rangle \widetilde{w}_{l_2}(\a,\b)$,
and hence, we have
\bals
\bsp
\mathcal{J}_{8,1}=
\;&\Big\langle
  \e E^\e \cdot\frac{\nabla_v \widetilde{w}_{l_2}(\a,\b)}{\widetilde{w}_{l_2}(\a,\b)}\partial_{\b}^{\a}(\mathbf{I}-\mathbf{P}) f^\e
,
\widetilde{w}^{2}_{l_2}(\a,\b)\partial_{\b}^\a(\mathbf{I}-\mathbf{P})f^\e\Big
\rangle_{L^2_{x,v}}\\
\lesssim\;&\e\| E^\e \|_{L^\infty_x}
\| \langle v\rangle \widetilde{w}_{l_2}(\a,\b)\partial_{\b}^{\a}(\mathbf{I}-\mathbf{P}) f^\e\|_{L^2_{x,v}}
\| \widetilde{w}_{l_2}(\a,\b)\partial_{\b}^{\a}(\mathbf{I}-\mathbf{P}) f^\e\|_{L^2_{x,v}}\\
\lesssim\;&\e^2\|\nabla_xE^\e\|_{H^1_x}(1+t)^{\frac{1+\vartheta}{2}}
 (1+t)^{\sigma_{N,|\b|}}\mathcal{D}_{l_2}^{N,|\b|}(t).
\esp
\eals
Applying the H\"{o}lder inequality, \eqref{NGinequality}, the Sobolev embedding $H^1(\mathbb{R}^3)\hookrightarrow L^3(\mathbb{R}^3)$ and $|\a|\leq N-1$, we derive
\bals
\bsp
\mathcal{J}_{8,2}\lesssim
\;&
\e \|\nabla_x E^\e \|_{L^\infty_x}\!\sum_{|\a'|=1} \! \|\widetilde{w}_{l_2}(|\a|\!-\!1,|\b|\!+\!1)\nabla_v
\partial_{\b}^{\a\!-\!\a'}\!(\mathbf{I}-\mathbf{P}) f^\e\! \|_{L^2_{x,v}}\| \widetilde{w}_{l_2}(\a,\b)\partial_{\b}^{\a}(\mathbf{I}\!-\!\mathbf{P})\! f^\e\|_{L^2_{x,v}}
\\
\;&+\e\Big[\sum_{2\leq|\a'|\leq\max\{|\a|-1,2\}}\|\partial_x^{\a'} E^\e \|_{L^\infty_x} \|\widetilde{w}_{l_2}(|\a-\a'|,|\b|+1)\nabla_v\partial_{\b}^{\a-\a'}(\mathbf{I}-\mathbf{P}) f^\e \|_{L^2_{x,v}}\\
\;&\qquad+\|\partial_x^{\a} E^\e \|_{L^3_x} \|\widetilde{w}_{l_2}(1,|\b|+1)\nabla_v\partial_{\b}(\mathbf{I}-\mathbf{P}) f^\e \|_{L^6_{x}L^2_{v}}\Big]\| \widetilde{w}_{l_2}(\a,\b)\partial_{\b}^{\a}(\mathbf{I}-\mathbf{P}) f^\e\|_{L^2_{x,v}}\\
\lesssim \;&\e^4\|\nabla_xE^\e\|_{L^{\infty}_x}^2\sum_{|\a'|=1}
\|\widetilde{w}_{l_2}(|\a|-1,|\b|+1)\nabla_v\partial_{\b}^{\a-\a'}(\mathbf{I}-\mathbf{P}) f^\e \|_{L^2_{x,v}}^2\\
\;&+
\e^4\|\nabla_xE^\e\|_{H^{N-1}_x}^2\sum_{n\leq N-1}(1+t)^{\sigma_{n,|\b|+1}}\mathcal{D}_{l_2}^{n,|\b|+1}(t)+
\frac{\eta}{\e^2}
\|\widetilde{w}_{l_2}(\a,\b)\partial_{\b}^\a(\mathbf{I}-\mathbf{P})f^\e\|_{L^2_{x,v}}^2
\\
\lesssim \;&
\e^4\|\nabla_xE^\e\|_{L^{\infty}_x}^2
(1+t)^{\sigma_{N,|\b|+1}}\mathcal{D}_{l_2}^{N,|\b|+1}(t)
+\delta_0\sum_{n\leq N-1}(1+t)^{\sigma_{n,|\b|+1}}\mathcal{D}_{l_2}^{n,|\b|+1}(t)\\
\;&
+\eta(1+t)^{\sigma_{N,|\b|}}\mathcal{D}_{l_2}^{N,|\b|}\!(t)
\\
\lesssim \;&
\delta_0
(1+t)^{\sigma_{N,|\b|}}\mathcal{D}_{l_2}^{N,|\b|+1}\!(t)
+\delta_0\!\!\sum_{n\leq N-1}\!\!(1\!+\!t)^{\sigma_{n,|\b|+1}}\mathcal{D}_{l_2}^{n,|\b|+1}\!(t)
+\eta(1\!+\!t)^{\sigma_{N,|\b|}}\mathcal{D}_{l_2}^{N,|\b|}\!(t)
,
\esp
\eals
where we have used the facts that
\bals
\bsp
\;& \widetilde{w}_{l_2}(\a,\b)\lesssim \widetilde{w}_{l_2}(|\a-\a'|,|\b|+1)\langle v\rangle^{-\frac{1}{2}}\quad \text{in the case of}\quad
1\leq|\a'|\leq \max\{|\a|-1,2\},\\
\;&
 \widetilde{w}_{l_2}(\a,\b)\lesssim \widetilde{w}_{l_2}(1,|\b|+1)\quad\quad \quad\quad \qquad \; \text{in the case of}\quad  |\a|\geq3.
\esp
\eals
Then we  obtain from  the H\"{o}lder inequality,  \eqref{NGinequality},  the Sobolev embeddings theory that
\bals
\bsp
\mathcal{J}_{8,3}\lesssim\;&\e\Big[\|E^\e\|_{L^\infty_x}\|\langle v\rangle \widetilde{w}_{l_2}(\a,\b)
\partial_{\b}^\a
 (\mathbf{I}-\mathbf{P}) f^\e\|_{L^2_{x,v}}\\
 \;&\quad+ \sum_{1\leq|\a'|\leq\max\{|\a|-1,1\}}\|\partial^{\a'}_x E^\e\|_{L^\infty_x}
 \|\widetilde{w}_{l_2}(\a-\a',\b)
\partial_{\b}^{\a-\a'}
 (\mathbf{I}-\mathbf{P}) f^\e\|_{L^2_{x,v}} \\
 \;&\quad+\|\partial^{\a}_x E^\e\|_{L^3_x}
 \|\widetilde{w}_{l_2}(1,|\b|)
 \partial_{\b}(\mathbf{I}-\mathbf{P}) f^\e\|_{L^6_{x}L^2_{v}}\Big]
  \| \widetilde{w}_{l_2}(\a,\b)\partial_{\b}^\a(\mathbf{I}-\mathbf{P})f^\e\|_{L^2_{x,v}}\\
 \lesssim \;&\e^2\|\nabla_xE^\e\|_{H^1_x}(1+t)^{\frac{1+\vartheta}{2}}
 (1+t)^{\sigma_{N,|\b|}}
 \mathcal{D}_{l_2}^{N,|\b|}(t)+ \e^4\|\nabla_xE^\e\|_{H^{N-1}_x}^2\!\!\!\!\sum_{n\leq N-1}\!\!\!(1+t)^{\sigma_{n,|\b|}}\!\!\mathcal{D}_{l_2}^{n,|\b|}(t)\\
 \;&+\frac{\eta}{\e^2} \| \widetilde{w}_{l_2}(\a,\b) \partial_{\b}^{\a}(\mathbf{I}-\mathbf{P}) f^\e \|_{L^2_{x,v}}^2\\
\lesssim\;&
 \delta_0\sum_{n\leq N}(1+t)^{\sigma_{n,|\b|}}\mathcal{D}_{l_2}^{n,|\b|}(t)
 + \eta(1+t)^{\sigma_{N,|\b|}}\mathcal{D}_{l_2}^{N,|\b|}(t),
\esp
\eals
where we have used $|\a|\leq N-1$ and the facts that
\bals
\bsp
\;&\langle v\rangle \widetilde{w}_{l_2}(\a,\b)\lesssim \widetilde{w}_{l_2}(\a-\a',\b)\quad\; \text{in the case of}\quad
1\leq|\a'|\leq\max\{|\a|-1,1\},\\
\;&
\langle v\rangle \widetilde{w}_{l_2}(\a,\b)\lesssim \widetilde{w}_{l_2}(1,|\b|)\quad\quad  \quad  \text{in the case of}\quad  |\a|\geq2.
\esp
\eals
For  $\mathcal{J}_{8,4}$, thanks to  $|\a|\leq N-1$ in this case, we derive from the H\"{o}lder inequality and \eqref{NGinequality} that
\bals
\bsp
\mathcal{J}_{8,4}\lesssim\;&\e\Big[\sum_{|\a'|\leq\max\{|\a|-1,1\}}\| \partial^{\a'}_x E^\e\|_{L^\infty_x}
\|\widetilde{w}_{l_2}(\a-\a',\b-e_i)\partial_{\b-e_i}^{\a-\a'}(\mathbf{I}-\mathbf{P}) f^\e\|_{L^2_{x,v}}\\
\;&\quad
+\| \partial^{\a}_x E^\e\|_{L^3_x}
\|\widetilde{w}_{l_2}(1,|\b-e_i|)\partial_{\b-e_i}(\mathbf{I}-\mathbf{P}) f^\e\|_{L^6_{x}L^2_{v}}\Big]
\|\widetilde{w}_{l_2}(\a,\b)\partial_{\b}^\a(\mathbf{I}-\mathbf{P} )f^\e\|_{L^2_{x,v}} \\
\lesssim\;& \e^4 \|\nabla_xE^\e\|_{H^{N-1}_x}^2\sum_{n\leq N-1}(1+t)^{\sigma_{n,|\b|-1}}\mathcal{D}_{l_2}^{n,|\b|-1}(t)
+\frac{\eta}{\e^2}\|\widetilde{w}_{l_2}(\a,\b)\partial_{\b}^\a(\mathbf{I}-\mathbf{P} )f^\e\|_{L^2_{x,v}}^2\\
\lesssim\;& \delta_0\sum_{n\leq N-1}(1+t)^{\sigma_{n,|\b|-1}}\mathcal{D}_{l_2}^{n,|\b|-1}(t)+ \eta(1+t)^{\sigma_{N,|\b|}}\mathcal{D}_{l_2}^{N,|\b|}(t).
\esp
\eals
Thus, the above estimates  of $\mathcal{J}_{8,i}(i=1,2,3,4)$    give us that
\bals
\bsp
\mathcal{J}_{8}
\lesssim
\;&\delta_0
(1+t)^{\sigma_{N,|\b|}}\mathcal{D}_{l_2}^{N,|\b|+1}(t)
+\delta_0\sum_{n\leq N-1}(1+t)^{\sigma_{n,|\b|+1}}\mathcal{D}_{l_2}^{n,|\b|+1}(t)
+\eta(1+t)^{\sigma_{N,|\b|}}\mathcal{D}_{l_2}^{N,|\b|}(t)\\
\;&+\delta_0\sum_{n\leq N}(1+t)^{\sigma_{n,|\b|}}\mathcal{D}_{l_2}^{n,|\b|}(t)+
\delta_0\sum_{n\leq N-1}(1+t)^{\sigma_{n,|\b|-1}}\mathcal{D}_{l_2}^{n,|\b|-1}(t)
,
\esp
\eals

From \eqref{hard gamma1} and the Cauchy--Schwarz inequality with $\eta$, we deduce
\bals
\bsp
\mathcal{J}_{9}
\lesssim
{\overline{\mathcal{E}}}_{N-1,l_1}(t)\sum_{n\leq N}(1+t)^{\sigma_{n,|\b|}}\mathcal{D}_{l_2}^{n,|\b|}(t)+ \eta(1+t)^{\sigma_{N,|\b|}}\mathcal{D}_{l_2}^{N,|\b|}(t).
\esp
\eals
Taking the analogous argument of the term  $\mathcal{Y}_{5}$ in  Proposition \ref{weighted 1},
one can easily find that
\bals
\bsp
\mathcal{J}_{10}
\lesssim
\mathcal{D}_{N}(t)+\e\mathcal{E}_{N}(t) \mathcal{D}_{N}(t)\lesssim
\mathcal{D}_{N}(t)+\delta_0
\mathcal{D}_{N}(t).
\esp
\eals
As a consequence, with the aid of   \eqref{priori assumption}  and  substituting all the above estimates  of $\mathcal{J}_{6}\sim\mathcal{J}_{10}$
into \eqref{diergeweight:L^2:2}  and then choosing $\eta$ small enough yield  the desired estimate  \eqref{weighted estimate3}.

Consequently,
taking \eqref{weighted estimate3}  the summation over $\left\{ |\beta|=m,|\alpha|+|\beta| = N\right\}$ for each given $1 \leq m \leq N $, and then taking combination of those $N$ estimates with properly chosen constant $C_m > 0 $ $(1 \leq m \leq N )$, and then adding the resulting inequality into  \eqref{weighted estimate2},
we conclude \eqref{estimate-weighted-2:N}. This completes the proof of Lemma \ref{weighted 2:N}.
\end{proof}

Next, we provide the lower-order weighted energy estimate   with the weight   $\widetilde{w}_{l_2}(\alpha, \beta)$ for $|\a|+|\beta|=n\leq N-1$  as follows.
\begin{lemma}\label{weighted 2:diyuN}
Assume that  the \emph{a priori} assumption  \eqref{priori assumption} holds true for $\delta_0$ small enough.
Then, there holds
\bal
\bsp\label{estimate-weighted-2:diyuN}
\sum_{n\leq N-1}\frac{\d}{\d t}\widetilde{\mathcal{E}}_{l_2}^{(n)}(t)
+\sum_{n\leq N-1}\widetilde{\mathcal{D}}_{l_2}^{(n)}(t)
\lesssim
\delta_0\mathcal{D}_{N-1, l_1}(t)
+\mathcal{D}_{N}(t).
\esp
\eal
Here, $\widetilde{\mathcal{E}}_{l_2}^{(n)}(t)$ and   $\widetilde{\mathcal{D}}_{l_2}^{(n)}(t)$ are given in \eqref{energy functional 2-N-new}.
\end{lemma}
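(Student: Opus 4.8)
The plan is to mirror the argument of Step~2 in Lemma~\ref{weighted 2:N}, carried out now for every order $0\leq n\leq N-1$ and every velocity level $0\leq|\beta|=m\leq n$, including the pure spatial case $|\beta|=0$. Since all derivatives involved are of order at most $N-1$, no $\varepsilon$-multiplication device (which was needed in Step~1 of Lemma~\ref{weighted 2:N} to avoid the singular macroscopic contribution $\tfrac{1}{\varepsilon^2}\|\partial_x^\alpha\mathbf{P}f^\varepsilon\|^2$) is required here: the term $\tfrac{1}{\varepsilon^2}\|\partial_x^\alpha(\mathbf{I}-\mathbf{P})f^\varepsilon\|_{L^2_{x,v}(\nu)}^2$ with $|\alpha|\leq N-1$ is already absorbed by $\mathcal{D}_N(t)$. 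Thus the lower-order case is genuinely simpler than the endpoint case.

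First I would fix $n\leq N-1$ and $|\beta|=m$, apply $\partial_\beta^\alpha$ to the microscopic equation \eqref{rVMBweiguan-1}, and take the weighted $L^2_{x,v}$ inner product against $(1+t)^{-\sigma_{n,m}}\widetilde{w}_{l_2}^2(\alpha,\beta)\partial_\beta^\alpha(\mathbf{I}-\mathbf{P})f^\varepsilon$, using the coercivity \eqref{L coercive3}. This produces an identity of exactly the form \eqref{diergeweight:L^2:2} (up to the overall time factor), with the same five groups $\mathcal{J}_6$–$\mathcal{J}_{10}$ on the right. The transport commutator term is bounded, as for $\mathcal{J}_6$, by $\eta(1+t)^{\sigma_{n,m}}\mathcal{D}_{l_2}^{n,m}(t)+(1+t)^{\sigma_{n,m}}\mathcal{D}_{l_2}^{n,m-1}(t)$ after invoking $\widetilde{w}_{l_2}(\alpha,\beta)=\widetilde{w}_{l_2}(\alpha+e_i,\beta-e_i)\langle v\rangle^{1/2}$ together with $\sigma_{n,m}-\sigma_{n,m-1}=\tfrac{1+\vartheta}{2}$ from \eqref{sigma:def}; this is precisely where the staircase of time increments absorbs the velocity growth of the transport term. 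The electric Lorentz term $\mathcal{J}_8$ (including the velocity-derivative piece $\mathcal{J}_{8,4}$ coming from $\tfrac{\varepsilon}{2}v\cdot E^\varepsilon$), the nonlinear collision term $\mathcal{J}_9$, and the macroscopic commutator $\mathcal{J}_{10}$ are handled verbatim as in Lemma~\ref{weighted 2:N}, yielding $\delta_0$ times the lower-order weighted dissipations together with $\mathcal{D}_N(t)$ and $\delta_0\overline{\mathcal{D}}_{N-1,l_1}(t)$.

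The main obstacle, exactly as in the endpoint case, is the magnetic part of the Lorentz force, i.e.\ the analog of $\mathcal{J}_{7,2}$. After expanding $-v\times\partial_x^{\alpha'}B^\varepsilon\cdot\nabla_v\partial_\beta^{\alpha-\alpha'}(\mathbf{I}-\mathbf{P})f^\varepsilon$ and isolating the $|\alpha'|=1$ contribution, one obtains a term carrying one extra velocity derivative and a genuine $\langle v\rangle^{1/2}$ growth, which cannot be closed at level $m$. I would control it as in \eqref{Bdekunnan:3}: bound it by $\varepsilon^2\|\nabla_x B^\varepsilon\|_{L^\infty_x}^2$ times $\|\langle v\rangle^{1/2}\widetilde{w}_{l_2}(|\alpha|-1,|\beta|+1)\nabla_v\partial_\beta^{\alpha-\alpha'}(\mathbf{I}-\mathbf{P})f^\varepsilon\|_{L^2_{x,v}}^2$, then apply the interpolation inequality with \eqref{sigma:def} to trade the $\langle v\rangle^{1/2}$ growth for the factor $\varepsilon(1+t)^{1+\vartheta}$ and land in the dissipation at level $m+1$. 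The decay budget $\varepsilon^3\|\nabla_x B^\varepsilon\|_{L^\infty_x}^2(1+t)^{1+\vartheta}\lesssim C$ guaranteed by \eqref{X define} then converts this into $\delta_0$ times $(1+t)^{\sigma_{n,m}}\mathcal{D}_{l_2}^{n,m+1}(t)$ plus the corresponding lower-order dissipations; the remaining pieces of $\mathcal{J}_7$ (the $|\alpha'|\geq2$ sum $\mathcal{J}_{7,1}$) cost only $\delta_0\sum_{n'\leq N}(1+t)^{\sigma_{n',m}}\mathcal{D}_{l_2}^{n',m}(t)$ via $\varepsilon^2\|\nabla_x B^\varepsilon\|_{H^{N-1}_x}\lesssim\delta_0$.

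Finally, I would sum the resulting inequalities over $0\leq m\leq n$ and $0\leq n\leq N-1$ with suitably chosen constants $C_{n,m}>0$, arranging the telescoping so that each $\mathcal{D}_{l_2}^{n,m+1}(t)$ and $\mathcal{D}_{l_2}^{n,m-1}(t)$ appearing on the right is dominated by the corresponding dissipation on the left (this is exactly where the ordering of the exponents $\sigma_{n,m}$ in \eqref{sigma:def} is essential), and absorbing all $\eta$- and $\delta_0$-multiplied dissipations into the left-hand side for $\eta,\delta_0$ small enough. What then survives on the right is $\delta_0\overline{\mathcal{D}}_{N-1,l_1}(t)+\mathcal{D}_N(t)$, which is precisely \eqref{estimate-weighted-2:diyuN}.
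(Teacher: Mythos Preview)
Your outline follows the paper's overall strategy, but there is a genuine gap in your treatment of the magnetic Lorentz term $\mathcal{J}_{7,2}$ (and, for the same reason, of $\mathcal{J}_{8,2}$). In the endpoint case $|\alpha|+|\beta|=N$ of Lemma~\ref{weighted 2:N}, the $|\alpha'|\geq 2$ contributions in \eqref{Bdekunnan:3} land in $\delta_0\sum_{n'\leq N-1}(1+t)^{\sigma_{n',|\beta|+1}}\mathcal{D}_{l_2}^{n',|\beta|+1}(t)$, and this is harmless precisely because $\sigma_{n',|\beta|+1}=\sigma_{N,|\beta|}$ by \eqref{sigma:def}. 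For $|\alpha|+|\beta|=n\leq N-1$, however, $\sigma_{n',|\beta|+1}=\sigma_{n,|\beta|}+\tfrac{1+\vartheta}{2}$, so the very same argument produces a term that grows like $(1+t)^{\tfrac{1+\vartheta}{2}}$ after reinserting the time weight; this is exactly the obstruction \eqref{weight function 2-kunnan} flagged in the paper. Your claim that the $|\alpha'|\geq 2$ pieces ``cost only $\delta_0\sum_{n'\leq N}(1+t)^{\sigma_{n',m}}\mathcal{D}_{l_2}^{n',m}(t)$'' overlooks the extra $\nabla_v$, which pushes the velocity level to $m+1$, and you never explain how the announced $\delta_0\,\overline{\mathcal{D}}_{N-1,l_1}(t)$ term actually arises.

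The paper closes this gap by a further splitting of $\mathcal{J}_{7,2}$ (and of $\mathcal{J}_{8,2}$) at $|\alpha'|=N_0$. For $1\leq|\alpha'|\leq N_0$ one uses the time decay of $\varepsilon^{3}\|\nabla_xB^\varepsilon\|_{H^{N_0-1}_x}^2(1+t)^{1+\vartheta}$ from the a priori bound on $X(t)$, together with the same $\langle v\rangle^{1/2}$ interpolation, to recover the missing factor $(1+t)^{-\tfrac{1+\vartheta}{2}}$ and obtain $\delta_0\sum_{n'\leq N-1}(1+t)^{\sigma_{n,|\beta|}}\mathcal{D}_{l_2}^{n',|\beta|+1}(t)$, which is now absorbable. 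For $|\alpha'|\geq N_0+1$ no such decay is available, and the crucial step is instead the weight comparison
\[
\langle v\rangle\,\widetilde{w}_{l_2}(\alpha,\beta)\ \leq\ \overline{w}_{l_1}\big(|\alpha-\alpha'|+2,\ |\beta|+1\big),
\]
valid because $l_2-l_1+9+\tfrac12 N-\tfrac92 N_0\leq 0$ under \eqref{hard assumption}; this converts the contribution into $\delta_0\,\overline{\mathcal{D}}_{N-1,l_1}(t)$ and is the genuine source of that term on the right of \eqref{estimate-weighted-2:diyuN}. Without this two-regime splitting and the passage to the first weight $\overline{w}_{l_1}$, the lower-order $\widetilde{w}_{l_2}$-estimate does not close.
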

\begin{proof}
The core idea to prove Lemma \ref{weighted 2:diyuN} is parallel to \emph{Step 2} in the proof of Lemma \ref{weighted 2:N}.
In what follows, we only establish some distinct and critical estimates, and the analogous calculations will be omitted for simplicity.
Indeed, notice that     the estimates of  $\mathcal{J}_{7,2}$ and  $\mathcal{J}_{8,2}$ in the case of $|\a|+|\b|=N$ include the term
\bal\label{weight function 2-kunnan}
\delta_0\sum_{n\leq N-1}(1+t)^{\sigma_{n,|\b|+1}}\mathcal{D}_{l_2}^{n,|\b|+1}(t)=\delta_0\sum_{n\leq N-1}(1+t)^{\sigma_{N,|\b|}}\mathcal{D}_{l_2}^{n,|\b|+1}(t).
\eal
Obviously, this result \eqref{weight function 2-kunnan}  is out of control when $|\a|+|\b|=n<N$ due to \eqref{sigma:def}. Thus, we should to revise      the estimates of  $\mathcal{J}_{7,2}$ and  $\mathcal{J}_{8,2}$  as follows. Direct calculation leads to
\bals
\bsp
\mathcal{J}_{7,2} =\;&
\sum_{0\leq|\a'|\leq N_0}\Big\langle
 -v\times \partial_x^{\a'} B^\e \cdot\nabla_v\partial_{\b}^{\a-\a'}(\mathbf{I}-\mathbf{P}) f^\e, \widetilde{w}^{2}_{l_2}(\a,\b)\partial_{\b}^\a(\mathbf{I}-\mathbf{P})f^\e
 \Big
\rangle_{L^2_{x,v}}\\
\;&
+\sum_{N_0+1\leq|\a'|\leq|\a|\leq n}\Big\langle
 -v\times \partial_x^{\a'} B^\e \cdot\nabla_v\partial_{\b}^{\a-\a'}(\mathbf{I}-\mathbf{P}) f^\e, \widetilde{w}^{2}_{l_2}(\a,\b)\partial_{\b}^\a(\mathbf{I}-\mathbf{P})f^\e
 \Big
\rangle_{L^2_{x,v}}\\
\equiv\;&\mathcal{J}_{7,2}^1+\mathcal{J}_{7,2}^2.
\esp
\eals
Here $N_0\geq 3$ is given in \eqref{hard assumption}.
Applying the H\"{o}lder inequality,  the Cauchy--Schwarz inequality with $\eta$,   the Sobolev embedding $H^1(\mathbb{R}^3)\hookrightarrow L^3(\mathbb{R}^3)$ and \eqref{NGinequality}, we derive
\bals
\bsp
{\mathcal{J}_{7,2}^1}\lesssim \;& \e^2\|\nabla_x B^\e \|_{L^\infty_x}^2 \sum_{|\a'|=1}\|\langle v\rangle^{\frac{1}{2}} \widetilde{w}_{l_2}(|\a|-1,|\b|+1)\nabla_v\partial_{\b}^{\a-\a'}(\mathbf{I}-\mathbf{P}) f^\e \|_{L^2_{x,v}}^2\\
\;&
+\e^2\sum_{2\leq|\a'|\leq N_0-1}\| \partial_x^{\a'}B^\e \|_{L^3_x}^2 \|\langle v\rangle^{\frac{1}{2}} \widetilde{w}_{l_2}(|\a-\a'|+1,|\b|+1)\nabla_v\partial_{\b}^{\a-\a'}(\mathbf{I}-\mathbf{P}) f^\e \|_{L^6_{x}L^2_{v}}^2\\
\;&+\e^2\sum_{|\a'|=N_0}\| \partial_x^{\a'}B^\e \|_{L^2_x}^2 \| \widetilde{w}_{l_2}(|\a-\a'|+1,|\b|+1)\langle v\rangle^{- \frac{7}{2}}\nabla_v\partial_{\b}^{\a-\a'}(\mathbf{I}-\mathbf{P}) f^\e \|_{L^\infty_{x}L^2_{v}}^2\\
\;&+\frac{\eta}{\e^2} \| \widetilde{w}_{l_2}(\a,\b) \partial_{\b}^{\a}(\mathbf{I}-\mathbf{P}) f^\e \|_{L^2_{x,v}}^2
\\
\lesssim \;&\e^3\|\nabla_x B^\e\|_{H^{N_0-1}_x}^2(1+t)^{\frac{1+\vartheta}{2}}
 \sum_{n\leq N-1} (1+t)^{\sigma_{n,|\b|+1}}\mathcal{D}_{l_2}^{n,|\b|+1}(t)
 + \eta(1+t)^{\sigma_{n,|\b|}}\mathcal{D}_{l_2}^{n,|\b|}(t)\\
\lesssim \;&\delta_0
 \sum_{n\leq N-1}(1+t)^{\sigma_{n,|\b|}}\mathcal{D}_{l_2}^{n,|\b|+1}(t)
 + \eta(1+t)^{\sigma_{n,|\b|}}\mathcal{D}_{l_2}^{n,|\b|}(t),
\esp
\eals
where we have used  the \emph{a priori} assumption  \eqref{priori assumption} and the facts that
\bals
\bsp
\;& \langle v\rangle \widetilde{w}_{l_2}(\a,\b)\lesssim \widetilde{w}_{l_2}(|\a-\a'|+1,|\b|+1)\langle v\rangle^{\frac{1}{2}}\quad \quad\quad\text{in the case of}\quad
2\leq|\a'|\leq N_0-1,\\
\;&
\langle v\rangle \widetilde{w}_{l_2}(\a,\b)\lesssim \widetilde{w}_{l_2}(|\a-\a'|+1,|\b|+1)\langle v\rangle^{-\frac{7}{2}}\quad\quad   \;\,\text{in the case of}\quad  |\a'|=N_0.
\esp
\eals
For the term ${\mathcal{J}_{7,2}^2}$, it can be inferred  from the H\"{o}lder inequality,  the Cauchy--Schwarz inequality,    \eqref{NGinequality} and the \emph{a priori} assumption  \eqref{priori assumption}  that
\bals
\bsp
{\mathcal{J}_{7,2}^2}\lesssim \;&\!\!\!\! \sum_{N_0+1\leq|\a'|\leq N}\!\!\|\partial_x^{\a'} B^\e \|_{L^2_x} \|\langle v\rangle \widetilde{w}_{l_2}(\a,
\b)\nabla_v\partial_{\b}^{\a-\a'}(\mathbf{I}-\mathbf{P}) f^\e \|_{L^\infty_{x}L^2_{v}} \| \widetilde{w}_{l_2}(\a,\b) \partial_{\b}^{\a}(\mathbf{I}-\mathbf{P}) f^\e \|_{L^2_{x,v}}
\\
\lesssim \;&\e^2\|\nabla_x B^\e\|_{H^{N-1}_x}^2
\sum_{N_0+1\leq|\a'|\leq N} \|\langle v\rangle \widetilde{w}_{l_2}(\a,
\b)\nabla_v\partial_{\b}^{\a-\a'}\nabla_x(\mathbf{I}-\mathbf{P}) f^\e \|_{H^1_{x}L^2_{v}}^2\\
\;&
+\frac{\eta}{\e^2} \| \widetilde{w}_{l_2}(\a,\b) \partial_{\b}^{\a}(\mathbf{I}-\mathbf{P}) f^\e \|_{L^2_{x,v}}^2
 \\
\lesssim \;&\delta_0
 {\overline{\mathcal{D}}}_{N-1,l_1}(t)
 + \eta(1+t)^{\sigma_{n,|\b|}}\mathcal{D}_{l_2}^{n,|\b|}(t).
\esp
\eals
Here, we have used the inequality
\bals
\bsp
\langle v\rangle \widetilde{w}_{l_2}(\a,\b)
\leq \overline{w}_{l_1}(|\a-\a'|+2, |\b|+1)\langle v\rangle ^{l_2-l_1-4|\a'|+8+\frac{1}{2}|\b|+4}
\leq \overline{w}_{l_1}(|\a-\a'|+2, |\b|+1),
\esp
\eals
which is derived from
$1+l_2-l_1-4|\a'|+8+\frac{1}{2}|\b|+4
\leq
l_2-l_1+9+\frac{1}{2}N- \frac{9}{2}N_0\leq 0
$ because of $|\a'|\geq N_0+1, |\b|=n-|\a'|\leq N-N_0$ and \eqref{hard assumption}.
Thereby, combining  the estimates of  ${\mathcal{J}_{7,2}^1}$ and ${\mathcal{J}_{7,2}^2}$ gives rise to
\bals
\bsp
\mathcal{J}_{7,2}\lesssim \;&
\delta_0
 \sum_{n\leq N-1}(1+t)^{\sigma_{n,|\b|}}\mathcal{D}_{l_2}^{n,|\b|+1}(t)+
\delta_0
 {\overline{\mathcal{D}}}_{N-1,l_1}(t)
 + \eta(1+t)^{\sigma_{n,|\b|}}\mathcal{D}_{l_2}^{n,|\b|}(t).
\esp
\eals
By employing the analogous argument of the estimate of $\mathcal{J}_{7,2}$, we are also able to  deduce
\bals
\bsp
\mathcal{J}_{8,2}\lesssim \;&
\delta_0
 \sum_{n\leq N-1}(1+t)^{\sigma_{n,|\b|}}\mathcal{D}_{l_2}^{n,|\b|+1}(t)+
\delta_0
 {\overline{\mathcal{D}}}_{N-1,l_1}(t)
 + \eta(1+t)^{\sigma_{n,|\b|}}\mathcal{D}_{l_2}^{n,|\b|}(t).
\esp
\eals

Furthermore,  for the rest terms in  \eqref{diyigeweight:L^2:2}  when $|\a|+|\b|=n\leq N-1$,   taking    the   similar estimated manner as \emph{Step 2} in the proof of Lemma \ref{weighted 2:N},  and then combining  the above estimates of $\mathcal{J}_{7,2}$ and $\mathcal{J}_{8,2}$, we   conclude \eqref{estimate-weighted-2:diyuN}. This completes the proof of Lemma \ref{weighted 2:diyuN}.
\end{proof}

Finally, with  Lemma  \ref{weighted 2:N}  and Lemma \ref{weighted 2:diyuN} in hand,   we turn to completing the proof of Proposition \ref{weighted 2:zongguji}.
\begin{proof}[\textbf{Proof of  Proposition \ref{weighted 2:zongguji}}]
From  \eqref{energy functional 2-N} and  \eqref{energy functional 2-N-new},
we deduce
\bals
\widetilde{{\mathcal{E}}}_{N,l_2}(t)=\sum_{n\leq N}\widetilde{{\mathcal{E}}}_{l_2}^{(n)}(t),
\quad\widetilde{{\mathcal{D}}}_{N,l_2}(t)=\sum_{n\leq N}\widetilde{{\mathcal{D}}}_{l_2}^{(n)}(t).
\eals
Therefore,  gathering \eqref{estimate-weighted-2:N}  and\eqref{estimate-weighted-2:diyuN}, and then choosing $\delta_0$ small enough, we conclude \eqref{estimate-weighted-2:zong}. This completes the proof of Proposition \ref{weighted 2:zongguji}.
\end{proof}
\subsection{Energy Estimate in Negative Sobolev Space}
\hspace*{\fill}

This subsection aims to provide an energy estimate in negative Sobolev space  for the VMB system (\ref{rVPB}).
\begin{proposition}\label{negative sobolev energy estimates total}
Assume that  the \emph{a priori} assumption  \eqref{priori assumption} holds true for $\delta_0$ small enough.
Then, there holds
\begin{align}
\begin{split}\label{negative sobolev estimate total}
&\frac{\d}{\d t}\mathcal{E}_{-s}(t)
+\mathcal{D}_{-s}(t)+\varepsilon \frac{\d}{\d t} \mathcal{E}^{-s}_{int}(t)
\lesssim   \mathcal{D}_{N}(t)+\delta_0\mathcal{D}_{N}(t)
+ \delta_0{\overline{\mathcal{D}}}_{N-1,l_1}(t).
\end{split}
\end{align}
Here,   $\mathcal{E}_{-s}(t)$ and   $\mathcal{D}_{-s}(t)$ are given in \eqref{negative sobolev energy} and \eqref{negative sobolev dissipation}, while $\mathcal{E}^{-s}_{int}(t)$   satisfies  \eqref{macro negative sobolev xiao energy}.
\end{proposition}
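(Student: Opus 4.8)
The plan is to reproduce the structure of the positive-order estimates in Lemmas \ref{macroscopic estimate} and \ref{energy estimate without weight}, replacing each spatial derivative $\partial_x^\a$ by the Riesz potential $\Lambda^{-s}$. First I would apply $\Lambda^{-s}$ to the first equation of \eqref{rVPB} and take the $L^2_{x,v}$ inner product with $\Lambda^{-s}f^\e$. Since $\Lambda^{-s}$ acts only in $x$, the transport term $\frac{1}{\e}v\cdot\nabla_x f^\e$ pairs to zero, the operator $L$ produces by the coercivity \eqref{spectL} the microscopic dissipation $\frac{\sigma_0}{\e^2}\|\Lambda^{-s}(\mathbf{I}-\mathbf{P})f^\e\|^2_{L^2_{x,v}(\nu)}$, and the linear field term $-E^\e\cdot v\mu^{1/2}$ combines with the $\Lambda^{-s}$-paired Maxwell equations exactly as $\mathcal{X}_1$ did: writing $\int v\mu^{1/2}\Lambda^{-s}f^\e\,\d v=\Lambda^{-s}b^\e=-\Lambda^{-s}(\partial_t E^\e-\nabla_x\times B^\e)$ and using $\partial_t B^\e+\nabla_x\times E^\e=0$ yields $\frac12\frac{\d}{\d t}\big(\|\Lambda^{-s}E^\e\|^2_{L^2_x}+\|\Lambda^{-s}B^\e\|^2_{L^2_x}\big)$. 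This produces the basic identity controlling $\frac{\d}{\d t}\mathcal{E}_{-s}(t)$ together with the microscopic dissipation, modulo the nonlinear remainders.

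Next I would estimate the remainders $(\e E^\e+v\times B^\e)\cdot\nabla_v f^\e$, $\frac{\e}{2}v\cdot E^\e f^\e$ and $\frac1\e\Gamma(f^\e,f^\e)$ in the negative Sobolev framework. Here $\Lambda^{-s}$ does not obey a Leibniz rule, so the crucial tool is the Hardy--Littlewood--Sobolev inequality, which gives the product estimate $\|\Lambda^{-s}(FG)\|_{L^2_x}\lesssim\|F\|_{L^{3/s}_x}\|G\|_{L^2_x}$ for $0<s<3/2$; the hypothesis $1<s<3/2$ in \eqref{hard assumption} is exactly what makes $3/s\in(2,3)$ so that $H^1_x\hookrightarrow L^{3/s}_x$ and the lower factor is absorbed by $\mathcal{E}_N(t)$. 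For the collision term I would pair against $\Lambda^{-s}(\mathbf{I}-\mathbf{P})f^\e$, invoke \eqref{hard gamma}, and balance the $1/\e$ prefactor against the microscopic dissipation through Cauchy--Schwarz with small $\eta$; the Lorentz and electric terms are handled similarly, with the velocity growth routed into the weighted dissipation ${\overline{\mathcal{D}}}_{N-1,l_1}(t)$ on the right-hand side.

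To recover the macroscopic and field dissipations $\|\Lambda^{1-s}\mathbf{P}f^\e\|^2_{L^2_{x,v}}$, $\e^2\|\Lambda^{1-s}E^\e\|^2_{L^2_x}$ and $\e^2\|\Lambda^{2-s}B^\e\|^2_{L^2_x}$, I would run the macro-micro argument of Lemma \ref{macroscopic estimate} with $\Lambda^{-s}$ in place of $\partial_x^\a$. Applying $\Lambda^{-s}$ to the fluid-type system \eqref{qkj:esti:diss:2} and the local conservation laws \eqref{qkj:esti:f:L^2:12}, and then forming the negative-order interactive functional $\mathcal{E}^{-s}_{int}(t)$, the analogue of $\mathcal{E}^{N}_{int}(t)$ in \eqref{macro energy2} built from the cross products $\langle\Lambda^{-s}(\mathbf{I}-\mathbf{P})f^\e,\chi\,\Lambda^{-s}\nabla_x(a^\e,b^\e,c^\e)\rangle$ together with the field cross terms $\langle\Lambda^{-s}E^\e,\Lambda^{-s}b^\e\rangle$ and $\langle\Lambda^{-s}\nabla_x\times B^\e,\Lambda^{-s}E^\e\rangle$. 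Since $\Lambda^{-s}\nabla_x\sim\Lambda^{1-s}$ and $(a^\e,b^\e,c^\e)$ encodes $\mathbf{P}f^\e$, its time derivative, carried by the $\e\frac{\d}{\d t}\mathcal{E}^{-s}_{int}(t)$ term in the statement, supplies precisely these missing dissipations at the cost of $\frac1{\e^2}\|\Lambda^{-s}\nabla_x(\mathbf{I}-\mathbf{P})f^\e\|^2$, which is dominated by $\mathcal{D}_N(t)$.

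The hard part will be the nonlinear estimate under the uniform-in-$\e$ requirement. Unlike the positive-order case, the singular prefactor $1/\e$ in front of $\Gamma$ and of the $v\times B^\e$ contribution cannot be compensated by distributing derivatives; the delicate point is to split each product into its macroscopic and microscopic parts, pair the microscopic part against $\Lambda^{-s}(\mathbf{I}-\mathbf{P})f^\e$ so that two powers of $1/\e$ are matched by the dissipation $\frac1{\e^2}\|\Lambda^{-s}(\mathbf{I}-\mathbf{P})f^\e\|^2_{L^2_{x,v}(\nu)}$, and direct all velocity growth into ${\overline{\mathcal{D}}}_{N-1,l_1}(t)$. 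Maintaining the Hardy--Littlewood--Sobolev exponents consistent with $1<s<3/2$ throughout, while ensuring the $\e$-powers close without loss, is where the care must be concentrated.
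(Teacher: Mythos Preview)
Your proposal is correct and follows essentially the same two-step architecture as the paper: first the $\Lambda^{-s}$-energy identity on \eqref{rVPB} with Hardy--Littlewood--Sobolev (Lemmas \ref{negative embedding theorem}--\ref{negative embedding theorem2}) to control the nonlinear terms, then a negative-order interactive functional $\mathcal{E}^{-s}_{int}$ built exactly as you describe to recover $\|\Lambda^{1-s}\mathbf{P}f^\e\|^2$ and the field dissipations, all closed by the interpolations \eqref{interpolation negative}. One small correction and one point you glossed over: there is no $1/\e$ prefactor on $v\times B^\e$ in \eqref{rVPB}; the actual difficulty with that term is the \emph{absence} of any helpful $\e$-factor, and when it is paired with the macroscopic test function $\Lambda^{-s}\mathbf{P}f^\e$ the paper shifts one derivative, bounding it by $\|\Lambda^{-1-s}(B^\e\mu^\delta f^\e)\|_{L^2_{x,v}}\|\Lambda^{1-s}\mathbf{P}f^\e\|_{L^2_{x,v}}$ so that the second factor is part of $\mathcal{D}_{-s}$ and the first yields $\|\Lambda^{1/2-s}B^\e\|_{L^2_x}\lesssim\delta_0$ via HLS with exponent $\tfrac{6}{5+2s}$---this is the step your outline leaves implicit.
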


\begin{proof}[\textbf{Proof of Proposition \ref{negative sobolev energy estimates total}}]
The proof of (\ref{negative sobolev estimate total}) will be split up into two steps.

\medskip

 \emph{Step 1. Energy estimate  of  $\Lambda^{-s} f^\e$.}\;
Precisely speaking, we can obtain
\begin{align}
\begin{split}\label{negative sobolev estimate}
&\frac{\d}{\d t} \left[ \left\|\Lambda^{-s}f^\varepsilon\right\|^2_{L^2_{x,v}}
+\left\|\Lambda^{-s}E^\varepsilon\right\|^2_{L^2_{x}}
+\left\|\Lambda^{-s}B^\varepsilon\right\|^2_{L^2_{x}} \right]
+\frac{\sigma_0}{\varepsilon^2}
\left\|\Lambda^{-s}(\mathbf{I}-\mathbf{P})f^\varepsilon\right\|^2_{L^2_{x,v}(\nu)} \\
\lesssim\;&  \delta_0
\left[ \e ^2\| \Lambda^{\frac{3}{4}-\frac{s}{2}} E^\varepsilon\|^2_{L^2_x}
+ \| \Lambda^{\frac{3}{4}-\frac{s}{2}} f^\varepsilon \|^2_{L^2_{x,v}(\nu)}+\e^2\| \Lambda^{\frac{3}{2}-s} E^\varepsilon\|^2_{L^2_x}+\| \Lambda^{\frac{3}{2}-s} f^\varepsilon\|^2_{L^2_{x,v}}\right]
\\
 \;&
+ \delta_0
\left[ \| \mathbf{P}f^\varepsilon \|^2_{L^2_{x,v}}
+ \| \Lambda^{1-s} \mathbf{P}f^\varepsilon \|^2_{L^2_{x,v}}+\mathcal{D}_N(t)+{\overline{\mathcal{D}}}_{N-1,l_1}(t)\right].
\end{split}
\end{align}
To do this,
applying $\Lambda^{-s}$ to the first equation of \eqref{rVPB} and taking the inner product with $\Lambda^{-s} f^\varepsilon$ over $\mathbb{R}_x^3 \times \mathbb{R}_v^3$, we derive from \eqref{spectL} that
\bal
 \bsp\label{negative sobolev estimate 1}
&\frac{1}{2}\frac{\d}{\d t} \left\| \Lambda^{-s} f^\varepsilon \right\|^2_{L^2_{x,v}}
+\underbrace{\left\langle \Lambda^{-s} E^\varepsilon \cdot v \mu^{1/2} ,\Lambda^{-s} f^\varepsilon \right\rangle_{L^2_{x,v}}}_{\mathcal{I}_1}
+ \frac{\sigma_0}{\varepsilon^2} \left\| \Lambda^{-s} (\mathbf{I}-\mathbf{P})f^\varepsilon \right\|_{L^2_{x,v}(\nu)}^2\\
=\;&\underbrace{\left\langle- \e\Lambda^{-s}\left(E^\varepsilon \cdot \nabla_v f^\varepsilon \right), \Lambda^{-s} f^\varepsilon \right\rangle_{L^2_{x,v}}}_{\mathcal{I}_2}
+\underbrace{\left\langle \frac{\e}{2} \Lambda^{-s}\left(E^\varepsilon \cdot v f^\varepsilon \right), \Lambda^{-s} f^\varepsilon \right\rangle_{L^2_{x,v}}}_{\mathcal{I}_3}\\
\;&+\underbrace{\left\langle- \Lambda^{-s}\left(v\times B^\varepsilon \cdot \nabla_v f^\varepsilon \right), \Lambda^{-s} f^\varepsilon \right\rangle_{L^2_{x,v}}}_{\mathcal{I}_4}
+\underbrace{\frac{1}{\varepsilon} \left\langle \Lambda^{-s}\Gamma(f^\varepsilon, f^\varepsilon), \Lambda^{-s} f^\varepsilon \right\rangle_{L^2_{x,v}}}_{\mathcal{I}_5}.
 \esp
\eal
Now we further estimate $\mathcal{I}_1\sim \mathcal{I}_5$ in \eqref{negative sobolev estimate 1} term by term. It follows from  the second and third equations of \eqref{rVPB} that
\begin{align}\nonumber
\mathcal{I}_1=\left\langle  \Lambda^{-s} E^\e,\Lambda^{-s}\pt_t E^\varepsilon- \nabla_x\times B^\varepsilon \right\rangle_{L^2_x}
=\frac{1}{2}\frac{\d}{\d t}\left(\left\| \Lambda^{-s} E^\varepsilon\right\|^2_{L^2_x}+\left\| \Lambda^{-s}B^\varepsilon\right\|^2_{L^2_x}\right)
.
\end{align}
Owing to $1 < s < \frac{3}{2}$, by using \eqref{negative embed 1}, \eqref{negative embed 2}, the Minkowski inequality \eqref{minkowski}, the H\"{o}lder inequality, the Cauchy--Schwarz inequality with $\eta$ and  the \emph{a priori} assumption  \eqref{priori assumption}, we deduce
\bals
\bsp
\mathcal{I}_2=\;&\e\left\langle   \Lambda^{-s}\left(E^\varepsilon \cdot \nabla_v f^\varepsilon \right), \Lambda^{-s} \mathbf{P}f^\varepsilon \right\rangle_{L^2_{x,v}}
+\e\left\langle   \Lambda^{-s}\left(E^\varepsilon \cdot \nabla_v f^\varepsilon \right), \Lambda^{-s} (\mathbf{I}-\mathbf{P})f^\varepsilon \right\rangle_{L^2_{x,v}} \\
\lesssim\;&\e\| \Lambda^{-s}(E^\varepsilon \mu^{\delta}f^\varepsilon)\|_{L^2_{x,v}}
\| \Lambda^{-s} \mathbf{P}f^\varepsilon \|_{L^2_{x,v}}
+\e\| \Lambda^{-s}(E^\varepsilon \cdot \nabla_v f^\varepsilon\langle v \rangle ^{-\frac{\gamma}{2}})\|_{L^2_{x,v}}
\| \Lambda^{-s} (\mathbf{I}-\mathbf{P})f^\varepsilon \|_{L^2_{x,v}(\nu)} \\
\lesssim\;&\e \left\| E^\varepsilon \mu^{\delta}f^\varepsilon\right\|_{{L^2_v}{L^{\frac{6}{3+2s}}_x}}
\left\| \Lambda^{-s} \mathbf{P}f^\varepsilon \right\|_{L^2_{x,v}}
+\e\| E^\varepsilon \cdot \nabla_v f^\varepsilon\langle v \rangle ^{-\frac{\gamma}{2}}\|_{L^2_v L_x^{\frac{6}{3+2s}}}
\| \Lambda^{-s} (\mathbf{I}-\mathbf{P})f^\varepsilon \|_{L^2_{x,v}(\nu)}  \\
\lesssim\;&\e \left\| E^\varepsilon \right\|_{L^{\frac{12}{3+2s}}_x}
\left\| \mu^{\delta}f^\varepsilon \right\|_{L^2_{v}{L^{\frac{12}{3+2s}}_x}}
\left\| \Lambda^{-s} \mathbf{P}f^\varepsilon \right\|_{L^2_{x,v}}
+\e\| E^\varepsilon\|_{L^{\frac{3}{s}}_x}
\|  \nabla_v f^\varepsilon \|_{L^2_{x,v}}
\left\| \Lambda^{-s} (\mathbf{I}-\mathbf{P})f^\varepsilon \right\|_{L^2_{x,v}(\nu)} \\
\lesssim\;&\delta_0\e \| \Lambda^{\frac{3}{4}-\frac{s}{2}} E^\varepsilon\|_{L^2_{x}}
 \| \Lambda^{\frac{3}{4}-\frac{s}{2}} (\mu^{\delta}f^\varepsilon)\|_{L^2_{x,v}}
+\e\| \Lambda^{\frac{3}{2}-s} E^\varepsilon\|_{L^2_{x}}
\left\|   \nabla_v f^\varepsilon \right\|_{L^2_{x,v}}
\left\| \Lambda^{-s} (\mathbf{I}-\mathbf{P})f^\varepsilon \right\|_{_{L^2_{x,v}(\nu)}} \\
\lesssim\;& \delta_0
\left[ \e ^2\| \Lambda^{\frac{3}{4}-\frac{s}{2}} E^\varepsilon\|^2_{L^2_x}
+ \| \Lambda^{\frac{3}{4}-\frac{s}{2}} f^\varepsilon \|^2_{L^2_{x,v}(\nu)}\right]
+\delta_0\e^4\| \Lambda^{\frac{3}{2}-s} E^\varepsilon\|^2_{L^2_x}
+\frac{\eta}{\e^2} \left\| \Lambda^{-s} (\mathbf{I}-\mathbf{P})f^\varepsilon \right\|^2_{_{L^2_{x,v}(\nu)}} .
\esp
\eals
Similarly, we directly find  that  $\mathcal{I}_3$ has the same upper bound as $\mathcal{I}_2$.

For $\mathcal{I}_4$, we derive from   \eqref{negative embed 1}, \eqref{negative embed 2}, the Minkowski inequality \eqref{minkowski}, the H\"{o}lder inequality, the Cauchy--Schwarz inequality with $\eta$ that
\bals
\bsp
\mathcal{I}_4=\;& \left\langle   \Lambda^{-s}\left(v\times B^\varepsilon \cdot \nabla_v f^\varepsilon \right), \Lambda^{-s} \mathbf{P}f^\varepsilon \right\rangle_{L^2_{x,v}}
+ \left\langle   \Lambda^{-s}\left(v\times B^\varepsilon  \cdot \nabla_v f^\varepsilon \right), \Lambda^{-s} (\mathbf{I}-\mathbf{P})f^\varepsilon \right\rangle_{L^2_{x,v}} \\
\lesssim\;& \| \Lambda^{-1-s}(B^\varepsilon \mu^{\delta}  f^\varepsilon)\|_{L^2_{x,v}}
\| \Lambda^{1-s} \mathbf{P}f^\varepsilon \|_{L^2_{x,v}}
+\| \Lambda^{-s}(v\times B^\varepsilon \cdot \nabla_v f^\varepsilon)\|_{L^2_{x,v}}
\| \Lambda^{-s} (\mathbf{I}-\mathbf{P})f^\varepsilon \|_{L^2_{x,v}(\nu)} \\
\lesssim\;&  \| B^\varepsilon \mu^{\delta}f^\varepsilon \|_{{L^2_v}{L^{\frac{6}{5+2s}}_x}}
 \| \Lambda^{1-s} \mathbf{P}f^\varepsilon  \|_{L^2_{x,v}}
+ \|v\times B^\varepsilon \cdot \nabla_v f^\varepsilon\|_{L^2_v L_x^{\frac{6}{3+2s}}}
\| \Lambda^{-s} (\mathbf{I}-\mathbf{P})f^\varepsilon \|_{L^2_{x,v}(\nu)}  \\
\lesssim\;& \left\| B^\varepsilon \right\|_{L^{\frac{3}{1+s}}_x}
 \| \mu^{\delta}f^\varepsilon  \|_{L^2_{x,v}}
 \| \Lambda^{1-s} \mathbf{P}f^\varepsilon  \|_{L^2_{x,v}}
+ \| B^\varepsilon\|_{L^{\frac{3}{s}}_x}
\| \langle v\rangle \nabla_v f^\varepsilon \|_{L^2_{x,v}}
\| \Lambda^{-s} (\mathbf{I}-\mathbf{P})f^\varepsilon \|_{L^2_{x,v}(\nu)} \\
\lesssim\;&\|\Lambda^{\frac{1}{2}\!-\!s}B^\e \|_{L^2_x}
 \| \mu^{\delta}\!f^\varepsilon\|_{L^2_{x,v}}\| \!\Lambda^{1-s} \mathbf{P}f^\varepsilon  \!\|_{L^2_{x,v}}
\!+ \!\| \Lambda^{\frac{3}{2}-s} B^\varepsilon\|_{L^2_{x}}
\|  \langle v\rangle  \nabla_v f^\varepsilon \|_{L^2_{x,v}}
\| \Lambda^{-s} (\mathbf{I}\!-\!\mathbf{P})f^\varepsilon \|_{_{L^2_{x,v}(\nu)}} \\
\lesssim\;& \delta_0
\left[\| \mu^{\delta}f^\varepsilon\|_{L^2_{x,v}}^2
+ \| \Lambda^{1-s} \mathbf{P}f^\varepsilon \|^2_{L^2_{x,v}}\right]
+\delta_0 \left[ \|  \mathbf{P}f^\varepsilon \|^2_{L^2_{x,v}}
+\|  \langle v\rangle  \nabla_v(\mathbf{I}- \mathbf{P})f^\varepsilon \|_{L^2_{x,v}(\nu)}^2
\right]+\eta\mathcal{D}_{-s}(t).
\esp
\eals
Here, deduced from  the interpolation inequality with respect to the spatial derivatives, the \emph{a priori} assumption  \eqref{priori assumption} and $1 < s < \frac{3}{2}$, the facts utilized are that
\bals
\|\Lambda^{\frac{1}{2}-s}B^\e \|_{L^2_x} \leq\|\Lambda^{ -s}B^\e \|_{L^2_x}+\|\nabla_x B^\e \|_{L^2_x}\lesssim \delta_0,
\quad
\| \Lambda^{\frac{3}{2}-s} B^\varepsilon\|_{L^2_{x}}
\leq\|   B^\varepsilon\|_{H^1_{x}}\lesssim \delta_0.
\eals
Finally, for the term $\mathcal{I}_5$, by the collision invariant property, we arrive at
\begin{align*}
\bsp 
\mathcal{I}_5
\lesssim \;& \frac{1}{\varepsilon}  \|\Lambda^{-s} ( \langle v \rangle ^{-\frac{\gamma}{2}}\Gamma(f^\varepsilon, f^\varepsilon) )  \|_{L^2_{x,v}}
 \| \Lambda^{-s} (\mathbf{I}-\mathbf{P})f^\varepsilon  \|_{L^2_{x,v}(\nu)} \\
\lesssim\;& \frac{1}{\varepsilon} \| \langle v \rangle ^{-\frac{\gamma}{2}}\Gamma(f^\varepsilon, f^\varepsilon) \|_{L^2_{v}L_x^{\frac{6}{3+2s}}}
 \| \Lambda^{-s} (\mathbf{I}-\mathbf{P})f^\varepsilon  \|_{L^2_{x,v}(\nu)} \\
\lesssim\;&\frac{1}{\varepsilon}  \| \langle v \rangle ^{-\frac{\gamma}{2}}\Gamma(f^\varepsilon, f^\varepsilon) \|_{L_x^{\frac{6}{3+2s}}L^2_{v}}
\| \Lambda^{-s} (\mathbf{I}-\mathbf{P})f^\varepsilon \|_{L^2_{x,v}(\nu)} \\
\lesssim\;&\frac{1}{\varepsilon} \| |f^\varepsilon|_{L^\infty_v} |f^\varepsilon|_{L^2_v(\nu)} \|_{L_x^{\frac{6}{3+2s}}}
\| \Lambda^{-s} (\mathbf{I}-\mathbf{P})f^\varepsilon \|_{L^2_{x,v}(\nu)} \\
\lesssim\;&\frac{1}{\varepsilon} \|f^\varepsilon\|_{L_x^{\frac{3}{s}}L^2_v} \|f^\varepsilon\|_{L^2_xH^2_v}
\| \Lambda^{-s} (\mathbf{I}-\mathbf{P})f^\varepsilon \|_{L^2_{x,v}(\nu)} \\
\lesssim\;& \delta_0 \| \Lambda^{\frac{3}{2}-s}f^\varepsilon\|_{L^2_{x,v}}^2
+\frac{\eta}{\varepsilon^2} \left\| \Lambda^{-s} (\mathbf{I}-\mathbf{P}) f^\varepsilon \right\|_{L^2_{x,v}(\nu)}^2,
\esp
\end{align*}
where we have used \eqref{negative embed 1}, \eqref{negative embed 2}, the Minkowski inequality \eqref{minkowski},  the H\"{o}lder inequality, the Cauchy--Schwarz inequality with $\eta$ and \eqref{priori assumption} .

As a result, gathering the above estimates of  $\mathcal{I}_1\sim\mathcal{I}_5$  into
 \eqref{negative sobolev estimate 1}
and choosing $\eta$ small enough yield
 \eqref{negative sobolev estimate}.

\medskip

 \emph{Step 2. Macroscopic dissipation estimate  of  $\Lambda^{1-s} \mathbf{P}f^\e$.}\;

\newcommand{\J}{\mathcal{G}}
In fact, similar to Lemma \ref{macroscopic estimate}, there exists an interactive functional $\mathcal{E}^{-s}_{int}(t)$  defined by
satisfying
\begin{align}\label{macro negative sobolev xiao energy}
\left|\mathcal{E}^{-s}_{int}(t)\right| \lesssim \left\| \Lambda^{-s} f^\varepsilon \right\|^2_{L^2_{x,v}}
+ \left\| \Lambda^{1-s} f^\varepsilon \right\|^2_{L^2_{x,v}}
+ \left\| \Lambda^{1-s} E^\e \right\|^2_{L^2_{x}}+ \left\| \Lambda^{1-s} \nabla_x\times B^\e \right\|^2_{L^2_{x}},
\end{align}
such that
\begin{align}
\begin{split}\label{macro negative sobolev1}
&\varepsilon \frac{\d}{\d t} \mathcal{E}^{-s}_{int}(t) + \left\| \Lambda^{1-s} \mathbf{P}f^\e\right\|^2_{L^2_{x,v}}+\e^2  \left\| \Lambda^{1-s} E^{\varepsilon} \right\|^2_{L^2_{x}}+ \e^2  \left\| \Lambda^{1-s}\nabla_x\times B^{\varepsilon} \right\|^2_{L^2_{x}}  \\
\lesssim\;& \left\| \Lambda^{1-s}(\mathbf{I}-\mathbf{P})f^\varepsilon \right\|^2_{L^2_{x,v}(\nu)}+ \frac{1}{\varepsilon^2}\left\| \Lambda^{-s}(\mathbf{I}-\mathbf{P})f^\varepsilon \right\|^2_{L^2_{x,v}(\nu)}
+ \eta \| \Lambda^{1-s} \nabla_x( a^{\varepsilon}+c^\e)  \|^2_{L^2_{x}}\\
\;&
+\eta\e^2  \left\| \Lambda^{1-s} \nabla_x\times E^{\varepsilon} \right\|^2_{L^2_{x}}
+\e^2\left(\|  \Lambda^{-s}\mathcal{X}_{b^\e}\|_{L_{x,v}^2}^2
+\|  \Lambda^{1-s}\mathcal{X}_{b^\e}\|_{L_{x,v}^2}^2\right)\\
\;&
  +\varepsilon^2\sum_{i=1}^3\sum_{j\neq i}\left\|  \big\langle \Lambda^{-s}( \J _3^i +\J _3^j +\J _4 ^{i,j}+\J _5^i ),\, \zeta\big\rangle_{L^2_{v}}\right\|^2_{L^2_{x}}.
\end{split}
\end{align}
Recalling  \eqref{g define}, \eqref{qkj:esti:f:L^2:12-1} for $g^\varepsilon$ and $\mathcal{X}_{b^\e}$, we obtain from \eqref{hard gamma}  that
\begin{align}
\bsp\label{g estimate--s}
&\e^2\left(\|  \Lambda^{-s}\mathcal{X}_{b^\e}\|_{L_{x,v}^2}^2
\!+\!\|  \Lambda^{1-s}\mathcal{X}_{b^\e}\|_{L_{x,v}^2}^2\right)
  \!+\!\varepsilon^2\sum_{i=1}^3\!\sum_{j\neq i}\!\left\|  \big\langle \Lambda^{-s}( \J _3^i +\J _3^j +\J _4 ^{i,j}+\J _5^i ),\, \zeta\big\rangle_{L^2_{v}}\right\|^2_{L^2_{x}}
\\
\lesssim\;&\left\| \Lambda^{1-s}(\mathbf{I}-\mathbf{P})f^\varepsilon \right\|_{L^2_{x,v}(\nu)}^2
+ \frac{1}{\varepsilon^2}\left\| \Lambda^{-s}(\mathbf{I}-\mathbf{P})f^\varepsilon \right\|_{L^2_{x,v}(\nu)}^2
+ \mathcal{D}_{N}(t)
+\delta_0\left\|\Lambda^{1-s} \mathbf{P}f^\e\right\|^2_{L^2_{x,v}}
\\
\;&
+\delta_0\left[\left\| \Lambda^{-s}(\mathbf{I}-\mathbf{P})f^\varepsilon \right\|_{L^2_{x,v}(\nu)}^2
+\mathcal{D}_{N}(t)\right].
\esp
\end{align}
Hence,
 combining \eqref{macro negative sobolev1} and \eqref{g estimate--s} and choosing $\delta_0$ small enough,   we conclude
\begin{align}
\begin{split}\label{macro negative sobolev2}
&\varepsilon \frac{\d}{\d t} \mathcal{E}^{-s}_{int}(t)  + \left\| \Lambda^{1-s} \mathbf{P}f^\e\right\|^2_{L^2_{x,v}}+\e^2  \left\| \Lambda^{1-s} E^{\varepsilon} \right\|^2_{L^2_{x}}+ \e^2  \left\| \Lambda^{1-s}\nabla_x\times B^{\varepsilon} \right\|^2_{L^2_{x}} \\
\lesssim\;& \left\| \Lambda^{1-s}(\mathbf{I}-\mathbf{P})f^\varepsilon \right\|^2_{L^2_{x,v}(\nu)}+ \frac{1}{\varepsilon^2}\left\| \Lambda^{-s}(\mathbf{I}-\mathbf{P})f^\varepsilon \right\|^2_{L^2_{x,v}(\nu)}
+ \eta \| \Lambda^{1-s} \nabla_x \mathbf{P}f^\varepsilon  \|^2_{L^2_{x.v}}\\
\;&
+\eta\e^2   \left\| \Lambda^{1-s} \nabla_x\times E^{\varepsilon} \right\|^2_{L^2_{x}}+ \mathcal{D}_{N}(t)
+\delta_0\left[
\left\| \Lambda^{-s}(\mathbf{I}-\mathbf{P})f^\varepsilon \right\|_{L^2_{x,v}(\nu)}^2
+\mathcal{D}_{N}(t)\right].
\end{split}
\end{align}

Moreover,
via the interpolation inequality with respect to the spatial derivatives, it is easy to derive that
\begin{align}
\begin{split}\label{interpolation negative}
&\left\| \Lambda^{1-s}(\mathbf{I}-\mathbf{P})f^\varepsilon \right \|^2_{L^2_{x,v}(\nu)}
\lesssim \left\| \Lambda^{-s}(\mathbf{I}-\mathbf{P})f^\varepsilon \right\|_{L^2_{x,v}(\nu)}^2
+\left \|  \nabla_x (\mathbf{I}-\mathbf{P})f^\varepsilon \right\|_{L^2_{x,v}(\nu)}^2,  \\
&\left\| \Lambda^{\frac{3}{4}-\frac{s}{2}}f^\varepsilon \right \|_{ L^2_{x,v}(\nu)}^2
\lesssim \left\| \Lambda^{1-s}\mathbf{P}f^\varepsilon \right\|_{ L^2_{x,v}}^2
+\left\| \Lambda^{-s}(\mathbf{I}-\mathbf{P})f^\varepsilon \right\|_{ L^2_{x,v}(\nu)}^2
+\left\| \nabla_xf^\varepsilon \right\|_{ L^2_{x,v}(\nu)}^2, \\
&\left\| \Lambda^{\frac{3}{2}-s}f^\varepsilon \right\|_{ L^2_{x,v}}^2
\lesssim \left \| \Lambda^{1-s} \mathbf{P}f^\varepsilon \right\|_{ L^2_{x,v}}^2
+\left\| \nabla_x \mathbf{P}f^\varepsilon\right\|_{ L^2_{x,v}}^2
+\left\|  (\mathbf{I}-\mathbf{P})f^\varepsilon\right\|_{H^1_x L^2_v(\nu)}^2,\\
&   \left\| \Lambda^{1-s} \nabla_x\mathbf{P}f^\varepsilon\right\|^2_{L^2_{x,v}}
+ \left\| \mathbf{P}f^\varepsilon \right\|^2_{L^2_{x,v}}
\lesssim  \left\| \Lambda^{1-s} \mathbf{P}f^\varepsilon\right\|_{ L^2_{x,v}}^2
+\left\| \nabla_x \mathbf{P}f^\varepsilon\right\|_{ H^1_{x} L^2_{v}}^2,\\
&\left\| \Lambda^{\frac{3}{4}-\frac{s}{2}} E^\varepsilon \right\|_{ L^2_{x}}^2
+\left\| \Lambda^{\frac{3}{2}-s} E^\varepsilon \right\|_{ L^2_{x}}^2+ \left\| \Lambda^{1-s} \nabla_x\times E^{\varepsilon} \right\|^2_{L^2_{x}}
\lesssim \left\| \Lambda^{1-s} E^\varepsilon \right\|_{ L^2_{x}}^2
+\left\| \nabla_x E^\varepsilon\right \|_{ H^1_{x}}^2,
\end{split}
\end{align}
where we have used $\frac{3}{4}-\frac{s}{2} > 1-s$ for $1 < s < \frac{3}{2}$.
Hence, with the aid of \eqref{interpolation negative},
we   deduce \eqref{negative sobolev estimate total} by a proper linear combination of \eqref{negative sobolev estimate} and \eqref{macro negative sobolev2}. This completes the proof of Proposition \ref{negative sobolev energy estimates total}.
\end{proof}

\section{Global Existence}\label{Global Existence}

In this section, we are devoted to  the proof of Theorem \ref{mainth1}  by first establishing the closed uniform estimates on $X(t)$ defined in \eqref{X define}.
For this purpose, we also need to obtain the time decay estimates of $\e^s\mathcal{E}_{N_0}^0(t)$ and $\e^{1+s}\mathcal{E}_{N_0}^1(t)$.

\subsection{Time Decay Estimate}
\hspace*{\fill}

Our main idea to deduce the time decay estimate   is based on the approach proposed by \cite{GW2012CPDE}. We are now in a position to state the main result of this subsection.
\begin{proposition}\label{k-N decay proposition}
Assume that   the \emph{a priori} assumption  \eqref{priori assumption} holds true for $\delta_0$ small enough.
There exist two  energy functionals $\mathcal{E}^0_{N_0}(t)$, $\mathcal{E}^1_{^{N_0}}(t)$ and the corresponding energy dissipation rate  functionals $\mathcal{D}^0_{N_0}(t)$, $\mathcal{D}^1_{N_0}(t)$ satisfying \eqref{low k energy} and \eqref{low k dissipation} respectively such that
  for   all $0 \leq t \leq T$
\bal
&\frac{\d}{\d t} \mathcal{E}^0_{N_0}(t)+  \mathcal{D}^0_{N_0}(t) \leq 0\label{0-N estimate},\\
&\frac{\d}{\d t} \mathcal{E}^1_{N_0}(t)+   \mathcal{D}^1_{N_0}(t)
\lesssim \|\nabla_x B^\e\|_{H^{N_0-1}_x}\mathcal{D}^0_{N_0}(t).\label{1-N estimate}
\eal
Furthermore, we can obtain that
\bal
\bsp\label{k-N decay}
\sup_{0\leq t\le T}\!\left\{\e^{s}(1+t)^{\frac{s}{2}} \mathcal{E}^0_{N_0}(t)\!+\!\e^{1\!+\!s}(1+t)^{\frac{1\!+\!s}{2}} \mathcal{E}^1_{N_0}(t) \right\}
\lesssim\;&{\mathcal{E}}_{N}(0)
\!+\!\sup_{0\leq t\leq T}\!\left\{{\mathcal{E}}_{N_0\!+\!\frac{1+s}{2}}(t)+
{\mathcal{E}}_{\!-s}(t)\right\}.
\esp
\eal
\end{proposition}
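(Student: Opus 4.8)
The plan is to prove the proposition in three stages: first the two structural energy inequalities \eqref{0-N estimate} and \eqref{1-N estimate}, then the time-decay of $\mathcal{E}^0_{N_0}$, and finally the decay of $\mathcal{E}^1_{N_0}$, where the two orders are coupled through the source term appearing in \eqref{1-N estimate}.

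For \eqref{0-N estimate} and \eqref{1-N estimate} I would rerun the construction behind Proposition \ref{result-basic energy estimate}, i.e.\ the macroscopic estimate of Lemma \ref{macroscopic estimate}, the $H^N_xL^2_v$-estimate of Lemma \ref{energy estimate without weight}, and the Maxwell-dissipation estimates, but now restricting the spatial multi-index to the range $k\le|\alpha|\le N_0$ and keeping only unweighted norms. Since the derivative count in $\mathcal{E}^k_{N_0}$ and $\mathcal{D}^k_{N_0}$ never exceeds $N_0\le N$, every nonlinear contribution is schematically of the form $\sqrt{\mathcal{E}}\,\mathcal{D}$ and is absorbed by the a priori smallness \eqref{priori assumption}, giving \eqref{0-N estimate} with no remainder. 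The only genuinely new feature arises for $k=1$: the Lorentz commutator $\langle\partial^\alpha_x[(v\times B^\e)\cdot\nabla_v\mathbf{P}f^\e],\partial^\alpha_x\mathbf{P}f^\e\rangle$ no longer vanishes once $|\alpha|\ge1$, cf.\ \eqref{decay-kunnan-1}. Estimating it by the H\"older inequality together with the Sobolev embeddings $H^1\hookrightarrow L^3,L^6$ produces exactly the non-absorbable factor $\|\nabla_xB^\e\|_{H^{N_0-1}_x}\mathcal{D}^0_{N_0}(t)$, which is the source term in \eqref{1-N estimate}.

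For the decay of $\mathcal{E}^0_{N_0}$ I would use the negative-Sobolev interpolation method of \cite{GW2012CPDE}. The point is that the dissipation $\mathcal{D}^0_{N_0}$ does not see the order-zero macroscopic mass $\|\mathbf{P}f^\e\|_{L^2_{x,v}}$ or the order-zero fields $\|E^\e\|_{L^2_x},\|B^\e\|_{L^2_x}$, nor the top-order fields $\nabla_x^{N_0}[E^\e,B^\e]$. The bottom pieces are recovered by Gagliardo--Nirenberg interpolation against the negative norm, e.g.\ $\|\mathbf{P}f^\e\|_{L^2_{x,v}}^2\lesssim\|\Lambda^{-s}\mathbf{P}f^\e\|_{L^2_{x,v}}^{2/(1+s)}\|\nabla_x\mathbf{P}f^\e\|_{L^2_{x,v}}^{2s/(1+s)}$, while the top-order fields are borrowed from the higher-regularity energy $\mathcal{E}_{N_0+\frac{1+s}{2}}\le\mathcal{E}_N$, which is controlled by $X(t)$; since $\mathcal{E}_{-s}$ is likewise bounded by $X(t)$, all borrowed quantities are under control. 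Multiplying \eqref{0-N estimate} by the time weight $(1+t)^{\frac s2+p}$ and the prefactor $\e^s$, integrating in $\tau$, and tracking the $\e$-powers so that the $\e^2$ carried by the Maxwell part of $\mathcal{D}^0_{N_0}$ stays compatible with the $\e^s$ weight, I expect to reach
\begin{align*}
\sup_{0\le\tau\le t}\!\big\{\e^s(1+\tau)^{\frac s2}\mathcal{E}^0_{N_0}(\tau)\big\}+(1+t)^{-p}\!\!\int_0^t\!\!\e^s(1+\tau)^{\frac s2+p}\mathcal{D}^0_{N_0}(\tau)\,\d\tau\lesssim \mathcal{E}_N(0)+\sup_{0\le\tau\le t}\!\big\{\mathcal{E}_{N_0+\frac s2}(\tau)+\mathcal{E}_{-s}(\tau)\big\}.
\end{align*}

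Finally I would run the same time-weighted scheme on \eqref{1-N estimate}, now with weight $(1+t)^{\frac{1+s}{2}+p}$ and prefactor $\e^{1+s}$. The hard part --- and the feature that distinguishes the one-species system from the two-species case --- is precisely the source term $\|\nabla_xB^\e\|_{H^{N_0-1}_x}\mathcal{D}^0_{N_0}$ of \eqref{1-N estimate}, which blocks a self-contained interpolation at the level $k=1$. The way around it is to treat it as a perturbation of the already-controlled $\mathcal{E}^0_{N_0}$-dissipation integral: because $X(t)$ furnishes the smallness $\e(1+t)^{\frac12}\|\nabla_xB^\e\|_{H^{N_0-1}_x}\lesssim\eta$, one gets $\e^{1+s}(1+\tau)^{\frac{1+s}{2}+p}\|\nabla_xB^\e\|_{H^{N_0-1}_x}\mathcal{D}^0_{N_0}\lesssim\eta\,\e^{s}(1+\tau)^{\frac s2+p}\mathcal{D}^0_{N_0}$, so that its time integral is absorbed by the $\mathcal{E}^0_{N_0}$ bound just established. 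Combining the two time-weighted estimates and choosing $\delta_0,\eta$ small then yields \eqref{k-N decay}. I expect the absorption of this last source term, together with the uniform-in-$\e$ bookkeeping of the field dissipations, to be the only delicate step; everything else is a faithful repetition of Section \ref{The a Priori Estimate} restricted to $k\le|\alpha|\le N_0$.
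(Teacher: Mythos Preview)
Your overall strategy matches the paper's proof: Step 1 establishes the two differential inequalities by combining a refined $H^{N_0}_xL^2_v$ estimate with the macroscopic/Maxwell interactive functionals; Step 2 derives the decay of $\mathcal{E}^0_{N_0}$ via the negative-Sobolev interpolation trick, multiplying by $\e^s(1+t)^{\frac s2+p}$; and the decay of $\mathcal{E}^1_{N_0}$ then follows by the same weighted argument, with the Lorentz source term $\|\nabla_xB^\e\|_{H^{N_0-1}_x}\mathcal{D}^0_{N_0}$ fed back into the already-controlled $\mathcal{E}^0_{N_0}$-dissipation integral exactly as you describe. The coupling mechanism you identify is precisely what the paper does in \eqref{decay:estimate:guocheng:5}.

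There is, however, one point in Step 1 that you underestimate. You write that because the derivative count stays below $N_0\le N$, ``every nonlinear contribution is schematically of the form $\sqrt{\mathcal{E}}\,\mathcal{D}$ and is absorbed by the a priori smallness, giving \eqref{0-N estimate} with no remainder.'' But a direct restriction of Lemma \ref{energy estimate without weight} to $|\alpha|\le N_0$ still produces the velocity-weighted remainders $\|\langle v\rangle\nabla_v(\mathbf{I}-\mathbf{P})f^\e\|_{H^{N_0-2}_xL^2_v}^2$ and $\|\langle v\rangle\nabla_x^{N_0}(\mathbf{I}-\mathbf{P})f^\e\|_{L^2_{x,v}}^2$, which are controlled by $\overline{\mathcal{D}}_{N-1,l_1}$ or $\widetilde{\mathcal{D}}_{N,l_2}$, not by $\mathcal{D}^0_{N_0}$. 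That would give $\frac{\d}{\d t}\mathcal{E}^0_{N_0}+\mathcal{D}^0_{N_0}\lesssim\delta_0\overline{\mathcal{D}}_{N-1,l_1}+\cdots$, which is not \eqref{0-N estimate}. The paper avoids this through the more refined Lemma \ref{0-N 1-N lemma}: each velocity-growing term is instead handled by Sobolev interpolation in both $x$ and $v$ (Lemma \ref{sobolev interpolation}), arranged so that the ``bad'' factor lands on one of the \emph{energies} $\overline{\mathcal{E}}_{N-1,l_1}$, $\mathcal{E}_{-s}$, $\mathcal{E}_N$ (all $\le\delta_0$ by the a priori assumption) while the remaining factor is genuinely a piece of $\mathcal{D}^k_{N_0}$. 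The same device handles the top-order field $\|\nabla_x^{N_0}[E^\e,B^\e]\|_{L^2_x}$, which is absent from $\mathcal{D}^0_{N_0}$: it is interpolated between $\e\|\nabla_x^{N_0-1}[E^\e,B^\e]\|_{L^2_x}$ (in $\mathcal{D}^1_{N_0}$) and $\|\nabla_x^{N}[E^\e,B^\e]\|_{L^2_x}$ (in $\mathcal{E}_N$). This bookkeeping is the real content of Step 1, and without it the clean inequalities \eqref{0-N estimate}--\eqref{1-N estimate} do not close.
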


To prove  Proposition \ref{k-N decay proposition},
 we  give a more refined energy estimate for the pure spatial derivatives than  that of    Lemma \ref{energy estimate without weight} for preparations.
\begin{lemma}\label{0-N 1-N lemma}
Assume that   the \emph{a priori} assumption  \eqref{priori assumption} holds true for $\delta_0$ small enough. Then,  for   all $0 \leq t \leq T$, there hold
\begin{align}
 \frac{\d}{\d t} \left[  \left\|   f^\varepsilon \right\|_{H^{N_0}_x\!L^2_v}^2 +  \left\|  E^\varepsilon \right\|_{H^{N_0}_x}^2+ \left\| B^\varepsilon \right\|_{H^{N_0}_x}^2 \right]
+ \frac{\sigma_0}{\varepsilon^2}   \left\|  (\mathbf{I}-\mathbf{P}) f^\varepsilon \right\|^2_{H^{N_0}_x\!L^2_v(\nu)}
\lesssim\;& \delta_0 \mathcal{D}^0_{N_0}(t),  \label{0-N estimate decay}
\end{align}
and
\begin{align}
\bsp
\;&  \frac{\d}{\d t} \left[  \left\|  \nabla_x f^\varepsilon \right\|_{H^{N_0-1}_x\!L^2_v}^2\! + \! \left\|   \nabla_x E^\varepsilon \right\|_{H^{N_0-1}_x}^2\!+\! \left\| \nabla_x B^\varepsilon \right\|_{H^{N_0-1}_x}^2 \right]
+ \frac{\sigma_0}{\varepsilon^2}   \left\|  \nabla_x (\mathbf{I}\!-\!\mathbf{P}) f^\varepsilon \right\|^2_{H^{N_0-1}_xL^2_v(\nu)}\\[1mm]
\lesssim\;& \delta_0 \mathcal{D}^1_{N_0}(t)+\|\nabla_x B^\e\|_{H^{N_0-1}_x}\mathcal{D}^0_{N_0}(t). \label{1-N estimate decay}
\esp
\end{align}
\end{lemma}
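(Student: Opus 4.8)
The plan is to establish both \eqref{0-N estimate decay} and \eqref{1-N estimate decay} by the direct energy scheme already used for Lemma \ref{energy estimate without weight}, carried out at the lower differential orders and refined in the one place where the magnetic Lorentz force genuinely intervenes. First I would apply $\partial_x^\alpha$ to the first equation of \eqref{rVPB}, take the $L^2_{x,v}$ inner product with $\partial_x^\alpha f^\varepsilon$, invoke the coercivity \eqref{spectL} of $L$ to generate the micro-dissipation $\frac{\sigma_0}{\varepsilon^2}\|\partial_x^\alpha(\mathbf{I}-\mathbf{P})f^\varepsilon\|^2_{L^2_{x,v}(\nu)}$ on the left, and sum over $0\le|\alpha|\le N_0$ for \eqref{0-N estimate decay} and over $1\le|\alpha|\le N_0$ for \eqref{1-N estimate decay}. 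Exactly as for the term $\mathcal{X}_1$ in Lemma \ref{energy estimate without weight}, the electric-field contribution $\langle\partial_x^\alpha(E^\varepsilon\cdot v\mu^{1/2}),\partial_x^\alpha f^\varepsilon\rangle$ is converted, through the Maxwell equations in \eqref{rVPB}, into the exact derivative $\frac{1}{2}\frac{\d}{\d t}(\|\partial_x^\alpha E^\varepsilon\|^2_{L^2_x}+\|\partial_x^\alpha B^\varepsilon\|^2_{L^2_x})$, which produces the field energies appearing on the two left-hand sides.

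The routine nonlinearities — the quadratic field terms $\frac{\varepsilon}{2}v\cdot E^\varepsilon f^\varepsilon$ and $\varepsilon E^\varepsilon\cdot\nabla_v f^\varepsilon$, and the collision term $\frac{1}{\varepsilon}\Gamma(f^\varepsilon,f^\varepsilon)$ — I would then bound along the lines of $\mathcal{X}_4$, $\mathcal{X}_5$ and $\mathcal{X}_7$, using H\"older's inequality, the embeddings $H^1\hookrightarrow L^3,L^6$, the Nirenberg inequality \eqref{NGinequality} and the a priori assumption \eqref{priori assumption}. The gain relative to Lemma \ref{energy estimate without weight} is that here $N_0\le N-2$, so the velocity-derivative and velocity-growth factors which at the top order $N$ forced the weighted right-hand side of \eqref{estimate without weight} now fall strictly inside the range controlled by $\mathcal{D}^k_{N_0}(t)$ of \eqref{low k dissipation}; together with the smallness furnished by \eqref{X define} every such contribution is absorbed into $\delta_0\mathcal{D}^k_{N_0}(t)$, and no weighted norm survives on the right.

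The single delicate term, and the one that distinguishes the two estimates, is the magnetic Lorentz term $\langle\partial_x^\alpha[(v\times B^\varepsilon)\cdot\nabla_v f^\varepsilon],\partial_x^\alpha f^\varepsilon\rangle$. Its leading piece, with no derivative on $B^\varepsilon$, vanishes because $\nabla_v\cdot(v\times B^\varepsilon)=0$, so only $|\alpha'|\ge1$ survives; expanding $f^\varepsilon=\mathbf{P}f^\varepsilon+(\mathbf{I}-\mathbf{P})f^\varepsilon$ then isolates the macroscopic self-interaction \eqref{decay-kunnan-1}, which is nonzero once $|\alpha|>0$. For \eqref{0-N estimate decay} this is harmless: $\mathcal{D}^0_{N_0}(t)$ contains $\|\partial_x^\alpha\mathbf{P}f^\varepsilon\|^2$ for all $1\le|\alpha|\le N_0$, so after distributing the derivatives and using $\|B^\varepsilon\|\lesssim\delta_0$ the term is swallowed by $\delta_0\mathcal{D}^0_{N_0}(t)$. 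For \eqref{1-N estimate decay} the macroscopic dissipation $\mathcal{D}^1_{N_0}(t)$ controls $\|\partial_x^\alpha\mathbf{P}f^\varepsilon\|$ only for $|\alpha|\ge2$, so the first-order macroscopic factor cannot be hidden in $\delta_0\mathcal{D}^1_{N_0}(t)$; instead one keeps $B^\varepsilon$ explicit and, by the splitting into $1\le|\alpha'|\le N_0-1$ and $|\alpha'|=N_0$ shown after \eqref{decay-kunnan-1} (routing the interior factors through $\sqrt{\mathcal{D}^0_{N_0}(t)}$ via $H^1\hookrightarrow L^6$ and \eqref{NGinequality}), estimates it by $\|\nabla_x B^\varepsilon\|_{H^{N_0-1}_x}\mathcal{D}^0_{N_0}(t)$. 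This is exactly the inhomogeneous source displayed in \eqref{1-N estimate decay}.

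I expect this macro-macro Lorentz term to be the crux. Unlike every other contribution it cannot be rendered quadratically small in the dissipation of its own estimate, and it is precisely what later blocks a clean Duhamel/negative-Sobolev time-decay argument (cf. \eqref{decay-kunnan-2}); its appearance as an inhomogeneity that must eventually be reabsorbed using the extra decay of $\varepsilon(1+t)^{1/2}\|\nabla_x B^\varepsilon\|_{H^{N_0-1}_x}$ is the key structural feature. Once the two displays above are in hand, the remaining work is only the bookkeeping of summing over $\alpha$ and fixing $\delta_0$ small, after which \eqref{0-N estimate decay} and \eqref{1-N estimate decay} follow.
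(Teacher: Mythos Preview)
Your plan is essentially the paper's proof: apply $\partial_x^\alpha$, pair with $\partial_x^\alpha f^\varepsilon$, convert the $E^\varepsilon\cdot v\mu^{1/2}$ term into $\frac12\frac{\d}{\d t}(\|\partial_x^\alpha E^\varepsilon\|^2+\|\partial_x^\alpha B^\varepsilon\|^2)$ via Maxwell, and isolate the macro--macro Lorentz piece as the only obstruction to closing \eqref{1-N estimate decay} inside $\delta_0\mathcal{D}^1_{N_0}(t)$. You have correctly identified both the mechanism and the inhomogeneous term $\|\nabla_x B^\varepsilon\|_{H^{N_0-1}_x}\mathcal{D}^0_{N_0}(t)$.

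One point is glossed over and is more delicate than your sentence ``the velocity-derivative and velocity-growth factors \dots now fall strictly inside the range controlled by $\mathcal{D}^k_{N_0}(t)$'' suggests. The dissipation $\mathcal{D}^k_{N_0}(t)$ in \eqref{low k dissipation} contains \emph{no} velocity derivatives, and for $k=1$ it does not contain $\|\nabla_x f^\varepsilon\|_{L^2_{x,v}}^2$ either. Thus a term such as
\[
\varepsilon\|\nabla_x E^\varepsilon\|_{L^6_x}\|\nabla_v f^\varepsilon\|_{L^3_xL^2_v}\|\nabla_x f^\varepsilon\|_{L^2_{x,v}}
\]
(the $k=1$ contribution of your $\mathcal{X}_5$-type term) cannot be placed in $\delta_0\mathcal{D}^1_{N_0}(t)$ by the embeddings you cite alone. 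The paper resolves this with an additional interpolation step: velocity-derivative factors are routed through the weighted energy $\overline{\mathcal{E}}_{N-1,l_1}(t)$, and the low-order spatial factors $\|f^\varepsilon\|$, $\|\nabla_x f^\varepsilon\|$ are interpolated between $\|\Lambda^{-1}f^\varepsilon\|$ (controlled by $\mathcal{E}_{-s}(t)$) and $\|\nabla_x^2 f^\varepsilon\|\in\sqrt{\mathcal{D}^1_{N_0}}$, so that all ``bad'' factors become small via \eqref{priori assumption} while the remaining dissipation factor lands in $\mathcal{D}^1_{N_0}$. The same device, together with the gap $N-N_0\ge2$ and \eqref{sobolev interpolation}, is used for the top-order field contributions $\|\nabla_x^{N_0}[E^\varepsilon,B^\varepsilon]\|$ paired with $\|\nabla_v f^\varepsilon\|_{L^\infty_xL^2_v}$. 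This is not a different strategy from yours---it is exactly how one substantiates your phrase ``together with the smallness furnished by \eqref{X define}''---but it is the non-routine technical content of the lemma and should be made explicit when you write the proof out.
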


\begin{proof}
Applying $\nabla^k_x$ with $1\leq k \leq N_0 $ to  the first equation of \eqref{rVPB}  and taking the inner product with $  \nabla^k_xf^\varepsilon $ over $\mathbb{R}_x^3$ $\times$  $\mathbb{R}_v^3$, we obtain by employing the coercivity of $ L$ in (\ref{spectL}) that
\bal
\bsp\label{pure spatial estimate}
&\frac{1}{2}\frac{\d}{\d t}\left[\left\| \nabla^k_x f^\varepsilon\right\|^2_{L^2_{x,v} } +
\left\| \nabla^k_x E^\varepsilon\right\|^2_{L^2_x}+\left\| \nabla^k_x B^\varepsilon\right\|^2_{L^2_x}\right]
+\frac{\sigma_0}{\varepsilon^2}\left\|\nabla^k_x (\mathbf{I}-\mathbf{P})f^\varepsilon\right\|^2_{L^2_{x,v}(\nu)}  \\
\lesssim\;&\underbrace{\sum_{1\leq j\leq k} \Big\langle\e\nabla_x^j E^{\e} \cdot \nabla_v\nabla_x^{k-j} f^\varepsilon,  \nabla^k_x f^\varepsilon\Big\rangle_{L^2_{x,v} }}_{\mathcal{M}_1}
 +\underbrace{ \frac{\e}{2}\sum_{0\leq j\leq k} \Big\langle v \cdot\nabla^{j}_x E^\e \nabla^{k-j}_x f^\e,\nabla^k_x f^\e \Big\rangle_{L^2_{x,v}}}_{\mathcal{M}_2}
  \\
&+\underbrace{\sum_{1\leq j\leq k} \Big\langle v\times \nabla^{j}_xB^\e \cdot \nabla_v \nabla^{k-j}_x f^\varepsilon, \nabla^k_x f^\varepsilon\Big\rangle_{L^2_{x,v} }}_{\mathcal{M}_3}+\underbrace{\Big\langle\frac{1}{\varepsilon}
\nabla^k_x \Gamma(f^\varepsilon, f^\varepsilon),\nabla^k_x
(\mathbf{I}-\mathbf{P})f^\varepsilon\Big\rangle_{L^2_{x,v} }}_{\mathcal{M}_4} .
\esp
\eal
Here, we have used the collision invariant property.

Now we estimate $\mathcal{M}_1\sim\mathcal{M}_4$ with $1 \leq k \leq N_0$  term by term.
For the term $\mathcal{M}_1$, by employing the H\"{o}lder inequality, the Sobolev inequalities  \eqref{NGinequality}  and the Cauchy--Schwarz inequality with $\eta$, we obtain that for any $2 \leq k \leq N_0$
\begin{align}
\begin{split}\label{M_1 estimate}
\mathcal{M}_1 \lesssim\;& \e \Big[\| \nabla_x E^\varepsilon \|_{L^6_x}
\|\nabla_v \nabla^{k-1}_x  f^\varepsilon\|_{L^3_x L^2_v}
+\!\!\!\!\!
\sum_{2 \leq j \leq N_0-1}\!\!\!\!\! \| \nabla^j_x E^\varepsilon \|_{L^2_x}
\|\nabla_v \nabla^{k-j}_x  f^\varepsilon \!\|_{L^\infty_x L^2_v}\Big]
 \| \nabla^k _xf^\varepsilon\|_{L^2_{x,v}} \\
&\quad+\e \| \nabla^{N_0}_x E^\varepsilon \|_{L^2_x}
\left[\| \mathbf{P} f^\varepsilon \|_{L^\infty_x L^2_v}
+\|  \nabla_v(\mathbf{I} -\mathbf{P} )f^\varepsilon\|_{L^\infty_x L^2_v}\right]\left\| \nabla^{N_0} _xf^\varepsilon\right\|_{L^2_{x,v}} \\
\lesssim\;& {\overline{\mathcal{E}}}_{N-1,l_1}(t)\!\!\! \!\sum_{2 \leq \ell \leq N_0-1 }\!\!\!\e^2 \| \nabla^\ell_x E^\varepsilon \|^2_{L^2_x}
 \!+\!\eta\!\!\!\sum_{2 \leq \ell \leq N_0}\!\!\!\| \nabla^\ell_x  f^\varepsilon \|^2_{L^2_{x,v}}\! +\!  \left[ \mathcal{E}_{N}(t)\!+\!{\overline{\mathcal{E}}}_{N-1,l_1}(t) \right] \mathcal{D}_{N_0}^1(t)
\\
\lesssim \;&\delta_0\mathcal{D}_{N_0}^1(t)+\eta\mathcal{D}_{N_0}^1(t).
\end{split}
\end{align}
Here,   we have used the \emph{a priori} assumption  \eqref{priori assumption} and  the fact, which is deduced from the interpolation inequality \eqref{sobolev interpolation} and the relation of $N-N_0\geq 2$ because of  \eqref{hard assumption}, that
\bals
\;&\e^2\left\| \nabla^{N_0}_x E^\varepsilon \right\|_{L^2_x}^2
\left[\left\| \mathbf{P} f^\varepsilon \right\|_{L^\infty_x L^2_v}^2+
\left\| \nabla_v(\mathbf{I}-\mathbf{P}) f^\varepsilon \right\|_{L^\infty_x L^2_v}^2\right]\\
\lesssim\;&\e^2\left\| \nabla^{N_0-1}_x E^\varepsilon \right\|_{L^2}^{\frac{2(N-N_0)}{N-N_0+1}}
 \left\| \nabla^{N}_x E^\varepsilon \right\|_{L^2_x}^{\frac{2}{N-N_0+1}}
 \Big[ \left\|  \nabla_x \mathbf{P} f^\varepsilon \right\|_{L^2_{x,v}}
\left\|  \nabla_x^2 \mathbf{P} f^\varepsilon \right\|_{L^2_{x,v}}\!+\!\left\|\nabla_v ^3\nabla_x(\mathbf{I}-\mathbf{P})  f^\varepsilon \right\|_{L^2_{x,v}}^{\frac{1}{3}}\\
\;&
\times\left\| \nabla_x (\mathbf{I}-\mathbf{P}) f^\varepsilon \right\|_{L^2_{x,v}}^{\frac{2}{3}}
\left\| \nabla_v\nabla_x^2 (\mathbf{I}-\mathbf{P})  f^\varepsilon \right\|_{L^2_{x,v}}\Big] \\
\lesssim\;&  \left[ \mathcal{E}_{N}(t)+{\overline{\mathcal{E}}}_{N-1,l_1}(t) \right] \mathcal{D}_{N_0}^1(t).
\eals
As for the case of $k=1$, similar to that of \eqref{M_1 estimate},
 one has that
\bals
 \mathcal{M}_1 \lesssim\;&  \e \left\| \nabla_x E^\varepsilon \right\|_{L^6_x} \left\|\nabla_v    f^\varepsilon \right\|_{L^3_{x}L^2_v}
  \left\| \nabla_xf^\varepsilon\right\|_{L^2_{x,v}}\\
     \lesssim\;& \e \| \nabla_x^2 E^\varepsilon \|_{L^2_x}\|\nabla_v ^3  f^\varepsilon \|_{L^2_{x,v}}^{\frac{1}{6}}
     \|\nabla_v^3 \nabla_x  f^\varepsilon \|_{L^2_{x,v}}^{\frac{1}{6}}
         \left\| f^\varepsilon \right\|_{L^2_{x,v}}^{\frac{1}{3}}
  \left\|  \nabla_x   f^\varepsilon \right\|_{L^2_{x,v}}^{\frac{4}{3}}
  \\
     \lesssim\;&\e \left\| \nabla_x^2 E^\varepsilon \right\|_{L^2_x}
     \left\|\nabla_v ^3  f^\varepsilon \right\|_{L^2_{x,v}}^{\frac{1}{6}}
     \|\nabla_v^3 \nabla_x  f^\varepsilon \|_{L^2_{x,v}}^{\frac{1}{6}}
    \|\Lambda^{-1} f^\varepsilon\|_{L^2_{x,v}}^{\frac{2}{9}}
     \| \nabla_x^2f^\varepsilon\|_{L^2_{x,v}}^{\frac{1}{9}}
     \| \Lambda^{-1}f^\varepsilon\|_{L^2_{x,v}}^{\frac{4}{9}}
  \| \nabla_x^2f^\varepsilon\|_{L^2_{x,v}}^{\frac{8}{9}}\\
  \lesssim \;& \big[{\overline{\mathcal{E}}}_{N-1,l_1}(t)+\mathcal{E}_{-s}(t)\big] \e^2 \left\| \nabla_x^2 E^\varepsilon \right\|_{L^2_x}^2+\eta\left\| \nabla_x^2f^\varepsilon\right\|_{L^2_{x,v}}^2\\
   \lesssim \;& \delta_0 \mathcal{D}_{N_0}^1(t)+\eta \mathcal{D}_{N_0}^1(t).
 \eals
In an analogous manner, $\mathcal{M}_2$ has the same upper bound as $\mathcal{M}_1$.

For  $\mathcal{M}_{3}$, utilizing  \eqref{fdefenjie} leads to
\bals
\mathcal{M}_{3}
=\;&\underbrace{\sum_{1\leq j\leq k} \Big\langle v\times \nabla^{j}_xB^\e \cdot \nabla_v \nabla^{k-j}_x(\mathbf{I}-\mathbf{P}) f^\varepsilon, \nabla^k_x\mathbf{P} f^\varepsilon\Big\rangle_{L^2_{x,v}}}_{\mathcal{M}_{3,1}}\\
\;&
+\underbrace{\sum_{1\leq j\leq k} \Big\langle v\times \nabla^{j}_xB^\e \cdot \nabla_v \nabla^{k-j}_x \mathbf{P}f^\varepsilon, \nabla^k_x\mathbf{P} f^\varepsilon\Big\rangle_{L^2_{x,v}}}_{\mathcal{M}_{3,2}}\\
\;&+
\underbrace{\sum_{1\leq j\leq k} \Big\langle v\times \nabla^{j}_xB^\e \cdot \nabla_v \nabla^{k-j}_x f^\varepsilon, \nabla^k_x (\mathbf{I}-\mathbf{P}) f^\varepsilon\Big\rangle_{L^2_{x,v}}}_{\mathcal{M}_{3,3}}
\eals
It follows from the H\"{o}lder inequality, the Sobolev embedding theory and the   Cauchy--Schwarz inequality with $\eta$ that for  $1\leq k\leq N_0$,
\bals
\mathcal{M}_{3,1}
\lesssim\;&\!
\Big[\!\!\sum_{1\leq j\leq N_0-1}\!\!\!\!\!\!\|\nabla^{j}_xB^\e \|_{L^3_x}\| \nabla^{k-j}_x (\mathbf{I}-\mathbf{P}) f^\e\!\|_{L^6_x\!L^2_v}\!
+ \!\|\nabla^{N_0}_xB^\e \|_{L^2_x}\|  (\mathbf{I}\!-\!\mathbf{P})  f^\e\|_{L^\infty_x\!L^2_v}\Big]
\|\nabla^k_x \mathbf{P}f^\varepsilon\|_{L^2_{x,\!v}}\\
\lesssim\;&
\mathcal{E}_{N}(t)\sum_{1\leq \ell \leq N_0}\|\nabla^{\ell}_x(\mathbf{I}-\mathbf{P}) f^\e \|_{L^2_{x,v}(\nu)}^2
+\eta\|\nabla^k_x \mathbf{P}f^\varepsilon\|_{L^2_{x,v}}^2.
\eals
In a similar manner, for  $1\leq k\leq N_0$, we find that
\bals
\mathcal{M}_{3,2}
\lesssim\;&\!
\Big[ \sum_{1\leq j\leq N_0-1} \|\nabla^{j}_xB^\e \|_{L^3_x}\| \nabla^{k-j}_x \mathbf{P} f^\e\!\|_{L^6_x L^2_v}
+ \!\|\nabla^{N_0}_xB^\e \|_{L^2_x}\| \mathbf{P} f^\e\|_{L^\infty_x L^2_v}\Big]
\|\nabla^k_x \mathbf{P}f^\varepsilon\|_{L^2_{x, v}}\\
\lesssim\;&
\|\nabla_x B^\e\|_{H^{N_0-1}_x}\mathcal{D}^0_{N_0}(t).
\eals
Applying the H\"{o}lder inequality  and the   Cauchy--Schwarz inequality with $\eta$, we obtain that
\bals
\mathcal{M}_{3,3}
\lesssim\;&
\Big[\|\nabla_xB^\e \|_{L^6_x}\|\langle v\rangle \nabla_v \nabla^{k-1}_x f^\e\|_{L^3_xL^2_v}
+\sum_{2\leq j\leq N_0-2}\|\nabla^{j}_xB^\e \|_{L^6_x}\|\langle v\rangle \nabla_v\nabla^{k-j}_x  f^\e\|_{L^3_xL^2_v}
\\
\;&
+ \|\nabla^{N_0-1}_xB^\e \|_{L^2_x}\|\langle v\rangle \nabla_v  \nabla_x f^\e\|_{L^\infty_xL^2_v}
+ \|\nabla^{N_0}_xB^\e \|_{L^2_x}\|\langle v\rangle \nabla_v   f^\e\|_{L^\infty_xL^2_v}\Big]
\|\nabla^k_x (\mathbf{I}\!-\!\mathbf{P})f^\varepsilon\|_{L^2_{x,v}}\\
\lesssim\;&
\left[{\overline{\mathcal{E}}}_{N-1,l_1}(t)+\mathcal{E}_{N}(t)\right]\e^2\sum_{3\leq \ell \leq N_0-1}\|\nabla^{\ell}_xB^\e \|_{L^2_x}^2
+\left[ \mathcal{E}_{N}(t)+{\overline{\mathcal{E}}}_{N-1,l_1}(t) +\mathcal{E}_{-s}(t)\right] \mathcal{D}_{N_0}^1(t)\\
\;&+
\frac{\eta}{\e^2}\|\nabla^k_x (\mathbf{I}-\mathbf{P})f^\varepsilon\|_{L^2_{x,v}}^2
\eals
holds for $2\leq k\leq N_0$. Here,   we have used   the facts, which are derived from \eqref{fdefenjie}, the interpolation inequality \eqref{sobolev interpolation1} and the relation of $N-N_0\geq 2$ because of  \eqref{hard assumption}, that
\bals
  \;&\e^2\|\nabla_xB^\e \|_{L^6_x}^2\|\langle v\rangle \nabla_v \nabla^{k-1}_x f^\e\|_{L^3_xL^2_v}^2\\
     \lesssim \;&\e^2\|\nabla_xB^\e \|_{L^6_x}^2\|\langle v\rangle \nabla_v \nabla^{k-1}_x (\mathbf{I}-\mathbf{P}) f^\e\|_{L^3_xL^2_v}^2+
   \e^2\|\nabla_xB^\e \|_{L^6_x}^2\|  \nabla^{k-1}_x \mathbf{P} f^\e\|_{L^3_xL^2_v}^2\\
     \lesssim\;&\e^2 \left\| \Lambda^{-1}B^\varepsilon \right\|_{L^2_x}^{\frac{1}{2}}
     \left\|\nabla_x ^3  B^\varepsilon \right\|_{L^2_{x,v}}^{\frac{3}{2}}
     \|\langle v\rangle^2 \nabla_v \nabla^{k-1}_x (\mathbf{I}-\mathbf{P})f^\e\|_{H^1_{x}L^2_{v}}
       \| \nabla_v^2\nabla^{k-1}_x(\mathbf{I}-\mathbf{P}) f^\e\|_{H^1_{x}L^2_{v}}^{\frac{1}{2}}\\
       \;&\times
       \| \nabla^{k-1}_x (\mathbf{I}-\mathbf{P})f^\e\|_{H^1_{x}L^2_{v}}^{\frac{1}{2}}
      +\e^2\|\nabla_x B^\e\|_{H^{N_0-1}_x}^2\mathcal{D}^0_{N_0}(t)
    \\
  \lesssim \;& \big[{\overline{\mathcal{E}}}_{N-1,l_1}(t)+\mathcal{E}_{-s}(t)\big] \mathcal{D}_{N_0}^1(t)+\e^2\|\nabla_x B^\e\|_{H^{N_0-1}_x}^2\mathcal{D}^0_{N_0}(t),
 \eals
and
\bals
\;&\e^2\|\nabla^{N_0}_xB^\e \|_{L^2_x}\|\langle v\rangle \nabla_v   f^\e\|_{L^\infty_xL^2_v}^2\\
\lesssim\;&\e^2\left\| \nabla^{N_0}_x B^\varepsilon \right\|_{L^2_x}^2
\left[\left\| \mathbf{P} f^\varepsilon \right\|_{L^\infty_x L^2_v}^2+
\left\| \langle v \rangle \nabla_v\nabla_x(\mathbf{I}-\mathbf{P}) f^\varepsilon \right\|_{ L^2_{x,v}}\left\| \langle v \rangle \nabla_v\nabla_x^2(\mathbf{I}-\mathbf{P}) f^\varepsilon \right\|_{L^2_{x,v}}\right]\\
\lesssim\;&\e^2\left\| \nabla^{N_0-1}_x B^\varepsilon \right\|_{L^2}^{\frac{2(N-N_0)}{N-N_0+1}}
 \left\| \nabla^{N}_x B^\varepsilon \right\|_{L^2_x}^{\frac{2}{N-N_0+1}}
 \Big[ \left\|  \nabla_x \mathbf{P} f^\varepsilon \right\|_{L^2_{x,v}}
\left\|  \nabla_x^2 \mathbf{P} f^\varepsilon \right\|_{L^2_{x,v}}
\\
\;&
\qquad+\left\|\nabla_v^3\nabla_x(\mathbf{I}-\mathbf{P}) f^\varepsilon \right\|_{  H^1_{x}L^2_{v}}^{\frac{1}{3}}
\left\| \nabla_x(\mathbf{I}-\mathbf{P}) f^\varepsilon \right\|_{ H^1_{x}L^2_{v}}^{\frac{2}{3}}\left\| \langle v \rangle^2 \nabla_v\nabla_x(\mathbf{I}-\mathbf{P}) f^\varepsilon \right\|_{ H^1_{x}L^2_{v}}\Big]
\\
\lesssim\;&  \left[ \mathcal{E}_{N}(t)+{\overline{\mathcal{E}}}_{N-1,l_1}(t) \right] \mathcal{D}_{N_0}^1(t).
\eals
In the case of $k=1$, by employing similar argument, we can deduce
\bals
\mathcal{M}_{3,3}
\lesssim\;&
\Big[\|\nabla_xB^\e \|_{L^\infty_x}\|\langle v\rangle \nabla_v(\mathbf{I}-\mathbf{P}) f^\e\|_{L^2_{x,v}}
+\|\nabla_xB^\e \|_{L^3_x}\|\mathbf{P} f^\e\|_{L^6_{x}L^2_{v}}\Big]\|\nabla_x (\mathbf{I}-\mathbf{P})f^\varepsilon\|_{L^2_{x,v}}
\\
\lesssim\;&  \big[~{\overline{\mathcal{E}}}_{N-1,l_1}(t)+\mathcal{E}_{-s}(t)\big] \mathcal{D}_{N_0}^1(t) +\|\nabla_x B^\e\|_{H^{N_0-1}_x}\mathcal{D}^0_{N_0}(t).
\eals

Consequently, gathering the above estimates of  $\mathcal{M}_{3,i}(i=1,2,3)$
and using the \emph{a priori} assumption  \eqref{priori assumption} ,
we arrive at the following outcome
\bals
\mathcal{M}_{3}
\lesssim\;& \delta_0 \mathcal{D}^1_{N_0}(t)+\eta \mathcal{D}^1_{N_0}(t) +\|\nabla_x B^\e\|_{H^{N_0-1}_x}\mathcal{D}^0_{N_0}(t).
\eals

For the term $ \mathcal{M}_4$,  making use of the H\"{o}lder inequality, the Sobolev inequality \eqref{NGinequality} and the Cauchy--Schwarz inequality, we obtain
\begin{align*}
 \mathcal{M}_4
\lesssim\;& \frac{1}{\varepsilon}\Big[\! \|  f^\varepsilon \|_{L^\infty_x L^2_v}\| \nabla_x^k  f^\varepsilon \|_{L^2_{x,v}(\nu)}+
\!\sum_{1 \leq j \leq k-1}\!\!
\| \nabla^{k-j}_x  f^\varepsilon \|_{L^3_x L^2_v}\| \nabla^j_x  f^\varepsilon \|_{L^6_x L^2_v(\nu)}\!\Big]
\| \nabla^k _x(\mathbf{I}\!-\!\mathbf{P})f^\varepsilon \|_{L^2_{x,v}(\nu)}\\
\lesssim\;&\mathcal{E}^{1/2}_{N}(t) \Big\{ \sum_{2 \leq \ell \leq N_0} \| \nabla^\ell_x  f^\varepsilon\|_{L^2_{x,v}(\nu)}^2
+\frac{1}{\varepsilon^2} \| \nabla^k _x (\mathbf{I}-\mathbf{P})f^\varepsilon\|_{L^2_{x,v}(\nu)} ^2\Big]\\
\lesssim\;&\delta_0 \mathcal{D}_{N_0}^1(t) \qquad \qquad \qquad \qquad \qquad \qquad \qquad \qquad \qquad   \qquad \qquad \qquad \quad \,\,\,\text{for}\quad k\geq2,
\end{align*}
and
\begin{align*}
 \mathcal{M}_4
\lesssim\;&\frac{1}{\e}
\left\|  f^\varepsilon \right\|_{L^3_x L^2_v}\left\| \nabla_x  f^\varepsilon \right\|_{L^6_x L^2_v(\nu)}
\left\| \nabla _x(\mathbf{I}-\mathbf{P})f^\varepsilon \right\|_{L^2_{x,v}(\nu)}\lesssim\delta_0 \mathcal{D}_{N_0}^1(t)
\quad  \text{for}\quad k=1.
\end{align*}

 Therefore, putting  all the estimates mentioned above into
 \eqref{pure spatial estimate} and summing the resulting inequality over $1\leq k\leq N_0$, we directly conclude \eqref{1-N estimate decay}.
Furthermore, under the \emph{a priori} assumption \eqref{priori assumption}, by combining \eqref{without weight-basic-result} and  \eqref{1-N estimate decay}, the desired estimate  \eqref{0-N estimate decay} follows.
This completes the proof of Lemma \ref{0-N 1-N lemma}.
\end{proof}

Now we complete the Proof of Proposition \ref{k-N decay proposition}.
\begin{proof}[\textbf{Proof of Proposition \ref{k-N decay proposition}}]
We divide it into two steps.

\medskip
\emph{Step 1. The uniform energy  estimate $\mathcal{E}^k_{N_0}(t)$ with $k=0,1$.}\;
Similar to Lemma \ref{macroscopic estimate}, by  considering the \emph{a priori} assumption \eqref{priori assumption}, we obtain that
there exists an  interactive energy functional $\mathcal{E}^{N_0}_{int}(t)$ satisfying
\begin{align}
\begin{split}\nonumber
\left|\mathcal{E}^{N_0}_{int}(t)\right| \lesssim\;&
\sum_{|\alpha| \leq N_0} \left\| \partial^\alpha_x f^{\varepsilon}\right\|^2_{L^2_{x,v}}
+\sum_{|\alpha| \leq N_0} \left\| \partial^\alpha_x E^{\varepsilon}\right\|^2_{L^2_{x}}+\sum_{|\alpha| \leq N_0} \left\| \partial^\alpha_x B^{\varepsilon}\right\|^2_{L^2_{x}},
\end{split}
\end{align}
such that
\begin{align}
\begin{split}\label{0-N macro 1}
&\varepsilon \frac{\d}{\d t} \mathcal{E}^{N_0}_{int}(t)+
\sum_{1 \leq k \leq N_0}\! \! \| \nabla^k_x \mathbf{P}f^\varepsilon\|_{L^2_{x,v}}^2+\e^2\sum_{1 \leq k \leq N_0-1} \| \nabla^k_x E^\varepsilon\|_{L^2_{x}}^2
+\e^2\sum_{2 \leq k \leq N_0-1}\! \!\| \nabla^k_x B^\varepsilon\|_{L^2_{x}}^2\\
\lesssim\;& \frac{1}{\varepsilon^2} \sum_{|\alpha| \leq N_0}\left\| \partial^\alpha_x (\mathbf{I}-\mathbf{P})f^{\varepsilon}\right\|^2_{L^2_{x,v}(\nu)}
+\delta_0 \mathcal{D}^0_{N_0}(t)+\eta \mathcal{D}^0_{N_0}(t).
\end{split}
\end{align}
Therefore, by recalling \eqref{low k energy} for $\mathcal{E}^k_{N_0}(t)$ when $k=0$ and combining the energy estimate \eqref{0-N estimate decay} and the macroscopic estimate \eqref{0-N macro 1},
we conclude \eqref{0-N estimate}.

Further,
applying the analogous argument as Lemma \ref{macroscopic estimate},
we can also derive that
there exists an  interactive energy functional $\mathcal{E}^{1 \rightarrow N_0}_{int}(t)$ satisfying
\begin{align}
\begin{split}\nonumber
\left| \mathcal{E}^{1 \rightarrow N_0}_{int}(t) \right| \lesssim\;& \sum_{1 \leq |\alpha| \leq N_0} \left\| \partial^\alpha_x f^{\varepsilon}\right\|^2_{L^2_{x,v}} +\sum_{1\leq|\alpha| \leq N_0} \left\| \partial^\alpha_x E^{\varepsilon}\right\|^2_{L^2_{x}}+\sum_{1\leq|\alpha| \leq N_0} \left\| \partial^\alpha_x B^{\varepsilon}\right\|^2_{L^2_{x}},
\end{split}
\end{align}
such that
\begin{align}
\begin{split} \label{1-N macro 1}
&\varepsilon \frac{\d}{\d t} \mathcal{E}^{1 \rightarrow N_0}_{int}(t) + \sum_{2 \leq k \leq N_0} \! \!\| \nabla^k_x \mathbf{P}f^\varepsilon\|_{L^2_{x,v}}^2
+\e^2\!\!\sum_{2 \leq k \leq N_0-1}\!\! \| \nabla^k_x E^\varepsilon\|_{L^2_{x}}^2
+\e^2\sum_{3 \leq k \leq N_0-1} \! \!\| \nabla^k_x B^\varepsilon\|_{L^2_{x}}^2\\
\lesssim\;& \frac{1}{\varepsilon^2} \!\!\sum_{1\leq|\alpha| \leq N_0}\!\!\left\| \partial^\alpha_x (\mathbf{I}-\mathbf{P})f^{\varepsilon}\right\|^2_{L^2_{x,v}(\nu)}
+\delta_0 \mathcal{D}^1_{N_0}(t)+\eta \mathcal{D}^1_{N_0}(t).
\end{split}
\end{align}
Consequently,  recalling \eqref{low k energy} for $\mathcal{E}^k_N(t)$ when $k=1$, the desired estimate \eqref{1-N estimate}  follows from the proper linear combination of \eqref{1-N estimate decay} and  \eqref{1-N macro 1}.

\emph{Step 2. The time decay estimate \eqref{k-N decay}.}\;
Indeed,   from Lemma \ref{sobolev interpolation}, we deduce that
\begin{align*}
\left\|   \mathbf{P}f^\varepsilon \right\|_{L^2_{x,v}}
\lesssim\;& \left\| \nabla^{2}_x\mathbf{P}f^\varepsilon \right\|_{L^2_{x,v}}^{\frac{s}{2+s}}
\left\| \Lambda^{-s} \mathbf{P}f^\varepsilon \right\|_{L^2_{x,v}}^{\frac{2}{2+s}},\\
\left\| [ {E}^\varepsilon,\, {B}^\varepsilon] \right\|_{L^2_{x}}
\lesssim\;& \e^{-\frac{s}{2+s}}\left\| \nabla^{2}_x [\e {E}^\varepsilon,\,\e {B}^\varepsilon] \right\|_{L^2_{x}}^{\frac{s}{2+s}}
\left\| \Lambda^{-s} [{E}^\varepsilon,\,{B}^\varepsilon] \right\|_{L^2_{x}}^{\frac{2}{2+s}},\\
\left\|    \nabla_x  {B}^\varepsilon  \right\|_{L^2_{x}}
\lesssim\;& \e^{-\frac{s}{2+s}}\left[\e\left\| \nabla^{2}_x  {B}^\varepsilon  \right\|_{L^2_{x}}\right]^{\frac{s}{ 2+s}}
 \| \Lambda^{1-s/2}  {B}^\varepsilon   \|_{L^2_{x}}^{\frac{2}{ 2+s}},\\
\left\| \nabla_x^{N_0} [ {E}^\varepsilon,\, {B}^\varepsilon] \right\|_{L^2_{x}}
\lesssim\;& \e^{-\frac{s}{2+s}}\left\| \nabla^{N_0-1}_x [\e {E}^\varepsilon,\,\e {B}^\varepsilon] \right\|_{L^2_{x}}^{\frac{s}{2+s}}
 \| \nabla^{N_0+\frac{s}{2}}_x [{E}^\varepsilon,\,{B}^\varepsilon] \|_{L^2_{x}}^{\frac{2}{2+s}}.
\end{align*}
The above inequalities together with the fact that for $0\leq m \leq N_0$
\begin{align}\nonumber
\left\| \nabla^m_x (\mathbf{I}-\mathbf{P})f^\varepsilon \right\|_{L^2_{x,v}}
\lesssim\;& \left( \frac{1}{\varepsilon} \left\| \nabla^m_x (\mathbf{I}-\mathbf{P})f^\varepsilon\right\|_{L^2_{x,v}(\nu)}\right)
^{\frac{ s}{2+s}}
\left( \varepsilon^{\frac{1}{2}}\left\| \langle v \rangle ^{-\frac{\gamma s}{2}}\nabla^m_x (\mathbf{I}-\mathbf{P})f^\varepsilon\right\|_{L^2_{x,v}}\right)
^{\frac{2}{2+s}}
\end{align}
imply
\begin{align} \label{decay:estimate:guocheng:1}
\mathcal{E}^0_{N_0}(t) \leq \e^{-\frac{2s}{2+s}} \left\{{\mathcal{E}}_{N_0+\frac{s}{2}}(t)+{\mathcal{E}}_{-s}(t)\right\}^{\frac{2}{2+s}}
\left\{ \mathcal{D}^0_{N_0}(t) \right\}^{\frac{ s}{2+s}}.
\end{align}
Furthermore, multiplying \eqref{0-N estimate}  by $\e^s(1+t)^{\frac{s}{2}+p}$ and integrating
the resulting inequality with respect to $t$ over $[0,t]$,
we arrive at
\bals
\;&\e^s(1+t)^{\frac{s}{2}+p} \mathcal{E}^0_{N_0}(t)+
\int_0^t
\e^s(1+\tau)^{\frac{s}{2}+p} \mathcal{D}^0_{N_0}(\tau)\dd \tau\\
\lesssim\;&
\e^s\mathcal{E}^0_{N_0}(0)
+\frac{s}{2}\int_0^t
\e^s(1+\tau)^{\frac{s}{2}-1+p} \mathcal{E}^0_{N_0}(\tau)\dd \tau\\
\lesssim\;&\mathcal{E}_{N}(0)+\eta\int_0^t
\e^{s}
(1+\tau)^{\frac{s}{2}+p} \mathcal{D}^0_{N_0}(\tau)\dd \tau
+\sup_{0\leq \tau\leq t}\left\{{\mathcal{E}}_{N_0+\frac{s}{2}}(\tau)+
{\mathcal{E}}_{-s}(\tau)\right\}(1+t)^p.
\eals
Here,   applied
\eqref{decay:estimate:guocheng:1} and the Cauchy-Schwartz inequality with $\eta$,  the fact utilized  is that
\bals
\bsp
\;&\int_0^t\e^{s}(1+\tau)^{\frac{s}{2}+p-1} \mathcal{E}^0_{N_0}(\tau)\dd \tau\\
\lesssim\;&\int_0^t
\left[\e^{s}
(1+\tau)^{\frac{s}{2}+p} \mathcal{D}^0_{N_0}(\tau)\right]^{\frac{s}{2+s}}
(1+\tau)^{(\frac{s}{2}+p)\frac{2}{2+s}-1}
\left\{{\mathcal{E}}_{N_0+\frac{s}{2}}(t)+
{\mathcal{E}}_{-s}(t)\right\}^{\frac{2}{2+s}}
\dd \tau\\
\lesssim\;&\eta\int_0^t
\e^{s}
(1+\tau)^{\frac{s}{2}+p} \mathcal{D}^0_{N_0}(\tau)\dd \tau
+\sup_{0\leq \tau\leq t}\left\{{\mathcal{E}}_{N_0+\frac{1+s}{2}}(\tau)+
{\mathcal{E}}_{-s}(\tau)\right\}\int_0^t(1+\tau)^{-1+p}
\dd \tau\\
\lesssim\;&\eta\int_0^t
\e^{s}
(1+\tau)^{\frac{s}{2}+p} \mathcal{D}^0_{N_0}(\tau)\dd \tau
+\sup_{0\leq \tau\leq t}\left\{{\mathcal{E}}_{N_0+\frac{s}{2}}(\tau)+
{\mathcal{E}}_{-s}(\tau)\right\}(1+t)^p.
\esp
\eals
Consequently, we can obtain
\bal
\bsp\label{decay:estimate:guocheng:2}
\;&\sup_{0\le\tau\le t}\left\{\e^s(1+t)^{\frac{s}{2}} \mathcal{E}^0_{N_0}(t)\right\}+
(1+t)^{-p}\int_0^t
\e^s(1+\tau)^{\frac{s}{2}+p} \mathcal{D}^0_{N_0}(\tau)\dd \tau\\
\lesssim\;&\mathcal{E}_{N}(0)
+\sup_{0\leq \tau\leq t}\left\{{\mathcal{E}}_{N_0+\frac{s}{2}}(\tau)+
{\mathcal{E}}_{-s}(\tau)\right\}.
\esp
\eal

On the other hand, multiply \eqref{1-N estimate}  by $\e^{1+s}(1+t)^{\frac{1+s}{2}+p}$ and integrate
the resulting inequality with respect to $t$ over $[0,t]$ to yield
\bal
\bsp\label{decay:estimate:guocheng:3}
\;&\e^{1+s}(1+t)^{\frac{1+s}{2}+p} \mathcal{E}^1_{N_0}(t)+\e^{1+s} \int_0^t  (1+\tau)^{\frac{1+s}{2}+p} \mathcal{D}^1_{N_0}(\tau)\dd \tau\\
\lesssim \;&\mathcal{E}_{N}(0)\!+\!
\e^{1+s}\!\!\int_0^t\!\!(1\!+\!\tau)^{\frac{s-1}{2}+p} \mathcal{E}^1_{N_0}(\tau)\dd \tau\!+\!
\e^{1+s} \!\!\int_0^t \!\! (1+\tau)^{\frac{1+s}{2}+p}\! \|\nabla_x B^\e(\tau)\|_{H^{N_0\!-\!1}_x}\mathcal{D}^0_{N_0}(\!\tau\!)\dd \tau.
\esp
\eal
Applying the analogous argument as the estimate \eqref{decay:estimate:guocheng:1} gives rise to
\begin{align} \nonumber
\mathcal{E}^1_{N_0}(t) \leq \e^{-\frac{2(1+s)}{3+s}} \left\{{\mathcal{E}}_{N_0+\frac{1+s}{2}}(t)+
{\mathcal{E}}_{-s}(t)\right\}^{\frac{2}{3+s}}
\left\{ \mathcal{D}^1_{N_0}(t) \right\}^{\frac{ 1+s}{3+s}}.
\end{align}
Then the second term on the right-hand side of  \eqref{decay:estimate:guocheng:3} can be inferred  from the above inequality  and the Cauchy-Schwartz inequality with $\eta$  that
\bal
\bsp\label{decay:estimate:guocheng:4}
\;&\e^{1+s}\int_0^t(1+\tau)^{\frac{s-1}{2}+p} \mathcal{E}^1_{N_0}(\tau)\dd \tau\\
\lesssim\;&\int_0^t\! \!
\left[\e^{1+s}
(1+\tau)^{\frac{s+1}{2}+p} \mathcal{D}^1_{N_0}(\tau)\right]^{\frac{1+s}{3+s}}
\! \!(1+\tau)^{(\frac{s+1}{2}+p)\frac{2}{3+s}-1}\! \!\left\{{\mathcal{E}}_{N_0+\frac{1+s}{2}}(t)+
{\mathcal{E}}_{-s}(t)\right\}^{\frac{2}{3+s}}
\dd \tau\\
\lesssim\;&\eta\int_0^t
\e^{1+s}
(1+\tau)^{\frac{1+s}{2}+p} \mathcal{D}^1_{N_0}(\tau)\dd \tau
+\sup_{0\leq \tau\leq t}\left\{{\mathcal{E}}_{N_0+\frac{1+s}{2}}(\tau)+
{\mathcal{E}}_{-s}(\tau)\right\}\! \!\int_0^t(1+\tau)^{-1+p}
\dd \tau\\
\lesssim\;&\eta\int_0^t
\e^{1+s}
(1+\tau)^{\frac{1+s}{2}+p} \mathcal{D}^1_{N_0}(\tau)\dd \tau
+\sup_{0\leq \tau\leq t}\left\{{\mathcal{E}}_{N_0+\frac{1+s}{2}}(\tau)+
{\mathcal{E}}_{-s}(\tau)\right\}(1+t)^p.
\esp
\eal
For the third term on the right-hand side of  \eqref{decay:estimate:guocheng:3}, we obtain from the \emph{a priori} assumption  \eqref{priori assumption}  that
\bal
\bsp\label{decay:estimate:guocheng:5}
 \e^{1+s} \int_0^t  (1+\tau)^{\frac{1+s}{2}+p} \|\nabla_x B^\e(\tau)\|_{H^{N_0-1}_x}\mathcal{D}^0_{N_0}(\tau)\dd \tau
\lesssim\;& \left[X(t)\right]^{\frac{1}{2}}\e^{s}\int_0^t  (1+\tau)^{\frac{s}{2}+p}
\mathcal{D}^0_{N_0}(\tau)\dd \tau\\
\lesssim\;& \delta_0^{\frac{1}{2}}\e^{s}\int_0^t  (1+\tau)^{\frac{s}{2}+p}
\mathcal{D}^0_{N_0}(\tau)\dd \tau.
\esp
\eal
As a consequence,  plugging \eqref{decay:estimate:guocheng:4} and \eqref{decay:estimate:guocheng:5} into  \eqref{decay:estimate:guocheng:3},  and choosing $\eta$ small enough, we obtain
\bal
\bsp\label{decay:estimate:guocheng:6}
\;&\sup_{0\le\tau\le t}\left\{\e^{1+s}(1+t)^{\frac{1+s}{2}} \mathcal{E}^1_{N_0}(t)\right\}+
(1+t)^{-p}\int_0^t
\e^{1+s}(1+\tau)^{\frac{1+s}{2}+p} \mathcal{D}^1_{N_0}(\tau)\dd \tau\\
\lesssim\;& \mathcal{E}_{N}(0)
+\sup_{0\leq \tau\leq t}\left\{{\mathcal{E}}_{N_0+\frac{1+s}{2}}(\tau)+
{\mathcal{E}}_{-s}(\tau)\right\}+\delta_0^{\frac{1}{2}}\e^{s}\int_0^t  (1+\tau)^{\frac{s}{2}+p}
\mathcal{D}^0_{N_0}(\tau)\dd \tau.
\esp
\eal
Therefore, a suitable linear combination of \eqref{decay:estimate:guocheng:2}
 and \eqref{decay:estimate:guocheng:6} yields the desired estimate \eqref{k-N decay}.
This completes the proof of Proposition \ref{k-N decay proposition}.
\end{proof}

\subsection{Proof of Global Existence}
\hspace*{\fill}

With Proposition \ref{result-basic energy estimate}, Proposition \ref{weighted 1}, Proposition \ref{weighted 2:zongguji}, Proposition \ref{negative sobolev energy estimates total} and Proposition \ref{k-N decay proposition} in hand, we  are now  in the position to complete the proof of Theorem
\ref{mainth1} and verify the {\emph{a priori}} assumption  \eqref{priori assumption}.
\begin{proof}[{\bf Proof of Theorem \ref{mainth1}.}] \ 
Collecting \eqref{weight estimate1}, \eqref{estimate-weighted-2:zong} and \eqref{negative sobolev estimate total}, then choosing $\delta_0$ small enough and using the relation $ \frac{1+s}{2}(l_2-l_1) =\frac{1+\vartheta}{2}(N-1)=\sigma_{N-1, N-1} $, we find
\bals
\bsp
\;&\frac{\d}{\d t}\Big[{\overline{\mathcal{E}}}_{N-1,l_1}(t)+ {\widetilde{\mathcal{E}}}_{N,l_2}(t)+\mathcal{E}_{-s}(t)+\varepsilon \mathcal{E}^{-s}_{int}(t)\Big]
+{\overline{\mathcal{D}}}_{N-1,l_1}(t)
+{\widetilde{\mathcal{D}}}_{N,l_2}(t)+\mathcal{D}_{-s}(t)\\
\lesssim\;&
\delta_0(1+t)^{-\sigma_{N-1, N-1}} \frac{1}{\e^2}
 \sum_{|\a|+|\b|\leq N-1,|\a|\geq 1}
 \!\!\!\!
\big\|\widetilde{w}_{l_2}(|\a|-1,|\b|+1)\nabla_v\partial_{\b}^{\a-\a'}(\mathbf{I}-\mathbf{P})
f^\varepsilon\big\|_{L^2_{x,v}}^2\\
\;&+\mathcal{D}_{N}(t)+\eta\e^2(1+t)^{-(1+\vartheta)}\|\nabla_x^N E^{\e}\|_{L^2_x}^2 \\[1mm]
\lesssim\;&
\delta_0\sum_{n\leq N-1}\widetilde{\mathcal{D}}_{l_2}^{(N)}(t)
 +\mathcal{D}_{N}(t)+\eta\e^2(1+t)^{-(1+\vartheta)}\|\nabla_x^N E^{\e}\|_{L^2_x}^2,
\esp
\eals
which further implies
\bal
\bsp\label{estimate-jielun-2}
\;&\frac{\d}{\d t}\Big[{\overline{\mathcal{E}}}_{N-1,l_1}(t)+{\widetilde{\mathcal{E}}}_{N,l_2}(t)+\mathcal{E}_{-s}(t)+\varepsilon \mathcal{E}^{-s}_{int}(t)\Big]
+{\overline{\mathcal{D}}}_{N-1,l_1}(t)
+{\widetilde{\mathcal{D}}}_{N,l_2}(t)+\mathcal{D}_{-s}(t)\\
\lesssim\;&
 \mathcal{D}_{N}(t)+\eta\e^2(1+t)^{-(1+\vartheta)}\|\nabla_x^N E^{\e}\|_{L^2_x}^2.
\esp
\eal
Therefore, a proper  linear combination of \eqref{basic-energy-estimate-result}
and \eqref{estimate-jielun-2} gives us that
\bal
\bsp\label{estimate-jielun-3}
\sup_{0\leq t\leq T}\Big\{ \mathcal{E}_{N}(t)+{\overline{\mathcal{E}}}_{N-1,l_1}(t)+
{\widetilde{\mathcal{E}}}_{N,l_2}(t)+\mathcal{E}_{-s}(t)\Big\}
\lesssim\;&
 \eta\e^2 \sup_{0\leq t\leq T}\left\{ \mathcal{E}_{N}(t)\right\}.
\esp
\eal
Combined     with \eqref{k-N decay}, the estimate (\ref{estimate-jielun-3}) helps us to conclude
\bal
\bsp\label{estimate-jielun-4}
X(t) \lesssim \mathcal{E}_{N}(0)+{\overline{\mathcal{E}}}_{N-1,l_1}(0)+ {\widetilde{\mathcal{E}}}_{N,l_2}(0)+\mathcal{E}_{-s}(0)
\esp
\eal
holds for any $0 \leq t \leq T$, as long as $\mathcal{E}_{N}(0)+{\overline{\mathcal{E}}}_{N-1,l_1}(0)+ {\widetilde{\mathcal{E}}}_{N,l_2}(0)+\mathcal{E}_{-s}(0)$ is sufficiently small.
Furthermore, \eqref{estimate-jielun-4} further  implies the \emph{a priori} assumption  \eqref{priori assumption}.

The rest is to prove the local existence and uniqueness of solutions in terms of the energy norm $$\mathcal{E}_{N}(t)+{\overline{\mathcal{E}}}_{N-1,l_1}(t)+ {\widetilde{\mathcal{E}}}_{N,l_2}(t)+\mathcal{E}_{-s}(t)$$ and the non-negativity of $F^\varepsilon=\mu+\varepsilon \mu^{1/2}f^\varepsilon$. One can use the iteration method with the iterating sequence $\left[f_n^\varepsilon, E^\e_n, B^\e_n\right]$ $(n \geq 0)$ on the system
\begin{align}
	\left\{\begin{array}{l}
\displaystyle \partial_{t} f_{n+1}^{\varepsilon}+\frac{1}{\varepsilon}v \cdot \nabla_{x} {f}_{n+1}^{\varepsilon}-E_{n+1}^{\varepsilon} \cdot v \mu^{1/2} +\dfrac{1}{\varepsilon^{2}} L {f}_{n+1}^{\varepsilon}
\\[3mm]
\qquad\qquad\qquad\qquad= -\left(\e E_{n}^{\varepsilon}+v\times B^\e_n\right) \cdot \nabla_{v} {f}_{n+1}^{\varepsilon}-\frac{\e}{2} E_{n}^{\varepsilon} \cdot v {f}_{n+1}^{\varepsilon}+\frac{1}{\varepsilon} \Gamma\left({f}_{n}^{\varepsilon}, {f}_{n}^{\varepsilon}\right),  \\ [2mm]
\displaystyle  \pt_tE^\varepsilon_{n+1}-\nabla_x\times B^\varepsilon_{n+1}=
    -\int_{\mathbb{R}^3}v \mu^{1/2}f^\varepsilon_{n+1}\d v, \qquad\;
	\nabla_x\cdot E^\varepsilon_{n+1}=\varepsilon \int_{\mathbb{R}^3}  \mu^{1/2}f^{\varepsilon}_{n+1}\d v, \\ [3mm]
 \displaystyle
    \pt_tB_{n+1}^\varepsilon+\nabla_x\times E^\varepsilon_{n+1}=
    0, \qquad\qquad\qquad\quad	\qquad\;\;\;		
	\nabla_x\cdot B^\varepsilon_{n+1}=0, \\ [3mm]
\displaystyle f_{n+1}^\varepsilon(0,x,v)=f^\varepsilon_0(x,v)
\quad E^\varepsilon_{n+1}(0,x)=E^\varepsilon_{0}(x), \quad B^\varepsilon_{n+1}(0,x)=B^\varepsilon_{0}(x).
	\end{array}\right.
\end{align}
The details of proof are omitted for brevity; see \cite{Guo2003, JL2019}. Therefore, the global existence of solutions follows with the help of the continuity argument, and the estimate \eqref{thm1 N l0 decay} and \eqref{thm1 widetilde N l0+l1 decay} hold by the definition of $X(t)$ in \eqref{X define}. This completes the proof of Theorem \ref{mainth1}.
\end{proof}

\section{Limit to the Incompressible NSFM System}\label{Limit section}

In this section, our goal is to derive the  incompressible NSFM system   from the VMB system \eqref{rVPB} as $\varepsilon \to  0$.   The proof   is similar to that one in \cite{JL2019}, but here  we  provide a full proof for completeness.

\subsection{Limits from the Energy Estimate}
\hspace*{\fill}

Based on   Theorem \ref{mainth1},  the system (\ref{rVPB})
admits a global solution $f^\varepsilon \in L^\infty(\mathbb{R}^+; H^N_x L^2_v)$ and $\left[E^\varepsilon,\, B^\varepsilon\right]\in L^\infty(\mathbb{R}^+; H^{N}_x)$, so there exists a positive constant $C$ which is independent of $\varepsilon$, such that
\begin{align}
&\sup_{t\geq 0} \left\{ \left\| f^{\varepsilon}(t) \right\|^2_{H^N_x L^2_v}+ \left\| E^{\varepsilon}(t) \right\|^2_{H^{N}_x}+\left\| B^{\varepsilon}(t) \right\|^2_{H^{N}_x}\right\} \leq  C,
\label{limit:1-1}\\
&\int_0^\infty \left\| (\mathbf{I}-\mathbf{P}){f}^{\varepsilon}(\tau) \right\|^2_{H^{N}_xL^2_v(\nu)}\d \tau\leq C \varepsilon^2. \label{limit:1-2}
\end{align}
From the energy bound \eqref{limit:1-1}, there exist $f\in L^\infty(\mathbb{R}^+; H^N_x L^2_v)$ and $\nabla_x\phi\in L^\infty(\mathbb{R}^+;H^{N+1}_x)$  such that
\begin{align}
f^{\varepsilon} &\to  f \quad \qquad\;\text{~~weakly}\!-\!* ~\text{in}~ L^\infty(\mathbb{R}^+; H^N_x L^2_v), \label{limit:3-1}\\
E^{\varepsilon}  &\to E\qquad \quad\text{~~weakly}\!-\!* ~\text{in} ~L^\infty(\mathbb{R}^+;H^{N}_x), \label{limit:3-2}\\
B^{\varepsilon}  &\to B\qquad \quad\text{~~weakly}\!-\!* ~\text{in} ~L^\infty(\mathbb{R}^+;H^{N}_x) \label{limit:3-3}
\end{align}
as $\varepsilon \to 0$. We still employ the original notation of sequences to denote the subsequences for convenience,
although the limit may hold for some subsequences.
From the dissipation bound \eqref{limit:1-2}, we deduce that
\begin{align}
\label{limit:4}
&(\mathbf{I}-\mathbf{P})f^{\varepsilon} \to  0 \;\;\;\text{~~strongly} {\text{~in}~L^2(\mathbb{R}^+;H^{N}_{x}L^2_v)}
\end{align}
as $\varepsilon \to 0$. We thereby deduce from the convergences \eqref{limit:3-1} and \eqref{limit:4} that
\begin{align}
(\mathbf{I}-\mathbf{P})f=0.
\end{align}
This implies that there exist functions $\rho,u,\theta\in L^\infty(\bbR^+;H^2_{x})$ such that
\bal
\bsp\label{limit:6}
f(t,x,v)=\Big[\rho(t,x)+v \cdot u(t,x) +\frac{|v|^2 -3}{2}\th(t,x)\Big]{\mu}^{1/2}.
\esp
\eal

Next, we introduce the following fluid variables
\bal
\bsp\label{limit:7}
&\rho^\e:=\left\langle f^{\e},\mu^{1/2}\right\rangle_{L^2_{v}},\;\;\;\; u^\e:=\left\langle f^{\e},v\mu^{1/2}\right\rangle_{L^2_{v}},\;\;\;\; \theta^\e :=\Big\langle f^{\e}, \Big(\frac{|v|^2}{3}-1\Big)\mu^{1/2}\Big\rangle_{L^2_{v}}.
\esp
\eal
Multiplying the first equation of \eqref{rVPB} by  $\mu^{1/2}$, $  v\mu^{1/2}$, $(\frac{|v|^2}{3}-1)\mu^{1/2}$ respectively  and integrating  over $\mathbb{R}^3_v$,  we derive that $\rho^\e$, $u^\e$ and $\th^\e$ obey the local conservation laws
\beq
 \left\{
\begin{array}{ll}\label{limit:8-1:1}
\displaystyle
\pt_t \rho^\e+\frac{1}{\e}\nabla_x\cdot u^\e=0,~\\[2mm]
\displaystyle\pt_t u^\e+\frac{1}{\e}\nabla_x(\rho^\e+\theta^\e)-E^\e+\frac{1}{\e}
\nabla_x\cdot \left\langle \widehat{A}(v){\mu}^{\frac{1}{2}}, L(\mathbf{I}-\mathbf{P})f^{\e}\right\rangle_{L^2_{v}}=\e\rho^\e E^{\e}+u_\e\times B^\e,~\\[2mm]
\displaystyle\pt_t \theta^\e+\frac{2}{3\e}\nabla_x\cdot u^\e+\frac{2}{3\e}\nabla_x\cdot\left\langle \widehat{B}(v){\mu}^{\frac{1}{2}}, L(\mathbf{I}-\mathbf{P})f^{\e}\right\rangle_{L^2_{v}}=\frac{2\e}{3}u^\e\cdot E^\e,
\end{array}
\right.
\eeq
where
$
A (v)=v\otimes v -\frac{|v|^2}{3}\mathbb{I}_{3\times 3},~ B (v)=v\Big(\frac{|v|^2}{2}-\frac{5}{2}\Big),
$
$\widehat{A} (v)$ is such that ${L}\big( \widehat{A}(v) \mu^{1/2}\big)= A(v) \mu^{1/2}$ with $\widehat{A}(v) \mu^{1/2} \in \mathrm{N}^{\bot}({L})$,
$\widehat{B} (v)$ is such that ${L}\big( \widehat{B}(v) \mu^{1/2}\big)$ $= B(v) \mu^{1/2}$ with $\widehat{B}(v) \mu^{1/2} \in \mathrm{N}^{\bot}({L})$.

From the second and third equations of \eqref{rVPB}  and the definition of $\rho^\varepsilon, u^\e$ in \eqref{limit:7}, we also find the equations of $E^{\varepsilon}$ and $B^{\varepsilon}$
\begin{align}\label{limit:8-1:4}
	\left\{\begin{array}{l}
\displaystyle \pt_tE^\varepsilon-\nabla_x\times B^\varepsilon=
    -u^\e, \quad\qquad\;\,\,
	\nabla_x\cdot E^\varepsilon=\e\rho^\e, \\ [2mm]
 \displaystyle
    \pt_tB^\varepsilon+\nabla_x\times E^\varepsilon=
    0, \qquad\qquad	\;\;\;		
	\nabla_x\cdot B^\varepsilon=0.
	\end{array}\right.
\end{align}

Further, via the definitions of $\rho^\varepsilon$, $u^\varepsilon$ and  $\theta^\varepsilon$  in \eqref{limit:7} and the uniform energy bound \eqref{limit:1-1}, we obtain
\begin{align}\label{limit:9}
\sup_{t\geq 0}\Big\{\| \rho^\varepsilon(t)\|_{H^N_{x}}+ \| u^\varepsilon(t)\|_{H^N_{x}}+\| \theta^\varepsilon(t)\|_{H^N_{x}}
\Big\}\leq C.
\end{align}
From the convergence \eqref{limit:3-1} and the limit function $f(t,x,v)$ given in \eqref{limit:6}, we thereby deduce the following convergences
\bal
&\rho^{\e}=\left\langle f^{\e},\mu^{1/2}\right\rangle_{L^2_{v}}\to \left\langle f,\mu^{1/2}\right\rangle_{L^2_{v}}
=\rho,~\label{limit:10-1}\\
&u^{\e}=\left\langle f^{\e},v\mu^{1/2}\right\rangle_{L^2_{v}}\to \left\langle f,v\mu^{1/2}\right\rangle_{L^2_{v}}=u,~\label{limit:10-2}\\
&\theta^{\e} =\Big\langle f^{\e}, \Big(\frac{|v|^2}{3}-1\Big){\mu}^{\frac{1}{2}}\Big\rangle_{L^2_{v}}\to\Big\langle f, \Big(\frac{|v|^2}{3}-1\Big){\mu}^{\frac{1}{2}}\Big\rangle_{L^2_{v}}=\theta,\label{limit:10-3}
\eal
$\text{weakly}\!-\!*~ \text{in}~ L^\infty(\mathbb{R}^+,H^N_{x})$ as $\varepsilon \to 0$.

\subsection{Convergences to the Limiting System}
\hspace*{\fill}

In this subsection, we deduce the   incompressible NSFM system  from the local conservation laws \eqref{limit:8-1:1}, \eqref{limit:8-1:4}, the convergences \eqref{limit:3-1}--\eqref{limit:4} and \eqref{limit:10-1}--\eqref{limit:10-3}  obtained in the previous subsection.

\subsubsection{Incompressibility and Boussinesq relation.}
\hspace*{\fill}

From the first equation of \eqref{limit:8-1:1} and the energy uniform bound \eqref{limit:9}, it is easy to deduce
\begin{align}\nonumber
\nabla_x \cdot u^{\varepsilon }=-\varepsilon \partial_t \rho^{\varepsilon}\to 0  \;\;\; \text{~~in the sense of distributions}
\end{align}
as $\varepsilon \to 0$. Combining with the convergence \eqref{limit:10-2}, we get
\begin{align}\label{limit:13}
\nabla_x \cdot u = 0.
\end{align}
The second equation of \eqref{limit:8-1:1}  yields
\begin{align}\nonumber
&\nabla_x(\rho^\varepsilon +\theta^\varepsilon)=-\varepsilon \partial_t u^\varepsilon+\e E^\e
-\nabla_x\cdot \left\langle \widehat{A}(v){\mu}^{\frac{1}{2}}, L(\mathbf{I}-\mathbf{P})f^{\e}\right\rangle_{L^2_{v}}+ \e^2\rho^\e E^{\e}+\e u^\e\times B^\e.
\end{align}

The bound \eqref{limit:9} implies
$$
\varepsilon \partial_t u^\varepsilon \to 0 \;\;\; \text{~~in the sense of distributions}
$$
as $\varepsilon\to 0$.
With the aid of the self-adjointness of ${L}$, we derive from the dissipation bound \eqref{limit:1-2} and the  H{\"{o}}lder inequality that
\begin{align}
\begin{split}\nonumber
\int_0^\infty
\!\Big\|\nabla_x\cdot \left\langle \widehat{A}(v)\sqrt{\mu}, L(\mathbf{I}-\mathbf{P})f^{\e}\right\rangle_{L^2_{v}} \Big\|_{H^{N-1}_x}^2\d \tau
\!=\;&\int_0^\infty
\Big\|\nabla_x\cdot \Big\langle{ {A}}(v)\mu^{1/2}, {(\mathbf{I}-\mathbf{P})f^{\varepsilon} } \Big\rangle \Big\|_{H^{N-1}_x}^2\d \tau\\
\leq \;& C\int_0^\infty
\| (\mathbf{I}-\mathbf{P})f^\varepsilon(\tau)\|_{H^N_{x}L^2_v(\nu)}^2\d \tau\\
\leq\;&C\varepsilon^2.
\end{split}
\end{align}
Therefore, we have
\begin{align}\nonumber
\nabla_x\cdot \left\langle \widehat{A}(v)\sqrt{\mu}, L(\mathbf{I}-\mathbf{P})f^{\e}\right\rangle_{L^2_{v}} \to 0
\;\;\;\text{~~strongly} {\text{~in}~L^2(\mathbb{R}^+;H^{N-1}_{x})}
\end{align}
as $\varepsilon\to 0$.
Moreover, the uniform energy bounds \eqref{limit:1-1} and \eqref{limit:9} reveal that
\begin{align}\nonumber
\sup_{t \geq 0} \left\|\e^2\rho^\e E^{\e}+\e u_\e\times B^\e\right\|_{H^{N}_x}
\leq C \varepsilon ,
\end{align}
which means that
\begin{align}\nonumber
\e^2\rho^\e E^{\e}+\e u_\e\times B^\e\to 0
\;\;\;\text{~~strongly} {\text{~in}~L^\infty(\mathbb{R}^+;H^{N}_{x})}
\end{align}
as $\varepsilon \to 0$.
In summary, we have
\begin{align}\label{limit:15}
\nabla_x(\rho^\varepsilon + \theta^\varepsilon) \to 0 \;\;\; \text{~~in the sense of distributions}
\end{align}
as $\varepsilon \to 0$.
Therefore, combining the convergences \eqref{limit:10-1}, \eqref{limit:10-3} and \eqref{limit:15}, we obtain
\begin{align}\nonumber
\nabla_x(\rho+\theta)=0.
\end{align}
From the integrability of functions, we can deduce the Boussinesq relation
\begin{align}\label{limit:16}
\rho+\theta=0.
\end{align}

\subsubsection{ Convergences of  $\frac{3}{5} \theta^\varepsilon-\frac{2}{5}\rho^\varepsilon$, $\mathcal{P}u^\varepsilon$, $n^\varepsilon$, $E^\varepsilon$, $B^\varepsilon$.}
\hspace*{\fill}

First of all, we introduce the following Aubin--Lions--Simon Theorem, a fundamental result of compactness in the study of nonlinear evolution problems, which can be referred in \cite{BF13, Simon1987}.
\begin{lemma} [Aubin--Lions--Simon Theorem] \label{Aubin--Lions--Simon Theorem}
Let $B_0 \subset B_1 \subset B_2$ be three Banach spaces. We assume that the embedding of $B_1$ in $B_2$ is continuous and the embedding of $B_0$ in $B_1$ is compact. Let $p, r$ be such that $1 \leq p,r \leq \infty$. For $T>0$, we define
\begin{align}\nonumber
E_{p,r}=\Big\{ u \in L^p(0,T; B_0), \partial_t u \in L^r(0,T;B_2) \Big\}.
\end{align}
\begin{itemize}
\setlength{\leftskip}{-6mm}
\item[(1)] If $p < +\infty$, the embedding of $E_{p,r}$ in $L^p(0,T; B_1)$ is compact.
\item[(2)] If $p = +\infty$ and if $r > 1$, the embedding of $E_{p,r}$ in $C(0,T; B_1)$ is compact.
\end{itemize}
\end{lemma}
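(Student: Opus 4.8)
The plan is to reduce the statement to two classical ingredients: Ehrling's interpolation inequality, which converts the compact embedding of $B_0$ into $B_1$ into a quantitative bound, together with a vector-valued compactness criterion that controls time translations. First I would establish \emph{Ehrling's lemma}: for every $\eta > 0$ there exists $C_\eta > 0$ such that
\[
\|u\|_{B_1} \le \eta \|u\|_{B_0} + C_\eta \|u\|_{B_2}, \qquad u \in B_0.
\]
This is proved by contradiction. If it failed for some $\eta > 0$, one would produce a sequence $\{u_n\}$ with $\|u_n\|_{B_1} = 1$, $\|u_n\|_{B_0} \le 1/\eta$, and $\|u_n\|_{B_2} \to 0$; the compact embedding of $B_0$ in $B_1$ would give a subsequence converging in $B_1$, while the continuous embedding of $B_1$ in $B_2$ would force the $B_1$-limit to vanish, contradicting $\|u_n\|_{B_1} = 1$.

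For the case $p < \infty$, I would fix a bounded subset $\mathcal{F}$ of $E_{p,r}$ and apply the Riesz--Fr\'echet--Kolmogorov criterion in $L^p(0,T;B_1)$. Boundedness of $\mathcal{F}$ in $L^p(0,T;B_1)$ is immediate from the continuous embedding of $B_0$ in $B_1$. The crucial step is the uniform smallness of time translations: writing $\tau_h u(t) := u(t+h)$ and applying Ehrling pointwise in $t$,
\[
\|\tau_h u - u\|_{L^p(0,T-h;B_1)} \le \eta \|\tau_h u - u\|_{L^p(0,T-h;B_0)} + C_\eta \|\tau_h u - u\|_{L^p(0,T-h;B_2)}.
\]
The first term is bounded by $2\eta \sup_{\mathcal{F}}\|u\|_{L^p(0,T;B_0)}$, while the second is controlled through $\|\tau_h u - u\|_{B_2} \le \int_t^{t+h}\|\partial_s u\|_{B_2}\,\d s$ and H\"older's inequality by $\|\partial_t u\|_{L^r(0,T;B_2)}$ times a power of $h$, hence tends to $0$ as $h \to 0$; choosing $\eta$ small and then $h$ small yields equicontinuity uniformly over $\mathcal{F}$. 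Together with the relative compactness in $B_1$ of the local time-averages $\tfrac{1}{h}\int_t^{t+h} u(s)\,\d s$ (again a consequence of the $L^p(0,T;B_0)$ bound and the compact embedding), the criterion delivers compactness of $\mathcal{F}$ in $L^p(0,T;B_1)$.

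For the case $p = \infty$ and $r > 1$, the bound on $\partial_t u$ in $L^r(0,T;B_2)$ with $r > 1$ gives, via H\"older, the uniform H\"older estimate $\|u(t) - u(s)\|_{B_2} \le |t-s|^{1 - 1/r}\,\|\partial_t u\|_{L^r(0,T;B_2)}$, so $\mathcal{F}$ is equicontinuous in $C([0,T];B_2)$ and bounded in $C([0,T];B_0)$. Upgrading this equicontinuity from $B_2$ to $B_1$ by Ehrling, and then invoking the Arzel\`a--Ascoli theorem, whose pointwise relative compactness hypothesis in $B_1$ is supplied by the $B_0$-bound and the compact embedding, yields compactness of $\mathcal{F}$ in $C([0,T];B_1)$.

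I expect the main obstacle to be the \emph{uniform} application of Ehrling across the entire family $\mathcal{F}$ so as to upgrade the translation/continuity control from $B_2$ to $B_1$ without losing the smallness; the two error contributions ($\eta$-term in $B_0$ and $C_\eta$-term in $B_2$) must be balanced in the correct order. The endpoint $p = \infty$ is the more delicate of the two, since the hypothesis $r > 1$ is genuinely needed: for $r = 1$ one only obtains continuity, not equicontinuity, into $B_2$, and the Arzel\`a--Ascoli step breaks down. Verifying the vector-valued Riesz--Kolmogorov criterion in the first case, particularly the compactness of the local averages, also requires some care but is routine once Ehrling is in hand.
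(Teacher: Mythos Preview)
The paper does not give its own proof of this lemma: it is stated as a classical result and attributed to the references \cite{BF13, Simon1987}, so there is no proof in the paper to compare against. Your sketch is the standard argument (Ehrling's interpolation inequality combined with a translation/Kolmogorov criterion for $p<\infty$ and with Arzel\`a--Ascoli for $p=\infty$, $r>1$) and is correct as outlined.
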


Now we consider the convergence of $\frac{3}{5}\theta^\varepsilon-\frac{2}{5}\rho^\varepsilon$. It follows from the first equation and the third equation of $\eqref{limit:8-1:1}$ that
\begin{align}\label{rho theta equation}
\partial_t\Big(\frac{3}{5}\theta^\varepsilon -\frac{2}{5}\rho^\varepsilon\Big)+\frac{2}{5\e}\nabla_x\cdot\left\langle \widehat{B}(v){\mu}^{\frac{1}{2}}, L(\mathbf{I}-\mathbf{P})f^{\e}\right\rangle_{L^2_{v}}
=\frac{2}{5}\e u^\e\cdot E^\e.
\end{align}
Thereby, by utilizing the equation \eqref{rho theta equation}, the energy bound \eqref{limit:1-1}, the dissipation bound \eqref{limit:1-2}, one has
\begin{align}
\begin{split}\label{partial rho theta 1}
	&\Big\|\partial_t\Big(\frac{3}{5}\theta^\varepsilon -\frac{2}{5}\rho^\varepsilon\Big)\Big\|_{L^2(0,T;H^{N-1}_x)}\\
	=\;&\Big\|\frac{2}{5}\e u^\e\cdot E^\e-\frac{2}{5\varepsilon} \nabla_x \cdot \langle B(v)\mu^{1/2},(\mathbf I-\mathbf P) f^{\varepsilon} \rangle \Big\|_{L^2(0,T;H^{N-1}_x)}\\
	\leq \;& C \varepsilon \left\|u^\varepsilon \right\|_{L^2(\mathbb{R}^{+};H^{N}_x)} \left\|E^{\varepsilon} \right\|_{L^\infty(\mathbb{R}^{+};H_x^{N})}+\frac{C}{\varepsilon}\left\| (\mathbf{I}-\mathbf{P}){f}^{\varepsilon}\right\|
_{L^2(\mathbb{R}^{+}; H^N_x L^2_v(\nu))}\\
\leq\;& C
\end{split}
\end{align}
for any $T > 0$ and $0 < \varepsilon \leq 1$. Further, we can derive from the energy bound \eqref{limit:9} that
\begin{align}\label{partial rho theta 2}
\Big\|\frac{3}{5}\theta^\varepsilon -\frac{2}{5}\rho^\varepsilon\Big\|_{L^\infty(0,T;H^{N}_x)} \leq C
\end{align}
for any $T > 0$ and $0 < \varepsilon \leq 1$. Notice the fact that
\begin{align}\label{embedding compact}
H^N  \hookrightarrow H^{N-1}_{loc} \hookrightarrow H^{N-1}_{loc},
\end{align}
where the embedding of $H^N$ in $H^{N-1}_{loc}$ is compact by the Rellich--Kondrachov Theorem (see Theorem 6.3 of \cite{AF2003}, for instance) and the embedding of $H^{N-1}_{loc}$ in $H^{N-1}_{loc}$ is naturally continuous. Then, from Aubin--Lions--Simon Theorem in Lemma \ref{Aubin--Lions--Simon Theorem}, the bounds \eqref{partial rho theta 1}, \eqref{partial rho theta 2} and the embedding \eqref{embedding compact}, we deduce that there exists a function $\widetilde{\theta} \in L^\infty(\mathbb{R}^+; H^N_x) \cap C(\mathbb{R}^+; H^{N-1}_{loc})$ such that
\begin{align}\nonumber
\frac{3}{5}\theta^\varepsilon-\frac{2}{5}\rho^\varepsilon \to \widetilde{\theta}
\;\;\;\text{~~strongly} {\text{~in}~C(0,T; H^{N-1}_{loc})}
\end{align}
for any $T > 0$ as $\varepsilon \to 0$. Combining with the convergences \eqref{limit:10-1} and \eqref{limit:10-3}, we deduce that $\widetilde{\theta}=\frac{3}{5}\theta-\frac{2}{5}\rho$. Moreover, it follows that $\widetilde{\theta}=\theta$ from the Boussinesq relation \eqref{limit:16} and $\theta=\frac{3}{5}\theta-\frac{2}{5}\rho+\frac{2}{5}(\rho+\theta)$. As a result, we obtain that
\begin{align}\nonumber
\frac{3}{5}\theta^\varepsilon-\frac{2}{5}\rho^\varepsilon \to \theta
\;\;\;\text{~~strongly} {\text{~in}~C(\mathbb{R}^+; H^{N-1}_{loc})}
\end{align}
as $\varepsilon \to 0$, where $\theta \in L^\infty(\mathbb{R}^+; H^N_x) \cap C(\mathbb{R}^+; H^{N-1}_{loc})$. Noticing that $\theta^\varepsilon=\frac{3}{5}\theta^\varepsilon-\frac{2}{5}\rho^\varepsilon+\frac{2}{5}(\rho^\varepsilon+\theta^\varepsilon)$ and combining with the convergence \eqref{limit:10-3}, one has
\begin{align}\nonumber
\rho^\varepsilon+\theta^\varepsilon \to 0
\;\;\; \text{weakly}\!-\!* \text{~in~} L^\infty(\mathbb{R}^+;H^N_{x}),\;
\text{~~strongly} \text{~in}~ C(\mathbb{R}^+;H^{N-1}_{loc})
\end{align}
as $\varepsilon \to 0$.

Next, we consider the convergence of $\mathcal{P}u^\varepsilon$, where $\mathcal{P}u^\varepsilon$ is the Leray projection operator defined by $\mathcal{P}= I-\nabla_x \Delta_x^{-1} \nabla_x \cdot$. By taking $\mathcal{P}$ on the second equation of \eqref{limit:8-1:1}, we have
\begin{align}\label{Pu equation}
&\partial_t \mathcal{P} u^\varepsilon +\frac{1}{\varepsilon}\mathcal{P}
\nabla_x\cdot \left\langle \widehat{A}(v){\mu}^{\frac{1}{2}}, L(\mathbf{I}-\mathbf{P})f^{\e}\right\rangle_{L^2_{v}} =\mathcal{P}(E^\e+\e\rho^\e E^{\e}+u_\e\times B^\e).
\end{align}

Hence, it follows from the equation \eqref{Pu equation}, the uniform dissipation bound  \eqref{limit:1-2}, the uniform energy bound \eqref{limit:9} and the H{\"{o}}lder inequality that
\begin{align}
\begin{split}\label{partial u 1}
&\left\|\partial_t \mathcal{P} u^\varepsilon \right\|_{L^2(0,T;H^{N-1}_x)}\\
\lesssim\:& \|E^\e\!+\!\e\rho^\e E^{\e}\!+\!u_\e\times B^\e\|_{L^2(0,T;H^{N-1}_x)}
+\Big\| \frac{1}{\varepsilon} \nabla_x \cdot \Big\langle {A}(v)\mu^{1/2}, (\mathbf{I}\!-\!\mathbf{P})f^{\varepsilon}\Big\rangle_{L^2_v} \Big\|_{L^2(0,T;H^{N-1}_x)}\\
\lesssim\:&
\left\|E^\varepsilon \right\|_{L^\infty(\mathbb{R}^+; H^{N}_x)}
+\e\left\|\rho^\varepsilon \right\|_{L^\infty(\mathbb{R}^+; H^{N}_x)} \left\|E^{\varepsilon} \right\|_{L^2(\mathbb{R}^+; H^{N}_x)}
\!+\!\left\|u^\varepsilon \right\|_{L^\infty(\mathbb{R}^+;\! H^{N}_x)} \left\|B^{\varepsilon} \right\|_{L^2(\mathbb{R}^+; \!H^{N}_x)}\\
\;&
+\frac{C}{\varepsilon} \left\|(\mathbf I-\mathbf P) f^{\varepsilon}\right\|_{L^2(\mathbb{R}^+; H^{N}_{x} N^s_\gamma)}\\
\lesssim\;& C
\end{split}
\end{align}
for any $T > 0$ and $0 < \varepsilon \leq 1$. Furthermore, we can derive from the definition of $u^\varepsilon$ in \eqref{limit:7} and the uniform energy bound \eqref{limit:1-1} that for any $T > 0$ and $0 < \varepsilon \leq 1$,
\begin{align}\label{partial u 2}
\left\| \mathcal{P}u^\varepsilon \right\|_{L^\infty (0,T; H^N_x)} \leq C.
\end{align}
Then, from Aubin--Lions--Simon Theorem in Lemma \ref{Aubin--Lions--Simon Theorem}, the bounds \eqref{partial u 1}, \eqref{partial u 2} and the embedding \eqref{embedding compact}, we deduce that there exists a function $\widetilde{u} \in L^\infty(\mathbb{R}^+; H^N_x) \cap C(\mathbb{R}^+; H^{N-1}_{loc})$ such that
\begin{align}\nonumber
\mathcal{P}u^\varepsilon \to \widetilde{u}
 \;\;\;\text{~~strongly} {\text{~in}~ C(0,T; H^{N-1}_{loc})}
\end{align}
for any $T > 0$ as $\varepsilon \to 0$. Moreover, from the convergence \eqref{limit:10-2} and the incompressibility \eqref{limit:13}, we have
\begin{align}\nonumber
\widetilde{u}=\mathcal{P}u=u.
\end{align}
Consequently,
\begin{align}\nonumber
\mathcal{P}u^\varepsilon \to u
\;\;\; \text{weakly}\!-\!*~ \text{in~} L^\infty(\mathbb{R}^+;H^N_{x}),\;
\text{~~strongly} {\text{~in}~C(\mathbb{R}^+; H^{N-1}_{loc})}
\end{align}
as $\varepsilon \to 0$, where $u \in L^\infty(\mathbb{R}^+; H^N_x) \cap C(\mathbb{R}^+; H^{N-1}_{loc})$. Therefore, we have that
\begin{align}\nonumber
\mathcal{P}^{ \bot } u^\varepsilon \to 0
\;\;\; \text{weakly}\!-\!*~ \text{in~} L^\infty(\mathbb{R}^+;H^N_{x})\;
\text{~~strongly} \text{~in}~ C(\mathbb{R}^+;H^{N-1}_{loc})
\end{align}
as $\varepsilon \to 0$. Here, $\mathcal{P}^{ \bot } := I-\mathcal{P}$.

Finally, we consider the convergences of $E^\varepsilon$ and $B^\varepsilon$ . From  \eqref{limit:8-1:4} and the uniform   bounds \eqref{limit:1-1}, \eqref{limit:9}, we have
\begin{align}
\bsp\label{partial nabla phi 1}
\left\| \partial_t E^\varepsilon \right\|_{L^2(0,T;H^{N-1}_x)} \;&=  \left\| \nabla_x\times B^\varepsilon-u^\e \right\|_{L^2(0,T;H^{N-1}_x)} \leq  C,\\
\left\| \partial_t B^\varepsilon \right\|_{L^2(0,T;H^{N-1}_x)} \;&=  \left\| \nabla_x\times E^\varepsilon \right\|_{L^2(0,T;H^{N-1}_x)} \leq  C
\esp
\end{align}
for any $T > 0$ and $0 < \varepsilon \leq 1$. Furthermore, from the bound \eqref{limit:1-1}, we deduce that for any $T > 0$ and $0 < \varepsilon \leq 1$
\begin{align}\label{partial nabla phi 2}
\left\|E^\varepsilon \right\|_{L^\infty(0,T;H^{N}_x)}+
\left\|B^\varepsilon \right\|_{L^\infty(0,T;H^{N}_x)} \leq C.
\end{align}
Then, from Aubin--Lions--Simon Theorem in Lemma \ref{Aubin--Lions--Simon Theorem} and the bounds \eqref{partial nabla phi 1},
\eqref{partial nabla phi 2}, we deduce that
\begin{align}\nonumber
\left[ E^\varepsilon, B^\e \right] \to \left[ E, B  \right]
\;\;\; \text{weakly}\!-\!*~ \text{in~} L^\infty(\mathbb{R}^+;H^{N}_{x}),\;
\text{~~strongly} {\text{~in}~C(\mathbb{R}^+; H^{N-1}_{loc})}
\end{align}
as $\varepsilon \to 0$, where $\left[ E, B  \right]\in L^\infty(\mathbb{R}^+; H^{N}_x) \cap C(\mathbb{R}^+; H^{N-1}_{loc})$.

In summary, we have deduced the following convergences:
\begin{align}\label{limit:17}
\!\!\!\Big( \frac{3}{5}\theta^\varepsilon-\frac{2}{5}\rho^\varepsilon, \mathcal{P}u^\varepsilon, E^\e, B^\e \Big)
\to ( \theta, u,  E, B )
\;\;\; \text{weakly}\!-\!*~ \text{in~} L^\infty(\mathbb{R}^+;H^{N}_{x}),\;
\end{align}
strongly in $C(\mathbb{R}^+; H^{N-1}_{loc})$ as $\varepsilon \to 0$, where $( \theta, u, E, B) \in L^\infty(\mathbb{R}^+; H^N_x) \cap C(\mathbb{R}^+; H^{N-1}_{loc})$  and
\begin{align}\label{limit:18}
\left( \rho^\varepsilon + \theta^\varepsilon, \mathcal{P}^{\bot}u\right) \to (0,0)
\;\;\; \text{weakly}\!-\!*~ \text{in~} L^\infty(\mathbb{R}^+;H^N_{x}),\;
\text{~~strongly} \text{~in}~ C(\mathbb{R}^+;H^{N-1}_{loc})
\end{align}
as $\varepsilon \to 0$.

\subsubsection{Equations of  \texorpdfstring{$\rho$, $u$, $\theta$, $E$} {Lg} and \texorpdfstring{$B$}{Lg}}
\hspace*{\fill}

We first calculate the term
\begin{align}\nonumber
 \frac{1}{\varepsilon} \left\langle\widehat{\Xi}(v) \mu^{1/2}, L(\mathbf{I}-\mathbf{P})f^{\e}\right\rangle_{L^2_{v}} ,
\end{align}
where $\Xi=A$ or $B$.
Following the standard formal derivations of fluid dynamic limits of the Boltzmann equation (see \cite{BGL93} for instance), we obtain
\begin{align}
\label{A(v):weiguan:zuoyong}
\displaystyle\frac{1}{\varepsilon} \Big\langle \widehat{A}(v)\mu^{1/2}, L(\mathbf{I}-\mathbf{P})f^{\e}  \Big\rangle_{L^2_v}
=\;& u^{\varepsilon}\otimes u^{\varepsilon}-\frac{|u^{\varepsilon}|^2}{3}\mathbb{I}_{3 \times 3}-\nu\Sigma(u^{\varepsilon})-R^\varepsilon_A,\\[2mm]
\displaystyle\frac{1}{\varepsilon}\Big\langle \widehat{B}(v)\mu^{1/2}, L(\mathbf{I}-\mathbf{P})f^{\e}\Big\rangle_{L^2_v}
=\;&\frac{5}{2}u^{\varepsilon}\theta^{\varepsilon}-\frac{5}{2}\kappa\nabla_x\theta^{\varepsilon}-R^\varepsilon_B, \label{B(v):weiguan:zuoyong}
\end{align}
where
\begin{align}
\nu := \;&\frac{1}{10}\Big\langle \widehat{A}(v) \mu^{1/2}, A(v)\mu^{1/2} \Big\rangle_{L^2_v}, \label{nu define}\\
\kappa := \;&\frac{2}{15}\Big\langle \widehat{B}(v) \mu^{1/2}, B(v)\mu^{1/2} \Big\rangle_{L^2_v}, \label{kappa define}\\
\Sigma(u^{\varepsilon}) := \;&\nabla_x u^{\varepsilon}+({\nabla_xu^{\varepsilon}})^{T}-\frac{2}{3}\nabla_x\cdot u^{\varepsilon}\mathbb{I}_{3 \times 3}, \nonumber\\
R^\varepsilon_\Xi := \;&\left\langle \hat{\Xi}(v)\sqrt{\mu},\;\;\e\partial_t f^{\e} \right\rangle_{L_v^2}
+\left\langle \hat{\Xi}(v)\sqrt{\mu},\;\;v\cdot\nabla_x(\mathbf{I}-\mathbf{P})f^{\e} \right\rangle_{L_v^2} \nonumber\\
\;&+\Big\langle \hat{\Xi}(v)\sqrt{\mu},\;\; \e \left( \e E^\varepsilon+ v\times B^\e\right)\cdot
\frac{\nabla_v \left({\sqrt{\mu}f^{\e}}\right)}{{\sqrt{\mu}}}\Big\rangle_{L_v^2}
-\left\langle \hat{\Xi}(v)\sqrt{\mu},\;\; \Gamma((\mathbf{I}-\mathbf{P})f^{\e},f^{\e})\right\rangle_{L_v^2} \nonumber\\
\;&-\left\langle \hat{\Xi}(v)\sqrt{\mu},\;\; \Gamma(\mathbf{P}f^{\e},(\mathbf{I}-\mathbf{P})f^{\e})\right\rangle_{L_v^2}
 \nonumber
\end{align}
with $\Xi = A$ or $B$.

The relation \eqref{A(v):weiguan:zuoyong}, the decomposition $u^{\varepsilon}=\mathcal{P}u^{\varepsilon}+\mathcal{P}^{\bot} u^{\varepsilon}$ and the equation \eqref{Pu equation} yield that
\begin{equation}\label{jixian:u:zongjie}
\partial_t\mathcal{P}u^{\varepsilon}+\mathcal{P}\nabla_x \cdot (\mathcal{P}u^{\varepsilon}\otimes\mathcal{P}u^{\varepsilon})
-\nu\Delta_x\mathcal{P}u^{\varepsilon}=\mathcal{P}(E^\e+u^\e\times B^\e)+R^{\varepsilon}_u,
\end{equation}
where $R^\varepsilon_u$ is given by
\begin{equation}\label{jixian:Ru:zongjie}
R^\varepsilon_u := \e\rho^\e E^{\e}+\mathcal{P}\nabla_x\cdot R^\varepsilon_A-\mathcal{P}\nabla_x\cdot
(\mathcal{P}u^{\varepsilon}\otimes\mathcal{P}^\perp u^{\varepsilon}+\mathcal{P}^\bot u^{\varepsilon}\otimes\mathcal{P}u^{\varepsilon}+\mathcal{P}^\perp u^{\varepsilon}\otimes\mathcal{P}^\perp u^{\varepsilon}).
\end{equation}

The relation \eqref{B(v):weiguan:zuoyong}, the decomposition $u^{\varepsilon}=\mathcal{P}u^{\varepsilon}+\mathcal{P}^{\bot} u^{\varepsilon}$ and the equation \eqref{rho theta equation} yield that
\begin{equation}\label{jixian:theta:zongjie}
\partial_t\Big(\frac{3}{5}\theta^{\varepsilon}-\frac{2}{5}\rho^{\varepsilon}\Big)
+\nabla_x \cdot \Big[\mathcal{P} u^\varepsilon \Big(\frac{3}{5}\theta^\varepsilon -\frac{2}{5}\rho^\varepsilon\Big)\Big]
-\kappa \Delta_x \Big(\frac{3}{5}\theta^\varepsilon -\frac{2}{5}\rho^\varepsilon\Big)=R^\varepsilon_\theta,
\end{equation}
where
\begin{align}
\begin{split}\label{jixian:Rtheta:zongjie}
R^\varepsilon_\theta := \;&\frac{2}{5}\nabla_x \cdot R^\varepsilon_B
-\frac{2}{5} \nabla_x \cdot \big[\mathcal{P} u^\varepsilon(\rho^\varepsilon+\theta^\varepsilon)\big]
-\frac{2}{5}\nabla_x \cdot\big[\mathcal{P}^{\bot} u^\varepsilon(\rho^\varepsilon+\theta^\varepsilon) \big]\\
&-\nabla_x \cdot \Big[\mathcal{P}^{\bot} u^\varepsilon \Big(\frac{3}{5}\theta^\varepsilon -\frac{2}{5}\rho^\varepsilon \Big)\Big]
+\frac{2}{5}\kappa \Delta_x(\rho^\varepsilon+\theta^\varepsilon)
+\frac{2}{5}\varepsilon E^\varepsilon\cdot u^{\varepsilon}.
\end{split}
\end{align}

Now, we take the limit from the equation \eqref{jixian:u:zongjie} to obtain the $u$-equation of \eqref{INSFP limit}. For any given $T > 0$, choose a vector-valued test function $\Psi(t,x) \in C^1(0,T;C_0^\infty(\mathbb{R}^3))$ with $\nabla_x \cdot \Psi=0$, $\Psi(0,x)=\Psi_0(x) \in C_0^\infty(\mathbb{R}^3)$ and $\Psi(t,x)=0$ for $t \geq T^\prime$, where $T^\prime < T$. Multiplying \eqref{jixian:u:zongjie} by  $\Psi(t,x)$ and integrating over $(t,x) \in [0,T] \times \mathbb{R}^3$, we have
\begin{align}
\begin{split}\nonumber
&\int_0^T\int_{\mathbb{R}^3}\partial_t\mathcal{P}u^{\varepsilon}\cdot \Psi(t,x) \d x\d t \\	
=\;&-\int_{\mathbb{R}^3}\mathcal{P}u^{\varepsilon}(0,x)\cdot \Psi(0,x) \d x
-\int_0^T\int_{\mathbb{R}^3}\mathcal{P}u^{\varepsilon}\cdot\partial_t \Psi(t,x) \d x \d t\\
=\;&-\int_{\mathbb{R}^3}\mathcal{P} \Big\langle f^{\varepsilon}_{0},v \mu^{1/2} \Big\rangle_{L^2_v} \cdot\Psi_0(x)\d x
-\int_0^T\int_{\mathbb{R}^3}\mathcal{P}u^{\varepsilon}\cdot\partial_t \Psi(t,x) \d x\d t.
\end{split}
\end{align}
The initial conditions in Theorem \ref{mainth3} and the convergence \eqref{limit:17} give us that
\begin{align*}
\int_{\mathbb{R}^3}\mathcal{P}\Big\langle f^{\varepsilon}_{0}, v \mu^{1/2}\Big\rangle_{L^2_v}\cdot \Psi_0(x)\d x
\to \int_{\mathbb{R}^3}  \mathcal{P} \Big\langle f_{0}, v\mu^{1/2} \Big\rangle_{L^2_v}\cdot \Psi_0(x)\d x
=\int_{\mathbb{R}^3} \mathcal{P} u_{0} \cdot \Psi_0(x)\d x
\end{align*}
and
\begin{align*}
\int_0^T\int_{\mathbb{R}^3}\mathcal{P}u^{\varepsilon}\cdot\partial_t \Psi(t,x) \d x\d t
\to \int_0^T\int_{\mathbb{R}^3}u\cdot\partial_t \Psi(t,x) \d x\d t
\end{align*}
as $\varepsilon \to 0$. That is, we deduce
\begin{align}\label{jixian:partial u 1}
\int_0^T\int_{\mathbb{R}^3}\partial_t\mathcal{P}u^{\varepsilon}\cdot \Psi(t,x) \d x\d t
\to-\int_{\mathbb{R}^3}\mathcal{P}u_{0}\cdot \Psi_0(x)\d x-\int_0^T\int_{\mathbb{R}^3}u\cdot\partial_t \Psi(t,x) \d x\d t
\end{align}
as $\varepsilon \to 0$.
With the aid of the bounds \eqref{limit:1-1}, \eqref{limit:1-2}, \eqref{limit:9} and the convergences \eqref{limit:17}--\eqref{limit:18}, we show the following convergences
\begin{equation}
 \left\{
\begin{array}{ll}\label{jixian:partial u 2}
\mathcal{P}\nabla_x\cdot(\mathcal{P}u^{\varepsilon}\otimes\mathcal{P}u^{\varepsilon}) \to
\mathcal{P}\nabla_x\cdot(u\otimes u) \quad\text{strongly in}~ \; C(\mathbb{R}^+;H^{N-2}_{loc}),\\[2mm]
\nu\Delta_x\mathcal{P}u^{\varepsilon}\to
\nu\Delta_x u \;\;\;\qquad\qquad\qquad\quad\quad \text{~~in the sense of distributions},\\[2mm]
\mathcal{P}(E^\e+u^\e\times B^\e)\to
\mathcal{P}(E+u\times B) \;\;\;\;\;\,\,\text{strongly in}~\; C(\mathbb{R}^+;H_{loc}^{N-1}),\\[2mm]
R^\varepsilon_u \to 0 \;\quad\quad\quad\quad\quad\quad\quad\quad\quad\quad\quad\quad\quad \text{~~in the sense of distributions}
\end{array}
\right.
\end{equation}
as $\varepsilon \to 0$. Therefore, the convergences \eqref{jixian:partial u 1}, \eqref{jixian:partial u 2} and the incompressibility \eqref{limit:13} imply that $u \in L^\infty(\mathbb{R}^+;H^{N}_x) \cap C(\mathbb{R}^+; H^{N-1}_{loc})$  subjects to
\begin{equation}
\left\{
\begin{array}{ll}\label{jixian:u equation}
\partial_t u+\mathcal{P}\nabla_x\cdot(u\otimes u)-\nu\Delta_x u =\mathcal{P}(E+u\times B ),\\[2mm]
\nabla_x \cdot u=0
\end{array}
\right.
\end{equation}
with initial data $u(0,x)=\mathcal{P}u_0(x)$.

Next, we take the limit from the equation \eqref{jixian:theta:zongjie} to gain the $\theta$-equation of \eqref{INSFP limit}. For any given $T > 0$, choose a test function $\Phi(t,x)  \in C^1(0,T;C_0^\infty(\mathbb{R}^3))$ with $\Phi(0,x)=\Phi_0(x) \in C_0^\infty(\mathbb{R}^3)$ and $\Phi(t,x)=0$ for $t \geq T^\prime$, where $T^\prime < T$. Then from the initial conditions in Theorem \ref{mainth3} and the convergence \eqref{limit:17}, we have
\begin{align}
\begin{split}\label{jixian:partial theta 1}
&\int_{0}^{T}\int_{\mathbb{R}^3}\partial_t\Big(\frac{3}{5}\theta^{\varepsilon}-\frac{2}{5}\rho^{\varepsilon}\Big) \Phi(t,x) \d x \d t \\
=\;&-\int_{\mathbb{R}^3} \Big\langle f^{\varepsilon}_{0},(\frac{|v|^2}{5}-1) \mu^{1/2} \Big\rangle \Phi_0(x)\d x
-\int_{0}^{T}\int_{\mathbb{R}^3} \Big(\frac{3}{5}\theta^{\varepsilon}-\frac{2}{5}\rho^{\varepsilon}\Big)\partial_t \Phi(t,x)  \d x \d t\\
\to\;&-\int_{\mathbb{R}^3} \Big\langle f_{0}, (\frac{|v|^2}{5}-1) \mu^{1/2} \Big\rangle \Phi_0(x)\d x
-\int_{0}^{T}\int_{\mathbb{R}^3}  \theta(t,x)\partial_t \Phi(t,x)  \d x \d t\\
=\;&-\int_{\mathbb{R}^3}\Big(\frac{3}{5}\theta_0-\frac{2}{5}\rho_0\Big) \Phi_0(x)\d x
-\int_{0}^{T}\int_{\mathbb{R}^3}  \theta(t,x)\partial_t \Phi(t,x)  \d x \d t
\end{split}
\end{align}
as $\varepsilon \to 0$. Furthermore, with the help of the bounds \eqref{limit:1-1}--\eqref{limit:1-2}, \eqref{limit:9} and the convergences \eqref{limit:17}--\eqref{limit:18}, we derive the following convergences
\begin{equation}
 \left\{
\begin{array}{ll}\label{jixian:partial theta 2}
\nabla_x\cdot\Big[\mathcal{P}u^{\varepsilon}\Big(\frac{3}{5}\theta^{\varepsilon}-\frac{2}{5}\rho^{\varepsilon}\Big)\Big]
\to	\nabla_x\cdot(u\theta)\quad \,\,\text{strongly in}~\; C(\mathbb{R}^+;H_{loc}^{N-2}), \\[2mm]
\kappa\Delta_x\Big(\frac{3}{5}\theta^{\varepsilon}-\frac{2}{5}\rho^{\varepsilon}\Big)\to \kappa\Delta_x\theta
\;\;\;\;\,\,\qquad\qquad \text{~~in the sense of distributions},\\[2mm]
R^\varepsilon_\theta \to 0  \,\;\;\;\quad\quad\quad\quad\quad\quad\quad\quad\qquad\quad\quad\quad\text{in the sense of distributions}
\end{array}
\right.
\end{equation}
as $\varepsilon \to 0$.
Therefore, the convergences \eqref{jixian:partial theta 1}, \eqref{jixian:partial theta 2} and the incompressibility \eqref{limit:13} imply that that $\theta \in L^\infty(\mathbb{R}^+;H^{N}_x) \cap C(\mathbb{R}^+; H^{N-1}_{loc})$ obeys
\begin{align}\label{jixian:theta equation}
\partial_t\theta+ u \cdot \nabla_x\theta-\kappa \Delta_x\theta=0
\end{align}
with the initial data $\theta(0,x)=\frac{3}{5}\theta_{0}(x)-\frac{2}{5}\rho_{0}(x)$.

Finally, the convergence \eqref{limit:17}   and the equation
\eqref{limit:8-1:4} show that
\begin{align*}
	\left\{\begin{array}{l}
\displaystyle \pt_tE -\nabla_x\times B =
    -u, \quad\qquad\;\,\,\,
	\nabla_x\cdot E =0 , \\ [2mm]
 \displaystyle
    \pt_tB +\nabla_x\times E =
    0, \qquad\qquad	\;\;\;		
	\nabla_x\cdot B =0
	\end{array}\right.
\end{align*}
with the initial date  $E(0, x)=E_0(x)$ and $B(0, x)=B_0(x)$.

\subsubsection{Summarization}
\hspace*{\fill}

Collecting all above convergence results, we have shown that
\begin{align*}
&\left(\rho, u, \theta, E, B \right) \in L^\infty(\mathbb{R}^+; H^N_{x}) \cap  C(\mathbb{R}^+; H^{N-1}_{loc}),
\end{align*}
 and the following  incompressible NSFM system
\begin{equation} \label{jixian fangchengzong}
\left\{
\begin{array}{ll}
\displaystyle \partial_{t} u+u \cdot \nabla_{x} u-\nu \Delta_{x} u+\nabla_{x}P=E+u\times B,  &\nabla_{x} \cdot u=0,\\[2mm]
\displaystyle \partial_{t} \theta+u \cdot \nabla_{x} \theta-\kappa \Delta_{x} \theta=0, &\rho+\theta=0,\\[2mm]
\displaystyle \partial_{t} E-\nabla_{x}\times B=-u, &\nabla_{x} \cdot E=0,\\[2mm]
\displaystyle \partial_{t} B+\nabla_{x}\times E=0, &\nabla_{x} \cdot B=0\\[2mm]
\end{array} \right.
\end{equation}
with the initial data
\begin{align}
\begin{split}\nonumber
u(0,x)=\mathcal{P}u_0(x),\;\; \theta (0,x)=\frac{3}{5}\theta_0(x)-\frac{2}{5}\rho_0(x),\;\;    E(0,x)=E_0(x),\;\;   B(0,x)=B_0(x).&
\end{split}
\end{align}

Actually, by mollifying the system \eqref{jixian fangchengzong} and combining the local existence theory, the standard energy method and the continuity argument,  we can show that $(\rho, u, \theta, E, B) \in C(\mathbb{R}^{+}; H_x^N) $, cf. Chapter 3.2 in  \cite{MA-90}, and then \eqref{jixian solution space} holds true.  Moreover, by approximating $\mathbb{R}^3_x$ by bounded
region \cite{CK2006} and the unform bounds of functions and their limits proved above, the convergences in \eqref{theorem1.3 4} are thus verified.
This completes the proof of Theorem \ref{mainth3}.

\appendix
\section{Appendix}\label{Appendix}

For convenience, we list the some basic results and inequalities to be used   throughout this paper.

Take that $w_{\ell} \equiv w_{\ell}(v)=\langle v \rangle^\ell e^{\frac{q\langle v\rangle^2}{(1+t)^\vartheta}}$.
The first lemma concerns the coercivity estimate on the linearized operator $L$, proved by \cite{DYZ2002,guo2006}.
\begin{lemma}\label{L coercive estimate result}
Let $0\leq \gamma\leq1$.
\begin{itemize}
\setlength{\leftskip}{-5mm}
\item[(1)] There exists a constant $C\geq 0$ such that the uniform coercive lower bound estimate
\begin{equation}\label{L coercive2}
\left\langle Lf, w_{\ell}^2f \right\rangle_{L^2_v}  \gtrsim \left| w_{\ell}f\right|^2_{L^2_v(\nu)}-C\left| f \right|^2_{L^2_v(\nu)}
\end{equation}
holds for any $\ell \in \mathbb{R}$.
\item[(2)]  For $|\beta| \geq 1$ be a multi-index, we have
\begin{equation}\label{L coercive3}
\left\langle \partial_{\beta}Lf, w_{\ell}^2\partial_{\beta}f \right\rangle_{L^2_v} \gtrsim \left| w_{\ell}\partial_{\beta}f \right|^2_{L^2_v(\nu)}
-\eta \sum_{|\beta^{\prime}| < |\beta|} \left| w_{\ell}\partial_{\beta^{\prime}}f\right|^2_{L^2_v(\nu)}-C_\eta\left| f \right|^2_{L^2_v(\nu)}
\end{equation}
for any $\ell \in \mathbb{R}$ and small $\eta>0$.
\end{itemize}
\end{lemma}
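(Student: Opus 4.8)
The plan is to exploit the classical splitting $L = \nu - K$ recorded in \eqref{nu-def}, where $\nu(v) \approx \langle v\rangle^\gamma$ is the collision frequency and $K$ is the (velocity) integral operator with Grad kernel. For part (1), I would write
$$
\langle Lf, w_\ell^2 f\rangle_{L^2_v} = \langle \nu f, w_\ell^2 f\rangle_{L^2_v} - \langle Kf, w_\ell^2 f\rangle_{L^2_v} = \left|w_\ell f\right|^2_{L^2_v(\nu)} - \langle Kf, w_\ell^2 f\rangle_{L^2_v},
$$
so that the whole matter reduces to the weighted bound
$$
\left|\langle Kf, w_\ell^2 f\rangle_{L^2_v}\right| \leq \eta \left|w_\ell f\right|^2_{L^2_v(\nu)} + C_\eta \left|f\right|^2_{L^2_v(\nu)}
$$
for arbitrarily small $\eta$; taking $\eta$ small and absorbing into the coercive term yields \eqref{L coercive2}.

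The heart of the argument, and the step I expect to be the main obstacle, is this weighted estimate on $K$. I would first recall Grad's pointwise kernel representation $Kf(v) = \int_{\mathbb{R}^3} k(v,v_*) f(v_*)\,\dd v_*$ together with the decay bound $|k(v,v_*)| \lesssim (|v-v_*| + |v-v_*|^{-1})\exp\!\big(-\tfrac{1}{8}|v-v_*|^2 - \tfrac{1}{8}\tfrac{(|v|^2-|v_*|^2)^2}{|v-v_*|^2}\big)$. The key point is to conjugate by the weight and study the kernel $k_w(v,v_*) := \tfrac{w_\ell(v)}{w_\ell(v_*)}k(v,v_*)$; because $w_\ell(v) = \langle v\rangle^\ell e^{q\langle v\rangle^2/(1+t)^\vartheta}$, the ratio contributes a factor $(\langle v\rangle/\langle v_*\rangle)^\ell \exp\!\big(\tfrac{q}{(1+t)^\vartheta}(\langle v\rangle^2 - \langle v_*\rangle^2)\big)$, and I must verify that this growth is dominated by the Gaussian decay of $k$. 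This forces $q$ to be chosen small enough (uniformly in $t\geq 0$, using $(1+t)^{-\vartheta}\leq 1$) that $k_w$ still enjoys an integrable, decaying bound of the same type. Granting this, I would split the $v$-integration into $\{|v|\leq R\}$ and $\{|v| > R\}$: on the large-velocity region the factor $\nu(v)^{-1}\lesssim \langle v\rangle^{-\gamma}$ together with $\int k_w\,\dd v_*$ produces a constant that tends to $0$ as $R\to\infty$, giving the $\eta$-term; on the bounded region $\{|v|\leq R\}$ the operator acts essentially like a Hilbert--Schmidt operator and a Cauchy--Schwarz estimate against $|f|_{L^2_v(\nu)}$ yields the $C_\eta|f|^2_{L^2_v(\nu)}$ term.

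For part (2) I would differentiate $L = \nu - K$ in $v$. Using the Leibniz rule, $\partial_\beta(\nu f) = \nu\,\partial_\beta f + \sum_{0<\beta'\leq\beta} C_\beta^{\beta'}(\partial_{\beta'}\nu)(\partial_{\beta-\beta'}f)$, so that $\langle \partial_\beta(\nu f), w_\ell^2 \partial_\beta f\rangle_{L^2_v}$ produces the coercive leading term $|w_\ell \partial_\beta f|^2_{L^2_v(\nu)}$ plus commutator terms; since $|\partial_{\beta'}\nu|\lesssim \langle v\rangle^{\gamma-|\beta'|}$ carries strictly lower velocity growth, these commutators are controlled by $\eta\sum_{|\beta'|<|\beta|}|w_\ell\partial_{\beta'}f|^2_{L^2_v(\nu)}$ via the Cauchy--Schwarz inequality with small constant. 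The remaining piece $\langle \partial_\beta(Kf), w_\ell^2\partial_\beta f\rangle_{L^2_v}$ is treated by establishing analogous decay estimates for the differentiated kernel $\partial_\beta^v k(v,v_*)$ (which retains Gaussian-type decay), and then rerunning the conjugation-and-splitting scheme of part (1); this again generates contributions absorbed either into $\eta\sum_{|\beta'|<|\beta|}|w_\ell\partial_{\beta'}f|^2_{L^2_v(\nu)}$ or into $C_\eta|f|^2_{L^2_v(\nu)}$. Collecting all terms and choosing $\eta$ small gives \eqref{L coercive3}. The delicate smallness of $q$ governing the weighted kernel bound is the one quantitative constraint that must be threaded through both parts.
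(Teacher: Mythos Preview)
The paper does not give its own proof of this lemma; it simply records the statement in the Appendix and cites \cite{DYZ2002,guo2006} for the proof. Your outline is exactly the standard argument carried out in those references: split $L=\nu-K$, conjugate the Grad kernel by the weight and verify that for $q$ sufficiently small the exponential factor $e^{q\langle v\rangle^2/(1+t)^\vartheta}$ is absorbed by the Gaussian decay of $k(v,v_*)$, then use the large/small velocity splitting to produce the $\eta$-small and $C_\eta$-remainder terms; part (2) follows by differentiating and applying the same kernel estimates to $\partial_\beta K$ together with $|\partial_{\beta'}\nu|\lesssim\langle v\rangle^{\gamma-|\beta'|}$ for the commutators. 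There is no gap in your plan, and nothing further to compare since the paper itself defers the proof to the literature.
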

\medskip

The following   lemma  focus on  the estimate  of the nonlinear collision operator $\Gamma$, which can be found in \cite{DYZ2002}.
\begin{lemma}\label{Gamma1}
Let $0\leq \gamma\leq1$.
\begin{itemize}
\setlength{\leftskip}{-6mm}
 \item[(1)] There holds
 \bal\label{hard gamma}
 |\nu^{-\frac{1}{2}} \Gamma (f, g)|_{L^2_v}\lesssim
 |\nu^{\frac{1}{2}} f|_{L^2_v} |g|_{L^2_v}
 + |\nu^{\frac{1}{2}} g|_{L^2_v} |f|_{L^2_v}.
 \eal
\item[(2)]
For  $\a\geq 0$ and $\b\geq 0$ be two multi-indexs. Then, there holds
\bal
\bsp\label{hard gamma1}
\!\!\!  \;& \left\langle \partial_{\beta}^{\alpha} \Gamma (f, g) ,
w_{\ell}^2 \partial_{\beta}^{\alpha} h \right\rangle_{L^2_v}\\
\lesssim \;& \sum \left\{ \left|w_{0}\partial_{\beta_1}^{\alpha_1} f\right|_{L^2_v} \left|w_{\ell} \partial_{\beta_2}^{\alpha_2} g\right|_{L^2_v(\nu)}
+\left|w_{\ell} \partial_{\beta_1}^{\alpha_1} f\right|_{L^2_v}\left|w_{0}\partial_{\beta_2}^{\alpha_2} g\right|_{L^2_v(\nu)}\right\}
\left|w_{\ell} \partial_\beta^\alpha h\right|_{L^2_v(\nu)}.
\esp
\eal
 where the summation $\sum$ is taken over $\alpha_1+\alpha_2=\alpha$ and $\beta_1+\beta_2 \leq \beta$.

\end{itemize}
\end{lemma}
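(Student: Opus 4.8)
The plan is to reduce both estimates to the explicit symmetrized form of the collision operator and then exploit the strong Gaussian decay carried by $\mu^{1/2}(v_*)$ together with the elementary bound $|v-v_*|^\gamma\lesssim\langle v\rangle^\gamma\langle v_*\rangle^\gamma$, which holds precisely because $0\le\gamma\le1$. First I would record that the energy conservation identity $\mu(v')\mu(v_*')=\mu(v)\mu(v_*)$ converts $\Gamma(f,g)=\mu^{-1/2}Q(\mu^{1/2}f,\mu^{1/2}g)$ into
\[
\Gamma(f,g)(v)=\iint_{\mathbb{R}^3\times\mathbb{S}^2}|v-v_*|^\gamma q_0(\theta)\mu^{1/2}(v_*)\big[f(v')g(v_*')-f(v)g(v_*)\big]\,\d\omega\,\d v_*,
\]
and split it as $\Gamma=\Gamma_+-\Gamma_-$. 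Throughout I would use that for cutoff hard potentials $\nu(v)\approx\langle v\rangle^\gamma$, so that $\nu^{-1/2}(v)\langle v\rangle^\gamma\approx\nu^{1/2}(v)$.

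For part (1), the loss term $\Gamma_-(f,g)(v)=f(v)\iint|v-v_*|^\gamma q_0(\theta)\mu^{1/2}(v_*)g(v_*)\,\d\omega\,\d v_*$ is the easy one: pulling $f(v)$ out pointwise, bounding $|v-v_*|^\gamma\lesssim\langle v\rangle^\gamma\langle v_*\rangle^\gamma$ and applying Cauchy--Schwarz in $v_*$ against the Gaussian yields $\nu^{-1/2}(v)|\Gamma_-(f,g)(v)|\lesssim\nu^{1/2}(v)|f(v)|\,|g|_{L^2_v}$, hence the contribution $|\nu^{1/2}f|_{L^2_v}|g|_{L^2_v}$. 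The gain term I would treat by duality, estimating $|\nu^{-1/2}\Gamma_+(f,g)|_{L^2_v}$ as the supremum of $\langle\Gamma_+(f,g),\nu^{-1/2}h\rangle_{L^2_v}$ over $|h|_{L^2_v}\le1$, then performing the measure-preserving pre--post collisional change of variables and using Cauchy--Schwarz to split $\mu^{1/2}(v_*)$, half of it absorbing the polynomial factor from $|v-v_*|^\gamma$ and the other half distributing the $L^2$ norms between $f$ and $g$. Symmetry of the resulting expression in $f$ and $g$ produces the two terms of \eqref{hard gamma}.

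For part (2) I would apply the Leibniz rule. Since $Q$ carries no $x$-dependence, the spatial derivatives distribute exactly, giving $\alpha_1+\alpha_2=\alpha$. The velocity derivatives are more delicate: because $v'=v-[(v-v_*)\cdot\omega]\omega$ and $v_*'=v_*+[(v-v_*)\cdot\omega]\omega$ are linear in $(v,v_*)$, the chain rule transfers $\partial_v$ onto the arguments of $f$ and $g$, while any derivative falling on $|v-v_*|^\gamma$ (or on the weight) only lowers the order of the kernel; this is exactly why the admissible range is $\beta_1+\beta_2\le\beta$ rather than equality. It then remains to carry $w_\ell$ through the collision manifold, for which I would use $\langle v\rangle\le\langle v'\rangle\langle v_*'\rangle$ on the support of the kernel (a consequence of $|v|^2+|v_*|^2=|v'|^2+|v_*'|^2$) together with the compatibility of the exponential factor $e^{q\langle v\rangle^2/(1+t)^\vartheta}$, valid for $q$ small, reducing each summand to a weighted repetition of the bilinear argument of part (1) and producing the paired factors $|w_0\partial^{\alpha_1}_{\beta_1}f|\,|w_\ell\partial^{\alpha_2}_{\beta_2}g|$ and $|w_\ell\partial^{\alpha_1}_{\beta_1}f|\,|w_0\partial^{\alpha_2}_{\beta_2}g|$ in \eqref{hard gamma1}.

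The main obstacle will be the gain-term estimate in part (2): one must simultaneously redistribute the velocity weight $w_\ell$ across the post-collisional variables and split the derivatives between the two factors, while keeping the exponential weight integrable against $\mu^{1/2}(v_*)$. Controlling the interaction of $e^{q\langle v\rangle^2/(1+t)^\vartheta}$ with the Gaussian and with the derivatives of the hard-potential kernel $|v-v_*|^\gamma$ is the delicate point, and it is what forces the smallness of $q$ and the precise placement of $w_0$ versus $w_\ell$ on the right-hand side of \eqref{hard gamma1}.
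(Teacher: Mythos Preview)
Your outline is a reasonable sketch of the standard argument for cutoff hard potentials, and in broad strokes it follows the route taken in the literature; the paper itself, however, does not supply a proof of this lemma at all. It is stated in the Appendix as a known result and attributed to \cite{DYZ2002} (Duan--Yang--Zhao), so there is no ``paper's own proof'' to compare against. Your duality treatment of the gain term, the Leibniz splitting in $(\alpha,\beta)$, and the use of $|v|^2+|v_*|^2=|v'|^2+|v_*'|^2$ to transfer the polynomial part of $w_\ell$ through the collision are the same mechanisms used in that reference; the exponential factor $e^{q\langle v\rangle^2/(1+t)^\vartheta}$ is handled there by the same observation you make, namely that $\langle v\rangle^2\le\langle v'\rangle^2+\langle v_*'\rangle^2$ lets the exponential split multiplicatively, with the $v_*'$ piece absorbed by $\mu^{1/2}(v_*')$ for $q$ small.

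One small caution: in your gain-term argument for part (1) you should be explicit that after the pre--post change of variables the Gaussian sits at $v_*'$, not $v_*$, and it is the factor $\mu^{1/2}(v_*')$ (or equivalently $\mu^{1/4}(v)\mu^{1/4}(v_*)$ after splitting) that absorbs $\langle v_*\rangle^\gamma$; as written, your sentence ``half of it absorbing the polynomial factor'' is slightly ambiguous about which Gaussian is doing the work. This is a presentational point, not a gap.
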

\medskip

In what follows, we present   the following Sobolev interpolation inequalities was shown in \cite{GW2012CPDE}.

\begin{lemma}\label{sobolev interpolation}
 Let $2 \leq p<\infty$ and $k, \ell, m \in \mathbb{R}$. Then for any $f \in C_0^\infty(\mathbb{R}^3)$, we have
\begin{align}\label{sobolev interpolation1}
\|\nabla^k_x f\|_{L^p_x} \lesssim\|\nabla^{\ell}_x f\|_{L^2_x}^\theta\|\nabla^m_x f\|_{L^2_x}^{1-\theta},
\end{align}
where $0 \leq \theta \leq 1$ and $\ell$ satisfies
\begin{align}\nonumber
\frac{1}{p}-\frac{k}{3}=\left(\frac{1}{2}-\frac{\ell}{3}\right) \theta+\left(\frac{1}{2}-\frac{m}{3}\right)(1-\theta).
\end{align}
For the case $p=+\infty$, we have
\begin{align}\label{sobolev interpolation2}
\|\nabla^k_x f\|_{L^{\infty}_x} \lesssim\|\nabla^{\ell}_x f\|_{L^2_x}^\theta\|\nabla^m_x f\|_{L^2_x}^{1-\theta},
\end{align}
where $\ell \leq k+1$, $m \geq k+2$, $0 \leq \theta \leq 1$ and $\ell$ satisfies
\begin{align}\nonumber
-\frac{k}{3}=\left(\frac{1}{2}-\frac{\ell}{3}\right) \theta+\left(\frac{1}{2}-\frac{m}{3}\right)(1-\theta).
\end{align}
\end{lemma}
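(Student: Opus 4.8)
The plan is to establish both \eqref{sobolev interpolation1} and \eqref{sobolev interpolation2} by a Littlewood--Paley (dyadic frequency) argument, reading $\nabla^k_x$, $\nabla^\ell_x$, $\nabla^m_x$ throughout as the homogeneous Fourier multipliers $|\xi|^k$, $|\xi|^\ell$, $|\xi|^m$. First I would fix a homogeneous dyadic decomposition $f=\sum_{j\in\mathbb{Z}}\Delta_j f$, where $\Delta_j$ localizes the frequency to the annulus $|\xi|\sim 2^j$; for $f\in C_0^\infty(\mathbb{R}^3)$ all the sums below converge absolutely. The two workhorses are the Bernstein inequality $\|\,|\nabla|^k\Delta_j g\|_{L^p_x}\lesssim 2^{j(k+3(1/2-1/p))}\|\Delta_j g\|_{L^2_x}$ and the frequency equivalences $\|\Delta_j f\|_{L^2_x}\approx 2^{-j\ell}\|\nabla^\ell_x\Delta_j f\|_{L^2_x}\approx 2^{-jm}\|\nabla^m_x\Delta_j f\|_{L^2_x}$. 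Writing $a_j:=\|\nabla^\ell_x\Delta_j f\|_{L^2_x}$, $b_j:=\|\nabla^m_x\Delta_j f\|_{L^2_x}$ and $\sigma:=k+\tfrac32-\tfrac3p$, the constraint on $\theta$ in the statement is precisely the algebraic identity $\sigma=\theta\ell+(1-\theta)m$; this yields $\|\Delta_j f\|_{L^2_x}\approx 2^{-j\sigma}a_j^{\theta}b_j^{1-\theta}$ and hence the uniform block bound $\|\,|\nabla|^k\Delta_j f\|_{L^p_x}\lesssim a_j^{\theta}b_j^{1-\theta}$ for every $j$.

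For the range $2\le p<\infty$ I would then invoke the Littlewood--Paley square-function characterization together with Minkowski's inequality in $L^{p/2}_x$ (valid since $p/2\ge1$) to obtain $\|\nabla^k_x f\|_{L^p_x}\lesssim\big(\sum_j\|\,|\nabla|^k\Delta_j f\|_{L^p_x}^2\big)^{1/2}\lesssim\big(\sum_j a_j^{2\theta}b_j^{2(1-\theta)}\big)^{1/2}$. Applying H\"older's inequality with conjugate exponents $1/\theta$ and $1/(1-\theta)$ (for $0<\theta<1$) controls the last quantity by $\big(\sum_j a_j^2\big)^{\theta/2}\big(\sum_j b_j^2\big)^{(1-\theta)/2}$, and Plancherel gives $\big(\sum_j a_j^2\big)^{1/2}\approx\|\nabla^\ell_x f\|_{L^2_x}$ and $\big(\sum_j b_j^2\big)^{1/2}\approx\|\nabla^m_x f\|_{L^2_x}$, which is exactly \eqref{sobolev interpolation1}. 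The degenerate cases $\theta\in\{0,1\}$ collapse to a single Bernstein--Plancherel estimate and are dispatched separately.

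The endpoint $p=+\infty$ is where I expect the main obstacle, since the square-function bound is unavailable and the naive sum $\sum_j\|\,|\nabla|^k\Delta_j f\|_{L^\infty_x}$ need not converge without extra hypotheses. Here I would instead sum the blocks directly: Bernstein gives $\|\,|\nabla|^k\Delta_j f\|_{L^\infty_x}\lesssim 2^{j(\sigma-\ell)}a_j$ and $\lesssim 2^{j(\sigma-m)}b_j$, now with $\sigma=k+\tfrac32$. The precise role of the additional assumptions $\ell\le k+1$ and $m\ge k+2$ is to force $\sigma-\ell=(1-\theta)(m-\ell)>0$ and $\sigma-m=-\theta(m-\ell)<0$ with a strict gap, so that, after estimating the individual coefficients by $a_j\lesssim\|\nabla^\ell_x f\|_{L^2_x}$ and $b_j\lesssim\|\nabla^m_x f\|_{L^2_x}$, the low-frequency and high-frequency geometric series both converge. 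Splitting the sum at a threshold $2^J$ and optimizing the balance $2^{J(\sigma-\ell)}\|\nabla^\ell_x f\|_{L^2_x}=2^{J(\sigma-m)}\|\nabla^m_x f\|_{L^2_x}$, i.e.\ taking $2^J=(\|\nabla^m_x f\|_{L^2_x}/\|\nabla^\ell_x f\|_{L^2_x})^{1/(m-\ell)}$, produces $\|\nabla^k_x f\|_{L^\infty_x}\lesssim\|\nabla^\ell_x f\|_{L^2_x}^{\theta}\|\nabla^m_x f\|_{L^2_x}^{1-\theta}$, which is \eqref{sobolev interpolation2}. I would close by remarking that the whole scheme is insensitive to the sign or integrality of $k,\ell,m$, since $|\nabla|^s$ is treated as a homogeneous multiplier throughout, so no separate discussion of negative-order or fractional derivatives is required.
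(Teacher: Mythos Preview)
Your Littlewood--Paley argument is correct and self-contained; the block-level Bernstein bound combined with H\"older on the dyadic coefficients is the standard route, and your treatment of the $p=\infty$ endpoint correctly identifies that the hypotheses $\ell\le k+1$, $m\ge k+2$ guarantee a strict spectral gap $\sigma-\ell\ge\tfrac12$ and $m-\sigma\ge\tfrac12$ (in particular forcing $0<\theta<1$), which is exactly what makes the two geometric tails summable.

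There is, however, no proof in the paper to compare against: the authors state the lemma in the appendix and simply cite Guo--Wang \cite{GW2012CPDE} for the result. So your proposal supplies what the paper omits. If anything is worth adding, it is a one-line remark that for $f\in C_0^\infty(\mathbb{R}^3)$ the homogeneous dyadic sum $\sum_{j\in\mathbb{Z}}\Delta_j f$ converges to $f$ in $\mathcal{S}'$ modulo polynomials, which here is genuine convergence since $\widehat f$ is Schwartz; this is implicit in your ``all the sums below converge absolutely'' but could be made explicit.
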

\medskip

If $s \in (0,3)$, then $\Lambda^{-s}$ is the Riesz potential operator. The
Hardy--Littlewood--Sobolev theorem implies the following inequality for the Riesz potential operator $\Lambda^{-s}$.

\begin{lemma}\label{negative embedding theorem}
Let $0<s<3$, $1< p < q < \infty$, $1/q + s/3=1/p$, then
\begin{align}\label{negative embed 1}
\left\|\Lambda^{-s} f\right\|_{L^q_x} \lesssim\|f\|_{L^p_x}.
\end{align}
\end{lemma}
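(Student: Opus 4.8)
The plan is to recognize $\Lambda^{-s}$ as a Riesz potential and then run the classical Hardy--Littlewood--Sobolev argument through the Hardy--Littlewood maximal function. First I would identify the convolution kernel. Since the Fourier multiplier of $\Lambda^{-s}$ is $|y|^{-s}$, and since the tempered distribution $|x|^{-(3-s)}$ on $\mathbb{R}^3$ is homogeneous of degree $-(3-s)$ and rotation invariant, its Fourier transform must be a constant multiple of the homogeneous rotation-invariant symbol $|y|^{-s}$; tracking the dimensional constant $c_s>0$ yields the representation
\begin{align*}
\Lambda^{-s} f(x) = c_s \int_{\mathbb{R}^3} \frac{f(z)}{|x-z|^{3-s}}\,\d z.
\end{align*}
Thus \eqref{negative embed 1} reduces to the mapping property $L^p_x \to L^q_x$ of this Riesz potential under the scaling relation $1/q+s/3=1/p$.

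The heart of the argument is a pointwise bound obtained by splitting the kernel at a free radius $R>0$. On the near region $|x-z|\le R$, decomposing into dyadic annuli and bounding ball averages of $|f|$ by the maximal function $Mf$ gives
\begin{align*}
\int_{|x-z|\le R}\frac{|f(z)|}{|x-z|^{3-s}}\,\d z \lesssim R^{s}\,Mf(x).
\end{align*}
On the far region $|x-z|>R$, Hölder's inequality with exponents $p$ and $p'=p/(p-1)$ yields
\begin{align*}
\int_{|x-z|>R}\frac{|f(z)|}{|x-z|^{3-s}}\,\d z \lesssim \|f\|_{L^p_x}\left(\int_{R}^{\infty} r^{(s-3)p'+2}\,\d r\right)^{1/p'} \lesssim R^{s-3/p}\,\|f\|_{L^p_x},
\end{align*}
where the radial integral converges precisely because $1/q+s/3=1/p$ with $q<\infty$ forces $s<3/p$. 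Balancing the two contributions by choosing $R=(\|f\|_{L^p_x}/Mf(x))^{p/3}$, and using $1-sp/3=p/q$, produces the pointwise estimate
\begin{align*}
|\Lambda^{-s}f(x)| \lesssim (Mf(x))^{p/q}\,\|f\|_{L^p_x}^{1-p/q}.
\end{align*}

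Finally I would integrate: raising to the $q$-th power and integrating in $x$ gives
\begin{align*}
\|\Lambda^{-s}f\|_{L^q_x}^q \lesssim \|f\|_{L^p_x}^{q-p}\int_{\mathbb{R}^3}(Mf(x))^{p}\,\d x = \|f\|_{L^p_x}^{q-p}\,\|Mf\|_{L^p_x}^p.
\end{align*}
Since $p>1$, the classical Hardy--Littlewood maximal inequality gives $\|Mf\|_{L^p_x}\lesssim\|f\|_{L^p_x}$, whence $\|\Lambda^{-s}f\|_{L^q_x}\lesssim\|f\|_{L^p_x}$, which is \eqref{negative embed 1}. The main obstacle, and the only genuinely nontrivial inputs, are the rigorous identification of the convolution kernel as a tempered-distribution Fourier transform of a homogeneous function and the invocation of the $L^p$-boundedness of the maximal operator (the reason the strict inequality $p>1$ is needed rather than an endpoint Marcinkiewicz argument); the kernel splitting and the dilation optimization are then routine bookkeeping.
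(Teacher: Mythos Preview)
Your argument is correct and is precisely the standard Hedberg--Stein proof of the Hardy--Littlewood--Sobolev inequality: identify $\Lambda^{-s}$ with the Riesz potential, split the convolution at scale $R$, bound the near piece by the maximal function and the far piece by H\"older, optimize in $R$, and conclude by the $L^p$-boundedness of $M$ for $p>1$. The paper, by contrast, does not prove this lemma at all; it simply records it as a direct consequence of the classical Hardy--Littlewood--Sobolev theorem and moves on. So you have supplied a full proof where the paper gives only a citation. The approaches are therefore not really comparable: yours is a genuine self-contained argument, while the paper treats the result as known. Your proof is textbook-standard and entirely adequate; the only minor remark is that the kernel identification (Fourier transform of $|x|^{-(3-s)}$) could be phrased more carefully via analytic continuation in $s$ if one wanted full rigor, but this is a well-known computation and your sketch of it via homogeneity and rotation invariance is fine for the level of detail appropriate here.
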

\medskip

Further, based on Lemma \ref{sobolev interpolation}, we have the following lemma to be used frequently in this article.
\begin{lemma}\label{negative embedding theorem2}
Let $0<s<3/2$. Then there holds
\begin{align}\label{negative embed 2}
\|f\|_{L^{\frac{12}{3+2 s}}_x} \lesssim \|\Lambda^{\frac{3}{4}-\frac{s}{2}} f \|_{L^2_x}, \quad\|f\|_{L^{\frac{3}{s}}_x} \lesssim  \|\Lambda^{\frac{3}{2}-s} f \|_{L^2_x}.
\end{align}
\end{lemma}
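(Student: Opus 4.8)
The plan is to derive both estimates as immediate consequences of the Hardy--Littlewood--Sobolev inequality recorded in Lemma \ref{negative embedding theorem}, by rewriting $f$ as a Riesz potential acting on one of its fractional derivatives and then matching the scaling exponents. In both cases the computation is purely algebraic: after the substitution, I only need to check that the parameters fall into the admissible range dictated by the hypothesis $0<s<3/2$.

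For the first inequality I would set $g := \Lambda^{\frac{3}{4}-\frac{s}{2}} f$, so that $f = \Lambda^{-\sigma} g$ with $\sigma := \frac{3}{4}-\frac{s}{2}$. Applying Lemma \ref{negative embedding theorem} with this $\sigma$, the exponent $p=2$, and $q$ determined by $\frac{1}{q}+\frac{\sigma}{3}=\frac{1}{2}$, a short computation gives $\frac{1}{q}=\frac14+\frac{s}{6}$, hence $q=\frac{12}{3+2s}$, which is precisely the target Lebesgue exponent. It then remains to verify the hypotheses of Lemma \ref{negative embedding theorem}: the restriction $0<s<3/2$ forces $\sigma\in(0,\tfrac34)\subset(0,3)$ and $q\in(2,4)$, so that $1<p<q<\infty$ holds as required. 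This yields $\|f\|_{L^{12/(3+2s)}_x}=\|\Lambda^{-\sigma}g\|_{L^q_x}\lesssim\|g\|_{L^2_x}=\|\Lambda^{\frac34-\frac{s}{2}}f\|_{L^2_x}$.

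The second inequality is handled in exactly the same manner. Setting $h:=\Lambda^{\frac{3}{2}-s}f$ so that $f=\Lambda^{-\tau}h$ with $\tau:=\frac{3}{2}-s$, I would invoke Lemma \ref{negative embedding theorem} with $p=2$ and $q$ solving $\frac{1}{q}+\frac{\tau}{3}=\frac{1}{2}$; this gives $\frac{1}{q}=\frac{s}{3}$, i.e.\ $q=\frac{3}{s}$, matching the claim. The constraint $0<s<3/2$ again guarantees $\tau\in(0,\tfrac32)\subset(0,3)$ and $q=\frac{3}{s}\in(2,\infty)$, so all admissibility conditions are met and $\|f\|_{L^{3/s}_x}\lesssim\|\Lambda^{\frac32-s}f\|_{L^2_x}$ follows. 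Since both bounds reduce to a single application of the already-established Riesz potential estimate, there is no genuine analytic obstacle; the only point requiring care is the bookkeeping of the exponent relation $\frac1q+\frac{\sigma}{3}=\frac1p$ together with the verification that $0<\sigma<3$ and $1<p<q<\infty$ are automatic consequences of $0<s<3/2$ rather than extra assumptions.
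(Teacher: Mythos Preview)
Your proof is correct. The paper does not spell out a proof but indicates that the lemma follows from Lemma~\ref{sobolev interpolation} (the Gagliardo--Nirenberg interpolation inequality), whereas you derive it from Lemma~\ref{negative embedding theorem} (Hardy--Littlewood--Sobolev); both routes amount to the same homogeneous Sobolev embedding $\dot H^{\sigma}(\mathbb{R}^3)\hookrightarrow L^{q}(\mathbb{R}^3)$ with $\tfrac{1}{q}=\tfrac{1}{2}-\tfrac{\sigma}{3}$, and your exponent checks under the hypothesis $0<s<3/2$ are exactly what is needed.
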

\medskip

Finally, we  also use the following Minkowski inequality to interchange the order of a multiple integral, cf. \cite{GW2012CPDE}.
\begin{lemma}\label{minkowski theorem}
For $1 \leq p \leq q \leq \infty$, we have
\begin{align}\label{minkowski}
\|f\|_{L_x^q L_v^p} \leq\|f\|_{L_v^p L_x^q}.
\end{align}
\end{lemma}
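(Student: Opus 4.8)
The plan is to establish the mixed-norm inequality \eqref{minkowski} by reducing it, through the duality of Lebesgue spaces in the $x$-variable, to a single application of Tonelli's theorem together with H\"older's inequality. First I would dispose of the two boundary cases. When $p=q$ both sides coincide with $\|f\|_{L^p_{x,v}}$ by Tonelli's theorem, so equality holds; when $q=\infty$ the claim $\|f\|_{L^\infty_x L^p_v}\le\|f\|_{L^p_v L^\infty_x}$ follows at once from the pointwise bound $|f(x,v)|\le\|f(\cdot,v)\|_{L^\infty_x}$ (for a.e.\ $x$) raised to the power $p$ and integrated in $v$. Hence it suffices to treat the range $1\le p<q<\infty$.

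For this main range, set $r:=q/p>1$ and introduce the nonnegative function $g(x):=\int_{\mathbb{R}^3_v}|f(x,v)|^{p}\,\dd v$, so that, by the definition of the mixed norm,
\begin{align*}
\|f\|_{L^q_x L^p_v}^{p}=\big\||f|_{L^p_v}^{p}\big\|_{L^{r}_x}=\|g\|_{L^{r}_x}.
\end{align*}
I would then use the duality characterization $\|g\|_{L^r_x}=\sup\big\{\int_{\mathbb{R}^3_x}g\,h\,\dd x:\ h\ge0,\ \|h\|_{L^{r'}_x}\le1\big\}$ with $r'=r/(r-1)$. For any such test function $h$, inserting the definition of $g$, interchanging the (nonnegative) integrals by Tonelli's theorem, and applying H\"older's inequality in $x$ with exponents $r$ and $r'$ gives
\begin{align*}
\int_{\mathbb{R}^3_x}g(x)h(x)\,\dd x=\int_{\mathbb{R}^3_v}\!\int_{\mathbb{R}^3_x}|f(x,v)|^{p}h(x)\,\dd x\,\dd v\le\int_{\mathbb{R}^3_v}\Big(\int_{\mathbb{R}^3_x}|f(x,v)|^{q}\,\dd x\Big)^{p/q}\dd v=\|f\|_{L^p_v L^q_x}^{p},
\end{align*}
where I have used $pr=q$ (so that $\big(\int_x|f|^{pr}\big)^{1/r}=\big(\int_x|f|^{q}\big)^{p/q}$) and $\|h\|_{L^{r'}_x}\le1$. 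Taking the supremum over admissible $h$ yields $\|g\|_{L^r_x}\le\|f\|_{L^p_v L^q_x}^{p}$, and extracting $p$-th roots gives exactly \eqref{minkowski}.

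This argument is elementary and I do not expect a genuine obstacle; the only points demanding care are the measurability of the inner integral $g$ (so that Tonelli and the duality representation apply) and the fact that $1<r<\infty$ places $L^r_x$ in the range where duality with $L^{r'}_x$ is available, both of which are automatic for a measurable $f$. An alternative, fully self-contained route would be to raise the target inequality to the power $q$ and integrate the classical one-variable Minkowski inequality in $v$, but the duality proof above is the shortest and is the one I would present.
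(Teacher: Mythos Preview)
Your proof is correct and self-contained. The paper itself does not give a proof of this lemma at all; it simply states the inequality and refers the reader to \cite{GW2012CPDE}. Your duality argument (reducing to Tonelli plus H\"older in the $x$-variable after setting $r=q/p$) is a standard and clean way to obtain the classical Minkowski integral inequality, and your treatment of the endpoint cases $p=q$ and $q=\infty$ is also correct. In short, you have supplied more than the paper does here.
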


\medskip
\noindent\textbf{Acknowledgements.}
The research is supported by NSFC under the grant number 12271179,
Basic and Applied Basic Research Project of Guangdong under the grant number 2022A1515012097,
and Basic and Applied Basic Research Project of Guangzhou under the grant number SL2022A04J01496.

\medskip
\noindent\textbf{Declarations}
\medskip

\noindent\textbf{Conflict of interest}
The authors declare that they have no conflict of interest.

\end{document}